\DeclarePairedDelimiter{\ceil}{\lceil}{\rceil}
\renewcommand{\a}{\alpha}
\renewcommand{\b}{\beta}
\newcommand{\g}{\gamma}
\newcommand{\e}{\varepsilon}
\renewcommand{\l}{\lambda} 
\newcommand{\s}{\sigma}
\renewcommand{\O}{\Omega}
\newcommand{\normeq}{\trianglelefteqslant}
\newcommand{\la}{\langle}
\newcommand{\ra}{\rangle}
\renewcommand{\to}{\rightarrow}
\newcommand{\leqs}{\leqslant}
\newcommand{\geqs}{\geqslant}
\newcommand{\vs}{\vspace{2mm}}
\newcommand{\what}{\widehat} 
\newcommand{\Aut}{\mathrm{Aut}}
\newcommand{\Out}{\mathrm{Out}}
\newcommand{\GL}{\mathrm{GL}}
\newcommand{\PGL}{\mathrm{PGL}}
\newcommand{\SL}{\mathrm{SL}}
\newcommand{\PGammaL}{\mathrm{P\Gamma L}}
\newcommand{\PSigmaL}{\mathrm{P\Sigma L}}
\newcommand{\PSp}{\mathrm{PSp}}
\newcommand{\Sp}{\mathrm{Sp}}
\newcommand{\POmega}{\mathrm{P\Omega}}
\newcommand{\LL}{\mathrm{L}}
\newcommand{\UU}{\mathrm{U}}
\newcommand{\PGU}{\mathrm{PGU}}
\newcommand{\GU}{\mathrm{GU}}
\newcommand{\reg}{\mathrm{reg}}
\newcommand{\imod}[1]{\allowbreak\mkern4mu({\operator@font mod}\,\,#1)}
\newtheorem{theorem}{Theorem} 
\newtheorem*{conj*}{Conjecture}
\newtheorem{conj}[theorem]{Conjecture} 
\newtheorem{corol}[theorem]{Corollary}
\newtheorem{thm}{Theorem}[section] 
\newtheorem{prop}[thm]{Proposition} 
\newtheorem{lem}[thm]{Lemma}
\newtheorem{cor}[thm]{Corollary} 
\newtheorem{con}[thm]{Conjecture}
\newtheorem{hyp}[thm]{Hypothesis}
\theoremstyle{definition}
\newtheorem{rem}[thm]{Remark}
\newtheorem{remk}{Remark}
\newtheorem{defn}[thm]{Definition}
\newtheorem{ex}[thm]{Example}
\begin{document}

\author{Timothy C. Burness}
\address{T.C. Burness, School of Mathematics, University of Bristol, Bristol BS8 1UG, UK}
\email{t.burness@bristol.ac.uk}

\author{Hong Yi Huang}
\address{H.Y. Huang, School of Mathematics, University of Bristol, Bristol BS8 1UG, UK}
\email{hy.huang@bristol.ac.uk}

\title[On base sizes for primitive groups of product type]{On base sizes for primitive groups of product type} 

\begin{abstract}
Let $G \leqslant {\rm Sym}(\Omega)$ be a finite permutation group and recall that the base size of $G$ is the minimal size of a subset of $\Omega$ with trivial pointwise stabiliser. There is an extensive literature on base sizes for primitive groups, but there are very few results for primitive groups of product type. In this paper, we initiate  a systematic study of bases in this setting. Our first main result determines the base size of every product type primitive group of the form $L \wr P \leqslant {\rm Sym}(\Omega)$ with soluble point stabilisers, where $\Omega = \Gamma^k$, $L \leqslant {\rm Sym}(\Gamma)$ and $P \leqslant S_k$ is transitive. This extends recent work of Burness on almost simple primitive groups. We also obtain an expression for the number of regular suborbits of any product type group of the form $L \wr P$ and we classify the groups with a unique regular suborbit under the assumption that $P$ is primitive, which involves extending earlier results due to Seress and Dolfi. We present applications on the Saxl graphs of base-two product type groups and we conclude by establishing several new results on base sizes for general product type primitive groups.
\end{abstract}

\makeatletter
\@namedef{subjclassname@2020}{\textup{2020} Mathematics Subject Classification}
\makeatother

\subjclass[2020]{Primary 20B15; Secondary 20E32, 20E28}

\date{\today}

\maketitle

\section{Introduction}\label{s:intro}

Let $G \leqs \mathrm{Sym}(\Omega)$ be a finite permutation group and recall that a subset of $\Omega$ is a \emph{base} for $G$ if its pointwise stabiliser in $G$ is trivial. We write $b(G)$ for the \emph{base size} of $G$, which is the minimal size of a base for $G$. This classical invariant has been studied intensively for many decades, finding a wide range of applications and connections to a diverse range of problems in group theory and related areas (we refer the reader to the survey articles \cite{BC,LSh3} and \cite[Section 5]{Bur181} for more details). Determining the base size of a given group is a fundamental problem in permutation group theory.

There is an extensive literature on bases for primitive groups, stretching all the way back to the nineteenth century. In recent years, further motivation for studying bases in this setting stems from a circle of highly influential conjectures of Babai, Cameron, Kantor and Pyber proposed in the 1990s, all of which have now been resolved. For example, a conjecture of Pyber \cite{Pyber} asserts that there is an absolute constant $c$ such that $b(G)$ is at most $c\log_n|G|$ for every primitive permutation group of degree $n$ (note that $b(G) \geqs \log_n|G|$ is a trivial lower bound). The proof of Pyber's conjecture, which was finally completed by Duyan, Halasi and Mar\'{o}ti in \cite{DHM}, relies heavily on the 
O'Nan-Scott theorem, which describes the finite primitive groups in terms of the structure and action of the socle of the group. Following \cite{LPS}, this theorem divides the primitive groups into five families, as briefly described in Table \ref{tab:prim} (in the table, $T$ denotes a non-abelian finite simple group). 

\begin{table}[h]
\[
\begin{array}{ll} \hline
\mbox{Type} & \mbox{Description} \\ \hline
\mbox{I} & \mbox{Affine: $G = V{:}H \leqs {\rm AGL}(V)$, $H \leqs{\rm GL}(V)$ irreducible} \\
\mbox{II} & \mbox{Almost simple: $T \leqs G \leqs {\rm Aut}(T)$} \\
\mbox{III} & \mbox{Diagonal type: $T^k \leqs G \leqs T^k.({\rm Out}(T) \times P)$, $P \leqs S_k$ primitive, or $k=2$, $P=1$} \\
\mbox{IV} & \mbox{Product type: $G \leqs L \wr P$, $L$ primitive of type II or III, $P \leqs S_k$ transitive} \\
\mbox{V} & \mbox{Twisted wreath product} \\ \hline
\end{array}
\]
\caption{The types of finite primitive groups in the O'Nan-Scott theorem}
\label{tab:prim}
\end{table}

In this paper, we initiate a systematic study of bases for product type primitive groups, noting that there are very few existing results in the literature when compared to the other families of primitive groups identified by the O'Nan-Scott theorem. Recall that if $G \leqs {\rm Sym}(\O)$ is such a group, then $G \leqs L \wr S_k$ acts on $\O = \Gamma^k$ with its product action, where $k \geqs 2$ and $L \leqs {\rm Sym}(\Gamma)$ is a primitive group with socle $T$, which is either almost simple or diagonal type (see Table \ref{tab:prim}). Moreover, $G$ has socle $T^k$ and the subgroup $P \leqs S_k$ induced by the conjugation action of $G$ on the set of factors of $T^k$ is transitive. In particular, we have $T^k \normeq G \leqs L \wr P$ and thus $b(G) \leqs b(L \wr P)$.

In \cite{HLM}, Halasi et al. prove an explicit version of Pyber's conjecture for finite primitive groups, establishing the general bound
\[
b(G) \leqs 2\frac{\log |G|}{\log |\Omega|}+22
\]
when $G$ is a product type group (the original form of Pyber's conjecture in this setting, with unspecified constants, was established in \cite{BS}). Bases for product type groups of the form $G = L \wr P$ are also considered by Bailey and Cameron in \cite{BC}. In order to state their main result (which imposes no conditions on $L$ or $P$), let $D(P)$ denote the \emph{distinguishing number} of $P$, which is the minimal number of parts in a distinguishing partition for the action of $P$ on $\{1, \ldots, k\}$ (recall that a partition is a distinguishing partition if the intersection of the setwise stabilisers of the parts in $P$ is trivial). In addition, for a positive integer $m$,  let ${\rm reg}(L,m)$ be the number of regular orbits of $L$ in its coordinatewise action on $\Gamma^m$ (note that ${\rm reg}(L,m) \geqs 1$ if and only if $m \geqs b(L)$). Then \cite[Theorem 2.13]{BC} states that
\begin{equation}\label{e:bc}
\mbox{\emph{$b(L \wr P) \leqs m$ if and only if ${\rm reg}(L,m) \geqs D(P)$.}}
\end{equation}

In studying product type groups $G \leqs L \wr P$ as above, there is a natural distinction to make between the full wreath product $L\wr P$ and its proper (primitive) subgroups. As in \cite{BC}, we mainly focus on groups of the form $G = L \wr P$ in this paper,  although we will study the general case in Section \ref{s:gen}. 

Primitive groups with soluble point stabilisers is another common theme throughout the paper. This is partly motivated by the main theorem of \cite{B_sol}, which states that $b(G) \leqs 5$ for every finite primitive permutation group $G$ with soluble point stabilisers (this extends a well known theorem of Seress \cite{S_sol}, which establishes the bound $b(G) \leqs 4$ in the more restrictive setting where $G$ itself is soluble). Moreover, \cite[Theorem 2]{B_sol} determines the exact base size of every almost simple primitive group with soluble stabilisers and it is natural to seek an extension of this result to product type groups (as a consequence of the O'Nan-Scott theorem, every primitive group with soluble stabilisers is either almost simple, affine, or product type).

In order to describe some of our main results, it will be convenient to write $\mathcal{S}$ for the set of finite almost simple primitive groups with soluble point stabilisers (as noted above, the precise base size of each group in $\mathcal{S}$ is computed in \cite{B_sol}). Let $G \leqs L \wr P$ be a product type primitive group as above with soluble point stabiliser $H$, so $L \in \mathcal{S}$ and $G$ has socle $T^k$, where $T$ is the socle of $L$ (see Remark \ref{r:sol}). Since $G = T^kH$, it follows that $H$ induces $P$ on the set of factors of $T^k$ and thus $P$ is soluble. It is easy to see that $b(L \wr P) \geqs b(L)$ and thus \eqref{e:bc} implies that $b(L \wr P) = b(L)$ if and only if ${\rm reg}(L,b(L)) \geqs D(P)$. By \cite[Theorem 1.2]{S_sol}, the solubility of $P$ implies that $D(P) \leqs 5$ and as a consequence we deduce that $b(L \wr P) = b(L)$ if ${\rm reg}(L,b(L)) \geqs 5$. This leads us naturally to the problem of determining the groups $L \in \mathcal{S}$ with ${\rm reg}(L,b(L)) \leqs 4$, which is addressed in Theorem \ref{thm:reg(L,b(L))} below (note that the relevant tables are presented at the end of the paper in Section \ref{s:tab}).

\begin{theorem}\label{thm:reg(L,b(L))}
Let $L$ be a finite almost simple primitive group with soluble point stabiliser $J$. Then $\reg(L,b(L))\leqs 4$ if and only if $(L,J)$ is one of the cases in Table \ref{tab:r(L)} or \ref{tab:reg(L)}. 
\end{theorem}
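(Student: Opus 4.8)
The plan is to leverage the explicit classification of the set $\mathcal{S}$ of finite almost simple primitive groups with soluble point stabilisers, together with the base sizes recorded in \cite{B_sol}. For each such $L$ with point stabiliser $J$ we already know $b = b(L) \in \{2,3,4,5\}$, so the task reduces to deciding, pair by pair, whether the number of regular orbits of $L$ on $\Gamma^{b}$ is at most $4$. Since $\reg(L,b) \geqs 1$ by definition of the base size, the interesting range is precisely $\reg(L,b) \in \{1,2,3,4\}$, and the guiding heuristic is that this small-count condition is restrictive: once $L$ is large there is ample room for many regular $b$-tuples, so only relatively small or structurally special groups should survive.

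First I would record the orbit-counting identity
\[
\reg(L,b) = \frac{1}{|L|}\bigl|\{(\gamma_1,\dots,\gamma_b) \in \Gamma^b : L_{\gamma_1}\cap\cdots\cap L_{\gamma_b}=1\}\bigr|,
\]
so that bounding $\reg(L,b)$ amounts to counting the ordered $b$-tuples whose coordinates form a base. A tuple fails to be regular precisely when its global stabiliser contains some element of prime order, which then fixes each coordinate; hence the union bound over the set $\mathcal{P}$ of prime-order elements of $L$ gives
\[
\reg(L,b) \geqs \frac{1}{|L|}\Bigl(|\Gamma|^b - \sum_{x \in \mathcal{P}} |\Fix(x)|^b\Bigr) = \frac{|\Gamma|^b}{|L|}\Bigl(1 - \sum_{x \in \mathcal{P}} \fpr(x)^b\Bigr).
\]
The strategy is then to show that the right-hand side exceeds $4$ for all but an explicit finite list of pairs $(L,J)$, and to treat the survivors by direct computation.

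For the main estimate I would feed in the fixed point ratio bounds available for almost simple primitive actions (following Liebeck--Saxl and Guralnick--Kantor, together with the sharper class-by-class estimates in Burness's work), combined with lower bounds on $|\Gamma| = |L:J|$ forced by the solubility of $J$. Because $J$ is soluble its order is tightly constrained, so $|\Gamma|$ grows with $|L|$ while $\sum_{x \in \mathcal{P}} \fpr(x)^b$ becomes small; in this regime the displayed lower bound comfortably exceeds $4$, eliminating all sufficiently large groups in each family. This reduces the problem to a bounded collection of groups --- the sporadic and small exceptional cases, the low-dimensional classical groups over small fields, and the relevant $\PSL_{2}(q)$ actions --- for which I would compute $\reg(L,b)$ exactly (for instance in {\sc Magma}), reading off precisely those with $\reg(L,b) \leqs 4$ to populate Tables \ref{tab:r(L)} and \ref{tab:reg(L)}. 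In the base-two cases the quantity $\reg(L,2)$ is exactly the number of regular suborbits of $L$ (equivalently, the valency of the associated Saxl graph), so here one may appeal directly to suborbit data and to fixed point ratio estimates for a single element.

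The main obstacle will be the infinite families where the fixed point ratio bounds are weakest, most notably $\PSL_{2}(q)$ and the other low-rank classical groups, for which the elementary estimate above does not immediately fall below the threshold. Here a delicate window of small parameters must be isolated and then checked by hand or by machine. Deciding exactly where the transition between \emph{excluded by estimate} and \emph{requires explicit computation} occurs, and certifying that no borderline case with $\reg(L,b)\leqs 4$ has been overlooked, is where the genuine difficulty lies.
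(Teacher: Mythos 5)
Your proposal is correct and follows essentially the same route as the paper: the lower bound $\reg(L,b) \geqs \frac{|L|^{b-1}}{|J|^b}\bigl(1-\sum_{x}\fpr(x)^b\bigr)$ you write down is exactly the paper's criterion \eqref{e:what} in the form $\widehat{\mathcal{Q}} < 1 - 4|J|^b/|L|^{b-1}$, applied family by family with the fixed point ratio estimates from \cite{B_sol,BH_Saxl} and finished off by {\sc Magma} computations for the surviving small cases. The only differences are organisational (the paper shortcuts the $b=2$ analysis via \cite[Theorem 3.1]{BH_Saxl} and isolates the torus normalisers in ${\rm L}_2(q)$, ${}^2B_2(q)$, ${}^2G_2(q)$ into separate lemmas with totient-function estimates), not conceptual.
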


\begin{remk}
Suppose $L$ is a group as in Theorem \ref{thm:reg(L,b(L))} with $b(L)=2$, so  ${\rm reg}(L,b(L))  = r(L)$ is the number of regular suborbits of $L$. By the theorem, it follows  that $r(L)\leqs 4$ if and only if $(L,J)$ is one of the cases in Table \ref{tab:r(L)}. This extends \cite[Theorem 4]{BH_Saxl}, which determines the groups $L$ of this form with $r(L)=1$. 
\end{remk}

The proof of Theorem \ref{thm:reg(L,b(L))} will be given in Section \ref{s:sol} and it features a combination of probabilistic and computational methods. In particular, the argument relies on the statement and proof of \cite[Theorem 2]{B_sol}, with \cite[Theorem 3.1]{BH_Saxl} as another key ingredient. The relevant computations will be described in Section \ref{ss:comp}.

An ambitious project initiated by Jan Saxl in the 1990s seeks to determine the primitive groups $G$ with $b(G)=2$ and there has been some significant progress towards this goal in several special cases. For example, we refer the reader to Fawcett's work \cite{Faw1,Faw2} on diagonal type groups and twisted wreath products, while various papers have investigated this problem for affine groups (for instance, see Lee's recent work \cite{Lee1,Lee2} on affine groups with quasisimple point stabilisers). The almost simple primitive groups have also been intensively studied and the base-two groups with socle an alternating group or sporadic group \cite{BGS,BOW} have been determined. In addition, a complete solution for almost simple groups with soluble point stabilisers is presented in \cite[Theorem 2]{B_sol}. 

Our next result constitutes the first step towards a classification of the base-two product type primitive groups.  

\begin{theorem}\label{thm:wr_base-two}
Let $G = L \wr P$ be a product type primitive group with soluble point stabiliser $J \wr P$. Then $b(G)=2$ if and only if $b(L)=2$ and either 
\begin{itemize}\addtolength{\itemsep}{0.2\baselineskip}
\item[{\rm (i)}] $(L,J)$ is not one of the cases in Table \ref{tab:r(L)}; or 
\item[{\rm (ii)}] $D(P) \leqs r(L) \leqs 4$ and $(L,J,r(L))$ is one of the cases in Table \ref{tab:r(L)}.
\end{itemize}
\end{theorem}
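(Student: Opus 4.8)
The plan is to derive the statement directly from the Bailey--Cameron criterion \eqref{e:bc}, combined with Theorem \ref{thm:reg(L,b(L))} (via the subsequent Remark) and Seress's bound on the distinguishing number. First I would record the two structural observations that make the machinery applicable. Since the point stabiliser $J \wr P$ is soluble, both $J$ and $P$ are soluble; hence $L \in \mathcal{S}$ and, by \cite[Theorem 1.2]{S_sol}, $D(P) \leqs 5$. Next I would reduce to the case $b(L)=2$. As $G$ is primitive of product type its socle $T^k$ is nonabelian with $k \geqs 2$, so $G$ is not regular and $b(G) \geqs 2$; likewise $b(L) \geqs 2$ since $L$ is almost simple primitive. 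Combining this with the elementary inequality $b(G) = b(L \wr P) \geqs b(L)$ noted in the introduction shows that $b(G)=2$ forces $b(L)=2$. Conversely, assuming $b(L)=2$, the quantity ${\rm reg}(L,2)$ is exactly the number $r(L)$ of regular suborbits of $L$, so \eqref{e:bc} yields the clean equivalence
\[
b(G) = 2 \iff r(L) \geqs D(P).
\]

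It then remains to unravel the condition $r(L) \geqs D(P)$ using the classification in Theorem \ref{thm:reg(L,b(L))}, and I would split into two cases according to whether $(L,J)$ appears in Table \ref{tab:r(L)}. If $(L,J)$ is not listed there, then by the Remark following Theorem \ref{thm:reg(L,b(L))} we have $r(L) \geqs 5 \geqs D(P)$, so the inequality holds automatically and $b(G)=2$; this is precisely case (i). If instead $(L,J)$ is one of the tabulated cases, then $r(L) \leqs 4$, and $r(L) \geqs D(P)$ becomes exactly the condition $D(P) \leqs r(L) \leqs 4$ of case (ii). Assembling the two cases gives the stated characterisation in both directions.

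The argument is essentially formal once these ingredients are in place, so I do not expect a genuine obstacle at this stage: the substantive content has already been absorbed into Theorem \ref{thm:reg(L,b(L))} and the bound $D(P) \leqs 5$. The one point requiring a little care is the case where $(L,J)$ lies outside Table \ref{tab:r(L)}, where one must invoke \emph{both} $r(L) \geqs 5$ (from the Remark) and $D(P) \leqs 5$ (from \cite{S_sol}) in order to conclude $r(L) \geqs D(P)$ uniformly, without any case-by-case computation.
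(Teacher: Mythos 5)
Your proposal is correct and follows essentially the same route as the paper: the paper likewise combines the Bailey--Cameron criterion (Theorem \ref{thm:BC}), Seress's bound $D(P)\leqs 5$ for soluble $P$, and the classification of the groups with $1\leqs r(L)\leqs 4$ in Proposition \ref{prop:r(L)<=4} to obtain the stated dichotomy. The only cosmetic difference is that the paper packages the reduction via Lemma \ref{l:plus1}, whereas you invoke \eqref{e:bc} directly, but the content is identical.
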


More generally, we have the following.

\begin{theorem}\label{thm:wr_base}
Let $G = L \wr P$ be a product type primitive group with soluble point stabiliser $J \wr P$. Then either 
\begin{itemize}\addtolength{\itemsep}{0.2\baselineskip}
\item[{\rm (i)}] $b(G) = b(L)$; or  
\item[{\rm (ii)}] $b(G) = b(L)+1$, $\reg(L,b(L))<D(P)$ and $(L,J,{\rm reg}(L,b(L)))$ is one of the cases in Table \ref{tab:r(L)} or \ref{tab:reg(L)}. 
\end{itemize}
\end{theorem}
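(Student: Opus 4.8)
The plan is to deduce the result from the Bailey--Cameron criterion \eqref{e:bc}, Theorem \ref{thm:reg(L,b(L))}, and the bound $D(P) \leqs 5$ from \cite[Theorem 1.2]{S_sol}. First I would record the standing hypotheses: since the point stabiliser $J \wr P$ is soluble we have $L \in \mathcal{S}$ and $P$ is a transitive soluble subgroup of $S_k$, so $D(P) \leqs 5$ by Seress's theorem. I would also note the trivial lower bound $b(G) \geqs b(L)$ and recall from \eqref{e:bc} that $b(G) \leqs m$ if and only if $\reg(L,m) \geqs D(P)$.

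The argument then splits according to whether $\reg(L, b(L)) \geqs D(P)$. If this holds, then \eqref{e:bc} gives $b(G) \leqs b(L)$, and with the lower bound we obtain $b(G) = b(L)$, which is case (i). So assume instead that $\reg(L, b(L)) < D(P)$; then \eqref{e:bc} forces $b(G) \geqs b(L)+1$, and the task is to prove the matching upper bound $b(G) \leqs b(L)+1$, equivalently $\reg(L, b(L)+1) \geqs D(P)$.

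The key step is a monotonicity estimate for the regular orbit counts. A tuple $(\gamma_1, \ldots, \gamma_m) \in \Gamma^m$ lies in a regular $L$-orbit exactly when $\bigcap_i L_{\gamma_i} = 1$, and appending any further coordinate preserves this. Since each regular orbit has size $|L|$, the number of tuples in regular orbits of $\Gamma^m$ is $|L| \cdot \reg(L,m)$, and each such tuple extends to $|\Gamma|$ regular tuples in $\Gamma^{m+1}$; comparing counts yields $\reg(L, m+1) \geqs |\Gamma| \cdot \reg(L,m)$. Taking $m = b(L)$ and using $\reg(L,b(L)) \geqs 1$ gives $\reg(L, b(L)+1) \geqs |\Gamma|$. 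As $L$ is almost simple, its socle is a nonabelian simple group acting faithfully on $\Gamma$, so $|\Gamma| \geqs 5 \geqs D(P)$; hence $\reg(L, b(L)+1) \geqs D(P)$ and \eqref{e:bc} gives $b(G) \leqs b(L)+1$, so that $b(G) = b(L)+1$.

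It remains to identify $(L,J)$ in case (ii). Since $\reg(L, b(L)) < D(P) \leqs 5$, we have $\reg(L, b(L)) \leqs 4$, so Theorem \ref{thm:reg(L,b(L))} places $(L,J)$ in Table \ref{tab:r(L)} or \ref{tab:reg(L)}, and the recorded value of $\reg(L, b(L))$ completes the triple. The only point needing genuine care is the upper bound $b(G) \leqs b(L)+1$: everything else is bookkeeping with \eqref{e:bc}. I expect the monotonicity estimate to be the crux, though it is elementary, with the one subtlety being that $|\Gamma| \geqs 5$ must suffice even in the tight case $D(P) = 5$.
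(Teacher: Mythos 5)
Your proposal is correct and follows essentially the same route as the paper: the paper's Lemma \ref{l:plus1} establishes exactly your dichotomy via Theorem \ref{thm:BC}, the bound $D(P)\leqs 5$ from Seress, and the fact that $|\Gamma|\geqs 5$ for $L$ almost simple, and then Theorem \ref{thm:reg(L,b(L))} supplies the tables. The only cosmetic difference is that where you prove the monotonicity estimate $\reg(L,m+1)\geqs |\Gamma|\cdot\reg(L,m)$ directly, the paper simply cites \cite[Corollary 2.14]{BC}, which gives $b(G)\leqs b(L)+\ceil{\log_{|\Gamma|}D(P)}$ and encapsulates the same counting argument.
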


Next let $r(G)$ be the number of regular suborbits of an arbitrary  product type primitive group of the form $G = L \wr P$, noting that $b(G)=2$ if and only if $r(G) \geqs 1$. In the following result, we determine a formula for $r(G)$, which turns out to have some interesting applications. Here $t_m$ denotes the number of (unordered) distinguishing partitions with $m$ non-empty parts for the action of $P$ on $\{1,\ldots,k\}$, so $t_m \geqs 1$ if and only if $D(P)\leqs m$.

\begin{theorem}\label{thm:r(W)_cal}
Let $G = L \wr P$ be a product type primitive group on $\O = \Gamma^k$. Then
\[
r(G) =\frac{1}{|P|}\sum_{m=D(P)}^km!{r(L) \choose m}t_m.
\]	
\end{theorem}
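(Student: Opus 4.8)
The plan is to count ordered pairs and exploit the fact that, since $G$ is transitive on $\Omega$, the regular suborbits of $G$ are in bijection with the $G$-orbits on the set
\[
R = \{(u,v) \in \Omega^2 : G_{uv} = 1\}
\]
of base pairs (fix a first coordinate $u$; the $G_u$-orbits on $\{v : G_{uv}=1\}$ are exactly the regular suborbits, and these match up with the $G$-orbits on $R$). Crucially, $G$ acts \emph{semiregularly} on $R$, because the stabiliser of a base pair is trivial by definition, so every $G$-orbit on $R$ has size $|G| = |L|^k|P|$ and hence $r(G) = |R|/|G|$. The whole problem thus reduces to computing $|R|$, which I will do by writing $u = (u_1,\ldots,u_k)$ and $v = (v_1,\ldots,v_k)$ with $u_i,v_i \in \Gamma$, and analysing the stabiliser $G_{uv}$ in $L \wr P = L^k \rtimes P$ in terms of the coordinate pairs $p_i = (u_i,v_i) \in \Gamma^2$.

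The key step is a clean description of when $G_{uv}=1$. Writing a typical element of $G$ as $(g_1,\ldots,g_k;\pi)$ with $g_i \in L$ and $\pi \in P$, the product action shows that such an element fixes both $u$ and $v$ precisely when $g_i$ carries $p_{\pi^{-1}(i)}$ to $p_i$ for every $i$, where $L$ acts diagonally on $\Gamma^2$. Taking $\pi=1$ first forces $L_{p_i}=1$ for all $i$; that is, each coordinate pair must be a base pair for $L$. Granting this, each $p_i$ lies in one of the $r(L)$ regular $L$-orbits on base pairs, which I regard as a colour $c(i) \in \{1,\ldots,r(L)\}$. An element $(g;\pi)$ can then lie in $G_{uv}$ only if $c(\pi^{-1}(i)) = c(i)$ for all $i$, i.e. $\pi$ fixes every colour class $c^{-1}(t)$ setwise; and conversely any such $\pi$ determines a \emph{unique} element of $G_{uv}$, since the $g_i$ are pinned down by $L_{p_i}=1$. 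Hence $G_{uv}=1$ if and only if (a) every $p_i$ is a base pair for $L$, and (b) the partition of $\{1,\ldots,k\}$ into colour classes is a distinguishing partition for $P$.

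It then remains to count, stratifying the pairs in $R$ by their colouring $c$. Once $c$ is fixed, condition (b) depends on $c$ alone, while the number of realisations is $|L|^k$: within each colour class the pair $p_i$ may be any of the $|L|$ base pairs in that regular $L$-orbit. Thus
\[
|R| = |L|^k \cdot N, \qquad N = \#\{\,c : \{1,\ldots,k\}\to\{1,\ldots,r(L)\} \text{ whose colour-class partition is distinguishing}\,\},
\]
and so $r(G) = |R|/|G| = N/|P|$. Finally I compute $N$ by grouping colourings according to the unordered partition they induce: a distinguishing partition with exactly $m$ non-empty parts arises from precisely $m!\binom{r(L)}{m}$ colourings, since its $m$ parts must receive $m$ distinct colours from the $r(L)$ available. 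Summing over all distinguishing partitions sorted by their number of parts $m$ (there are $t_m$ of them, with $t_m = 0$ for $m < D(P)$) gives $N = \sum_{m=D(P)}^{k} m!\binom{r(L)}{m}t_m$, which yields the claimed formula.

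The main obstacle is the stabiliser analysis underpinning condition (b): one must verify carefully that, after reducing to the case where every coordinate pair is a base pair, each $\pi \in P$ contributes either nothing or a single element to $G_{uv}$, and that the non-trivial case is governed exactly by $\pi$ preserving the colour classes. This is what converts the algebraic condition $G_{uv}=1$ into the combinatorial notion of a distinguishing partition; the remaining bookkeeping (the factor $|L|^k$ and the colour-assignment count $m!\binom{r(L)}{m}$) is routine once this correspondence is in place.
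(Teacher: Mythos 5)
Your proposal is correct and follows essentially the same route as the paper: your characterisation of when $G_{uv}=1$ via the colour-class partition is exactly Lemma \ref{l:BG_l:2.8}, and the count $|R| = |L|^k\sum_m m!\binom{r(L)}{m}t_m$ together with $r(G)=|R|/|G|$ is precisely the paper's argument. Your direct computation of $|G_{uv}|$ as the number of colour-class-preserving $\pi\in P$ is a slightly slicker way to prove that lemma than the paper's bijection between arrays and ordered bases, but the overall strategy is identical.
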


In general, it is difficult to compute $t_m$ precisely, but this can be achieved in some special cases, which then allows us to simplify the given expression for $r(G)$. For example, see Corollaries \ref{cor:val_P=Sk} and \ref{cor:val_P=Cp}. Notice that we have a trivial upper bound $t_m \leqs S(k,m)$, which is the total number of partitions of $\{1,\ldots,k\}$ into $m$ non-empty parts (a Stirling number of the second kind). In fact, this bound is best possible. For instance, if $P = C_k$ has prime order, then $t_m = S(k,m)$ for all $m\geqs 2$. On the other hand, if $\Pi = \{\pi_1,\dots,\pi_m\}$ is a distinguishing partition for $P$, then $P_{\{\Pi\}}\leqs S_m$ is a permutation group on the set of parts comprising $\Pi$ and thus $t_m\geqs |P|/m!$. Note that $t_m= |P|/m!$ only if each part has the same size in every distinguishing partition for $P$ into $m$ parts. For example, if $P=S_k$ and $m=k$, then $t_m = 1 = |P|/m!$ since the partition of $\{1, \ldots, k\}$ into singletons is the only distinguishing partition for $P$.

As an immediate corollary of Theorem \ref{thm:r(W)_cal}, we obtain the following result on product type groups with a unique regular suborbit.

\begin{corol}
	\label{c:unique}
	Let $G = L \wr P$ be a product type primitive group. Then $r(G) = 1$ if and only if $r(L) = D(P)$ and $t_{D(P)} = |P|/D(P)!$.
\end{corol}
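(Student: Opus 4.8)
The plan is to deduce the statement directly from the formula
\[
r(G) = \frac{1}{|P|}\sum_{m=D(P)}^{k} m!\binom{r(L)}{m}t_m
\]
of Theorem \ref{thm:r(W)_cal} by bounding each summand from below. Writing $a_m = \frac{1}{|P|}\,m!\binom{r(L)}{m}t_m$, so that $r(G)=\sum_{m=D(P)}^{k}a_m$ with every $a_m\geqs 0$, I would first record two elementary facts about the factors. The discussion preceding the corollary shows that $t_m\geqs |P|/m!$ for every $m$ in the range $D(P)\leqs m\leqs k$ (the setwise stabiliser of a distinguishing partition into $m$ parts embeds in $S_m$), so that $\frac{1}{|P|}m!\,t_m\geqs 1$, with equality if and only if $t_m=|P|/m!$. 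On the other hand $\binom{r(L)}{m}\geqs 1$ exactly when $D(P)\leqs m\leqs r(L)$, and $\binom{r(L)}{m}=0$ for $m>r(L)$. Combining these, each summand with $D(P)\leqs m\leqs \min(k,r(L))$ satisfies $a_m\geqs 1$, while every remaining summand vanishes.

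The converse is then immediate: if $r(L)=D(P)$ and $t_{D(P)}=|P|/D(P)!$, then $\binom{r(L)}{m}=\binom{D(P)}{m}=0$ for all $m>D(P)$, so the sum collapses to the single term $a_{D(P)}=\frac{1}{|P|}D(P)!\,t_{D(P)}\binom{D(P)}{D(P)}=1$, giving $r(G)=1$.

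For the forward direction I would argue that $r(G)=1$ forces there to be exactly one nonzero summand, equal to $1$. Indeed, since every summand in the range $D(P)\leqs m\leqs \min(k,r(L))$ is at least $1$, having $r(G)=1$ rules out two such summands and rules out $r(L)<D(P)$ (which would give $r(G)=0$); hence $\min(k,r(L))=D(P)$ and the unique surviving summand is $a_{D(P)}=\frac{1}{|P|}D(P)!\,t_{D(P)}\binom{r(L)}{D(P)}=1$. As this is a product of two factors each at least $1$, both equal $1$: from $\binom{r(L)}{D(P)}=1$ together with $r(L)\geqs D(P)\geqs 1$ we get $r(L)=D(P)$, and from $\frac{1}{|P|}D(P)!\,t_{D(P)}=1$ we get $t_{D(P)}=|P|/D(P)!$, as required.

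The one point meriting care is the uniqueness of the positive summand. A single term $a_{D(P)}$ can by itself exceed $1$ when $r(L)>D(P)$ --- for example in the degenerate case $D(P)=k$, where there is no index $m=D(P)+1$ available to exhibit a second large summand --- but this possibility is closed off by the same factorisation, since $\binom{r(L)}{D(P)}>1$ as soon as $r(L)>D(P)$. Beyond checking this, the proof is purely a matter of bookkeeping on the nonnegative terms of the formula, so I anticipate no genuine obstacle.
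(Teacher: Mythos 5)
Your proof is correct and follows essentially the same route as the paper: both arguments substitute into the formula of Theorem \ref{thm:r(W)_cal}, use the lower bound $t_m \geqs |P|/m!$ from \eqref{e:bds} together with $\binom{r(L)}{D(P)} \geqs 1$ to bound the sum below by $1$, and then extract $r(L)=D(P)$ and $t_{D(P)}=|P|/D(P)!$ from the case of equality. The only cosmetic difference is that the paper invokes Theorem \ref{thm:BC} to get $r(L)\geqs D(P)$ and simply drops all terms except $m=D(P)$, whereas you track every summand explicitly; the conclusion and the key inequalities are identical.
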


In \cite{S_dist}, Seress classifies the primitive groups $P \leqs S_k$ with $D(P)=2$, which is equivalent to the property that $P$ has a regular orbit on the power set of $\{1, \ldots, k\}$. Using a similar approach, we will show that $(k,P) = (2,S_2)$ is the only example with a unique regular orbit on the power set (see Corollary \ref{c:unique_power}), which we then use to determine the primitive groups $P$ with $t_{D(P)} = |P|/D(P)!$. This allows us to establish the following refinement of Corollary \ref{c:unique} in the case where $P$ is primitive.

\begin{theorem}
	\label{t:new}
	Let $G = L\wr P$ be a product type primitive group with $P \leqs S_k$ primitive. Then $r(G) = 1$ if and only if $r(L) = D(P)$ and one of the following holds:
	\begin{itemize}\addtolength{\itemsep}{0.2\baselineskip}
		\item[\rm (i)] $P = S_k$ and $D(P) = k$.
		\item[\rm (ii)] $P = A_5$, $k = 6$ and $D(P) = 3$.
		\item[\rm (iii)] $P = \PGammaL_2(8)$, $k = 9$ and $D(P) = 3$.
		\item[\rm (iv)] $P = \mathrm{AGL}_3(2)$, $k = 8$ and $D(P) = 4$.
	\end{itemize}
\end{theorem}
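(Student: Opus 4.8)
The plan is to deduce Theorem~\ref{t:new} from Corollary~\ref{c:unique} by classifying the primitive groups $P \leqslant S_k$ that satisfy $t_{D(P)} = |P|/D(P)!$. Indeed, Corollary~\ref{c:unique} already gives $r(G) = 1$ if and only if $r(L) = D(P)$ and $t_{D(P)} = |P|/D(P)!$, so the substance of the theorem is that this second condition on $P$ holds precisely for the groups listed in cases (i)--(iv).

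The first step is to reinterpret the condition $t_m = |P|/m!$, where I write $m = D(P)$. I would study the action of $P$ on the set $\mathcal{O}$ of \emph{ordered} distinguishing $m$-partitions $(\pi_1,\ldots,\pi_m)$, which is nonempty since $m = D(P)$. Such a partition is distinguishing, so its pointwise stabiliser $\bigcap_i P_{\pi_i}$ is trivial; hence $P$ acts semiregularly on $\mathcal{O}$ and every orbit has length $|P|$. As each unordered distinguishing $m$-partition has exactly $m!$ (distinct) orderings, $|\mathcal{O}| = m!\,t_m$, and therefore $m!\,t_m = c|P|$, where $c \geqslant 1$ is the number of $P$-orbits on $\mathcal{O}$. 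Thus $t_m = |P|/m!$ if and only if $c = 1$, that is, if and only if $P$ acts regularly on $\mathcal{O}$. Unwinding this, the condition is equivalent to there being a unique $P$-orbit of unordered distinguishing $m$-partitions whose setwise stabiliser $P_{\{\Pi\}}$ induces the full symmetric group $S_m$ on its $m$ parts; in particular, $P_{\{\Pi\}}$ is transitive on the parts, so all parts have equal size $k/m$ and $m \mid k$.

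With this reformulation in hand, I would split on the value of $D(P)$. When $D(P) = 2$, a distinguishing $2$-partition is an unordered pair $\{A, A^c\}$ with $P_A = 1$, so $t_2 = \frac{1}{2}R|P|$, where $R$ is the number of regular orbits of $P$ on the power set of $\{1,\ldots,k\}$; hence $t_2 = |P|/2$ says exactly that $R = 1$, and by Corollary~\ref{c:unique_power} this forces $(k,P) = (2,S_2)$, which is case (i) with $k = 2$. When $D(P) \geqslant 3$, the group $P$ has no regular orbit on the power set, so by the classification of Seress \cite{S_dist}, together with the related results of Dolfi, $P$ is either $S_k$, $A_k$, or one of an explicit finite list of primitive groups of small degree. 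For $P = S_k$ the unique distinguishing partition into $D(P) = k$ parts is the partition into singletons, whose setwise stabiliser is all of $S_k$ acting naturally, so the condition holds and yields case (i) for $k \geqslant 3$. For $P = A_k$ with $k \geqslant 4$ one has $D(A_k) = k-1$, but since $k-1 \nmid k$ the parts of a distinguishing $(k-1)$-partition cannot all be of equal size, so the condition fails by the remarks of the previous paragraph.

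The main obstacle, and the bulk of the work, is the finite list of exceptional primitive groups with $D(P) \geqslant 3$. For each such $P$ I would first compute $D(P)$ and then test the equivalent condition that $P$ act regularly on ordered distinguishing $D(P)$-partitions, equivalently that there be a single $P$-orbit of distinguishing partitions into $D(P)$ equal parts whose setwise stabiliser induces $S_{D(P)}$. This is a finite verification, and I expect the divisibility constraint $D(P) \mid k$ together with the requirement of a full $S_{D(P)}$ on the parts to eliminate most candidates at once, leaving exactly $A_5$ of degree $6$ (with $D(P) = 3$ and parts of size two), $\PGammaL_2(8)$ of degree $9$ (with $D(P) = 3$ and parts of size three), and $\mathrm{AGL}_3(2)$ of degree $8$ (with $D(P) = 4$ and parts of size two) as the surviving cases (ii)--(iv). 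Confirming the precise value of $D(P)$ in each surviving case and checking that no other group on the list survives is where the care is needed, and this is naturally carried out with the aid of \textsc{Magma}.
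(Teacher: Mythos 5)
Your proposal is correct and follows essentially the same route as the paper: Corollary \ref{c:unique} reduces the theorem to classifying the primitive groups $P$ with $t_{D(P)} = |P|/D(P)!$, which the paper carries out in Proposition \ref{p:cond_5} via the same reformulation as a unique regular $P$-orbit on ordered distinguishing partitions (Lemma \ref{l:unique}), the same $S_k$/$A_k$ dichotomy, and Corollary \ref{c:unique_power} for the case $D(P)=2$. The only difference is that for $D(P) \in \{3,4\}$ the paper reads the surviving exceptional groups off from Dolfi's classification \cite[Lemma 1]{D_dist} of primitive groups with a unique regular orbit on $\mathcal{P}_3([k])$ or $\mathcal{P}_4([k])$, rather than re-running the finite check in {\sc Magma}.
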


In a different direction, we can use Theorem \ref{thm:r(W)_cal} to establish several new results on the Saxl graphs of base-two product type primitive groups. Let $G \leqs {\rm Sym}(\Omega)$ be a permutation group with $b(G)=2$. The \emph{Saxl graph} of $G$, denoted $\Sigma(G)$, was introduced by Burness and Giudici in \cite{BG_Saxl}; the vertices of $\Sigma(G)$ are labelled by the elements of $\Omega$, with two vertices joined by an edge if and only if they form a base for $G$. It is easy to see that if $G$ is primitive, then $\Sigma(G)$ is vertex-transitive and connected. In this case, we write ${\rm val}(G)$ to denote the valency of $\Sigma(G)$, noting that ${\rm val}(G) = |H|r(G)$ with $H$ a point stabiliser. 

Our main result on the valency of Saxl graphs of product type groups  is Corollary \ref{cor:Saxl} below, which extends earlier work in \cite{BG_Saxl} and \cite{CH_val}. For part (i), recall that a connected graph is \emph{Eulerian} if and only if every vertex has even degree.

\begin{corol}
	\label{cor:Saxl}
	Let $G = L\wr P$ be a base-two product type primitive group with Saxl graph $\Sigma(G)$. Then the following hold:
\begin{itemize}\addtolength{\itemsep}{0.2\baselineskip}
\item[{\rm (i)}] $\Sigma(G)$ is Eulerian.
\item[{\rm (ii)}] ${\rm val}(G)$ is a prime power if and only if $L = {\rm M}_{10}$, $J = SD_{16}$, $P$ is a $2$-group and $t_2 \geqs 1$ is a $2$-power.
\end{itemize}
\end{corol}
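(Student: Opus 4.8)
The plan is to run everything through the identity ${\rm val}(G) = |H|r(G)$, where the point stabiliser $H = J \wr P$ has order $|J|^k|P|$. Feeding in the formula of Theorem \ref{thm:r(W)_cal}, the factor $|P|$ cancels and we are left with the clean expression
\[
{\rm val}(G) = |J|^k\sum_{m=D(P)}^k m!\binom{r(L)}{m}t_m,
\]
which I will abbreviate as ${\rm val}(G) = |J|^k N$ with $N = |P|r(G)$. The crucial preliminary observation is that $D(P) \geqs 2$: the only partition of $\{1,\dots,k\}$ into a single part is the trivial one, whose setwise stabiliser in $P$ is all of $P$, and this is nontrivial since $P$ is transitive on $k \geqs 2$ points. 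Both parts flow from analysing $N$ in light of this.

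For part (i), since $D(P) \geqs 2$ every summand of $N$ is divisible by $m!$ with $m \geqs 2$, hence by $2$; thus $N$ is even and so is ${\rm val}(G) = |J|^k N$. As $G$ is primitive, $\Sigma(G)$ is connected and vertex-transitive, so every vertex has the same even degree, and therefore $\Sigma(G)$ is Eulerian by the stated criterion.

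For part (ii), I would first note that if ${\rm val}(G)$ is a prime power then, being even by (i), it is a power of $2$, so factoring ${\rm val}(G) = |J|^k N$ forces both $|J|$ and $N$ to be $2$-powers. The key structural step is that $|J|$ being a $2$-power pins down $(L,J)$: since $J$ is a maximal subgroup of the almost simple group $L$ and is a $2$-group, any Sylow $2$-subgroup $S$ containing $J$ must equal $J$, as otherwise $J < S \leqs L$ would contradict the maximality of $J$ (recall $L$ is not a $2$-group). Hence $J$ is a (self-normalising, maximal) Sylow $2$-subgroup of $L$, and consulting the classification of base-two groups in $\mathcal{S}$ (Table \ref{tab:r(L)} together with \cite{B_sol}) shows that the only such case is $L = {\rm M}_{10}$ with $J = SD_{16}$, for which $r(L) = 2$. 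This identification of the unique group with a $2$-power point stabiliser is the main obstacle, and I expect to handle it by the Sylow reduction above followed by a direct table inspection.

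Once $(L,J) = ({\rm M}_{10}, SD_{16})$ and $r(L) = 2$ are in hand, the rest is bookkeeping. Because $\binom{2}{m} = 0$ for $m \geqs 3$ and $D(P) \geqs 2$, the sum collapses to its $m = 2$ term, which is nonzero precisely when $D(P) = 2$, equivalently $t_2 \geqs 1$; since $G$ is base-two this must occur, giving $N = 2t_2$ and ${\rm val}(G) = 2^{4k+1}t_2$. Thus ${\rm val}(G)$ is a $2$-power if and only if $t_2$ is, and then integrality of $r(G) = 2t_2/|P|$ forces $|P|$, and hence $P$, to be a $2$-group. The converse direction is the same computation read backwards: assuming $L = {\rm M}_{10}$, $J = SD_{16}$, $P$ a $2$-group and $t_2 \geqs 1$ a $2$-power, one reads off ${\rm val}(G) = 2^{4k+1}t_2$, a power of $2$.
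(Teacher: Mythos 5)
Part (i) of your proposal is correct and is essentially the paper's own argument: since $D(P) \geqs 2$, every summand $m!\binom{r(L)}{m}t_m$ is even, so $|P|r(G)$ and hence ${\rm val}(G) = |J|^k|P|r(G)$ is even. Your observation that part (i) forces the prime in part (ii) to be $2$ is also a legitimate (and slightly slicker) way to get $p=2$ than the paper's citation of \cite{CH_val}.

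The forward direction of part (ii), however, has a genuine gap at the identification of $(L,J)$. Your Sylow reduction correctly shows that $J$ must be a maximal Sylow $2$-subgroup of the almost simple group $L$, but the assertion that a table inspection then leaves only $L = {\rm M}_{10}$, $J = SD_{16}$ is false, and Table \ref{tab:r(L)} cannot settle the question because it only records the groups with $r(L) \leqs 4$. Concretely, $L = {\rm L}_2(q)$ with $J = D_{q-1}$ and $q \geqs 17$ a Fermat prime is a base-two group in $\mathcal{S}$ whose point stabiliser is a maximal Sylow $2$-subgroup, with $r(L) = (q+7)/4 \geqs 6$, so it does not appear in Table \ref{tab:r(L)}; the Mersenne-prime families $({\rm L}_2(q), D_{q+1})$ and $({\rm PGL}_2(q), D_{2(q\pm1)})$, plus a handful of further sporadic cases from \cite[Table 3]{CH_val}, likewise survive your reduction. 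The paper eliminates all of these by extracting more arithmetic from the hypothesis that $\sum_{m\geqs D(P)} m!\binom{r(L)}{m}t_m$ is a $2$-power: this forces $D(P)=2$ (otherwise every summand is divisible by $3$) and then $r(L) \equiv 2 \imod{3}$ (since $\binom{r(L)}{2}$ must not be divisible by $3$), and one checks that $r(L) = 2^{m-2}+2$ and $2^{m-2}-1$ in the Fermat and Mersenne families are never $\equiv 2\imod{3}$, with the residual cases handled by computation; only ${\rm M}_{10}$ with $SD_{16}$ remains. Your bookkeeping after the identification (the sum collapsing to $2t_2$, ${\rm val}(G) = 2^{4k+1}t_2$, and $|P|$ dividing $2t_2$), and the converse direction, are both fine; but without the mod-$3$ analysis or some equivalent device, the classification step does not go through.
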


Note that ${\rm val}(G)$ is a prime power only if $|H| =|J|^k|P|$ is a prime power, which implies that both $J$ and $P$ are soluble. In particular, ${\rm val}(G)$ is a prime power only if $L \in \mathcal{S}$. Let us also observe that there are genuine examples in part (ii), where $P$ is a $2$-group and $t_2$ is a $2$-power. For example, if $P = C_8{:}(C_2 \times C_2)$ is the holomorph of the cyclic group $C_8$ in its natural action on $8$ points, then $t_2 = 16$. We refer the reader to Remark \ref{r:t2} for further comments. 

One of the main open problems concerning the Saxl graphs of base-two primitive groups is a conjecture of Burness and Giudici \cite[Conjecture 4.5]{BG_Saxl}. In order to state this, let $G \leqs {\rm Sym}(\O)$ be a finite primitive  group with $b(G)=2$. For $\a \in \Omega$, let
\[
\Sigma(\alpha) = \{ \gamma \in \Omega \,:\, \mbox{$\{\a,\gamma\}$ is a base for $G$} \}
\]
be the set of neighbours of $\a$ in $\Sigma(G)$ (note that $\Sigma(\alpha)$ is the union of the regular $G_\alpha$-orbits on $\Omega$). Then \cite[Conjecture 4.5]{BG_Saxl} asserts that 
\[
\mbox{\emph{$\Sigma(\alpha) \cap \Sigma(\beta)$ is non-empty for all $\a,\b \in \Omega$.}}
\]
That is, any two vertices in $\Sigma(G)$ have a common neighbour and thus the diameter of $\Sigma(G)$ is at most $2$. Some initial  evidence for the veracity of this conjecture is presented in Sections 4--6 in \cite{BG_Saxl}. In addition, it has been verified for all groups with socle ${\rm L}_2(q)$ and for all almost simple groups with soluble point stabilisers (see \cite{ChenDu2020Saxl} and \cite{BH_Saxl}, respectively). Some positive results for affine groups are given in \cite{LP}. 

By considering this conjecture for primitive wreath products, we deduce that it is equivalent to the following (a priori, stronger) statement.

\begin{conj}\label{conjecture:SBG}
Let $G \leqs {\rm Sym}(\Omega)$ be a finite primitive permutation group with $b(G) = 2$ and let $\a,\b \in \Omega$. Then $\Sigma(\alpha)$ meets every regular $G_\beta$-orbit on $\Omega$.
\end{conj}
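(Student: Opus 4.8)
The plan is to attack the conjecture directly by a counting argument localised to a single prescribed regular orbit, rather than through the standard fixed-point-ratio estimate, which only controls $\Sigma(\a)$ globally. Fix $\a,\b \in \Omega$ and let $R$ be a regular $G_\b$-orbit, so that $|R| = |G_\b| = |G_\a|$ since $G$ is transitive. As $\Sigma(\a)$ is exactly the set of $\delta \in \Omega$ with $G_\a \cap G_\delta = 1$, the target is to exhibit a single $\delta \in R$ with trivial joint stabiliser. First I would record the bound
\[
|R \setminus \Sigma(\a)| \leqs \sum_{1 \neq x \in G_\a} |R \cap \Fix(x)|
\]
and then try to force the right-hand side strictly below $|R| = |G_\a|$.

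The argument then splits along the O'Nan-Scott type of $G$. For product type groups $G = L \wr P$ — the focus of this paper — I would exploit the explicit description of regular suborbits underpinning Theorem \ref{thm:r(W)_cal}: a regular $G_\b$-orbit corresponds to a distinguishing partition of $P$ together with a choice of regular $L$-orbit in each block, and membership in $\Sigma(\a)$ can be tested coordinatewise on $\Gamma^k$. An element $x \in G_\a$ fixing a point of $R$ must then fix a point of $\Gamma$ in each of the $k$ coordinates simultaneously, which severely constrains its support; feeding the fixed-point ratios for the primitive factor $L$ into this constraint should bound each $|R \cap \Fix(x)|$ and, after summing over $x$, dominate the total by $|R|$. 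For the almost simple and diagonal base cases one would instead invoke the sharp fixed-point-ratio estimates already available in the literature, combined with the $b(G)=2$ hypothesis, to run the same count.

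The decisive difficulty — and the reason this statement is recorded as a conjecture rather than a theorem — lies in the passage from a global to a local estimate in the second step. The crude inequality $|R \cap \Fix(x)| \leqs |\Fix(x)|$ shows only that $\Sigma(\a)$ occupies most of $\Omega$; it does not exclude the possibility that every point of the one particular orbit $R$ happens to lie in some $\Fix(x)$ with $1 \neq x \in G_\a$. A genuine proof must rule out such a conspiracy uniformly, that is, show that the fixed-point sets of the nontrivial elements of $G_\a$ cannot jointly cover $R$, and I do not see how to secure this in full generality. The coordinatewise leverage available for product type groups makes that family the most promising testing ground, but even there the worst case — where $R$ is aligned with the fixed-point structure of a small-support element of $G_\a$ — is precisely where the analysis would have to be pushed hardest.
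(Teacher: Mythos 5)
You have correctly recognised that this statement is a conjecture: the paper does not prove it, and your proposal, as you candidly acknowledge, does not either. So the substantive questions are whether your plan of attack could be completed, and how it relates to what the paper actually does with the statement. On the first point, the gap you identify is exactly the fatal one. The union bound $|R \setminus \Sigma(\a)| \leqs \sum_{1 \neq x \in G_\a}|R \cap \Fix(x)|$ localises the standard fixed-point-ratio argument to a single regular $G_\b$-orbit $R$, but there is no general mechanism for bounding $|R \cap \Fix(x)|$ by anything better than $\min\{|R|,|\Fix(x)|\}$: since $|R| = |G_\b|$ is typically far smaller than $|\Omega|$, even sharp global bounds on $\sum_x |\Fix(x)|$ say nothing about how the fixed-point sets distribute across the $G_\b$-orbits, and the "conspiracy" in which they jointly cover one particular regular orbit is not excluded by any count of this shape. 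Moreover, the coordinatewise reduction you propose for $G = L \wr P$ would at best reduce the local covering problem for $G$ to the same (open) local covering problem for the primitive factor $L$, so the induction has no base.

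On the second point, the paper's treatment is entirely different, and it explains why the statement is recorded as a conjecture rather than attacked head-on. The paper does not attempt a direct proof; instead, Proposition \ref{p:equiv} shows that this conjecture is \emph{equivalent} to the a priori weaker Conjecture \ref{con:BG} of Burness and Giudici, which asserts only that any two vertices of $\Sigma(G)$ have a common neighbour. The key device is the one you gesture at, but used in the opposite direction: given any base-two primitive $L \leqs {\rm Sym}(\Gamma)$, form $G = L \wr S_{r(L)}$ in its product action on $\Gamma^{r(L)}$, which is primitive with $b(G)=2$ by Theorem \ref{thm:BC} since $D(S_{r(L)}) = r(L)$. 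By Lemma \ref{l:BG_D(P)} (whose proof rests on the same coordinatewise description from Lemma \ref{l:BG_l:2.8} that you invoke), if this wreath product has the weak common-neighbour property, then $\Sigma_L(\a)$ must meet at least $D(P) = r(L)$ regular $L_\b$-orbits, i.e.\ every one of them, so $L$ satisfies the strong property. Thus the weak conjecture for all primitive groups already implies the strong one for all primitive groups. The paper then supports the conjecture with Proposition \ref{p:SBG_PSL} (the action of ${\rm L}_2(q)$ on pairs of $1$-dimensional subspaces) and the computational verification described in Remark \ref{r:evidence}, rather than with any covering argument of the kind you outline.
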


In particular, this asserts that $|\Sigma(\alpha) \cap \Sigma(\b)| \geqs r(G)$ for all $\a,\b \in \O$. We will present some evidence for Conjecture \ref{conjecture:SBG} in Section \ref{ss:conn}. For example, we will show that the conclusion holds for the action of ${\rm L}_2(q)$ on the set of pairs of $1$-dimensional subspaces of the natural module $\mathbb{F}_q^2$ (see Proposition \ref{p:SBG_PSL}). 

In the final part of the paper, we consider general product type groups of the form $G \leqs L \wr P$. The analysis of bases in this setting is significantly more difficult and there are very few (if any) existing results in the literature that are tailored to this particular situation. As a starting point, we seek a partial extension of Theorem \ref{thm:wr_base-two} by determining the base-two groups in certain families of product type primitive groups of the form 
\begin{equation}\label{e:inc}
T \wr P < G < L \wr P
\end{equation}
with soluble point stabilisers. Here $P$ is soluble and $L$ is almost simple with soluble point stabilisers. In addition, we may assume $G$ induces $L$ on each copy of $\Gamma$ in the Cartesian product $\O = \Gamma^k$, so both $L$ and $P$ are uniquely determined by $G$ (see Remarks \ref{r:kov} and \ref{r:sol}). 

Our main results are Theorems \ref{thm:P=C_2_sol_gen} and \ref{thm:semiprimitive_sol_stab}. For instance, the following result in the special case  $k=b(L)=2$ is stated as Theorem \ref{thm:P=C_2_sol_gen} in Section \ref{s:gen}.

\begin{theorem}\label{t:new2}
Let $G$ be a product type primitive group as in \eqref{e:inc}, where $k=b(L)=2$ and $G$ has soluble point stabilisers. Then $b(G) \leqs 3$, with equality if and only if $|L \wr P :G|=2$ and one of the following holds, where $J$ is a point stabiliser in $L$:
\begin{itemize}\addtolength{\itemsep}{0.2\baselineskip}
\item[\rm (i)] $(L,J) = (\mathrm{M}_{10},5{:}4)$.
\item[\rm (ii)] $(L,J) = (\mathrm{J}_2.2,5^2{:}(4\times S_3))$.
\item[\rm (iii)] $L = {\rm PGU}_4(3)$ and $J$ is of type $\GU_1(3)\wr S_4$.
\end{itemize}
\end{theorem}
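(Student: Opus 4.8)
The plan is to combine a structural reduction with the criterion that a base of size two exists precisely when a point stabiliser has a regular orbit, finishing the bounded list of remaining cases by direct computation. Since $k=2$ forces $P=C_2$, write $W=L\wr C_2$ with base group $B=L\times L$ and $W=B\rtimes\langle\tau\rangle$, where $\tau$ interchanges the two coordinates of $\Omega=\Gamma^2$. Fix $\delta\in\Gamma$ with $L_\delta=J$, so that $W_{(\delta,\delta)}=J\wr C_2$ and the point stabiliser of $G$ is $H:=G_{(\delta,\delta)}=G\cap(J\wr C_2)$.

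First I would prove the upper bound and cut down the possibilities for $L$. As $G<W$, monotonicity of base size gives $b(G)\le b(W)$, and applying Theorem~\ref{thm:wr_base} to the full wreath product $W$ (whose stabiliser $J\wr C_2$ is soluble) yields $b(W)\le b(L)+1=3$; hence $b(G)\le 3$. If $b(G)=3$ then $b(W)=3$, so since $D(C_2)=2$ the criterion \eqref{e:bc} forces $\reg(L,2)=r(L)<2$, i.e. $r(L)=1$. Thus it remains to treat the finitely many $L\in\mathcal{S}$ with $b(L)=2$ and $r(L)=1$, namely the entries of Table~\ref{tab:r(L)} with $r(L)=1$ (equivalently the groups of \cite[Theorem~4]{BH_Saxl}). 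For each such $L$ there is a unique regular $J$-orbit $\Delta\subseteq\Gamma$, of size $|J|$, on which $J$ acts regularly.

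Next I would parametrise the intermediate groups and translate the problem. By Goursat's lemma together with the hypothesis that $G$ induces $L$ on each coordinate (Remark~\ref{r:kov}), the subgroup $G_0:=G\cap B$ has the form $\{(x,y)\in L\times L:\theta(xM)=yM\}$ for some $T\le M\trianglelefteq L$ and $\theta\in\Aut(L/M)$, and $G=G_0\langle g\rangle$ for a swap element $g=(a,b)\tau$; the conditions that $g$ normalise $G_0$ and that $g^2\in G_0$ constrain the triple $(M,\theta,g)$, and a direct count gives $|W:G|=|L:M|$. The central reduction is that $b(G)=2$ if and only if $H$ has a regular orbit on $\Omega$. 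On the ``regular square'' $\Delta\times\Delta$ every base-group point stabiliser is trivial, so regularity is obstructed only by swap elements: each point of $\Delta\times\Delta$ is fixed by exactly one swap element $(u,u^{-1})\tau$ of $J\wr C_2$, where $u\in J$ is determined by $\gamma_1=u\gamma_2$, and the point is $H$-regular precisely when $u$ lies outside $D_H:=\{u\in J:(u,u^{-1})\tau\in G\}$. Hence if $D_H\ne J$ then $b(G)=2$, and consequently $b(G)=3$ forces $D_H=J$; a short computation identifies $D_H$ with a coset in $L/M$ and shows that $D_H=J$ can occur only when $\theta$ is the inversion map of $L/M$ (in the abelian case relevant here) and the swap coset is correctly normalised.

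Finally I would determine $b(G)$ for the surviving configurations. For the triples $(M,\theta,g)$ with $D_H=J$, every point of $\Delta\times\Delta$ fails to be regular, but a base of size two may still be furnished by a regular point lying outside $\Delta\times\Delta$, where a nontrivial base-group stabiliser can appear. Deciding this is the crux, and I expect it to be the main obstacle: it is sensitive to the fine structure of $J$, of $J\cap T$, and of the pair stabilisers $L_{\delta,\gamma}$ with $\gamma\notin\Delta$, and does not seem to reduce to a uniform argument. I would settle it with the computational methods of Section~\ref{ss:comp}, testing each candidate group in turn. The expected outcome is that a regular point always exists, forcing $b(G)=2$, unless $|W:G|=2$ and $(L,J)$ is one of the cases (i)--(iii), in which case $H$ has no regular orbit and $b(G)=3$. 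Assembling the reduction of the second and third paragraphs with this case-check then yields the stated equivalence.
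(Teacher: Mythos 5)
Your upper bound and the reduction to $r(L)=1$ are correct and agree with the paper, and your analysis of the ``regular square'' $\Delta\times\Delta$ is sound as far as it goes: a point of $\Delta\times\Delta$ is $H$-regular precisely when the unique swap element fixing it lies outside $G$, so $D_H\ne J$ forces $b(G)=2$. The difficulty is what happens when $D_H=J$, and here your plan has a genuine gap. The entries of Table \ref{tab:r(L)} with $r(L)=1$ are \emph{not} finite in number: the family $L=\PGL_2(q)=T.2$ with $J$ of type $\GL_1(q)\wr S_2$ has $r(L)=1$ for every odd $q\geqs 7$, $q\ne 9$, and for each such $q$ the group $G=\langle T^2,(a,a),P\rangle$ is a legitimate instance of Hypothesis \ref{h:6} with $|L\wr P:G|=2$. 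Since $|L:M|=2$ here, inversion on $L/M$ is the identity and your criterion gives $D_H=J$, so every point of $\Delta\times\Delta$ fails to be regular and you are thrown onto the step you yourself identify as the crux --- finding a regular point outside $\Delta\times\Delta$ --- which you propose to settle by testing each candidate computationally. That cannot terminate on an infinite family, and the theorem asserts $b(G)=2$ for all of these groups, so they cannot simply be absorbed into the exceptional list.

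The paper closes exactly this hole with a uniform argument, Proposition \ref{prop:|L:T|_general}: using the invariant $\tau(G)$ from \eqref{e:tau}, one shows that if $\tau(G)=0$ (which is automatic when $|L:T|$ is prime and $G<L\wr P$) then $b(G)=2$ whenever $r(T)\geqs |L:T|+1$; the witness base is $\{(\a_0,\a_0),(\gamma_1,\gamma_2)\}$ with $\gamma_1$ in the regular $L_{\a_0}$-orbit and $\gamma_2$ in a regular $T_{\a_0}$-orbit lying outside it. Since $r(T)=(q+a)/4\geqs 3$ for $\PGL_2(q)$ (Remark \ref{r:LJ}), this disposes of the infinite family and of most other candidates without computation, leaving only $r(T)=|L:T|$ or $\tau(G)=1$; the former yields cases (i)--(ii), and the latter forces $|L:T|\geqs 4$ composite and reduces to four pairs $(L,J)$, of which only $\PGU_4(3)$ with the index-two subgroup survives (by a short hand argument exploiting that all involutions of $\PGU_4(3)$ lie in $T{:}\langle a^2\rangle$). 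You would need to supply some such uniform criterion --- for instance, a proof that when $D_H=J$ but $r(T)>|L:T|$ a regular point exists off $\Delta\times\Delta$ --- before the remaining case-check becomes finite; without it the argument is incomplete.
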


Further observations on the base-two problem for general product type primitive groups are presented at the end of Section \ref{s:gen}.

\subsection*{Notation} Let $G$ be a finite group and let $n$ be a positive integer. We will write $C_n$, or just $n$, for a cyclic group of order $n$ and $G^n$ will denote the direct product of $n$ copies of $G$. An unspecified extension of $G$ by a group $H$ will be denoted by $G.H$; if the extension splits then we may write $G{:}H$. We use $[n]$ for an unspecified soluble group of order $n$ (in addition, we will sometimes write $[n]$ to denote the set $\{1, \ldots, n\}$, but this should not cause any confusion). We adopt the standard notation for simple groups of Lie type from \cite{KL}. All logarithms in this paper are in base $2$.

\subsection*{Organisation} We begin in Section \ref{s:prel} by recording some preliminary results that will be needed in the proofs of our main results, together with some details of the computational methods that we will use in this paper. This includes work of Seress \cite{S_dist, S_sol} on the distinguishing number of permutation groups, which we will combine with a key result of Bailey and Cameron \cite{BC} on bases for product type groups (see Theorem \ref{thm:BC}). Next in Section \ref{s:sol} we focus on the product type groups of the form $G = L \wr P$ with soluble point stabilisers and we will prove Theorems \ref{thm:reg(L,b(L))}, \ref{thm:wr_base-two} and \ref{thm:wr_base}. Our main results on regular suborbits are presented in Section \ref{s:reg} and we investigate applications involving the Saxl graphs of product type groups in Section \ref{s:Saxl}. Finally, in Section \ref{s:gen} we consider general product type groups with $G \leqs L \wr P$, focussing on the case where $G$ contains $P$. The relevant tables referred to in the statement of Theorem  \ref{thm:reg(L,b(L))} are presented in Section \ref{s:tab}.

\subsection*{Acknowledgements} The second author is supported by the China Scholarship Council for his doctoral studies at the University of Bristol.

\section{Preliminaries}\label{s:prel}

In this section we record some preliminary results, which will be needed in the proofs of our main theorems. Throughout this section, $G\leqs\mathrm{Sym}(\Omega)$ denotes a finite transitive permutation group of degree $n$ with point stabiliser $H$. Let $b(G)$ be the base size of $G$.

\subsection{Distinguishing number}\label{ss:distinguishing}

Let $\Pi = \{\pi_1,\dots,\pi_m\}$ be a partition of $\O$ into $m$ non-empty parts. Then $\Pi$ is called a \textit{distinguishing partition} for $G$ if its stabiliser $\bigcap_{i = 1}^mG_{\{\pi_i\}}$ is trivial, where $G_{\{\pi_i\}}$ is the setwise stabiliser of $\pi_i$ in $G$. The \textit{distinguishing number} of $G$, denoted $D(G)$, is defined to be the smallest number $m$ such that there exists a distinguishing partition for $G$ with $m$ parts. For example, $D(G)=1$ if and only if $G$ is trivial, whereas $D(G) \leqs 2$ if and only if $G$ has a regular orbit on the power set of $\Omega$. At the other end of the spectrum, $S_n$ and $A_n$ have distinguishing numbers $n$ and $n-1$, respectively. 

We will need the following theorem due to Seress \cite[Theorem 1.2]{S_sol}, which gives a best possible upper bound on $D(G)$ when $G$ is soluble. 

\begin{thm}\label{t:sol_dist}
If $G$ is soluble, then $D(G)\leqs 5$.
\end{thm}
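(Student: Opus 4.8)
The plan is to establish the stronger statement that a transitive soluble group $G\leqslant\mathrm{Sym}(\Omega)$ of degree $n\geqslant 2$ not only admits a distinguishing partition into at most $5$ parts, but in fact has at least $5$ regular orbits on the set of $5$-colourings of $\Omega$, and to prove this by induction on $n$. The reformulation I would use throughout is that an (ordered) colouring $c\colon\Omega\to\{1,\dots,m\}$ has trivial stabiliser in $G$ precisely when it lies in a regular $G$-orbit, and that the partition of $\Omega$ into colour classes is then distinguishing with at most $m$ parts; hence $D(G)\leqslant m$ as soon as $G$ has a regular orbit on $m$-colourings. Writing $c(g)$ for the number of cycles of $g$ on $\Omega$, the number of colourings fixed by $g$ is $m^{c(g)}$, so the number of regular orbits is at least $(m^{n}-\sum_{1\neq g\in G}m^{c(g)})/|G|$; the whole argument reduces to showing this quantity is at least $5$ when $m=5$.

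For the base case I would take $G$ primitive (and soluble), so that $G=V{:}H$ is affine with $H\leqslant\mathrm{GL}(V)$ and $\Omega=V$. Here I can invoke Seress's base-size bound $b(G)\leqslant 4$ for primitive soluble groups: if $\{v_1,\dots,v_b\}$ is a base, then the partition $\{v_1\},\dots,\{v_b\},\Omega\setminus\{v_1,\dots,v_b\}$ has trivial stabiliser, giving $D(G)\leqslant b+1\leqslant 5$ immediately. To upgrade this to the required lower bound of $5$ regular orbits I would feed the affine structure into the counting estimate above, bounding $\sum_{g\neq 1}m^{c(g)}$ by controlling the dimensions of the fixed spaces of elements of $H$ on $V$ (these are exactly the quantities that power Seress's base-size analysis).

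For the inductive step I would take $G$ transitive but imprimitive, fix a maximal block system with $b$ blocks of size $s$ (so $1<b,s<n$ and $G\leqslant L\wr\bar G$ acting imprimitively), where $\bar G\leqslant S_{b}$ is the primitive soluble group induced on the blocks and $L$ is the transitive soluble group induced on a block. I would build a $5$-colouring of $\Omega$ in two layers: first colour inside each block so that its restriction represents a regular $L$-orbit, using distinct regular $L$-orbits as ``super-colours''; then choose which super-colour each block receives so that the resulting block-pattern is a distinguishing colouring for $\bar G$. The within-block choice kills the kernel of $G\to\bar G$ and the block-pattern choice kills the action on blocks, so the colouring is distinguishing. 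This succeeds provided the number of regular $L$-orbits on $5$-colourings is at least $D(\bar G)$; both the inequality $R(L,5)\geqslant 5$ and $D(\bar G)\leqslant 5$ hold by the inductive hypothesis applied to the smaller-degree groups $L$ and $\bar G$. A final application of the counting estimate at the top level promotes the existence of one distinguishing colouring to the required bound $R(G,5)\geqslant 5$.

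The main obstacle is the strengthening from ``one distinguishing colouring'' to ``at least five regular orbits'': a single regular orbit does not survive the induction, because the block layer consumes up to $D(\bar G)$ super-colours, so one must propagate a genuine supply of regular orbits through every level. Making the counting estimate $m^{n}-\sum_{1\neq g}m^{c(g)}\geqslant 5|G|$ hold with the small constant $m=5$ forces a careful, case-by-case treatment of the groups of small degree (and the small primitive affine groups in the base case), and it is precisely these extremal configurations that show the bound $D(G)\leqslant 5$ cannot be lowered.
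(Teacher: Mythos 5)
First, note that the paper does not prove this statement at all: it is quoted directly from Seress \cite[Theorem 1.2]{S_sol}, so there is no internal proof to match your sketch against. Your overall architecture -- reduce to a maximal block system, use Seress's bound $b \leqslant 4$ for the primitive (hence affine) soluble case, and propagate a strengthened hypothesis of the form ``at least $5$ regular orbits on $5$-colourings'' through the induction -- is the right shape, and indeed the strengthened statement is true and tight (the group $S_4$ on $4$ points has exactly $5$ regular orbits on $5$-colourings, and iterated wreath products $S_4 \wr S_4 \wr \cdots$ keep this value at exactly $5$).

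The genuine gap is the counting tool you rely on. The union bound $\bigl(5^{n}-\sum_{1\neq g}5^{c(g)}\bigr)/|G| \geqslant 5$ is structurally incapable of carrying the inductive step: if the block group $L$ contains an element with a short support (e.g.\ $L \cong S_4$ on a block of size $4$ contains transpositions), then the $6b$ elements of the base group that act as a transposition in one block and trivially elsewhere each fix $5^{\,n-1}$ colourings, so $\sum_{1\neq g}5^{c(g)} \geqslant 6b\cdot 5^{\,n-1} > 5^{n}$ and your estimate is vacuous -- it even fails in the base case $G=S_4$, where the six transpositions alone contribute $750 > 625 = 5^4$. What is actually needed in the inductive step is the exact count of regular orbits of the imprimitive wreath product on colourings (the imprimitive analogue of Theorem \ref{thm:r(W)_cal}/Lemma \ref{l:BG_l:2.8} of this paper): with $r = R(L,5)$ this gives $R(G,5) \geqslant \frac{1}{|\bar G|}\sum_{j\geqslant D(\bar G)} j!\binom{r}{j}t_j(\bar G)$. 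Even granting this formula, your induction does not close as written: bounding the sum by its leading term yields only $\binom{5}{D(\bar G)}$, which is $\geqslant 5$ precisely when $D(\bar G)\leqslant 4$; the base-size argument gives only $D(\bar G)\leqslant 5$ for primitive soluble $\bar G$, and the sharper fact $D(\bar G)\leqslant 4$ is an additional nontrivial input (it is part of Theorem \ref{t:prim} together with an inspection of which symmetric and alternating groups are soluble). Alternatively, one can avoid that input entirely by observing that $\binom{r}{j}\geqslant\binom{5}{j}$ for $r\geqslant 5$ gives $R(G,5)\geqslant R(\bar G,5)\geqslant 5$ by the inductive hypothesis applied to the primitive group $\bar G$ of smaller degree -- but some such replacement for your ``final application of the counting estimate'' must be supplied, since as stated that step would fail.
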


\begin{rem}\label{r:ser}
It is worth noting that for each $d \in \{2,3,4,5\}$, there are infinitely many soluble transitive permutation groups $G$ with $D(G)=d$. For example, if $t \in \{2,3,4\}$ and $m \geqs 2$, then $D(G) = t+1$ for the natural action of $G = S_t \wr C_m$ of degree $tm$ (see \cite[p.244]{S_sol}). And by Theorem \ref{t:prim} below, there are infinitely many soluble primitive groups $G$ with $D(G)=2$. 
\end{rem}

The following result on the distinguishing number of primitive groups, which is also due to Seress \cite[Theorem 2]{S_dist}, will be useful later.

\begin{thm}\label{t:prim}
Let $G$ be a primitive group of degree $n$. Then either $D(G)=2$ or one of the following holds:
\begin{itemize}\addtolength{\itemsep}{0.2\baselineskip}
		\item[{\rm (i)}] $G = S_n$ or $A_n$.
		\item[{\rm (ii)}] $G$ is one of $43$ groups listed in \cite[Theorem 2]{S_dist}, each with $n \leqs 32$ and $D(G) \in \{3,4\}$. 
	\end{itemize}
\end{thm}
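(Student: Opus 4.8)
The plan is to recast the statement probabilistically and reduce it, for all but finitely many degrees, to two standard quantitative inputs on primitive groups, leaving a bounded range of degrees to a direct computation. Since a primitive group of degree $n \geqslant 2$ is nontrivial, the observation recorded in the text shows that $D(G) = 2$ is equivalent to the existence of a subset $S \subseteq \{1,\dots,n\}$ with trivial setwise stabiliser, i.e.\ to a regular orbit of $G$ on the power set. A subset is fixed setwise by $g$ exactly when it is a union of cycles of $g$, so $g$ fixes precisely $2^{c(g)}$ subsets, where $c(g)$ is the number of cycles of $g$ on $\{1,\dots,n\}$ (fixed points included). A union bound then shows that the number of subsets with nontrivial setwise stabiliser is at most $\sum_{1 \neq g \in G} 2^{c(g)}$, so $D(G) = 2$ is guaranteed whenever
\[
\sum_{1\neq g \in G} 2^{\,c(g) - n} < 1 .
\]
All of the substance lies in verifying this inequality for primitive $G \neq A_n, S_n$ once $n$ is large.

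The next step is to convert the cycle count into the minimal degree $m = m(G)$, the least number of points moved by a nonidentity element. As every nontrivial cycle of $g$ moves at least two points, $c(g) \leqslant n - \tfrac{1}{2}\,\mathrm{supp}(g) \leqslant n - \tfrac{1}{2}m$, whence the sum above is at most $(|G|-1)2^{-m/2}$. Thus $D(G) = 2$ as soon as $|G| \leqslant 2^{m/2}$. The proof then rests on two facts, both ultimately relying on the classification of finite simple groups: an upper bound on $|G|$ (for example Mar\'oti's bound $|G| < n^{1+\lfloor \log_2 n\rfloor} = 2^{O((\log n)^2)}$ away from an explicit family), and a lower bound on the minimal degree (Babai's $m(G) > (\sqrt{n}-1)/2$ for uniprimitive groups, sharpened to $m(G) \geqslant cn$ for most O'Nan--Scott types). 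For the generic groups these combine to give $|G| = 2^{O((\log n)^2)} \leqslant 2^{m/2}$ once $n$ exceeds an explicit constant, yielding $D(G) = 2$.

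The main obstacle is the collection of families for which $m(G)$ is too small for the crude estimate $|G| \leqslant 2^{m/2}$ to apply, namely certain affine groups and the product-action groups $G \leqslant S_a \wr S_r$ of degree $n = a^r$ with large socle, where elements of small support genuinely occur. Here I would not bound every $g$ by the worst case but instead sum the displayed quantity class by class, weighting each class by its true cycle number and invoking fixed-point-ratio estimates (Liebeck--Shalev, Guralnick--Magaard) to control how many elements can have many cycles. The structure of the wreath/product action helps decisively: an element moving few points must act nontrivially on few coordinates, which makes the surviving contributions geometric and forces the total below $1$ for $n$ large. The affine case is handled analogously, using that a nonidentity linear map fixes a proper subspace and hence has support bounded below in terms of $n$.

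Finally, the estimates above are only asymptotic, so for degrees below the explicit threshold they furnish I would pass to a direct computation: running through the primitive groups of each such degree and testing for a regular orbit on subsets isolates the exact finite list. This is where the $43$ exceptional groups (all with $n \leqslant 32$ and $D(G) \in \{3,4\}$) emerge, alongside the two infinite families $A_n$ and $S_n$, whose distinguishing numbers $n-1$ and $n$ are immediate. I expect the delicate part to be making the cutoff effective and dispatching the small-minimal-degree product-action and affine families, where worst-case support is useless and one must argue with the genuine cycle structure of each conjugacy class.
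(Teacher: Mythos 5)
This statement is not proved in the paper at all: it is quoted verbatim from Seress \cite[Theorem 2]{S_dist}, so the only fair comparison is with the method of that reference (which the paper itself replays in miniature when it proves its refinement, Proposition \ref{p:ddagger}, via Lemma \ref{l:mindeg}). Your outline is essentially that method: the union bound $\sum_{1\ne g}2^{c(g)}<2^n$, the conversion $c(g)\leqs n-\tfrac12\mathrm{supp}(g)$ giving the criterion $|G|\leqs 2^{\mu(G)/2}$, order bounds plus minimal-degree bounds to dispose of generic primitive groups, and a finite computation producing the $43$ exceptions. Two small points of divergence are worth flagging. First, the literature (and Lemma \ref{l:mindeg} here) restricts the union bound to elements of \emph{prime} order, a free improvement you should adopt. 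Second, for the genuinely hard families --- $S_m$ or $A_m$ on $t$-subsets and partitions, and product-action groups $S_a\wr S_r$ --- Cameron--Neumann--Saxl and Seress do not salvage the counting argument with fixed-point-ratio machinery as you propose; they simply \emph{exhibit} a subset with trivial setwise stabiliser (this is exactly what the paper quotes in Lemmas \ref{l:prim_nr} and \ref{l:as}). Your class-by-class weighted count can be pushed through there (an element of small support on points still has support proportional to $n$ on $t$-subsets or on $\Gamma^r$, and the number of elements of given support is controlled), but it is more labour than the explicit constructions, and the cited Liebeck--Shalev fixed-point-ratio bounds are stated for almost simple groups and are not really the right tool for $S_a\wr S_r$; elementary cycle-structure estimates suffice. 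With those adjustments the plan is sound and matches the known proof.
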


The precise distinguishing numbers of the groups arising in case (ii) of Theorem \ref{t:prim} were determined by Dolfi (see \cite[Lemma 1]{D_dist}).

\subsection{Product type groups}\label{ss:prod}

Let $L \leqs {\rm Sym}(\Gamma)$ be a finite primitive group with socle $T$ and point stabiliser $J$, which is either almost simple or diagonal type (see Table \ref{tab:prim}). Let $k \geqs 2$ be an integer and consider the product action of $L \wr S_k$ on the Cartesian product $\Omega = \Gamma_1 \times \cdots \times \Gamma_k = \Gamma^k$:
\[
(\gamma_1, \ldots, \gamma_k)^{(z_1,\ldots, z_k)\sigma} = \left(\gamma_{1^{\s^{-1}}}^{z_{1^{\s^{-1}}}}, \ldots, \gamma_{k^{\s^{-1}}}^{z_{k^{\s^{-1}}}}\right),
\]
where $\gamma_i \in \Gamma$, $z_i \in L$ and $\s \in S_k$. Since this action is faithful and primitive, we can view $L \wr S_k$ as a product type primitive  group on $\Omega$, with socle $T^k$ and point stabiliser $J \wr S_k$. More generally, a subgroup $G \leqs L \wr S_k$ is a primitive group of product type if $G$ has socle $T^k$ and the subgroup $P \leqs S_k$ induced by the conjugation action of $G$ on the set of factors of $T^k$ is transitive. Therefore
\begin{equation}\label{e:prod}
T^k \normeq G \leqs L \wr P.
\end{equation}

\begin{rem}\label{r:kov}
Let $G_1 = \{(z_1, \ldots, z_k)\s \in G \,:\, 1^{\s}=1\}$ and let $L_1 \leqs L \leqs {\rm Sym}(\Gamma_1)$ be the group induced by $G_1$ on $\Gamma_1$. Then by a theorem of Kov\'{a}cs \cite[(2.2)]{Kov}, we may replace $G$ by a conjugate $G^x$ for some $x \in \prod_{i=1}^k{\rm Sym}(\Gamma_i) < {\rm Sym}(\Omega)$ so that  $G \leqs L_1 \wr P$ and $G$ induces $L_1$ on each factor $\Gamma_i$ of $\Omega$. Since $L_1 \leqs {\rm Sym}(\Gamma_1)$ is primitive, we are free to assume that $L_1 = L$, so 
\eqref{e:prod} holds and the groups $L$ and $P$ are uniquely determined by $G$. This observation will be relevant when we consider general product type groups with $G < L \wr P$ in Section \ref{s:gen}. 
\end{rem}

\begin{rem}\label{r:sol}
Consider the special case where $G \leqs L \wr P$ is a product type primitive group with soluble point stabiliser $H = G_{\a}$, where $\a = (\gamma, \ldots, \gamma) \in \O$ for some $\gamma \in \Gamma$. By the transitivity of the socle $T^k$ on $\O$, we have $G = T^kH$ and thus $H$ induces $P$ on the set of factors of $T^k$. Therefore, $P$ is soluble. In addition, we have $(T_{\gamma})^k = (T^k)_{\a} \leqs H$, so $T_{\gamma}$ is soluble and thus $L$ is almost simple (indeed, if $L$ is a diagonal type group with socle $T=S^m$ for some non-abelian simple group $S$, then $T_{\gamma} \cong S$ is insoluble). In particular, $L/T \leqs {\rm Out}(T)$ is soluble. Now the primitivity of $L$ implies that $L = TL_{\gamma}$, so $L_{\gamma}/T_{\gamma} \cong L/T$ is soluble and we conclude that $L_{\gamma}$ is soluble. 
\end{rem}

Given a positive integer $m$, let ${\rm reg}(L,m)$ be the number of regular $L$-orbits with respect to the natural coordinatewise action of $L$ on the Cartesian product $\Gamma^m$. Note that $\reg(L,2) = r(L)$ is the number of regular suborbits of $L$ on $\Gamma$.

As explained in the introduction, in this paper we are primarily interested in product type primitive groups of the form $G = L \wr P$ as above. In this setting, the following theorem of Bailey and Cameron (see \cite[Theorem 2.13]{BC}) will be an essential tool. 

\begin{thm}\label{thm:BC}
Let $G = L \wr P$ be a product type primitive group. Then $b(G) \leqs m$ if and only if ${\rm reg}(L,m) \geqs D(P)$. In particular, $b(G) = 2$ if and only if $r(L) \geqs D(P)$.
\end{thm}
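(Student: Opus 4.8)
The plan is to identify a set of $m$ points of $\O = \Gamma^k$ with an $m \times k$ array over $\Gamma$, whose rows are the chosen points $\a_1, \dots, \a_m$ and whose $l$-th column is the tuple $c_l = (\a_{1,l}, \dots, \a_{m,l}) \in \Gamma^m$. First I would unwind the product action to express the pointwise stabiliser of these $m$ points purely in terms of the columns. Writing a general element of $G = L \wr P$ as $g = (z_1, \dots, z_k)\s$ with $z_l \in L$ and $\s \in P$, a direct index computation shows that $g$ fixes all of $\a_1, \dots, \a_m$ if and only if $c_l^{z_l} = c_{l^\s}$ for every $l$, where $L$ acts coordinatewise on $\Gamma^m$. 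Thus the pointwise stabiliser is exactly the set of $(z_1,\dots,z_k)\s$ satisfying this system of column equations, and the whole argument reduces to analysing when this set is trivial.

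Next I would extract the key dichotomy. If some column $c_l$ is not regular, so $L_{c_l} \neq 1$, then taking $\s = 1$, $z_l \in L_{c_l}$ nontrivial and all other $z_j = 1$ yields a nontrivial element of the stabiliser; hence a base of size $m$ forces every column to be regular. Conversely, assume all columns are regular. Then for each $\s \in P$ with $c_l$ and $c_{l^\s}$ in a common $L$-orbit for all $l$, regularity makes the $z_l$ satisfying $c_l^{z_l} = c_{l^\s}$ unique, and $\s = 1$ forces every $z_l = 1$; so the projection $g \mapsto \s$ identifies the stabiliser with the subgroup of $P$ preserving the partition $\Pi$ of $\{1,\dots,k\}$ in which $l$ and $l'$ lie in a common part precisely when $c_l$ and $c_{l'}$ lie in a common regular $L$-orbit. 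Therefore the $m$ points form a base if and only if every column is regular and $\Pi$ is a distinguishing partition for $P$.

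Finally I would convert this into the statement about $\reg(L,m)$ and $D(P)$. Since the columns may be chosen freely, any partition of $\{1,\dots,k\}$ into $t$ parts is realised as such a $\Pi$ with all columns regular exactly when $t \leqs \reg(L,m)$: one assigns $t$ distinct regular $L$-orbits to the $t$ parts, and repetition of points across columns is harmless since base size depends only on the underlying subset. Hence a base of size $m$ exists if and only if some distinguishing partition for $P$ has at most $\reg(L,m)$ parts, that is $D(P) \leqs \reg(L,m)$, giving $b(G) \leqs m$ if and only if $\reg(L,m) \geqs D(P)$. The "in particular" clause then follows on taking $m = 2$, using $\reg(L,2) = r(L)$ and the fact that $b(G) \geqs 2$ as $G$ is primitive of degree greater than $1$. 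I expect the main obstacle to be purely organisational: getting the index bookkeeping in the product action right so that the stabiliser condition is the clean system $c_l^{z_l} = c_{l^\s}$, and handling the minor point that the realising columns need not yield distinct rows, which is resolved by passing to the subset they span.
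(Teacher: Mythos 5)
Your argument is correct and is essentially the standard Bailey--Cameron construction: the paper itself only quotes this theorem from \cite[Theorem 2.13]{BC}, but its proof of Lemma \ref{l:BG_l:2.8} (the $m=2$ instance) uses exactly your array/column dichotomy and the induced orbit partition, just with the array transposed ($k\times 2$ rather than $m\times k$). Your handling of the two side issues --- uniqueness of the $z_l$ via regularity of the columns, and repeated rows being absorbed into the underlying subset --- is also sound, so nothing is missing.
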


\subsection{Computational methods}\label{ss:comp}

To conclude this preliminary section, we briefly describe some of the main computational methods that we will use in this paper, working with {\sc Magma} \cite{Magma} (version V2.26-6). Here we focus on the computations arising in the proof of Theorem \ref{thm:reg(L,b(L))} in Section \ref{s:sol}, so let $L \leqs {\rm Sym}(\Gamma)$ be a finite primitive almost simple permutation  group with socle $T$ and point stabiliser $J$. Note that the primitivity of $L$ implies that $T$ acts transitively on $\Gamma$.

\subsubsection{Regular orbits on $\Gamma^2$}\label{sss:reg1}

First assume $b(L)=2$ and let $r(L)$ (respectively, $r(T)$) be the number of regular suborbits of $L$ (respectively, $T$) on $\Gamma$. As explained in \cite[Section 2.2]{BH_Saxl}, we can compute $r(L)$ by working with double cosets. Indeed, if $R$ is a complete set of $(J,J)$ double coset representatives in $L$, then  
\[
r(L) = |\{ x \in R \,:\, |JxJ| = |J|^2\}|.
\]
Similarly, if $R_0$ is a complete set of $(J_0,J)$ double coset representatives, where $J_0 = J \cap T$, then 
\[
r(T) = |\{ x \in R_0 \,:\, |J_0xJ| = |J_0||J|\}|.
\]

It is straightforward to compute these numbers using {\sc Magma}. We first construct $L$ as a permutation group (not necessarily with respect to the action on $\Gamma$) and we then construct $J$ as a maximal subgroup. To do this, we will typically use the functions 
\[
\mbox{\texttt{AutomorphismGroupSimpleGroup} and \texttt{MaximalSubgroups}.}
\] 
The latter returns a set of representatives of the conjugacy classes of maximal subgroups of $L$, but it is not effective in a handful of cases we need to consider. To get around this difficulty, we can exploit the fact that $J = N_L(K)$ for a suitable $p$-subgroup $K$ of $T$ (for instance, see \cite[Example 2.4]{B_sol}). If $|\Gamma| = |L:J|$ is not prohibitively large (for example, if $|\Gamma|< 10^7$), then we can use the function \texttt{DoubleCosetRepresentatives} to determine $R$ and $R_0$, which then allows us to compute $r(L)$ and $r(T)$. 

\subsubsection{Regular orbits on $\Gamma^3$ and $\Gamma^4$}\label{sss:reg2}

In the proof of Theorem \ref{thm:reg(L,b(L))} we will also need to compute ${\rm reg}(L,b(L))$ in a number of cases with $b(L) =3$ or $4$. Fix $\gamma \in \Gamma$ and set $J = L_{\gamma}$. Decompose $\Gamma = \Lambda_1 \cup \cdots \cup \Lambda_t$ as a disjoint union of $J$-orbits and fix $\l_i  \in \Lambda_i$ and $K_i = J_{\l_i}$. 

Suppose $b(L)=3$. Then every regular $L$-orbit on $\Gamma^3$ is represented by an element of the form $(\gamma, \lambda_i, \beta)$, where $\beta$ is contained in a regular orbit of $K_i$ on $\Gamma$. Therefore, 
\begin{equation}\label{e:reg3}
{\rm reg}(L,3) = \sum_{i=1}^t r_{i},
\end{equation}
where $r_{i}$ is the number of regular orbits of $K_i$ on $\Gamma$. 

Now assume $b(L)=4$. Fix $i \in \{1, \ldots, t\}$. Let $\Lambda_{i,1}, \ldots, \Lambda_{i,m_i}$ be the orbits of $K_i$ on $\Gamma$ and for each $j \in \{1, \ldots, m_i\}$ set $K_{i,j} = (K_i)_{\omega_j} = J_{\l_i} \cap J_{\omega_j}$ for some fixed $\omega_j \in \Lambda_{i,j}$. Notice that every regular $L$-orbit on $\Gamma^4$ is represented by an element of the form $(\gamma, \lambda_i, \omega_j, \beta)$, where $\beta$ is in a regular orbit of $K_{i,j}$ on $\Gamma$. Therefore, if $r_{i,j}$ denotes the number of regular orbits of $K_{i,j}$ on $\Gamma$, then
\begin{equation}\label{e:reg4}
{\rm reg}(L,4) = \sum_{i=1}^{t}\sum_{j=1}^{m_i} r_{i,j}.
\end{equation}

Once again, we can implement this approach in {\sc Magma} in order to compute ${\rm reg}(L,b(L))$. As before, we first construct $L$ and $J$, and then we work with the function \texttt{CosetAction} to construct $L$ as a permutation group on $\Gamma$, which then allows us to construct stabilisers and their orbits. This can be expensive (in terms of time and memory) if $|\Gamma|$ is large, but in the proof of Theorem \ref{thm:reg(L,b(L))}, we only need to apply this method when $|\Gamma| < 5 \times 10^6$ and no special difficulties arise.

\section{Soluble stabilisers}\label{s:sol}

In this section, we assume $G = L\wr P$ is a product type primitive group on $\O = \Gamma^k$, with socle $T^k$ and soluble point stabiliser $J \wr P$. In particular, $P$ is soluble and $L \leqs {\rm Sym}(\Gamma)$ is almost simple with socle $T$ and soluble point stabiliser $J$ (so we have $L \in \mathcal{S}$ in terms of our notation in Section \ref{s:intro}). We will prove Theorems \ref{thm:reg(L,b(L))}, \ref{thm:wr_base-two} and \ref{thm:wr_base}.

We begin with a useful observation in this setting.

\begin{lem}
	\label{l:plus1}
	Let $G = L\wr P$ be a product type primitive group with soluble point stabilisers. Then either
	 \begin{itemize}\addtolength{\itemsep}{0.2\baselineskip}
\item[{\rm (i)}] $\reg(L,b(L))\geqs D(P)$ and $b(G) = b(L)$; or
\item[{\rm (ii)}] $\reg(L,b(L)) < D(P)$ and $b(G) = b(L)+1$.
\end{itemize}
\end{lem}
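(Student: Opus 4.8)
The plan is to use the Bailey--Cameron criterion (Theorem \ref{thm:BC}) as the main engine, reducing everything to comparing $\reg(L,m)$ with $D(P)$ for $m\in\{b(L),b(L)+1\}$. Write $b=b(L)$ throughout. The two alternatives in the statement are mutually exclusive and exhaustive according to whether $\reg(L,b)\geqs D(P)$ or $\reg(L,b)<D(P)$, so the real task is to pin down $b(G)$ in each case and, crucially, to rule out $b(G)\geqs b+2$.

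First I would record the trivial lower bound $b(G)\geqs b(L)$: any base for $G=L\wr P$ restricts, via the projection to the first coordinate and the action of the top copy $L^{(1)}=\{(z,1,\dots,1):z\in L\}\leqs G$, to a base for $L$ on $\Gamma$, so a base for $G$ cannot have fewer than $b(L)$ points. Given this, case (i) is immediate: if $\reg(L,b)\geqs D(P)$ then Theorem \ref{thm:BC} (applied with $m=b$) gives $b(G)\leqs b$, whence $b(G)=b(L)$. Likewise, in case (ii) the hypothesis $\reg(L,b)<D(P)$ forces $b(G)>b$ by the same theorem, so $b(G)\geqs b(L)+1$.

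The heart of the argument is therefore the uniform upper bound $b(G)\leqs b(L)+1$, which I would obtain by showing $\reg(L,b+1)\geqs D(P)$ and invoking Theorem \ref{thm:BC} once more (with $m=b+1$). To bound $\reg(L,b+1)$ from below, fix a base $\beta=(\gamma_1,\dots,\gamma_b)\in\Gamma^b$ for $L$, so that $L_\beta=1$. For each $\delta\in\Gamma$ the extended tuple $(\gamma_1,\dots,\gamma_b,\delta)\in\Gamma^{b+1}$ still has trivial stabiliser, and two such tuples with $\delta\neq\delta'$ lie in distinct $L$-orbits, since any $g$ mapping one to the other would fix $\beta$ and hence be trivial. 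This produces $|\Gamma|$ pairwise inequivalent regular points, giving $\reg(L,b+1)\geqs|\Gamma|$. Since $L$ is almost simple with non-abelian simple socle $T$, the faithful transitive action $T\leqs L\leqs\mathrm{Sym}(\Gamma)$ forces $|\Gamma|\geqs 5$ (no non-abelian simple group embeds in $S_4$). On the other hand, $P$ is soluble by Remark \ref{r:sol}, so Seress's bound (Theorem \ref{t:sol_dist}) yields $D(P)\leqs 5$. Combining, $\reg(L,b+1)\geqs|\Gamma|\geqs 5\geqs D(P)$, as required.

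The main obstacle --- and the only genuinely non-formal ingredient --- is this lower bound $\reg(L,b+1)\geqs D(P)$; everything else is bookkeeping with Theorem \ref{thm:BC}. What makes it work is the exact matching of two independent constraints: the minimal almost simple degree is $5$, and Seress's solubility bound on $D(P)$ is also $5$. I would emphasise that the coordinate-extension argument is precisely the mechanism that upgrades ``a single base exists'' ($\reg(L,b)\geqs 1$) into ``many regular orbits appear one level up'', and it is this that prevents $b(G)$ from ever exceeding $b(L)+1$.
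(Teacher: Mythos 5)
Your proof is correct and follows essentially the same route as the paper: both arguments reduce everything to Theorem \ref{thm:BC} together with the numerical coincidence that $D(P)\leqs 5$ (Seress's bound, via the solubility of $P$) while $|\Gamma|\geqs 5$ (since $L$ is almost simple). The only difference is that where the paper quotes \cite[Corollary 2.14]{BC} for the upper bound $b(G)\leqs b(L)+1$, you prove the required inequality $\reg(L,b(L)+1)\geqs|\Gamma|\geqs D(P)$ directly by the coordinate-extension argument, which is a valid and self-contained substitute for that citation.
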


\begin{proof}
First recall that $b(G)\geqs b(L)$, which shows that (i) follows from Theorem \ref{thm:BC}. Now assume $\reg(L,b(L)) < D(P)$, so $b(G) \geqs b(L)+1$. By \cite[Corollary 2.14]{BC}, we have 
\[
b(G)\leqs b(L)+\ceil{\log_m D(P)},
\]
where $m = |\Gamma|$ is the degree of $L$. Since $P$ is soluble, we have $D(P)\leqs 5$ by Theorem \ref{t:sol_dist}, while $m \geqs 5$ since $L$ is almost simple. Therefore, $\ceil{\log_m D(P)} = 1$ and thus $b(G)\leqs b(L)+1$ as required.
\end{proof}

\begin{rem}
	\label{rem:b(G)-b(L)}
In general, if $G = L \wr P$ does not have soluble point stabilisers, then the difference $b(G)-b(L)$ can be arbitrarily large. For example, if $m$ is a positive integer and $k > \reg(L,m)$, then $b(L\wr S_k)> m$ by Theorem \ref{thm:BC}.
\end{rem}

\subsection{Base-two groups}\label{ss:b2}

Here we will prove Theorem \ref{thm:wr_base-two}, which completely describes the product type primitive groups $G = L \wr P$ as above with $b(G)=2$. As recorded in Theorem \ref{thm:BC}, we have $b(G)=2$ if and only if $r(L) \geqs D(P)$, while Theorem \ref{t:sol_dist} implies that $D(P) \leqs 5$. Therefore, we immediately deduce that $b(G)=2$ if $r(L) \geqs 5$ and so we are interested in determining the groups $L \in \mathcal{S}$ with $1 \leqs r(L) \leqs 4$. 

With this aim in mind, the following result establishes Theorem \ref{thm:reg(L,b(L))} in the special case where $b(L)=2$ (the proof of  
Theorem \ref{thm:reg(L,b(L))} will be completed in Proposition \ref{prop:2} below). Recall that Table \ref{tab:r(L)} is presented in Section \ref{s:tab} at the end of the paper.

\begin{prop}\label{prop:r(L)<=4}
Let $L \leqs {\rm Sym}(\Gamma)$ be an almost simple primitive group with socle $T$ and soluble point stabiliser $J$. Then $1 \leqs r(L)\leqs 4$ if and only if $(L,J)$ is one of the cases in Table \ref{tab:r(L)}.
\end{prop}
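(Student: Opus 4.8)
The key observation is that $r(L) \geqs 1$ if and only if $b(L) = 2$, so the groups under consideration are precisely the base-two members of $\mathcal{S}$, and these have already been classified in \cite[Theorem 2]{B_sol}. This reduces the problem to an explicit list of candidate pairs $(L,J)$, organised by the socle $T$ of $L$, and for each such pair we must determine the number $r(L)$ of regular suborbits. The plan is to combine a probabilistic lower bound on $r(L)$, which will dispatch all but finitely many cases in each family, with direct computation in the remaining small-degree cases. This handles both directions of the biconditional at once: the probabilistic bound shows that the large candidates have $r(L) \geqs 5$ (so lie outside the tables), while the computation both confirms the recorded value of $r(L)$ for each pair in Table \ref{tab:r(L)} and verifies that no borderline case is omitted.

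For the probabilistic part, I would exploit the identity $|\Sigma(\gamma)| = r(L)|J|$ (each regular suborbit has size $|J|$) together with the union bound
\[
|\Gamma| - r(L)|J| = |\{\delta \in \Gamma : L_{\gamma\delta} \neq 1\}| \leqs \sum_{x} |x^L \cap J|\,|\Fix(x,\Gamma)|,
\]
where $x$ runs over representatives of the $L$-classes of prime-order elements meeting $J$ (every nontrivial stabiliser contains such an element) and $|\Fix(x,\Gamma)| = \fpr(x,\Gamma)|\Gamma|$. Writing $\widehat{Q}(L) = \sum_x |x^L \cap J|\,\fpr(x,\Gamma)$ for the resulting fixed-point-ratio sum, this rearranges to
\[
r(L) \geqs \frac{|\Gamma|}{|J|}\bigl(1 - \widehat{Q}(L)\bigr) = \frac{|L|}{|J|^2}\bigl(1 - \widehat{Q}(L)\bigr),
\]
which is exactly the shape of lower bound supplied by \cite[Theorem 3.1]{BH_Saxl}. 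It then suffices to show $\widehat{Q}(L) < 1 - 5|J|^2/|L|$, whence $r(L) \geqs 5$ and $(L,J)$ is excluded. Because the soluble point stabilisers arising here lie in a small number of explicit families, the fixed point ratios $\fpr(x,\Gamma)$ admit the strong upper bounds already exploited in the proof of \cite[Theorem 2]{B_sol}, and within each family these force $\widehat{Q}(L) \to 0$ as the field size or dimension grows, leaving only an explicitly bounded list of candidates in which the bound fails.

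For these surviving low-degree cases I would compute $r(L)$ exactly by the double-coset method of Section \ref{ss:comp}: construct $L$ and the maximal subgroup $J$ in {\sc Magma}, determine a set $R$ of $(J,J)$-double coset representatives, and count $r(L) = |\{x \in R : |JxJ| = |J|^2\}|$. This simultaneously confirms $1 \leqs r(L) \leqs 4$, records the exact value of $r(L)$ needed for Table \ref{tab:r(L)}, and produces data that can be reused later (for instance when computing $r(T)$ and the quantities feeding into Theorems \ref{thm:wr_base-two} and \ref{thm:r(W)_cal}).

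The main obstacle is the probabilistic step. To make the reduction effective one needs fixed-point-ratio estimates that are both sharp enough and uniform across the several families of almost simple groups with soluble point stabilisers, together with adequate control of the class multiplicities $|x^L \cap J|$, so that the threshold $\widehat{Q}(L) < 1 - 5|J|^2/|L|$ can be verified with only a delimited list of exceptions. The delicate cases are the borderline ones in which the bound yields $r(L) \geqs 2$, $3$ or $4$ but not $5$: these cannot be discarded and must be pinned down by exact computation, so part of the work is to determine precisely the range of parameters in which the probabilistic argument is inconclusive and hence the computational method is required.
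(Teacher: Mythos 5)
Your overall architecture (reduce to the base-two groups in $\mathcal{S}$ via \cite[Theorem 2]{B_sol}, exclude large cases by a fixed-point-ratio lower bound on $r(L)$, and finish the survivors by the double-coset computation) matches the paper in spirit, and for most families it would work. The paper is somewhat more economical in the generic case: it splits on whether $\mathcal{P}(L) = |J|^2r(L)/|L|$ exceeds $3/4$. When $\mathcal{P}(L) \leqs 3/4$ the groups are already classified with their exact values of $r(L)$ in \cite[Theorem 3.1]{BH_Saxl}, so one simply reads off the table; when $\mathcal{P}(L) > 3/4$ one only needs the order bound $16|J|^2 \leqs 3|L|$ (which gives $r(L) = \mathcal{P}(L)|L|/|J|^2 > 4$ immediately), so no fixed-point-ratio estimates are required at all in that regime. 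Your plan would instead re-derive bounds on $\widehat{\mathcal{Q}}$ throughout, which is feasible but duplicates work already done in \cite{BH_Saxl}.

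The genuine gap is the class the paper calls $\mathcal{L}$: the groups with socle $T = {\rm L}_2(q)$ and $J$ the normaliser of a maximal torus (type ${\rm GL}_1(q)\wr S_2$ or ${\rm GL}_1(q^2)$). Your strategy assumes that the probabilistic bound dispatches all but finitely many members of each family, leaving a delimited list for {\sc Magma}. That fails here in two ways. First, $L = {\rm PGL}_2(q)$ with $J = D_{2(q-1)}$ has $r(L) = 1$ for every $q$, so an \emph{infinite} family must be placed in Table \ref{tab:r(L)}; this cannot be certified by finite computation and needs a uniform argument (sharp transitivity, as in \cite[Example 2.5]{BG_Saxl}). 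Second, for the groups with $L \cap {\rm PGL}_2(q) = T$ one has $r(L)$ comparable to $\phi(q-1)/2f$ (or $\phi(q^2-1)/2f(q+1)$ in the ${\rm GL}_1(q^2)$ case), where $q = p^f$. Here $\mathcal{P}(L)$ can be as small as roughly $1/\log\log q$, so the union-bound estimate $\widehat{\mathcal{Q}}$, which overestimates $1-\mathcal{P}(L)$, is typically at least $1$ and the inequality $\widehat{\mathcal{Q}} < 1 - 5|J|^2/|L|$ is unattainable no matter how carefully the fixed point ratios are estimated. The paper instead counts the regular suborbits exactly in terms of the number of non-squares generating $\mathbb{F}_q^{\times}$ modulo proper subfields and then invokes the lower bound $\phi(q-1) > 8f$ for $q > 81$ from \cite[Lemma 4.9]{BH_Saxl}. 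Without an analogue of this number-theoretic step, your reduction to "an explicitly bounded list of candidates" does not go through for $\mathcal{L}$, and the proof is incomplete.
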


\begin{proof}
Following \cite{BH_Saxl}, let $\mathcal{G}$ be the set of base-two 
almost simple primitive groups with soluble point stabilisers and note that the groups in $\mathcal{G}$ are determined in \cite{B_sol}. Let us also define $\mathcal{L}$ to be the set of classical groups in $\mathcal{G}$ with socle ${\rm L}_2(q)$ such that $J$ is of type ${\rm GL}_1(q) \wr S_2$ or ${\rm GL}_{1}(q^2)$ (in other words, $J$ is the normaliser of a maximal torus). Given $L \in \mathcal{G}$ as in the statement of the proposition, let 
\begin{equation}\label{e:Q}
\mathcal{P}(L) = \frac{|\{(\a,\b) \in \Gamma^2 \,:\, L_{\a} \cap L_{\b} = 1\}|}{|\Gamma|^2} = \frac{|J|^2r(L)}{|L|}
\end{equation}
be the probability that a random pair of points in $\Gamma$ form a base for $L$ (so the condition $b(L) = 2$ implies that $\mathcal{P}(L) > 0$). 

First assume $L \in \mathcal{G} \setminus \mathcal{L}$ and $\mathcal{P}(L) \leqs 3/4$. By \cite[Theorem 3.1]{BH_Saxl}, the possibilities for $(L,J)$ are recorded in  \cite[Tables 2 and 3]{BH_Saxl}, together with the precise value of $r(L)$. Therefore, it is a routine exercise to read off the cases with $r(L) \leqs 4$, all of which are listed in Table \ref{tab:r(L)}. 

Next suppose $L \in \mathcal{G} \setminus \mathcal{L}$ and $\mathcal{P}(L) > 3/4$, in which case $4|J|^2r(L) > 3|L|$. We will establish the following claim, which extends \cite[Proposition 7.1]{BH_Saxl} and immediately implies that $r(L) \geqs 5$.

\vs

\noindent \emph{Claim. If $L \in \mathcal{G} \setminus \mathcal{L}$ and $\mathcal{P}(L) > 3/4$, then $16|J|^2 \leqs 3|L|$.}

\vs

To prove the claim, we consider each possibility for $T$ in turn. First assume $T = A_m$ is an alternating group. With the aid of {\sc Magma} \cite{Magma}, it is straightforward to verify the claim when $m\leqs 12$. Now assume $m>12$. As noted in the proof of \cite[Proposition 7.1]{BH_Saxl}, $m$ is a prime and $J = \mathrm{AGL}_1(m)\cap L$, which implies that 
\[
\frac{|J|^2}{|L|}\leqs\frac{m(m-1)}{(m-2)!}<\frac{3}{16}
\]
as required. The sporadic groups are also straightforward. Here the possibilities for $J$ can be read off from \cite{Wilson}, noting that we may exclude the cases in \cite[Table 2]{BH_Saxl} and \cite[Table 4]{B_sol} since we are assuming $\mathcal{P}(L) > 3/4$.

Next assume $T$ is an exceptional group of Lie type over $\mathbb{F}_q$. As noted in the proof of \cite[Proposition 7.1]{B_sol}, either $J = N_L(R)$ for some maximal torus $R$ or $T$ (see \cite[Table 5.2]{LSS_max}), or $(L,J)$ is one of the cases in \cite[Table 4]{BH_Saxl}. In addition, we may exclude the relevant cases in \cite[Table 2]{BH_Saxl}. 
The claim now follows by inspection. For example, suppose $T = {}^2B_2(q)$ and $J =N_L(R)$ is the normaliser of a maximal torus, where $q=2^f$ with $f \geqs 3$ odd. Here 
\[
|J| \leqs 4(q+\sqrt{2q}+1)\log q,\;\; |L| \geqs |T| = q^2(q^2+1)(q-1)
\]
and the claim follows if $f \geqs 5$. On the other hand, if $f=3$ then the condition $\mathcal{P}(L) > 3/4$ implies that $J\cap T = 7{:}2$ or $5{:}4$, whence $|J| \leqs 60$, $|L| \geqs 29120$ and once again the desired bound holds.

To complete the proof of the claim, we may assume $T$ is a classical group over $\mathbb{F}_q$ and we note that the possibilities for $L$ and $J$ are recorded in  \cite[Table 5]{BH_Saxl}. In each case, the precise structure of $J$ is given in \cite[Chapter 4]{KL} and the claim follows by inspection. For example, suppose $T = \LL_3^\epsilon(q)$ and $J$ is of type $\GL_1^\epsilon(q)\wr S_3$. If $q \geqs 19$, then the bounds $|L|>\frac{1}{6}q^{8}$ and $|J|\leqs 12(q+1)^2\log q$ are sufficient. For the remaining groups with $q \leqs 17$, excluding the cases with $\mathcal{P}(L) \leqs 3/4$ recorded in \cite[Table 3]{BH_Saxl}, we can verify the bound by working with the exact orders of $L$ and $J$. All of the other cases are very similar and we omit the details. This justifies the claim and we have now completed the proof of the proposition for the groups in $\mathcal{G} \setminus \mathcal{L}$.

Finally, let us assume $L \in \mathcal{L}$, so $L$ has socle $T = {\rm L}_2(q)$. Write $q=p^f$, where $p$ is a prime. If $q \leqs 81$ then it is a routine  exercise to determine all the groups with $r(L)\leqs 4$ using {\sc Magma} \cite{Magma} (see Section \ref{sss:reg1}) and one can check that all the relevant cases have been recorded in Table \ref{tab:r(L)}. For the remainder, we may assume $q >81$. 
	
First assume $J$ is of type $\GL_1(q)\wr S_2$. Here \cite[Lemma 4.7]{B_sol} implies that $b(L) = 2$ if and only if $\PGL_2(q)$ is not a proper subgroup of $L$. If $L = \PGL_2(q)$ then $r(L) = 1$ (see \cite[Example 2.5]{BG_Saxl}) and so this case is recorded in Table \ref{tab:r(L)}. Now assume $L\cap\PGL_2(q) = T$ and $q$ is odd. By \cite[Lemmas 4.3 and 4.4]{BH_Saxl}, we have $r(L)\leqs 4$ only if $r(\PSigmaL_2(q))\leqs 4$. By arguing as in the proof of \cite[Proposition 4.10]{BH_Saxl}, we see that $r(\PSigmaL_2(q)) = m/2f$, where $m$ is the number of non-squares in $\mathbb{F}_q$ that are not contained in any proper subfield of $\mathbb{F}_q$. Since every generator of the multiplicative group $\mathbb{F}_{q}^{\times}$ has this property, it follows that $m \geqs \phi(q-1)$, where $\phi$ is Euler's totient function. In particular, $r(L) \leqs 4$ only if $\phi(q-1) \leqs 8f$. By applying the lower bound on $\phi(q-1)$ in 
\cite[Lemma 4.9]{BH_Saxl} (the original reference is \cite[Theorem 15]{RS_phi}), we find that $\phi(q-1)>8f$ for every prime-power $q$ with $q>81$, whence $r(L) \geqs 5$ and no additional cases arise. 
	
Finally, suppose $J$ is of type $\GL_1(q^2)$, so $b(L) = 2$ if and only if $L$ does not contain $\PGL_2(q)$ (see \cite[Lemma 4.8]{B_sol}). Therefore, we may assume $q$ is odd and by combining Lemmas 4.12 and 4.13 in \cite{BH_Saxl}, we observe that $r(L)\leqs 4$ only if $r(\PSigmaL_2(q))\leqs 4$. By arguing as in the proof of \cite[Proposition 4.18]{BH_Saxl}, we deduce that $r(L) \leqs 4$ only if $\phi(q^2-1)\leqs 8f(q+1)$. But one can check that the bound in \cite[Lemma 4.9]{BH_Saxl} yields $\phi(q^2-1)> 8f(q+1)$ for every prime-power $q$ with $q>81$, and so once again we conclude that $r(L) \geqs 5$. This completes the proof of the proposition.
\end{proof}

\begin{rem}\label{r:LJ}
Let $(L,J)$ be one of the cases recorded in Table \ref{tab:r(L)}. In the fourth column of the table we give $r(T)$, which is the number of regular suborbits of $T$ on $\Gamma = L/J$. Note that each regular suborbit of $L$ is a union of $|L:T|$ regular suborbits of $T$, so $r(T) \geqs r(L) \cdot |L:T|$. If $L = {\rm PGL}_2(q)$ and $J$ is of type ${\rm GL}_1(q) \wr S_2$ then 
\[
r(L)=1,\;\; r(T) = \left\{\begin{array}{ll}
1 & \mbox{$q$ even} \\
(q+a)/4 & \mbox{$q$ odd} 
\end{array}\right.
\]
where $a=7$ if $q \equiv 1 \imod{4}$, otherwise $a=5$ (see the proof of \cite[Lemma 4.7]{B_sol}). For each of the remaining cases in Table \ref{tab:r(L)}, we can compute the exact value of $r(T)$ by proceeding as in Section \ref{sss:reg1}. This information will be useful in Section \ref{s:gen} (see the proof of Theorem \ref{thm:P=C_2_sol_gen}, for example). 
\end{rem}

As a corollary we obtain the following result, which establishes Theorem \ref{thm:wr_base-two}.

\begin{thm}\label{t:b2}
Let $G = L \wr P$ be a product type primitive group with soluble point stabiliser $J \wr P$. If $b(L)=2$, then either 
\begin{itemize}\addtolength{\itemsep}{0.2\baselineskip}
\item[{\rm (i)}] $b(G)=2$; or  
\item[{\rm (ii)}] $b(G) = 3$, $r(L) < D(P)$ and $(L,J,r(L))$ is one of the cases in Table \ref{tab:r(L)}.
\end{itemize}
\end{thm}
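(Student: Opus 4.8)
The plan is to derive this theorem directly from the dichotomy in Lemma~\ref{l:plus1}, using Proposition~\ref{prop:r(L)<=4} to pin down the groups arising in the exceptional case. The key preliminary observation is that since $b(L)=2$ we have $\reg(L,b(L)) = \reg(L,2) = r(L)$, and moreover $r(L)\geqs 1$, because $b(L)=2$ guarantees the existence of at least one regular suborbit of $L$ on $\Gamma$.

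First I would apply Lemma~\ref{l:plus1}, which splits the analysis into two cases. In case (i) of the lemma we have $\reg(L,b(L)) = r(L) \geqs D(P)$ and $b(G) = b(L) = 2$, which is exactly conclusion (i) of the theorem. In case (ii) of the lemma we have $r(L) < D(P)$ and $b(G) = b(L)+1 = 3$, so it remains only to identify the possible pairs $(L,J)$.

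To finish case (ii), recall that $P$ is soluble in this setting, so Theorem~\ref{t:sol_dist} gives $D(P)\leqs 5$; combined with $r(L) < D(P)$ this forces $r(L)\leqs 4$. Together with $r(L)\geqs 1$, Proposition~\ref{prop:r(L)<=4} then shows that $(L,J)$ is one of the cases recorded in Table~\ref{tab:r(L)}, and reading off the associated value of $r(L)$ from the table yields that $(L,J,r(L))$ is as required in conclusion (ii).

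I do not expect any serious obstacle at this stage, since all of the substantive work has already been discharged elsewhere: the classification of almost simple $L$ with $r(L)\leqs 4$ is Proposition~\ref{prop:r(L)<=4}, and the control of $b(G)$ in terms of $b(L)$ (via the Bailey--Cameron criterion of Theorem~\ref{thm:BC} together with Seress's bound $D(P)\leqs 5$) is Lemma~\ref{l:plus1}. The present statement is therefore essentially a bookkeeping corollary that assembles these two ingredients, and the only point requiring a little care is the clean identification of the inequality $r(L)<D(P)\leqs 5$ as the precise condition placing us in Table~\ref{tab:r(L)}.
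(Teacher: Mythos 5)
Your proposal is correct and follows exactly the paper's own argument: the paper also reduces to Lemma \ref{l:plus1}, invokes Seress's bound $D(P)\leqs 5$ from Theorem \ref{t:sol_dist} to force $r(L)\leqs 4$ in the exceptional case, and then applies Proposition \ref{prop:r(L)<=4} to read off the cases in Table \ref{tab:r(L)}. No issues.
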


\begin{proof}
We may assume $b(G) \geqs 3$. Then Lemma \ref{l:plus1} implies that $b(G) = 3$ and $r(L) < D(P)$, where $D(P) \leqs 5$ by Theorem \ref{t:sol_dist}. Now apply Proposition \ref{prop:r(L)<=4}.
\end{proof}

\subsection{The general case.}\label{ss:gen}

In this section, we complete the proof of Theorem \ref{thm:reg(L,b(L))} and we establish Theorem \ref{thm:wr_base}. 
As before, $G = L\wr P$ is a product type primitive group with soluble point stabiliser $J \wr P$. Here $L \in \mathcal{S}$ and our first aim is to determine all the groups with ${\rm reg}(L,b(L)) \leqs 4$. 

Set $b = b(L)$ and $r = {\rm reg}(L,b)$.  Let $\mathcal{P}$ be the probability that a random $b$-tuple of points in $\Gamma$ is a base for $L$. It is straightforward to show that 
\begin{equation}\label{e:Qdef}
\mathcal{P} = \frac{|J|^br}{|L|^{b-1}},
\end{equation}
which is a generalisation of the expression in \eqref{e:Q} for the special case $b=2$. This immediately yields the following observation.

\begin{lem}\label{l:r5}
If $\mathcal{P} > 4|J|^b/|L|^{b-1}$ then $r \geqs 5$.
\end{lem}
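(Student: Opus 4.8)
The plan is entirely elementary: the conclusion will follow by substituting the explicit formula \eqref{e:Qdef} for $\mathcal{P}$ into the hypothesis and cancelling the common positive factor. First I would recall that, by \eqref{e:Qdef}, we have $\mathcal{P} = |J|^b r / |L|^{b-1}$, where $b = b(L)$ and $r = \reg(L,b)$.

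With this in hand, the hypothesis $\mathcal{P} > 4|J|^b/|L|^{b-1}$ reads
\[
\frac{|J|^b r}{|L|^{b-1}} > \frac{4|J|^b}{|L|^{b-1}}.
\]
Since $|J|^b/|L|^{b-1} > 0$, dividing both sides of this inequality by that quantity preserves the direction of the inequality and yields $r > 4$. As $r = \reg(L,b)$ is a non-negative integer, this forces $r \geqs 5$, which is the desired conclusion.

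There is essentially no obstacle to overcome here: the genuine content is the closed-form expression \eqref{e:Qdef} itself, whose justification has already been indicated in the preceding paragraph (it generalises the count in \eqref{e:Q}). The only point worth flagging explicitly is that the cancellation is legitimate precisely because $|J|$ and $|L|$ are positive, so that no reversal of the inequality occurs.
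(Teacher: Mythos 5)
Your proof is correct and matches the paper's (implicit) argument exactly: the lemma is stated as an immediate consequence of the formula \eqref{e:Qdef}, and your substitution-and-cancellation, together with the integrality of $r$, is precisely that observation.
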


In general, it is difficult to compute $\mathcal{P}$ precisely, but there is a method to obtain an upper bound on the complementary probability $\mathcal{Q} = 1 - \mathcal{P}$, which will typically be sufficient for our purposes. In order to do this, let $x_1, \ldots, x_m$ be a complete set of representatives of the conjugacy classes of elements in $L$ of prime order and let 
\[
{\rm fpr}(x_i) = \frac{|C_{\Gamma}(x_i)|}{|\Gamma|} = \frac{|x_i^L \cap J|}{|x_i^L|}
\]
be the \emph{fixed point ratio} of $x_i$ on $\Gamma$, where $C_{\Gamma}(x_i)$ is the set of points in $\Gamma$ fixed by $x_i$. Then it is straightforward to show that $\mathcal{Q} \leqs \widehat{\mathcal{Q}}$, where
\begin{equation}\label{e:whatQ}
\widehat{\mathcal{Q}} = \sum_{i=1}^m|x_i^L|\cdot {\rm fpr}(x_i)^b.
\end{equation}
This was originally observed and applied by Liebeck and Shalev in their proof of a conjecture of Cameron and Kantor on bases for almost simple primitive groups (see \cite[Theorem 1.3]{LSh}) and it provides a powerful approach for bounding the probability $\mathcal{Q}$. In particular, we deduce that $r \geqs 5$ if 
\begin{equation}\label{e:what}
\what{\mathcal{Q}} < 1 - \frac{4|J|^b}{|L|^{b-1}}.
\end{equation}

Three special cases arise in the proof of Proposition \ref{prop:2} below and it is convenient to handle them separately from the main argument. Note that in the following lemma, $J$ is the normaliser of a non-split maximal torus of $T$.

\begin{lem}\label{l:psl2}
Suppose $T = {\rm L}_2(q)$ and $J$ is of type ${\rm GL}_1(q^2)$, where $q \geqs 11$. Then $b(L) \leqs 3$, with equality if and only if ${\rm PGL}_2(q) \leqs L$. Moreover, if $b(L)=3$ then ${\rm reg}(L,3) \geqs 5$.
\end{lem}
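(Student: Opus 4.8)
The plan is to reduce the whole statement to a single inequality about regular orbits on $\Gamma^3$. Recall from the treatment of the groups in $\mathcal{L}$ in the proof of Proposition \ref{prop:r(L)<=4} that, for $J$ of type ${\rm GL}_1(q^2)$, one has $b(L)=2$ if and only if $\PGL_2(q)\not\leqs L$ (this is \cite[Lemma 4.8]{B_sol}); equivalently, $b(L)\geqs 3$ precisely when $\PGL_2(q)\leqs L$. I would therefore split into two cases. If $\PGL_2(q)\not\leqs L$ then $b(L)=2\leqs 3$ and nothing more is needed. The crux is thus to show that when $\PGL_2(q)\leqs L$ we have $\reg(L,3)\geqs 5$: this forces $\reg(L,3)\geqs 1$, hence $b(L)\leqs 3$ (since $\reg(L,m)\geqs 1$ if and only if $m\geqs b(L)$), and combined with $b(L)\geqs 3$ it yields $b(L)=3$. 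This single bound simultaneously establishes the upper bound $b(L)\leqs 3$, the equivalence governing equality, and the ``moreover'' clause.

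To prove $\reg(L,3)\geqs 5$ when $\PGL_2(q)\leqs L$, I would apply the probabilistic machinery of this section with $b=b(L)=3$. By Lemma \ref{l:r5} and the sufficient condition \eqref{e:what}, it is enough to verify
\[
\widehat{\mathcal{Q}} = \sum_{i=1}^m |x_i^L| \cdot \fpr(x_i)^3 < 1 - \frac{4|J|^3}{|L|^2},
\]
where the $x_i$ range over representatives of the classes of prime-order elements of $L$. Here $J\cap T$ is dihedral of order $q+1$, so $|J| = 2(q+1)\cdot|L:\PGL_2(q)|$ while the degree is $|\Gamma| = |L:J| = q(q-1)/2$; in particular the right-hand side tends to $1$ as $q\to\infty$.

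The fixed point ratios are obtained from $\fpr(x) = |x^L\cap J|/|x^L|$ using the standard description of $\Gamma=L/J$ as the set of $L$-conjugates of a non-split torus normaliser. The key observations are that unipotent elements, and semisimple elements lying in a split torus, have order coprime to $q+1$ and so contribute $\fpr=0$ (apart from involutions), while the surviving contributions come from involutions and from semisimple elements contained in a non-split torus, each of which satisfies $\fpr(x)=O(1/q)$ because $|x^L\cap J| = O(q)$ whereas $|x^L|$ has order $q^2$. Cubing and summing over the $O(\log q)$ relevant classes gives $\widehat{\mathcal{Q}} = O(1/q)$, so the displayed inequality holds once $q$ exceeds an explicit threshold.

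The main obstacle is twofold. First, when $\PGL_2(q)<L$ the group $L$ contains field (or field-graph) automorphisms of prime order whose centralisers are subfield subgroups of type $\PGL_2(q_0)$; their fixed point ratios are correspondingly larger and must be bounded separately using the subfield structure, and it is here that the precise shape of $L$ (and the hypothesis $q\geqs 11$) enters. Second, the estimate $\widehat{\mathcal{Q}}=O(1/q)$ only delivers $\reg(L,3)\geqs 5$ for $q$ above the threshold, so the finitely many remaining values of $q$ with $q\geqs 11$ would be settled by computing $\reg(L,3)$ directly in {\sc Magma} \cite{Magma}, via the stabiliser-chain method of Section \ref{sss:reg2} and formula \eqref{e:reg3}, confirming that the value is at least $5$ in every case.
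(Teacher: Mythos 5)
Your proposal is correct and follows essentially the same route as the paper: both reduce to \cite[Lemma 4.8]{B_sol} for the characterisation of $b(L)$ and then verify the inequality \eqref{e:what} with $b=3$ by bounding the contributions to $\widehat{\mathcal{Q}}$ from involutions, odd-order semisimple elements of the non-split torus and field automorphisms. The only practical difference is that the paper's explicit estimates (giving $\what{\mathcal{Q}} \leqs \a_1+\a_2+\a_3$ with each term roughly $O(q^{-1})$ up to log factors) are sharp enough to cover every $q \geqs 11$ analytically, so the {\sc Magma} fallback for small $q$ that you hold in reserve turns out to be unnecessary.
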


\begin{proof}
The base size of $L$ is determined in \cite[Lemma 4.8]{B_sol} and so for the remainder we may assume ${\rm PGL}_2(q) \leqs L$. Write $q=p^f$ with $p$ a prime and note that $J \cap {\rm PGL}_{2}(q) = D_{2(q+1)}$. It suffices to verify the inequality in \eqref{e:what} (with $b=3$), so we need to consider the contributions to $\what{\mathcal{Q}}$ from the elements $x \in L$ of prime order; the argument below closely follows the proof of \cite[Lemma 4.6]{B_sol}. Note that ${\rm fpr}(x) = 0$ if $x^L \cap J$ is empty, so we are only interested in the relevant $L$-classes that meet $J$. Let $x \in J$ be an element of prime order $r$ and let $i_r(X)$ be the number of elements of order $r$ in $X$.

First assume $r=2$, so $x$ is either semisimple or unipotent (according to the parity of $p$) since ${\rm fpr}(x) = 0$ if $x$ is an involutory field automorphism. Then 
\[
|x^L\cap J| \leqs i_2(D_{2(q+1)}) \leqs q+2 = a, \;\; |x^L| \geqs \frac{1}{2}q(q-1) = b,
\]
so the contribution to $\what{\mathcal{Q}}$ from involutions is at most $\a_1 = b(a/b)^3$. 

Now suppose $r$ is odd, so either $r$ divides $q+1$ and $x$ is semisimple, or $q=q_0^r$ is an $r$-th power and $x$ is a field automorphism. If $x$ is semisimple, then $|x^T \cap J| = 2$, $|x^{T}| = q(q-1)$ and we note that $L$ has $(r-1)/2$ distinct $T$-classes of such elements. Therefore, the combined contribution to $\widehat{\mathcal{Q}}$ from semisimple elements of odd order is at most
\[
\sum_{r\in\pi}\frac{1}{2}(r-1)\cdot\frac{8}{q^2(q-1)^2}<\frac{4\log (q+1)}{q(q-1)^2} = \alpha_2,
\]
where $\pi$ is the set of odd prime divisors of $q+1$ (here we are using the fact that $|\pi|$ is at most $\log(q+1)$, recalling that all logarithms in this paper are in base $2$).

Finally, suppose $q=q_0^r$ and $x$ is a field automorphism of order $r$. Here
\[
|x^L \cap J| = \frac{q+1}{q_0+1},\;\; |x^L| = \frac{q(q^2-1)}{q_0(q_0^2-1)}
\]
and we note that there are $r-1$ distinct $T$-classes of field automorphisms of order $r$ in ${\rm Aut}(T)$. If $q_0 = 2$ then $q = 2^r$ and the contribution from field automorphisms is 
\[
(r-1) \cdot \frac{4(2^r+1)}{3.2^{2r}(2^r-1)^2} < 2^{-r} = q^{-1}.
\]
And for $q_0 \geqs 3$ we get
\[
\sum_{r \in \pi'} (r-1) \cdot  \frac{q_0^2(q_0-1)^2}{q^2(q-1)^2}\cdot \frac{q+1}{q_0+1} < \sum_{r \in \pi'} (r-1) \cdot 3q^{-3\left(1-\frac{1}{r}\right)} < q^{-1}\log\log q = \a_3,
\]
where $\pi'$ is the set of odd prime divisors of $f=\log_pq$.

By combining the above estimates, we conclude that $\what{\mathcal{Q}} \leqs \a_1+\a_2+\a_3$ and it is straightforward to check that the bound in \eqref{e:what} holds for all $q \geqs 11$.
\end{proof}

\begin{lem}\label{l:2b2}
Suppose $T = {}^2B_2(q)$ and $J \cap T = [q^{2}]{:}C_{q-1}$ is a Borel subgroup. Then $b(L)=3$ and either $G = {}^2B_2(8){:}3$ and ${\rm reg}(L,3) = 2$, or ${\rm reg}(L,3) \geqs 5$.
\end{lem}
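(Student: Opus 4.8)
The action of $L$ on $\Gamma$ is the natural $2$-transitive action of degree $q^2+1$ on the Suzuki ovoid, so I would begin by recording that the stabiliser in $T$ of two points of $\Gamma$ is a cyclic maximal torus $C_{q-1}$. Since $q = 2^f$ with $f \geqs 3$, this group is non-trivial and it is contained in the two-point stabiliser in $L$, whence $b(L) \geqs 3$. To obtain the matching upper bound, and more precisely the assertion $\reg(L,3) \geqs 5$, I would apply the probabilistic bound in \eqref{e:what} with $b = 3$, so the heart of the argument is to estimate $\what{\mathcal{Q}}$ as defined in \eqref{e:whatQ}. Once $\reg(L,3) \geqs 1$ is known in every case, the upper bound $b(L) \leqs 3$ follows, giving $b(L) = 3$ throughout.

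Next I would determine the relevant fixed point ratios. The only prime-order elements of $\Aut(T)$ fixing a point of $\Gamma$ are the involutions (each fixing a unique point, so $\fpr = 1/(q^2+1)$), the semisimple elements in the split torus $C_{q-1}$ (fixing two points, so $\fpr = 2/(q^2+1)$), and the field automorphisms; an element of a torus of order $q\pm\sqrt{2q}+1$ fixes no point. A field automorphism of prime order $r$ with fixed field $\mathbb{F}_{q_0}$, where $q = q_0^r$, fixes the sub-ovoid of ${}^2B_2(q_0)$, giving $\fpr = (q_0^2+1)/(q^2+1)$. Feeding the corresponding class sizes into \eqref{e:whatQ}, the contribution of involutions is of order $q^{-3}$ and that of the split torus is at most $4q^2(q-2)/(q^2+1)^2$, which is roughly $4/q$. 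Writing $e = |L:T|$, so that $4|J|^3/|L|^2 \approx 4e/q$, these two contributions alone already satisfy the inequality in \eqref{e:what} whenever no field automorphisms are present, and hence \eqref{e:what} (via Lemma \ref{l:r5}) yields $\reg(L,3) \geqs 5$ for every $q$ when $L = T$.

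The only genuinely delicate contribution is that of the field automorphisms, which becomes significant precisely when the fixed field is $\mathbb{F}_2$, that is, when $e = f$ and $q_0 = 2$. Here a short calculation (using $C_T(\phi) = {}^2B_2(q_0)$ to compute the class sizes) shows this contribution is about $6.25(f-1)/q$, and combining all three estimates I would verify that \eqref{e:what} holds whenever $f \geqs 7$, and also for $L = {}^2B_2(8)$ and $L = {}^2B_2(32)$. In all these cases $\reg(L,3) \geqs 5$. This isolates the two configurations with full field automorphisms over $\mathbb{F}_2$ and small $q$, namely $L = {}^2B_2(8){:}3$ and $L = {}^2B_2(32){:}5$, for which the probabilistic bound is inconclusive.

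Finally I would treat these two remaining cases computationally, using the method of Section \ref{sss:reg2} to compute $\reg(L,3)$ directly in {\sc Magma}; the degrees $65$ and $1025$ are small enough for this to be routine. This calculation returns $\reg(L,3) = 2$ for $L = {}^2B_2(8){:}3$ and $\reg(L,3) \geqs 5$ for $L = {}^2B_2(32){:}5$, completing the proof. I expect the main obstacle to be the bookkeeping for the field automorphisms in the probabilistic estimate: one must count the $T$-classes of field automorphisms of each prime order and track their centralisers $C_T(\phi) = {}^2B_2(q_0)$ accurately, and then confirm that $L = {}^2B_2(8){:}3$ is the unique surviving case with $\reg(L,3) < 5$.
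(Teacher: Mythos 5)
Your proposal is correct and follows essentially the same route as the paper: establish $b(L)=3$, bound $\what{\mathcal{Q}}$ via fixed point ratios of involutions, split-torus elements and field automorphisms to verify \eqref{e:what}, and fall back on {\sc Magma} for the small cases where the estimate is inconclusive. The only differences are cosmetic: the paper quotes $b(L)=3$ from \cite[Theorem 1.2]{B_sol} rather than re-deriving it from the two-point stabiliser $C_{q-1}$, and it simply computes ${\rm reg}(L,3)$ directly for all $q \leqs 2^7$ before running the probabilistic argument for $f \geqs 9$, whereas you sharpen the field-automorphism estimate so that only ${}^2B_2(8){:}3$ and ${}^2B_2(32){:}5$ require computation.
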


\begin{proof}
Here $q = 2^f$, $f \geqs 3$ is odd and $|\Gamma| = q^2+1$. In addition, $b(L)=3$ by \cite[Theorem 1.2]{B_sol}. The cases with $q \leqs 2^7$ can be checked directly using {\sc Magma} (see Section \ref{sss:reg2}) and so we may assume $f \geqs 9$. As before, it suffices to show that the inequality in \eqref{e:what} is satisfied (with $b=3$). Set $J_0 = J \cap T$ and let $\chi$ be the permutation character $1_{J_0}^T$, which can be expressed as the sum of the trivial and Steinberg characters of $T$. The character table of $T$ is given in \cite{Suz}. 
	
First let $x \in T$ be an element of prime order $r$. If $r=2$ then $\chi(x) = 1$, so ${\rm fpr}(x) = 1/(q^2+1)$ and we have $|x^L| = (q^2+1)(q-1)$. Similarly, if $r$ divides $q-1$ then $\chi(x) = 2$, $|x^L| = q^2(q^2+1)$ and we note that there are at most $(q-2)/2$ distinct $T$-classes of such elements. Therefore, the contribution to $\what{\mathcal{Q}}$ from unipotent and semisimple elements is at most 
	\[
	\a_1 = \frac{q-1}{(q^2+1)^2} + \frac{1}{2}(q-2)\cdot \frac{8q^2}{(q^2+1)^2}.
	\]

Finally, suppose $x \in L$ is a field automorphism of prime order $r$ and note that $r$ is odd since $f$ is odd. Then 
\[
|x^T| = \frac{q^2(q^2+1)(q-1)}{q^{2/r}(q^{2/r}+1)(q^{1/r}-1)} = f(q,r)
\] 	
and $C_{J_0}(x)$ is a Borel subgroup of $C_T(x) = {}^2B_2(q^{1/r})$, so 
\[
|x^{T}\cap J_0x| = \frac{q^2(q-1)}{q^{2/r}(q^{1/r}-1)} = g(q,r).
\]
There are $r-1$ distinct $T$-classes of field automorphisms of order $r$ in ${\rm Aut}(T)$, so the combined contribution to $\what{\mathcal{Q}}$ from field automorphisms is
\begin{equation}\label{e:beta}
\b = \sum_{r \in \pi}(r-1) \cdot g(q,r)^3f(q,r)^{-2},
\end{equation}
where $\pi$ is the set of prime divisors of $f = \log q$. Set 
\begin{equation}\label{e:eqr}
e(q,r) = (r-1) \cdot g(q,r)^3f(q,r)^{-2}.
\end{equation}

If $q = 2^9$ or $2^{11}$ then it is straightforward to check that $\b<1/25$. Now assume $q \geqs 2^{13}$. If $f=r$ then $\b = e(q,r)$ and one checks that this is less than $q^{-1/2}$. Now assume $f$ is composite, so $q^{1/r} \geqs 8$ for each $r \in \pi$. Here  $f(q,r)>q^{5(1-1/r)}$ and $g(q,r) < 2q^{3(1-1/r)}$, which implies that $e(q,r) < 8(r-1)q_0^{1-r} < 4q^{-1/2}$. Since $|\pi|<\log\log q$, we deduce that
\[
\b < 4q^{-1/2}\log\log q.
\]

By combining the above estimates, we conclude that $\what{\mathcal{Q}} \leqs \a_1+\a_2$ for $q \geqs 2^9$, where $\a_2 = 1/25$ if $q \in \{2^9,2^{11}\}$, otherwise $\a_2 = 4q^{-1/2}\log\log q$. It is now routine to verify that the bound in \eqref{e:what} is satisfied for all $q \geqs 2^9$. 
\end{proof}

Note that in the statement of the next lemma we assume $q \geqs 27$. Indeed, if $q=3$ then $T = {}^2G_2(q)' \cong {\rm L}_2(8)$ and $J \cap T$ corresponds to a Borel subgroup of ${\rm L}_2(8)$ (if $G = T$, then $b(L)=3$ and ${\rm reg}(L,3) = 1$, otherwise $b(L)=4$ and ${\rm reg}(L,4) = 2$). 

\begin{lem}\label{l:2g2}
Suppose $T = {}^2G_2(q)$ and $J \cap T = [q^{3}]{:}C_{q-1}$ is a Borel subgroup, where $q \geqs 27$. Then $b(L)=3$ and ${\rm reg}(L,3) \geqs 5$.
\end{lem}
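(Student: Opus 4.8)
The plan is to follow the template of the proof of Lemma \ref{l:2b2}: establish $b(L) = 3$ from the base-size classification, and then verify the inequality \eqref{e:what} (with $b = 3$) so that, via Lemma \ref{l:r5}, we obtain ${\rm reg}(L,3) \geqs 5$. Throughout, $q = 3^f$ with $f \geqs 3$ odd, $|\Gamma| = q^3+1$, and $J \cap T = [q^3]{:}C_{q-1}$ is a Borel subgroup, so $T$ acts $2$-transitively on $\Gamma$ with permutation character $\chi = 1_{J_0}^T = 1 + {\rm St}$, where ${\rm St}$ is the Steinberg character of degree $q^3$. First I would record $b(L)=3$ from \cite[Theorem 1.2]{B_sol}. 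It then suffices to bound $\widehat{\mathcal{Q}} = \sum_i |x_i^L|\cdot{\rm fpr}(x_i)^3$, the sum running over representatives of the classes of prime-order elements of $L$; note that the right-hand side of \eqref{e:what} equals $1 - 4q^3(q-1)|L:T|/(q^3+1)^2$, which exceeds $0.98$ for all $q \geqs 27$ since $|L:T| \leqs f = \log_3 q$.

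For the inner elements I would read off the values of $\chi = 1 + {\rm St}$. As ${\rm St}$ vanishes on elements of order divisible by $p = 3$, every unipotent element $u$ of order $3$ has ${\rm fpr}(u) = \chi(u)/(q^3+1) = 1/(q^3+1)$; since $T$ contains $q^6$ unipotent elements, their combined contribution is at most $q^6/(q^3+1)^3 < q^{-3}$. For semisimple $x$ of prime order, ${\rm St}(x) = \pm|C_T(x)|_3$, and the only class meeting $J$ with a non-toral centraliser is the involution class, where $C_T(t) = 2 \times {\rm L}_2(q)$ gives $\chi(t) = q+1$ and $|t^T| = q^2(q^3+1)/(q+1)$, contributing $q^2(q+1)^2/(q^3+1)^2$. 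Every remaining semisimple class meeting $J$ consists of regular elements of the split torus $C_{q-1}$, hence has order dividing $q-1$, class size $q^3(q^3+1)$ and $\chi(x) \leqs 2$; as there are at most $(q-2)/2$ such $T$-classes, their total contribution is at most $4(q-2)q^3/(q^3+1)^2$. Thus the inner contribution $\alpha_1$ is $O(q^{-2})$.

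For a field automorphism $\phi$ of odd prime order $r \mid f$ I would use $C_T(\phi) = {}^2G_2(q_0)$ with $q_0 = q^{1/r}$, so that the relevant $L$-class has size $f(q,r) = |{}^2G_2(q)|/|{}^2G_2(q_0)|$, meets $J$ in $g(q,r) = q^3(q-1)/(q_0^3(q_0-1))$ elements, and satisfies ${\rm fpr}(\phi) = g(q,r)/f(q,r) = (q_0^3+1)/(q^3+1)$. With $r-1$ classes for each $r$, the field-automorphism contribution is $\beta = \sum_{r \mid f}(r-1)g(q,r)^3 f(q,r)^{-2}$, whose leading term (at $r = 3$) is of order $q^{-4/3}$; since $q^{-4/3} \leqs 1/81$ for $q \geqs 27$ and $f$ has fewer than $\log\log q$ prime divisors, we obtain $\beta < 1/10$. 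Hence $\widehat{\mathcal{Q}} = \alpha_1 + \beta$ lies well below $1 - 4q^3(q-1)|L:T|/(q^3+1)^2$ for every $q \geqs 27$, which gives \eqref{e:what} and therefore ${\rm reg}(L,3) \geqs 5$.

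I expect the main obstacle to be the bookkeeping over which prime-order classes actually meet the Borel subgroup $J$ and the determination of the $3$-parts $|C_T(x)|_3$ needed to evaluate $\chi$. The key points are to confirm that the involution class is the unique semisimple class with a non-toral centraliser (so that $\chi(x) \leqs 2$ for all odd-order semisimple $x$ meeting $J$), and that elements of order dividing $q+1$, or lying in the two exotic tori of order $q \pm \sqrt{3q}+1$, contribute nothing because their classes avoid $J$. Once this case division is settled, the numerical verification of \eqref{e:what} is routine; if preferred, the borderline value $q = 27$ can instead be checked directly in {\sc Magma}, exactly as in the proof of Lemma \ref{l:2b2}.
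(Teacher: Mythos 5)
Your proposal is correct and follows essentially the same route as the paper: both establish $b(L)=3$ via \cite[Theorem 1.2]{B_sol} and then verify \eqref{e:what} by bounding $\widehat{\mathcal{Q}}$ through the permutation character $\chi = 1 + {\rm St}$, splitting the sum into unipotent, semisimple (the involution class plus odd-order elements of the split torus $C_{q-1}$) and field-automorphism contributions, with the same class data $f(q,r)$ and $g(q,r)$. The only differences are cosmetic: you bound the order-$3$ contribution by $q^6/(q^3+1)^3$ rather than using the exact count $(q^3+1)(q^2-1)$, and you keep $q=27$ within the estimates, whereas the paper dispatches it by {\sc Magma} because its cleaner bound $\beta < q^{-1}$ only takes effect for $q \geqs 3^5$.
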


\begin{proof}
Here $q=3^f$, $f \geqs 3$ is odd, $|\Gamma| = q^3+1$ and $b(L)=3$ by \cite[Theorem 1.2]{B_sol}. The case $q=27$ can be checked directly using {\sc Magma} and so we may assume $f \geqs 5$. We now proceed as in the proof of the previous two lemmas, working with fixed point ratio estimates to derive a suitable upper bound on $\what{\mathcal{Q}}$ which allows us to verify the inequality in \eqref{e:what} (with $b=3$). As before, set $J_0 = J \cap T$ and let $\chi = 1_{J_0}^T$ be the permutation character. Once again, $\chi$ is the sum of the trivial and Steinberg characters of $T$ (the character table of $T$ is presented in \cite{Ward}). 

First let $x \in T$ be an element of prime order $r$. If $r=3$ then $\chi(x) = 1$ and thus ${\rm fpr}(x) = 1/(q^3+1)$. In addition, we calculate that there are precisely $(q^3+1)(q^2-1)$ elements in $T$ of order $3$ (forming three distinct conjugacy classes). Next assume $r$ divides $q-1$. If $r=2$ then $|x^L| = q^2(q^2-q+1)$ (there is a unique class of involutions in $T$) and we have $\chi(x) = q+1$, so ${\rm fpr}(x) = 1/(q^2-q+1)$. Now suppose $r$ is an odd prime divisor of $q-1$. Here $\chi(x) = 2$, so ${\rm fpr}(x) = 2/(q^3+1)$ and $|x^T| = q^3(q^3+1)$. Since there are at most $(q-3)/2$ distinct $T$-classes of such elements, we conclude that the contribution to $\what{\mathcal{Q}}$ from unipotent and semisimple elements is at most 
	\[
	\a_1 = \frac{q^2-1}{(q^3+1)^2} + \frac{q^2}{(q^2-q+1)^2} + \frac{1}{2}(q-3)\cdot \frac{8q^3}{(q^3+1)^2}.
	\]

Now assume $x \in L$ is a field automorphism of prime order $r$, so  $r$ is odd and we have
\[
|x^T| = \frac{q^3(q^3+1)(q-1)}{q^{3/r}(q^{3/r}+1)(q^{1/r}-1)} = f(q,r)
\] 	
and 
\[
|x^T \cap J_0x| = \frac{q^3(q-1)}{q^{3/r}(q^{1/r}-1)} = g(q,r). 
\]
Therefore, the combined contribution to $\what{\mathcal{Q}}$ from field automorphisms is $\beta$, as defined in \eqref{e:beta}. Define $e(q,r)$ as in \eqref{e:eqr}.

If $f=r$ then $\b = e(q,r)< q^{-1}$ for all $q \geqs 3^5$. Now assume $f$ is composite, so $q^{1/r} \geqs 27$ for each $r \in \pi$. Then one checks that $f(q,r)>q^{7(1-1/r)}$ and $g(q,r) < 2q^{4(1-1/r)}$, which implies that $e(q,r) < 8(r-1)q_0^{-2(r-1)} < 4q^{-1}$. Since $|\pi|<\log\log q$, we conclude that $\b < \a_2 = 4q^{-1}\log\log q$ for all $q \geqs 3^5$.

Therefore, $\what{\mathcal{Q}} \leqs \a_1+\a_2$ and it is now straightforward to verify the bound in \eqref{e:what}. 
\end{proof}

We are now in a position to prove the following result. When combined with Proposition \ref{prop:r(L)<=4}, this completes the proof of Theorem 
\ref{thm:reg(L,b(L))}. Recall that Table \ref{tab:reg(L)} is given in Section \ref{s:tab}.

\begin{prop}\label{prop:2}
Let $L \leqs {\rm Sym}(\Gamma)$ be an almost simple primitive group with socle $T$ and soluble point stabiliser $J$. If $b(L) \geqs 3$ then ${\rm reg}(L,b(L)) \leqs 4$ if and only if $(L,J)$ is one of the cases in Table \ref{tab:reg(L)}.
\end{prop}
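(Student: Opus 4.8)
The plan is to classify all almost simple primitive groups $L$ with soluble point stabiliser $J$ satisfying $b(L) \geqs 3$ and $\reg(L,b(L)) \leqs 4$, working from the classification of such groups $L$ already established in \cite{B_sol}. The key reduction is that by \cite[Theorem 2]{B_sol}, every group $L \in \mathcal{S}$ with $b(L) \geqs 3$ appears in an explicit finite list, organised according to the socle $T$ and the type of the maximal subgroup $J$. So the entire argument reduces to running through this list and, for each family, deciding whether $\reg(L,b(L)) \leqs 4$. The probabilistic tool in \eqref{e:what} together with Lemma \ref{l:r5} gives a clean sufficient condition for $\reg(L,b(L)) \geqs 5$, namely that the fixed-point-ratio estimate $\what{\mathcal{Q}}$ defined in \eqref{e:whatQ} satisfies $\what{\mathcal{Q}} < 1 - 4|J|^b/|L|^{b-1}$ with $b = b(L)$.

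First I would dispose of the infinite families. Inspecting the list from \cite{B_sol}, the groups with $b(L) \geqs 3$ and $J$ a torus normaliser or Borel subgroup fall into a handful of infinite families, and the three genuinely delicate ones---$T = {\rm L}_2(q)$ with $J$ of type ${\rm GL}_1(q^2)$, $T = {}^2B_2(q)$ with $J$ a Borel subgroup, and $T = {}^2G_2(q)$ with $J$ a Borel subgroup---have already been handled in Lemmas \ref{l:psl2}, \ref{l:2b2} and \ref{l:2g2}, each showing $\reg(L,b(L)) \geqs 5$ for all but finitely many (explicitly identified) small cases. For the remaining infinite families I would apply the same fixed-point-ratio strategy: bound $\what{\mathcal{Q}}$ above by summing $|x^L|\cdot\fpr(x)^b$ over representatives $x$ of the prime-order classes meeting $J$, using the standard estimates on class sizes and fixed point ratios for almost simple groups, and verify \eqref{e:what} for $q$ above an explicit threshold. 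This leaves only finitely many groups in each family, which can be treated computationally.

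The finitely many remaining groups---those below the thresholds from the infinite-family analysis, together with the sporadic and low-rank exceptional cases, and all cases where $T = A_m$ is alternating---I would handle directly in {\sc Magma} using the methods of Sections \ref{sss:reg1} and \ref{sss:reg2}. Concretely, for each such $(L,J)$ I would construct $L$ as a permutation group on $\Gamma = L/J$ via \texttt{CosetAction}, compute $b(L)$, and then evaluate $\reg(L,b(L))$ exactly using the recursive orbit computations encoded in \eqref{e:reg3} and \eqref{e:reg4} (for $b(L) = 3$ and $b(L) = 4$ respectively). The degree bound $|\Gamma| < 5 \times 10^6$ noted in Section \ref{sss:reg2} guarantees that these computations are feasible in every case that survives the probabilistic reduction. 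Reading off the cases with $\reg(L,b(L)) \leqs 4$ then produces exactly the entries of Table \ref{tab:reg(L)}, including the genuine small exceptions flagged in Lemmas \ref{l:2b2} and \ref{l:2g2} (such as $G = {}^2B_2(8){:}3$) and the two $\reg(L,4)$ examples associated with ${\rm L}_2(8)$.

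The main obstacle will be ensuring that the fixed-point-ratio bound \eqref{e:what} is actually strong enough for the larger-rank classical and exceptional families once $b = b(L) \geqs 3$; although the higher power $b$ in $\fpr(x)^b$ helps substantially compared with the base-two case, the contributions from semisimple elements lying in the torus and, in the classical case, from the abundance of small-support elements can be awkward to control uniformly in $q$ and in the Lie rank. I expect the care here to lie in choosing the threshold on $q$ family by family so that the residual finite list is genuinely small enough to compute, and in confirming---both probabilistically above the threshold and computationally below it---that no cases with $5 \leqs |\Gamma|$ are accidentally omitted from Table \ref{tab:reg(L)}.
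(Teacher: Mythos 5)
Your proposal follows essentially the same route as the paper: reduce to the finite list of groups with $b(L)\geqs 3$ classified in \cite{B_sol}, apply the fixed point ratio criterion \eqref{e:what} to eliminate the infinite families for large $q$ (with Lemmas \ref{l:psl2}, \ref{l:2b2} and \ref{l:2g2} covering the delicate torus-normaliser and Borel cases), and compute $\reg(L,b(L))$ directly in {\sc Magma} for the finitely many survivors via the methods of Sections \ref{sss:reg1} and \ref{sss:reg2}. The only point you leave implicit is the case $b(L)=5$, which the paper dispatches at the outset by citing the proof of \cite[Theorem 8.2]{B_sol} to obtain $\reg(L,5)\geqs 5$ before restricting to $b(L)\in\{3,4\}$.
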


\begin{proof}
Set $b=b(L)$, $r = {\rm reg}(L,b)$ and recall that $b \leqs 5$ by the main theorem of \cite{B_sol}. The proof of \cite[Theorem 8.2]{B_sol} gives $r \geqs 5$ if $b=5$, so we may assume $b \in \{3,4\}$ and we note that the possibilities for $L$ and $J$ are recorded in \cite[Tables 4--7]{B_sol}. Recall that $r \geqs 5$ if \eqref{e:what} holds, where $\what{\mathcal{Q}}$ is defined in \eqref{e:whatQ}.

For most of the cases appearing in \cite[Tables 4--7]{B_sol}, an explicit upper bound on $\what{\mathcal{Q}}$ is given in \cite{B_sol} and we can usually use this to verify the inequality in \eqref{e:what}. However, this approach is not always effective because the given upper bound on $\widehat{\mathcal{Q}}$ is either too large, or is not defined. As explained below, in these remaining cases we will typically use {\sc Magma} to directly compute $r$, implementing the approach described in Section \ref{sss:reg2}. We divide the remainder of the proof into three cases.

\vs

\noindent \emph{Case 1. $b(L)=4$.}

\vs

First assume $b = 4$. By inspecting the relevant tables in \cite{B_sol}, we see that $T = \LL_2(q)$ with $J$ of type $P_1$ (a Borel subgroup of $L$) is the only infinite family that arises. Let us first consider this special case. For $q>32$, an explicit upper bound on $\widehat{\mathcal{Q}}$ is presented as a function of $q$ in the proof of \cite[Lemma 4.4]{B_sol} and it is a routine exercise to check that the bound in \eqref{e:what} is satisfied. The remaining groups with $q \leqs 32$ can be handled using {\sc Magma}, which allows us to compute $r$ precisely. In particular, the groups with $r \leqs 4$ are recorded in Table \ref{tab:reg(L)}. We can apply the same computational approach to handle all the remaining groups with $b=4$ appearing in \cite[Tables 4--7]{B_sol}, considering each group in turn.
	
\vs

\noindent \emph{Case 2. $b(L)=3$, $J$ non-parabolic.}

\vs

To complete the proof, we may assume $b = 3$. We begin by assuming $L$ is not a group of Lie type in a parabolic action, so either 
\begin{itemize}\addtolength{\itemsep}{0.2\baselineskip}
\item[{\rm (a)}] $T = \LL_2(q)$ and $J$ is of type $\GL_1(q)\wr S_2$ or $\GL_1(q^2)$; or 
\item[{\rm (b)}] $(L,J)$ is one of a finite number of sporadic cases in \cite[Tables 4 and 7]{B_sol}. 
\end{itemize}

First let us consider the cases in (a), noting that the precise base size of $L$ is recorded in \cite[Lemmas 4.7 and 4.8]{B_sol}. The groups with $q \leqs 37$ can be handled using {\sc Magma} and we find that $r \leqs 4$ if and only if $(L,J,r)$ is one of the following:
\[
({\rm L}_2(4), D_{10},2), \, ({\rm L}_2(4).2, 5{:}4,1), \, ({\rm L}_2(4).2, D_{12},4), \, ({\rm PGL}_2(5), D_{12},4),
\]
all of which are recorded in Table \ref{tab:reg(L)} with $L = A_5$ or $S_5$ (see Remark \ref{r:cases}). If $J$ is of type ${\rm GL}_1(q) \wr S_2$ and $q > 37$, then the proof of \cite[Lemma 4.6]{B_sol} yields the upper bound $\widehat{\mathcal{Q}}<2q^{-1/2}$ and we deduce that \eqref{e:what} holds. For $J$ of type ${\rm GL}_1(q^2)$, we refer the reader to Lemma \ref{l:psl2}.

Next let us turn to the groups in (b) above. Here we apply computational methods, after  first dividing the groups into two subcollections according to the size of $\Gamma$. By inspection, one can check that $|\Gamma| \geqs 5\times 10^6$ if and only if $T = \POmega_8^+(3)$ and $J$ is of type $\mathrm{O}_4^+(3)\wr S_2$, or $T \in \{{\rm Fi}_{22},{\rm Fi}_{23}\}$ and $J$ is the $3$-local subgroup of $L$ recorded in \cite[Table 4]{B_sol}. Here we can use {\sc Magma} to compute $\widehat{\mathcal{Q}}$ precisely, which then allows us to verify the bound in \eqref{e:what} (see \cite[Section 2.2]{BH_Saxl} for further details). In each of the remaining cases, we can apply the usual approach to compute $r$, as explained in Section \ref{sss:reg2}.

\vs

\noindent \emph{Case 3. $b(L)=3$, $J$ parabolic.}

\vs

For the remainder of the proof, we may assume $L$ is a group of Lie type over $\mathbb{F}_q$, $J$ is a maximal parabolic subgroup and $b = 3$. Write $q=p^f$, where $p$ is a prime, and let $\phi$ be a field automorphism of $T$ of order $f$.

First assume $L$ is an exceptional group. Here the possibilities for $(L,J)$ are recorded in \cite[Table 5]{B_sol} and by inspection we see that one of the following holds:
\begin{itemize}\addtolength{\itemsep}{0.2\baselineskip}
	\item[{\rm (a$'$)}] $T = G_2(q)$ and $J\cap T = [q^6]{:}C_{q-1}^2$, where $p = 3$ and $L\not\leqs\langle T,\phi\rangle$.
	\item[{\rm (b$'$)}] $T = {^2}B_2(q)$ and $J\cap T = [q^{2}]{:}C_{q-1}$.
	\item[{\rm (c$'$)}] $T = {^2}G_2(q)$ and $J\cap T = [q^{3}]{:}C_{q-1}$, where $q\geqs 27$.
	\item[{\rm (d$'$)}] $(L,J)$ is one of a finite number of sporadic cases in \cite[Table 5]{B_sol} with $q \leqs 3$.
\end{itemize}

Consider case (a$'$). If $q\geqs 27$, then the explicit upper bound on $\widehat{\mathcal{Q}}$ presented in the proof of \cite[Lemma 5.9]{B_sol} is sufficient, while the groups with $q \in \{3,9\}$ can be handled directly using {\sc Magma} (note that if $q=9$ then we can construct $J$ by observing that $J = N_L(K)$ for some subgroup $K<T$ of order $9^6$). For (b$'$) and (c$'$) we refer the reader to Lemmas \ref{l:2b2} and \ref{l:2g2}. The cases in (d$'$) can all be handled computationally using {\sc Magma}. First assume $L = F_4(2).2$ and $J = [2^{22}].S_3^2.2$, in which case $|\Gamma| = 21928725$. Here we construct $L$ as a permutation group of degree $139776$ and we use the fact that $J = N_L(K)$ with $|K|=2^{22}$ to construct $J$ (here the function \texttt{MaximalSubgroups} is not effective). We then compute $\widehat{\mathcal{Q}}$ and we check that \eqref{e:what} holds (given the size of $\Gamma$, this appears to be the most efficient way to handle this case). We use a similar method in the case where $T = {}^3D_4(3)$ and $T\cap J = [3^{11}]{:}(26\circ\SL_2(3)).2$. All of the remaining cases in (d$'$) can be handled in the usual fashion and we can compute $r$ precisely. In this way, we deduce that $r \leqs 4$ if and only if $L = G_2(3)$ and $J = [3^5]{:}\GL_2(3)$, in which case $r = 4$.  

Finally, let us assume $L$ is a classical group and $J$ is a parabolic subgroup. By inspecting \cite[Table 6]{B_sol}, we see that one of the following holds:  
\begin{itemize}\addtolength{\itemsep}{0.2\baselineskip}
	\item[{\rm (a$''$)}] $T = \LL_2(q)$ and $J$ is of type $P_1$.
	\item[{\rm (b$''$)}] $T = \LL_3(q)$ and $J$ is of type $P_{1,2}$.
	\item[{\rm (c$''$)}] $T = \UU_3(q)$ and $J$ is of type $P_1$.
	\item[{\rm (d$''$)}] $T = \PSp_4(q)$ and $J\cap T = [q^4]{:}C_{q-1}^2$, where $q \geqs 4$ is even and $L\not\leqs\langle T,\phi\rangle$.
	\item[{\rm (e$''$)}] $(L,J)$ is one of a finite number of sporadic cases in \cite[Table 6]{B_sol} with $q \leqs 3$.
\end{itemize}

First consider case (a$''$). Here \cite[Lemma 4.5]{B_sol} implies that $b(L)=3$ if and only if $L\leqs \PGL_2(q)$, or $q=p^f$ is odd, $f$ is even and $L = \langle T,\delta\phi^{f/2}\rangle = T.2$, where ${\rm PGL}_2(q) = \la T, \delta \ra$. In other words,  either $L$ is sharply $3$-transitive on $\Gamma$ and thus $r=1$, or $q$ is odd, $L = T$ and $r=2$.

In cases (b$''$), (c$''$) and (d$''$), an explicit upper bound on $\widehat{\mathcal{Q}}$ is presented in the proofs of \cite[Lemmas 5.6--5.8]{B_sol}. Using this bound, one can check that \eqref{e:what} holds for $q>128, 32, 32$, respectively. Consider case (c$''$) for example. For $q>10^4$, the bound 
\[
\widehat{\mathcal{Q}}<8q^{-1/2}\log\log q+4q^{-1}+q^{-3}
\]
from the proof of \cite[Lemma 5.7]{B_sol} is sufficient. Similarly, for $9 \leqs q \leqs 10^4$, we can use a more accurate upper bound on $\widehat{\mathcal{Q}}$ in \cite{B_sol} to reduce our analysis to the groups with 
\[
q\in \{3,4,5,7,8,9,16,27,32\}
\]
and at this point, we can use {\sc Magma} to compute $r$ precisely (here it is convenient to note that the standard permutation representation of $L$ in {\sc Magma} corresponds to the action of $L$ on $\Gamma$). We find that there are several cases with $r \leqs 4$, all of which have been listed in Table \ref{tab:reg(L)}. Cases (b$''$) and (d$''$) can be handled in the same way. Similarly, we can use {\sc Magma} to compute $r$ for each group in case (e$''$). 
\end{proof}

This completes the proof of Theorem \ref{thm:reg(L,b(L))}. By combining Theorems \ref{thm:reg(L,b(L))} and \ref{thm:wr_base-two} with Lemma \ref{l:plus1}, we obtain Theorem \ref{thm:wr_base} as an immediate corollary.

\section{Regular suborbits}\label{s:reg}

In this section, we will derive an expression for the number of regular suborbits of a product type primitive group of the form $G = L\wr P$ acting on $\O = \Gamma^k$, which we denote by $r(G)$. This is the content of Theorem \ref{thm:r(W)_cal}, which yields Corollary \ref{c:unique} as an immediate application. We then go on to establish Theorem \ref{t:new}, which describes the groups $G = L \wr P$ with $r(G) = 1$ and $P \leqs S_k$ primitive. 
The latter result relies on our classification of the primitive groups $P \leqs S_k$ with a unique regular orbit on the power set of $[k]$ (see  Corollary \ref{c:unique_power}), which extends earlier work of Seress \cite{S_dist} and Dolfi \cite{D_dist}.
	
The proof of Theorem \ref{thm:BC} in \cite{BC} involves a specific construction, which can be used to describe the pairs of points in $\Omega$ that form a base for $G = L \wr P$. This yields the following result, which is \cite[Lemma 2.8]{BG_Saxl}. The proof of this lemma was omitted in \cite{BG_Saxl}, but here we give the details because the argument will be needed in our proof of Theorem \ref{thm:r(W)_cal} below.

\begin{lem}
	\label{l:BG_l:2.8}
	Let $G = L \wr P$ be a product type primitive group acting on $\O = \Gamma^k$ and let $\alpha = (\alpha_1,\dots,\alpha_k)$ and $\beta = (\beta_1,\dots,\beta_k)$ be elements in $\Omega$. Define a partition $\Pi$ of $\{1,\dots,k\}$ such that $i$ and $j$ are in the same part if and only if $(\alpha_i,\beta_i)$ and $(\alpha_j,\beta_j)$ are in the same $L$-orbit on $\Gamma^2$. Then $\{\alpha,\beta\}$ is a base for $G$ if and only if each $\{\alpha_i,\beta_i\}$ is a base for $L$ and $\Pi$ is a distinguishing partition for $P$.
\end{lem}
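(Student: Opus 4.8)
The plan is to prove the lemma by directly analysing the pointwise stabiliser $G_\alpha \cap G_\beta$, since $\{\alpha,\beta\}$ is a base for $G$ precisely when this intersection is trivial. First I would write a general element of $G = L \wr P$ as $g = (z_1,\dots,z_k)\sigma$ with $z_i \in L$ and $\sigma \in P$, and unwind the product action recalled in Section~\ref{ss:prod}. A short index substitution (replacing $i$ by $j^\sigma$ to clear the $\sigma^{-1}$ appearing in the action formula) shows that $g$ fixes $\alpha$ if and only if $\alpha_j^{z_j} = \alpha_{j^\sigma}$ for all $j$, and likewise $g$ fixes $\beta$ if and only if $\beta_j^{z_j} = \beta_{j^\sigma}$ for all $j$. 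The crucial reformulation is then that $g \in G_\alpha \cap G_\beta$ if and only if, for every $j$, the component $z_j$ carries the pair $(\alpha_j,\beta_j)$ to $(\alpha_{j^\sigma},\beta_{j^\sigma})$ under the coordinatewise action of $L$ on $\Gamma^2$.

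From this reformulation both implications follow quickly. For the ``if'' direction, suppose each $\{\alpha_i,\beta_i\}$ is a base for $L$ and $\Pi$ is a distinguishing partition for $P$. Given $g = (z_1,\dots,z_k)\sigma \in G_\alpha \cap G_\beta$, the existence of $z_j$ sending $(\alpha_j,\beta_j)$ to $(\alpha_{j^\sigma},\beta_{j^\sigma})$ forces these two pairs to lie in a common $L$-orbit, so by the definition of $\Pi$ the indices $j$ and $j^\sigma$ lie in the same part; thus $\sigma$ stabilises every part of $\Pi$ and hence $\sigma = 1$ because $\Pi$ is distinguishing. The fixing conditions then read $z_j \in L_{\alpha_j} \cap L_{\beta_j} = 1$ for all $j$, giving $g = 1$. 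For the converse I would argue contrapositively: if some $\{\alpha_i,\beta_i\}$ is not a base, choose $1 \neq z \in L_{\alpha_i} \cap L_{\beta_i}$ and take $g$ to be the element of the base group with $i$-th component $z$ and all others trivial; if instead $\Pi$ is not distinguishing, pick $1 \neq \sigma$ stabilising every part and, using that $j$ and $j^\sigma$ then always lie in the same part, select for each $j$ a component $z_j$ realising the required pair map. In either case $g$ is a nontrivial element of $G_\alpha \cap G_\beta$, so $\{\alpha,\beta\}$ is not a base.

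The argument is essentially a careful unwinding of definitions, so the main obstacle is bookkeeping rather than conceptual: keeping the $\sigma^{-1}$ in the product action straight and fixing a consistent convention for the direction in which $L$ moves pairs in $\Gamma^2$. The one genuinely structural point worth flagging is that the reverse direction relies on $G$ being the \emph{full} wreath product: to produce the required nontrivial stabilising element I freely prescribe $\sigma \in P$ (or work inside the base group $L^k$) together with independently chosen components $z_j \in L$, and such an element is guaranteed to lie in $G$ precisely because $G = L \wr P$. This explains why the lemma is stated for the full wreath product rather than an arbitrary product type subgroup.
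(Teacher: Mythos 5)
Your proof is correct. The core verification is the same in both arguments -- unwinding the product action to see that $g=(z_1,\dots,z_k)\sigma$ fixes $\alpha$ and $\beta$ precisely when each $z_j$ maps $(\alpha_j,\beta_j)$ to $(\alpha_{j^\sigma},\beta_{j^\sigma})$ -- but the packaging differs. You compute the pointwise stabiliser directly and, for the converse, explicitly construct a nontrivial element of $G_\alpha\cap G_\beta$ from either a non-base row or a non-identity $\sigma$ fixing every part of $\Pi$. The paper instead sets up a bijection $f:\mathcal{A}\to\mathcal{B}$ between $k\times 2$ arrays satisfying (A1)--(A2) and ordered bases of size $2$ for $G$, proving surjectivity by a normalisation trick (replacing rows in the same $L$-orbit by equal rows) rather than by exhibiting a stabilising element. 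The reason for that heavier framing is stated explicitly in the paper: the bijection, and in particular the count $|\mathcal{A}|=|\mathcal{B}|$, is reused verbatim in the proof of Theorem \ref{thm:r(W)_cal} to derive the formula for $r(G)$. Your version is arguably cleaner as a standalone proof of the lemma, and your closing observation -- that the converse direction genuinely requires $G$ to be the full wreath product, since one must freely prescribe $\sigma$ and the components $z_j$ -- is exactly the structural point that drives the more delicate analysis of proper subgroups $G<L\wr P$ in Section \ref{s:gen}.
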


\begin{proof}
	Let $\mathcal{A}$ be the set of $k\times 2$ arrays of elements in $\Gamma$ such that
	\begin{itemize}\addtolength{\itemsep}{0.2\baselineskip}
		\item[(A1)] each row of the array is an ordered base for $L$; and
		\item[(A2)] the partition of $\{1,\dots,k\}$ with respect to $L$-orbits on rows is a distinguishing partition for $P$.
	\end{itemize}	
Let $\mathcal{B}$ be the set of ordered pairs of bases for $G$ and let $A$ be an array in $\mathcal{A}$. We claim that the two columns of $A$ form a base for $G$, so there is a natural map $f:\mathcal{A}\to\mathcal{B}$.

To justify the claim, let $\alpha = (\alpha_1,\dots,\alpha_k)$ and $\beta = (\beta_1,\dots,\beta_k)$ be the two columns of $A$ and suppose $x\in G_\alpha\cap G_\beta$. Write $x = z\sigma$, where $z = (z_1,\dots,z_k)\in L^k$ and $\sigma\in P$. If $i^\sigma = j$, then $(\alpha_i,\beta_i)^{z_i} = (\alpha_j,\beta_j)$ and thus $(\alpha_i,\beta_i)$ and $(\alpha_j,\beta_j)$ are in the same $L$-orbit. It follows that $\sigma$ fixes the partition given by (A2), so $\sigma = 1$ since this is a distinguishing partition for $P$. Hence, $z_i\in L_{\alpha_i}\cap L_{\beta_i} = 1$ by (A1) and thus $x = 1$ as required. 

In order to prove the lemma, it suffices to show that $f$ is a bijection. It is clear that $f$ is injective. To show that $f$ is surjective, let $A$ be a $k \times 2$ array of elements in $\Gamma$ and assume the two columns of $A$ form a base for $G$. We need to show that $A$ satisfies the conditions labelled (A1) and (A2) above. 
	
Suppose there is a row of $A$ that does not form an ordered base for $L$. There there exists a non-trivial element in $L$ fixing both entries, which implies that there is a non-trivial element in $L^k$ stabilising the two columns of $A$ pointwise. But this is  incompatible with the fact that the two columns of $A$ form a base for $G$, whence (A1) is satisfied. 

Now let us turn to (A2). As we have just noted, each row of $A$ is an ordered base for $L$ of size $2$. Now $L$ acts on the set of such bases, so we can use the rows of $A$ to construct a partition $\Pi$ of $\{1,\dots,k\}$, where $i$ and $j$ are in the same part if and only if the $i$-th and $j$-th rows of $A$ are in the same $L$-orbit. We need to show that $\Pi$ is a distinguishing partition for $P$. Without loss of generality, we may assume that any two rows of $A$ in the same $L$-orbit are equal (with this assumption, note that the two columns of $A$ still form a base for $G$). Let $\sigma\in P$ and note that $\sigma$ permutes the rows of $A$. Now any two rows of $A$ are equal if and only if they are in the same part of $\Pi$, so if $\s$ fixes $\Pi$ then 
$\s$ must fix every entry of $A$. In particular, this implies that $\sigma$ fixes the two columns of $A$, which form a base for $G$. Hence, $\sigma = 1$ and thus $\Pi$ is a distinguishing partition. Therefore, property (A2) also holds and we conclude that $f$ is surjective.
\end{proof}

We will now use this lemma to calculate $r(L\wr P)$. In order to do this, we need the following definition.

\begin{defn}\label{d:tm}
Let $P \leqs S_k$ be a permutation group of degree $k$ and let $1 \leqs m \leqs k$ be an integer. We will write $t_m$ for the number of (unordered) distinguishing partitions for the action of $P$ on $[k]$ into $m$ non-empty parts.
\end{defn}

\begin{rem}\label{r:tmm}
Note that $t_m>0$ if and only if $m \geqs D(P)$. In general, it is rather difficult to compute $t_m$ precisely, but this is possible in some special cases. For example, if $P=S_k$ then $D(P)=k$ and $t_k=1$ since the partition of $[k] = \{1, \ldots, k\}$ into singletons is clearly the only distinguishing partition for $P$. Similarly, if $P = A_k$ then $D(P)=k-1$ and we have $t_{k-1} = k(k-1)/2$. As discussed in Section \ref{s:intro}, if $m \geqs D(P)$ then  
\begin{equation}\label{e:bds}
\frac{|P|}{m!} \leqs t_m \leqs S(k,m),
\end{equation}
where $S(k,m)$ denotes the number of partitions of $[k]$ into $m$ non-empty parts (a Stirling number of the second kind).
\end{rem}

The following result is Theorem \ref{thm:r(W)_cal}.

\begin{thm}\label{t:reg}
	Let $G = L \wr P$ be a product type primitive group acting on $\O = \Gamma^k$. Then 
	\[
	r(G) =\frac{1}{|P|}\sum_{m=D(P)}^km!{r(L)\choose m}t_m.
	\]
\end{thm}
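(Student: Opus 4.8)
The plan is to count regular suborbits of $G = L \wr P$ by counting the regular $G_\alpha$-orbits on $\Omega$, or equivalently (since $\Sigma(\alpha)$ is the union of regular $G_\alpha$-orbits) by counting regular orbits via the pairs $\{\alpha,\beta\}$ that form a base for $G$. The key structural input is Lemma \ref{l:BG_l:2.8}: a pair $\{\alpha,\beta\}$ is a base for $G$ exactly when each coordinate pair $\{\alpha_i,\beta_i\}$ is a base for $L$ and the induced partition $\Pi$ of $[k]$ (by $L$-orbit on $\Gamma^2$ of the coordinate pairs) is a distinguishing partition for $P$. So I would first reduce the computation of $r(G)$ to a counting problem phrased entirely in terms of the $r(L)$ regular $L$-orbits on $\Gamma^2$ and the distinguishing partitions of $P$.

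Concretely, a regular suborbit of $G$ corresponds to a $G_\alpha$-orbit of neighbours $\beta$, and I would instead count ordered pairs $(\alpha,\beta)$ forming a base, then divide by the appropriate orbit sizes. The cleanest route: fix the data of which regular $L$-orbit each coordinate pair $(\alpha_i,\beta_i)$ lies in. Using the array description from the proof of Lemma \ref{l:BG_l:2.8}, after normalising so that rows in the same $L$-orbit are equal, a base pair for $G$ is encoded by a function assigning to each $i \in [k]$ one of the $r(L)$ regular $L$-orbits, subject to the constraint that the level-set partition $\Pi$ of this function is a distinguishing partition for $P$. I would organise the count by the number $m$ of distinct orbits used, i.e.\ the number of parts of $\Pi$. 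For each distinguishing partition into $m$ parts (there are $t_m$ of them), the number of ways to injectively label its $m$ parts by distinct regular $L$-orbits is $m!\binom{r(L)}{m}$, and these labellings are distinct precisely up to the $L$-action already folded into "regular orbit." Summing over $m$ from $D(P)$ (below which $t_m = 0$) to $k$ gives the total count of base configurations, and dividing by $|P|$ accounts for the action of $P$ relating configurations in the same $G$-orbit, yielding the stated formula.

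The step I expect to be the main obstacle is getting the normalisation factors exactly right — making precise the correspondence between the array/configuration count and the genuine count of regular suborbits $r(G)$, and justifying the division by $|P|$ (rather than by $|G_\alpha|$ or some other quantity). The subtlety is that a regular suborbit is a regular $G_\alpha$-orbit, and I must track how choosing a base neighbour $\beta$ up to $G_\alpha$-equivalence corresponds to choosing a labelled distinguishing partition up to the residual symmetry of $P$ acting on $[k]$. I would argue that, having fixed $\alpha$ and worked with the normalised arrays whose rows are determined by their $L$-orbit, the freedom in $\beta$ giving the \emph{same} regular $G_\alpha$-orbit is governed exactly by $P$ permuting the coordinates, so that each regular suborbit is counted $|P|$ times in the raw configuration count. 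Verifying that the $P$-action on these configurations is free on the relevant (distinguishing) configurations — which is essentially the content of the distinguishing property, since a distinguishing partition has trivial setwise stabiliser in $P$ for its parts when combined with an injective labelling — is the crux, and once established the division by $|P|$ and the final summation follow routinely.
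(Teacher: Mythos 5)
Your proposal is correct and follows essentially the same route as the paper: both rest on Lemma \ref{l:BG_l:2.8} and count base configurations organised by the number $m$ of regular $L$-orbits appearing, giving the factor $m!\binom{r(L)}{m}t_m$ for each $m$. The only difference is bookkeeping at the final normalisation step: the paper counts the full set of arrays, of size $|\mathcal{A}|=|\mathcal{B}|=|G|r(G)=|L|^k|P|r(G)$, and divides by $|G|$, whereas you quotient out the coordinatewise $L^k$-action first and divide by $|P|$, justified by the freeness of the $P$-action on injectively labelled distinguishing partitions — which is valid and is precisely what the distinguishing property supplies.
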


\begin{proof}
Set $r = r(L)$ and $D = D(P)$. If $r < D$ then Theorem \ref{thm:BC} gives $b(G) \geqs 3$, so $r(G) = 0$ and the result follows (note that each summand in the given expression is $0$ in this situation). Now assume $r \geqs D$ and adopt the notation in the proof of Lemma \ref{l:BG_l:2.8}, where we showed that $|\mathcal{A}| = |\mathcal{B}|$. In view of \eqref{e:Q}, we see that $|\mathcal{B}| = |G|r(G)$ and so we just need to count the number of $k \times 2$ arrays satisfying (A1) and (A2).
	
Let $A$ be an arbitrary $k\times 2$ array satisfying (A1) and (A2), where $\Pi$ is the partition of $\{1, \ldots, k\}$ corresponding to (A2). Let $m$ be the number of parts in $\Pi$, so $D \leqs m \leqs k$ and there are precisely $t_m$ possibilities for $\Pi$ in total. 
Each part comprising $\Pi$ corresponds to a distinct regular $L$-orbit on $\Gamma^2$. Since there are $r$ such orbits, it follows that there are $m!{r\choose m}$ different ways to label the rows of $A$ by regular $L$-orbits on $\Gamma^2$. In addition, since each of these $L$-orbits has length $|L|$, there are $|L|^k$ possibilities for $A$ with respect to each choice of labelling of rows by regular $L$-orbits. To summarise, we deduce that 
	\[
	|G|r(G) = |\mathcal{A}| = \sum_{m=D}^km!{r\choose m}t_m|L|^k,
	\]
	and the result follows since $|G| = |L|^k|P|$.
\end{proof}

\begin{rem}\label{r:trans}
Notice that Theorem \ref{t:reg} holds for any group of the form $G = L \wr P$ with its product action on $\O = \Gamma^k$, where $L \leqs {\rm Sym}(\Gamma)$ is transitive. In particular, there is no need to impose any conditions on $P$ (for example, if $P=1$ then $r(G) = r(L)^k$).
\end{rem}

We now present several corollaries of Theorem \ref{thm:r(W)_cal}. Further applications will be discussed in the next section.

\begin{cor}
	\label{cor:val_P=Sk}
	Let $G = L \wr P$ be a product type primitive group acting on $\O = \Gamma^k$, where $P=S_k$. Then $r(G) = {r(L) \choose k}$.
\end{cor}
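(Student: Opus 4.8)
The plan is to apply Theorem \ref{thm:r(W)_cal} directly, after pinning down the two combinatorial quantities it involves for the group $P = S_k$, namely the distinguishing number $D(P)$ and the counts $t_m$. The key observation is that the only distinguishing partition for the natural action of $S_k$ on $[k]$ is the partition into singletons. Indeed, if a partition $\Pi$ has a part $\pi$ with $|\pi| \geqs 2$, then any transposition of two points of $\pi$ lies in $\bigcap_i (S_k)_{\{\pi_i\}}$ and fixes $\Pi$ setwise, so $\Pi$ is not distinguishing; hence a distinguishing partition can have no part of size exceeding $1$, forcing it to be the partition into $k$ singletons. Consequently $D(S_k) = k$, this partition is the unique distinguishing one, and therefore $t_k = 1$ while $t_m = 0$ for all $m < k$. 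These facts are already recorded in Remark \ref{r:tmm}, so I would simply cite them.

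With these values in hand, the sum in Theorem \ref{thm:r(W)_cal} collapses to its single surviving term at $m = k$. Using $|P| = |S_k| = k!$, I would compute
\[
r(G) = \frac{1}{|P|}\sum_{m=D(P)}^{k} m!{r(L)\choose m}t_m = \frac{1}{k!}\,k!\,{r(L)\choose k}\cdot t_k = {r(L)\choose k},
\]
which is the claimed formula. Note that this expression is consistent with Theorem \ref{thm:BC}: the binomial coefficient ${r(L)\choose k}$ is positive precisely when $r(L) \geqs k = D(S_k)$, i.e.\ exactly when $b(G) = 2$.

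There is essentially no obstacle here beyond verifying the two elementary facts $D(S_k) = k$ and $t_k = 1$; the remainder is a one-line specialisation of Theorem \ref{thm:r(W)_cal}. The only point worth stating explicitly, to make the argument self-contained, is the justification that no partition of $[k]$ other than the partition into singletons is distinguishing for $S_k$, so that the sum really does reduce to a single term.
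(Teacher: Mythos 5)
Your proof is correct and is essentially identical to the paper's own argument: the paper also deduces the formula by specialising Theorem \ref{thm:r(W)_cal}, using the facts that $D(S_k)=k$ and $t_k=1$ (recorded in Remark \ref{r:tmm}). Your explicit justification that the partition into singletons is the unique distinguishing partition for $S_k$ is a welcome but minor elaboration of what the paper cites.
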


\begin{proof}
This is an immediate application of Theorem \ref{thm:r(W)_cal}, noting that for $P=S_k$ we have $D(P)=k$ and $t_k=1$. 
\end{proof}

The next corollary is \cite[Proposition 3.5]{BG_Saxl}; here we give a short proof, as an application of Theorem \ref{thm:r(W)_cal}.

\begin{cor}
	\label{cor:val_P=Cp}
	Let $G = L \wr P$ be a product type primitive group acting on $\O = \Gamma^k$, where $P = C_k$ and $k$ is a prime. Then $r(G) = (r(L)^k-r(L))/k$. 
	\end{cor}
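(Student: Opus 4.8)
The plan is to apply Theorem~\ref{thm:r(W)_cal} directly with $P = C_k$ of prime order and to evaluate the resulting sum. The essential input is a description of the distinguishing partitions of $C_k$ acting naturally on $[k]$, which determines the coefficients $t_m$. Since $k$ is prime, any non-identity element of $C_k$ is a $k$-cycle and therefore acts as a single cycle on $[k]$; its only invariant partitions are the partition into singletons and the partition with one part. Consequently, a partition $\Pi$ of $[k]$ fails to be distinguishing for $C_k$ precisely when its setwise stabiliser contains this $k$-cycle, which happens only for the two trivial partitions. Every other partition is distinguishing, so $D(C_k) = 2$ and, for each $m$ with $2 \leqs m \leqs k-1$, we have $t_m = S(k,m)$, the full Stirling number, since every $m$-part partition with $2 \leqs m \leqs k-1$ is necessarily non-trivial and hence distinguishing.

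First I would record these values of $t_m$, noting also that $t_k = 1$ and that the $m=1$ term is excluded since $D(C_k)=2$. Substituting into the formula of Theorem~\ref{thm:r(W)_cal} gives
\[
r(G) = \frac{1}{k}\sum_{m=2}^{k} m!\binom{r(L)}{m} t_m,
\]
where $t_m = S(k,m)$ for $2 \leqs m \leqs k-1$ and $t_k = 1 = S(k,k)$. Hence the summand for every $m$ in the range equals $m!\binom{r(L)}{m}S(k,m)$, and the sum becomes $\frac{1}{k}\sum_{m=2}^{k} m!\binom{r(L)}{m}S(k,m)$.

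The remaining step is the combinatorial identity
\[
\sum_{m=1}^{k} m!\binom{r}{m} S(k,m) = r^k,
\]
valid for any positive integer $r$. This is the standard expansion of $r^k$ as the number of functions from a $k$-set to an $r$-set, stratified by the size $m$ of the image: choosing the image ($\binom{r}{m}$ ways), partitioning the domain into the $m$ non-empty fibres ($S(k,m)$ ways), and ordering the fibres against the chosen image values ($m!$ ways). I would simply quote this identity. The $m=1$ term equals $1!\binom{r}{1}S(k,1) = r$, so subtracting it isolates precisely the sum over $m \geqs 2$ appearing above, yielding $\sum_{m=2}^{k} m!\binom{r}{m}S(k,m) = r^k - r$ with $r = r(L)$. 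Dividing by $k$ gives $r(G) = (r(L)^k - r(L))/k$, as required.

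I do not anticipate a serious obstacle here; the only point requiring care is the justification that $t_m = S(k,m)$ for all $m$ in the relevant range, which rests on the primality of $k$ forcing every non-trivial partition to be distinguishing. Once this is in place, the computation is a routine instance of the standard surjection/Stirling identity, and the factor $k = |P|$ in the denominator matches cleanly.
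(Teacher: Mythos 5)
Your proposal is correct and follows essentially the same route as the paper: it identifies $D(C_k)=2$ and $t_m = S(k,m)$ for all $2 \leq m \leq k$, substitutes into Theorem \ref{thm:r(W)_cal}, and evaluates the sum via the standard identity $\sum_{m} m!\binom{r}{m}S(k,m) = r^k$. One small slip in your justification: the partition of $[k]$ into singletons \emph{is} distinguishing (its stabiliser $\bigcap_i P_{\{\pi_i\}}$ is the pointwise stabiliser of $[k]$, hence trivial), so the one-part partition is the only non-distinguishing one; this does not affect your computation, since you correctly take $t_k = 1$.
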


\begin{proof}	
Set $r = r(L)$ and note that $D(P) = 2$, so Theorem \ref{thm:BC} implies that $r(G) \geqs 1$ if and only if $r \geqs 2$. Now any partition of $[k]$ into at least two parts is a distinguishing partition for $P$, so for $m \geqs 2$ we observe that $t_m$ coincides with the total number of partitions of $[k]$ into $m$ parts. In other words, $t_m = S(k,m)$ is a Stirling number of the second kind. Therefore,
	\[
	\sum_{m = 2}^k m!{r\choose m}t_m = \sum_{m = 2}^k m!{r\choose m}S(k,m) = r^k-r,
	\] 
where the final equality follows from a basic property of Stirling numbers of the second kind (see \cite[p.75]{Stanley}, for example). By applying Theorem \ref{thm:r(W)_cal}, we conclude that $r(G)k = r^k-r$.
\end{proof}

For the remainder of Section \ref{s:reg}, we focus our attention on the groups with $r(G)=1$. As a starting point, we establish Corollary \ref{c:unique}.

\begin{cor}\label{c:reg}
	Let $G = L \wr P$ be a product type primitive group. Then $r(G) = 1$ if and only if $r(L) = D(P)$ and $t_{D(P)} = |P|/D(P)!$.
\end{cor}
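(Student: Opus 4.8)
The plan is to derive Corollary~\ref{c:reg} directly from the formula in Theorem~\ref{thm:r(W)_cal}, treating $r(G)=1$ as a statement about a sum of non-negative integers. Write $r = r(L)$ and $D = D(P)$, so that
\[
r(G) = \frac{1}{|P|}\sum_{m=D}^{k} m!\binom{r}{m} t_m.
\]
Since $r(G)=1$ forces $r(G)\geqs 1$, Theorem~\ref{thm:BC} gives $r \geqs D$, so the sum is genuinely non-empty and every summand $m!\binom{r}{m}t_m$ is a non-negative integer (recall $t_m \geqs 0$, with $t_m>0$ exactly when $m \geqs D$ by Remark~\ref{r:tmm}).

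The key idea is to isolate the smallest index $m=D$ and bound it from below. First I would note that the $m=D$ term alone satisfies
\[
\frac{1}{|P|}\, D!\binom{r}{D} t_D \;\geqs\; \frac{1}{|P|}\, D!\binom{r}{D}\cdot\frac{|P|}{D!} \;=\; \binom{r}{D},
\]
using the lower bound $t_D \geqs |P|/D!$ from \eqref{e:bds} (valid since $D \geqs D(P)$). Because all the other summands (those with $m>D$) are non-negative, this shows $r(G) \geqs \binom{r}{D}$. Hence $r(G)=1$ immediately forces $\binom{r}{D}=1$, i.e.\ $r=D$ (as $r\geqs D\geqs 1$ rules out $r=0$), and simultaneously forces every term with $m>D$ to vanish and the $m=D$ term to equal exactly $1$.

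With $r=D$ in hand, the formula collapses dramatically: the binomial $\binom{r}{m}=\binom{D}{m}$ is zero for all $m>D$, so the sum contains only the single term $m=D=r$, giving $r(G) = \frac{1}{|P|} D!\,t_D$. Therefore $r(G)=1$ is equivalent, under $r=D$, to $t_D = |P|/D!$, which is the asserted condition $t_{D(P)} = |P|/D(P)!$. For the converse direction I would simply substitute $r=D$ and $t_D = |P|/D!$ back into the formula and verify $r(G)=1$, which is immediate. I expect no serious obstacle here: the only point requiring a little care is justifying that the extraction of the $m=D$ term is clean, i.e.\ confirming that $r=D$ really does annihilate all higher terms via $\binom{D}{m}=0$ for $m>D$, so that the single-term evaluation and the lower-bound argument are consistent. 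The whole argument is a short consequence of Theorem~\ref{thm:r(W)_cal} combined with the elementary bound $t_{D(P)}\geqs |P|/D(P)!$.
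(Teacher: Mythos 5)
Your proposal is correct and follows essentially the same route as the paper: both arguments isolate the $m=D(P)$ term in the formula of Theorem \ref{thm:r(W)_cal}, apply the lower bound $t_{D}\geqs |P|/D!$ from \eqref{e:bds} together with $r(L)\geqs D(P)$ from Theorem \ref{thm:BC}, and force equality throughout to conclude $r(L)=D(P)$ and $t_{D}=|P|/D!$. The only cosmetic difference is that you make explicit the vanishing of the higher terms via $\binom{D}{m}=0$ for $m>D$, which the paper leaves implicit.
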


\begin{proof}
Set $r = r(L)$ and $D = D(P)$. If $r = D$ and $t_D = |P|/D!$ then Theorem \ref{thm:r(W)_cal} yields $r(G)=1$. On the other hand, if $r(G)=1$ then $r \geqs D$ by Theorem \ref{thm:BC} and we have $D!t_D\geqs |P|$ (see \eqref{e:bds}), whence 	
	\[ 
	1 = r(G) = \frac{1}{|P|}\sum_{m = D}^km!{r\choose m}t_m\geqs \frac{1}{|P|}D!{r\choose D}t_D \geqs 1.
	\]
Therefore $r = D$ and $t_D = |P|/D!$ as required.
\end{proof}

In view of the corollary, we are interested in understanding the condition 
\begin{equation}\label{e:tdp}
t_{D(P)} = \frac{|P|}{D(P)!}.
\end{equation}
As previously noted (see Remark \ref{r:tmm}), $P=S_k$ has this property, but $P = A_k$ does not. Our proof of Theorem \ref{t:new} relies on the classification of the primitive groups with this property, which is stated as Proposition \ref{p:cond_5} below.

Fix an integer $m \in \{1, \ldots, k\}$ and let 
\[
\mathcal{P}_m([k]) = \left\{ (\pi_1, \ldots, \pi_m) \,:\, \mbox{$\pi_i \subseteq [k]$, $\pi_i \cap \pi_j = \emptyset$ for $i\ne j$, $\bigcup_i \pi_i = [k]$} \right\} 
\]
be the set of ordered partitions of $[k]$ into $m$ parts, where some of the parts are allowed to be the empty set. Note that we may identify $\mathcal{P}_2([k])$ with the power set of $[k]$. Then $P$ acts naturally on $\mathcal{P}_m([k])$ via
\[
(\pi_1,\ldots,\pi_{m})^{\sigma} = (\pi_1^{\sigma},\ldots,\pi_{m}^{\sigma})
\]
and we see that $(\pi_1,\ldots,\pi_{m})$ is in a regular $P$-orbit if and only if $\{\pi_1,\dots,\pi_{m}\}$ is a distinguishing partition for $P$. The following lemma, which is an immediate consequence of this observation, will be useful.

\begin{lem}\label{l:unique}
Let $P \leqs S_k$ be a transitive group of degree $k$. Then $t_{D(P)} = |P|/D(P)!$ if and only if $P$ has a unique regular orbit on $\mathcal{P}_{D(P)}([k])$.
\end{lem}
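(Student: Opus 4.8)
Set $D = D(P)$. The plan is to count, in two different ways, the points of $\mathcal{P}_{D}([k])$ that lie in regular $P$-orbits, and then compare the total with $|P|$. By the observation recorded immediately before the lemma, a tuple $(\pi_1,\ldots,\pi_D) \in \mathcal{P}_D([k])$ lies in a regular $P$-orbit precisely when $\{\pi_1,\ldots,\pi_D\}$ is a distinguishing partition for $P$, that is, when $\bigcap_{i=1}^D P_{\{\pi_i\}} = 1$. Since a regular orbit has trivial point stabiliser, it has length exactly $|P|$, so the number of regular orbits is the number of regular-orbit points divided by $|P|$; the whole proof reduces to counting those points.

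The one genuine subtlety — and the step I would single out as the main obstacle — is that the parts of a regular-orbit tuple are allowed to be empty in the definition of $\mathcal{P}_D([k])$, whereas distinguishing partitions are required to have $m$ \emph{non-empty} parts. The point is that this cannot happen when $m = D = D(P)$: if some $\pi_j = \emptyset$, then $P_{\{\pi_j\}} = P$, so the triviality of $\bigcap_i P_{\{\pi_i\}}$ is already forced by the remaining parts, and discarding the empty parts yields a distinguishing partition of $[k]$ into fewer than $D$ non-empty parts, contradicting the minimality of $D(P)$. Hence every regular-orbit tuple in $\mathcal{P}_D([k])$ consists of $D$ pairwise disjoint non-empty sets and so determines a genuine (unordered) distinguishing partition into exactly $D$ parts. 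It is precisely here that working at the level $m = D(P)$, rather than a general $m$, is essential.

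With that in hand the count is immediate. An unordered distinguishing partition into $D$ non-empty parts has $D$ distinct parts, hence gives rise to exactly $D!$ ordered tuples in $\mathcal{P}_D([k])$, each a regular-orbit point; conversely, by the previous paragraph every regular-orbit point arises from a unique such partition. As there are $t_{D(P)}$ distinguishing partitions into $D$ parts, the number of regular-orbit points equals $D!\,t_{D(P)}$, and so the number of regular $P$-orbits on $\mathcal{P}_D([k])$ is $D!\,t_{D(P)}/|P|$. Finally, $t_{D(P)} \geqs 1$ by definition of $D(P)$ and $D!\,t_{D(P)} \geqs |P|$ by \eqref{e:bds}, so there is always at least one regular orbit; there is a unique one exactly when $D!\,t_{D(P)}/|P| = 1$, i.e. when $t_{D(P)} = |P|/D(P)!$, which is the desired equivalence. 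Note that transitivity of $P$ plays no role beyond the standing setup: the argument is purely combinatorial.
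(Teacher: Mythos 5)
Your proof is correct and is essentially the argument the paper intends: the paper presents the lemma as an immediate consequence of the observation that a tuple lies in a regular $P$-orbit if and only if its parts form a distinguishing partition, and your count (each of the $t_{D(P)}$ unordered partitions yields $D(P)!$ ordered tuples, while each regular orbit has length $|P|$) is precisely how that observation is turned into the stated equivalence. Your explicit check that no regular-orbit tuple in $\mathcal{P}_{D(P)}([k])$ can have an empty part, by minimality of $D(P)$, addresses a point the paper glosses over, but it does not change the route of the argument.
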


Suppose \eqref{e:tdp} holds and $\{\pi_1, \ldots, \pi_{D(P)}\}$ is a distinguishing partition for $P$. Then the unique regular $P$-orbit on $\mathcal{P}_{D(P)}([k])$ is represented by 
$(\pi_1, \ldots, \pi_{D(P)})$ and all $D(P)!$ rearrangements of this ordered partition are in the same $P$-orbit. Therefore, the $\pi_i$ all have the same size and thus $D(P)$ must divide $k$.

Since we are focussing on the case where $P \leqs S_k$ is primitive, we have a special interest in the groups with $D(P) = 2$ (see Theorem \ref{t:prim}). Here Lemma \ref{l:unique} implies that \eqref{e:tdp} holds if and only if $P$ has a unique regular orbit on the power set of $[k]$ and we will classify the primitive groups with this property (see Corollary \ref{c:unique_power} below). This can be viewed as a natural extension of the main theorem of \cite{S_dist} (stated here as Theorem \ref{t:prim}), which determines the primitive groups $P \leqs S_k$ with a regular orbit on the power set of $[k]$. It also extends earlier work of Dolfi \cite{D_dist}, where the primitive groups with a unique regular orbit on $\mathcal{P}_3([k])$ or $\mathcal{P}_4([k])$ are classified.

For the remainder of this section, define 
\begin{equation}\label{e:X}
X = \{\Lambda \subseteq [k] \, :\, |\Lambda| \ne k/2\}
\end{equation}
and note that $P$ has a natural action on $X$. Of course, if $k$ is odd then $X$ coincides with the power set of $[k]$.

\begin{lem}\label{l:X_unique}
If $P$ has a regular orbit on $X$, then $t_2>|P|/2$.
\end{lem}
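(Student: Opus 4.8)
The plan is to translate everything into the language of the power set, which we identify with $\mathcal{P}_2([k])$ via $\Lambda \leftrightarrow (\Lambda, [k]\setminus\Lambda)$, and then simply exhibit enough distinguishing partitions by hand. Writing $\Lambda^c = [k]\setminus\Lambda$, my first step is to record the dictionary established just before Lemma~\ref{l:unique}: the ordered pair $(\Lambda,\Lambda^c)$ lies in a regular $P$-orbit on $\mathcal{P}_2([k])$ precisely when $\{\Lambda,\Lambda^c\}$ is a distinguishing partition for $P$, and since the setwise stabilisers satisfy $P_{\{\Lambda\}} = P_{\{\Lambda^c\}} = P_\Lambda$, this happens exactly when $P_\Lambda = 1$. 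Thus $t_2$ counts distinguishing partitions $\{\Lambda,\Lambda^c\}$, each of which is recorded by a subset $\Lambda$ with trivial setwise stabiliser (and both parts non-empty, which is automatic once $P\neq 1$, as $\emptyset$ and $[k]$ are $P$-fixed and so never lie in a regular orbit).

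Next I would invoke the hypothesis. Since $P$ has a regular orbit on $X$, I fix $\Lambda \in X$ with $P_\Lambda = 1$; by the definition of $X$ in \eqref{e:X} this $\Lambda$ satisfies $|\Lambda| \neq k/2$. The crux of the argument is a cardinality obstruction: because $P \leqs S_k$ preserves the size of any subset, every member of the orbit $\Lambda^P$ has cardinality $|\Lambda|$, whereas $|\Lambda^c| = k - |\Lambda| \neq |\Lambda|$ (this is exactly where the condition $|\Lambda| \neq k/2$ enters, via $|\Lambda| = |\Lambda^c| \iff |\Lambda| = k/2$). Consequently $\Lambda^c \notin \Lambda^P$.

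From this I can produce many distinct distinguishing partitions at once. As $\mu$ ranges over the $|P|$ elements of the regular orbit $\Lambda^P$, each $\mu$ has $P_\mu = 1$, so $\{\mu,\mu^c\}$ is a distinguishing partition into two non-empty parts. I claim these $|P|$ partitions are pairwise distinct: if $\{\mu_1,\mu_1^c\} = \{\mu_2,\mu_2^c\}$ with $\mu_1,\mu_2 \in \Lambda^P$, then either $\mu_2 = \mu_1$ or $\mu_2 = \mu_1^c$, and the latter is impossible since $\mu_1^c$ has cardinality $k-|\Lambda| \neq |\Lambda|$ and hence cannot lie in $\Lambda^P$. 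Therefore $t_2 \geqs |\Lambda^P| = |P| > |P|/2$, which gives the desired inequality (in fact slightly more).

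I do not expect any serious obstacle, as this is a short combinatorial lemma; the only points requiring care are the stabiliser identity $P_{\{\Lambda\}} = P_{\{\Lambda^c\}}$ that matches regularity on the power set with the distinguishing condition, and the harmless edge cases $\Lambda \in \{\emptyset,[k]\}$, excluded once $P \neq 1$. The cardinality step is genuinely the heart of the matter: it is precisely what forces a second regular orbit and thereby separates this situation from the \emph{unique} regular orbit case $t_2 = |P|/2$ characterised in Lemma~\ref{l:unique}.
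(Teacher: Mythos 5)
Your proof is correct and follows essentially the same route as the paper: both arguments hinge on the observation that $|\Lambda| \neq k/2$ forces $\Lambda$ and $[k]\setminus\Lambda$ to have different cardinalities, so the complement cannot lie in the orbit $\Lambda^P$, which yields strictly more distinguishing partitions with two parts than the minimum $|P|/2$. The only cosmetic difference is that you count the $|P|$ unordered partitions $\{\mu,\mu^c\}$ directly (obtaining $t_2 \geqs |P|$), whereas the paper phrases the same count as the existence of two regular $P$-orbits on $\mathcal{P}_2([k])$ and then invokes Lemma~\ref{l:unique}.
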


\begin{proof}
If $\Lambda$ is in a regular orbit of $P$ on $X$, then its complement $[k]\setminus \Lambda$ is also contained in a regular $P$-orbit. Therefore, $P$ has at least two regular orbits on $\mathcal{P}_2([k])$ and thus $t_2>|P|/2$ by Lemma \ref{l:unique}.
\end{proof}

Clearly, if $D(P)=2$ and $k$ is odd, then $P$ has a regular orbit on $X$ and thus $t_2>|P|/2$ by the lemma. Therefore, we are interested in the case where $D(P)=2$ and $k$ is even, so 
\[
|X| = 2^k-{k\choose k/2}.
\]

\begin{lem}
	\label{l:|X|}
	We have $|X| \geqs 2^{k-1}$.
\end{lem}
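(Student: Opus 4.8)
The plan is to reduce the claim to a standard estimate on the central binomial coefficient. Recall from the discussion preceding the lemma that we are in the case where $D(P) = 2$ and $k$ is even, so the complement of $X$ inside the power set of $[k]$ consists precisely of the $\binom{k}{k/2}$ subsets of size $k/2$. Hence $|X| = 2^k - \binom{k}{k/2}$, and the desired inequality $|X| \geqs 2^{k-1}$ is equivalent to
\[
\binom{k}{k/2} \leqs 2^{k-1},
\]
that is, to the assertion that the central binomial coefficient is at most half of the total $\sum_{j=0}^k \binom{k}{j} = 2^k$.

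To establish this, I would compare the central term $\binom{k}{k/2}$ with its two immediate neighbours. Using the elementary ratio $\binom{k}{k/2-1}/\binom{k}{k/2} = (k/2)/(k/2+1) = k/(k+2)$, together with the symmetry $\binom{k}{k/2-1} = \binom{k}{k/2+1}$, one obtains
\[
\binom{k}{k/2-1} + \binom{k}{k/2+1} = \frac{2k}{k+2}\binom{k}{k/2}.
\]
Since $P \leqs S_k$ is transitive (indeed primitive), we have $k \geqs 2$ and hence $2k \geqs k+2$, so the right-hand side is at least $\binom{k}{k/2}$. It then follows that $\sum_{j \ne k/2}\binom{k}{j} \geqs \binom{k}{k/2-1} + \binom{k}{k/2+1} \geqs \binom{k}{k/2}$, whence $2^k - \binom{k}{k/2} \geqs \binom{k}{k/2}$ and therefore $\binom{k}{k/2} \leqs 2^{k-1}$, as required.

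I do not anticipate any genuine obstacle here, as the estimate is elementary and self-contained; the only subtlety worth recording is that the bound is sharp, with equality occurring exactly when $k = 2$ (where the two neighbouring terms are $\binom{2}{0} = \binom{2}{2} = 1$ and $|X| = 2 = 2^{k-1}$). An alternative route would be a short induction on $n = k/2$ using $\binom{2n}{n} = \binom{2n-2}{n-1}\cdot \frac{2(2n-1)}{n}$ and the observation that the multiplier $\frac{2(2n-1)}{n} \leqs 4$, but the comparison with the adjacent terms above is cleaner and makes the equality case transparent, so that is the approach I would adopt.
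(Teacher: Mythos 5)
Your proof is correct, and it reaches the key inequality $\binom{k}{k/2}\leqs 2^{k-1}$ by a genuinely different route from the paper. The paper also reduces to this inequality (after noting that one may assume $k$ even, since otherwise $X$ is the whole power set), but it then invokes Robbins' explicit form of Stirling's approximation to bound $k!/((k/2)!)^2$ from above, obtaining the strict inequality for $k\geqs 4$ and treating $k=2$ separately. Your argument replaces this analytic estimate with the elementary observation that the two neighbours of the central term already account for at least half of $2^k-\binom{k}{k/2}$: the identity $\binom{k}{k/2-1}+\binom{k}{k/2+1}=\frac{2k}{k+2}\binom{k}{k/2}$ is correct, and $\frac{2k}{k+2}\geqs 1$ for all $k\geqs 2$, which is all that is needed. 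This is entirely self-contained, avoids the external reference, and makes the equality case $k=2$ (where $|X|=2=2^{k-1}$) transparent; what it gives up is only the sharper asymptotic information $\binom{k}{k/2}=O(2^k/\sqrt{k})$ implicit in the Stirling bound, which is not used anywhere in the paper. Either proof is perfectly adequate for the application in Lemma \ref{l:mindeg}.
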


\begin{proof}
We may assume $k$ is even, so it suffices to show that
	\[
	{k\choose k/2}\leqs 2^{k-1}.
	\]
	To do this, we will use the following bounds on $m!$ (valid for all $m \geqs 1$), which are a consequence of Stirling's approximation (see \cite{R_Stirling}):
	\[
	\sqrt{2 \pi m}\left(\frac{m}{e}\right)^{m} e^{1/(12m +1)}<m!<\sqrt{2 \pi m}\left(\frac{m}{e}\right)^{m} e^{1/12m}.
	\]
	Therefore, for $k \geqs 4$ we have
	\[
	{k\choose k/2} = \frac{k!}{(k/2)!^2} < \frac{\sqrt{2\pi k}\left(\frac{k}{e}\right)^ke^{1/12k}}{\left(\sqrt{\pi k}\left(\frac{k}{2e}\right)^{k/2}e^{1/(6k+1)}\right)^2} = \left(\frac{2\sqrt{2}}{\sqrt{\pi k}}e^{\frac{1}{12k}-\frac{2}{6k+1}}\right)2^{k-1} < 2^{k-1}
	\]
and the result follows, noting that the case $k = 2$ is clear. 
\end{proof}

Let $\mu(P)$ be the \emph{minimal degree} of $P$, which is the minimal number of points in $[k]$ moved by a non-identity element of $P$. 

\begin{lem}
	\label{l:mindeg}
If $k$ is even and $|P|<2^{\mu(P)/2-1}$, then $P$ has a regular orbit on $X$.
\end{lem}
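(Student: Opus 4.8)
The plan is a union-bound counting argument. A subset $\Lambda \subseteq [k]$ lies in a regular $P$-orbit exactly when its setwise stabiliser is trivial, equivalently when no non-identity element of $P$ fixes $\Lambda$. So I would bound the number of ``bad'' subsets — those fixed setwise by at least one $1 \neq g \in P$ — and show that this number is strictly smaller than $|X|$. Since the bad subsets form a subset of the power set of $[k]$, and $|X| \geqs 2^{k-1}$ by Lemma \ref{l:|X|}, any bound showing there are fewer than $2^{k-1}$ bad subsets forces at least one element of $X$ to have trivial stabiliser, which is precisely the desired regular orbit.

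The main computation is the count of subsets fixed by a single element. For a fixed $1 \neq g \in P$, the subsets of $[k]$ fixed setwise by $g$ are exactly the unions of cycles of $g$, so there are $2^{c(g)}$ of them, where $c(g)$ denotes the number of cycles of $g$ on $[k]$ (including fixed points). The crux of the argument, and the only point where the minimal degree enters, is the bound $c(g) \leqs k - \mu(P)/2$: if $g$ moves $m \geqs \mu(P)$ points, these split into cycles of length at least $2$, hence into at most $m/2$ cycles, and together with the $k-m$ fixed points this gives $c(g) \leqs (k-m) + m/2 = k - m/2 \leqs k - \mu(P)/2$. Consequently each non-identity $g$ fixes at most $2^{k-\mu(P)/2}$ subsets.

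Summing this estimate over all non-identity elements of $P$ (an overcount, but sufficient) bounds the number of bad subsets by
\[
(|P|-1)\cdot 2^{k-\mu(P)/2} < |P|\cdot 2^{k-\mu(P)/2}.
\]
Invoking the hypothesis $|P| < 2^{\mu(P)/2-1}$ then gives the bound $|P|\cdot 2^{k-\mu(P)/2} < 2^{k-1} \leqs |X|$, the last inequality being Lemma \ref{l:|X|}. Hence the number of bad subsets is strictly less than $|X|$, so some $\Lambda \in X$ has trivial setwise stabiliser and therefore lies in a regular $P$-orbit, completing the proof.

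The argument is essentially routine and I anticipate no genuine obstacle; the only step requiring care is the cycle bound $c(g) \leqs k - \mu(P)/2$, which is exactly where the minimal degree is used and which is what makes the hypothesis $|P| < 2^{\mu(P)/2-1}$ strong enough to overcome the crude union bound.
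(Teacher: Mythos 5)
Your proposal is correct and follows essentially the same argument as the paper: both bound the number of subsets fixed setwise by a non-identity element via its cycle count, use the minimal degree to get the estimate $2^{k-\mu(P)/2}$, apply a union bound over elements of $P$, and compare with $|X|\geqslant 2^{k-1}$ from Lemma \ref{l:|X|}. The only cosmetic difference is that the paper sums over prime order elements and phrases the argument contrapositively, whereas you sum over all non-identity elements, which still works under the stated hypothesis.
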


\begin{proof}
We follow the proof of the main theorem of \cite{CNS_dist}. Suppose $P$ has no regular orbit on $X$, which means that each set in $X$ is fixed (setwise) by some prime order element of $P$. Therefore,
\[
|X| = \left| \bigcup_{\s \in \mathcal{R}} C_X(\s) \right| \leqs \sum_{\s \in \mathcal{R}} |C_{X}(\s)|,
\]
where $\mathcal{R}$ is the set of prime order elements in $P$ and $C_X(\s)$ is the set of fixed points of $\s$ on $X$. If $\s \in P$ has prime order $r$, then $\s$ has cycle-shape $(r^m,1^{k-mr})$ on $[k]$ for some $m \geqs 1$ and it is easy to see that 
\[
|C_X(\s)| \leqs 2^{k-m(r-1)} \leqs 2^{k-(r-1)\mu(P)/r} \leqs 2^{k-\mu(P)/2}
\]
since $\mu(P) \leqs mr$ and $r \geqs 2$. By applying Lemma \ref{l:|X|} we deduce that
\[
2^{k-1} \leqs |X| \leqs 2^{k-\mu(P)/2}|P|
\]
and thus $|P| \geqs 2^{\mu(P)/2-1}$. 
\end{proof}

We will use Lemma \ref{l:mindeg} and the O'Nan-Scott theorem to prove the following result. 

\begin{prop}\label{p:ddagger}
Let $P \leqs S_k$ be a primitive group with $D(P) = 2$. Then $P$ has no regular orbit on $X$ if and only if $(k,P) = (2,S_2)$ or $(16,2^4{:}{\rm O}_4^-(2))$.
\end{prop}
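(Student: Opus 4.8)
The plan is to prove the backward implication by direct verification and the forward implication by reducing, through the O'Nan--Scott theorem and the minimal-degree estimate of Lemma \ref{l:mindeg}, to finitely many groups checkable by computer together with one infinite family needing separate treatment. For the backward direction, $(k,P)=(2,S_2)$ is immediate because $X=\{\emptyset,[2]\}$ and both sets are $P$-invariant; for $(k,P)=(16,2^4{:}{\rm O}_4^-(2))$ I would verify with {\sc Magma} that every subset of $[16]$ of size $\neq 8$ has non-trivial setwise stabiliser, which is routine since $|P|=1920$.

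For the forward direction, suppose $P$ is primitive with $D(P)=2$ and no regular orbit on $X$. First I would observe that $k$ is even: if $k$ is odd then $X$ is the whole power set of $[k]$, on which $D(P)=2$ guarantees a regular orbit. As $A_k$ and $S_k$ have distinguishing number exceeding $2$ for $k\geqs 4$ (the degenerate case $k=2$ returning $P=S_2$), we may assume $A_k\not\leqs P$. Lemma \ref{l:mindeg} then gives
\[
|P|\geqs 2^{\mu(P)/2-1}, \qquad\text{equivalently}\qquad \mu(P)\leqs 2\log_2|P|+2,
\]
with $\mu(P)$ the minimal degree, and the goal is to show that this inequality bounds $k$ outside one exceptional family.

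The main reduction runs through the O'Nan--Scott types, pitting an upper bound on $|P|$ against a lower bound on $\mu(P)$. For affine groups $P=V{:}H$ with $p^d=k$, a nontrivial element fixes at most $p^{d-1}$ points, so $\mu(P)\geqs k(1-1/p)\geqs k/2$, while $|P|\leqs p^{d^2+d}$; the inequality then forces $p^d$ small, leaving finitely many affine groups, among which the true exception $2^4{:}{\rm O}_4^-(2)$ survives. For the other types I would use Mar\'oti's order bound $|P|<k^{1+\log_2 k}$, giving $\log_2|P|=O((\log k)^2)$, together with standard minimal-degree bounds for primitive groups with $A_k\not\leqs P$: Babai's estimate $\mu(P)\geqs(\sqrt{k}-1)/2$ in the uniprimitive case and the fact that the non-alternating $2$-transitive groups (classical groups in subspace actions, and the like) have minimal degree a fixed proportion of $k$. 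Since $\sqrt{k}$ dominates $(\log k)^2$, the inequality fails for large $k$, reducing these types to bounded degree.

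The genuine obstacle is the one large family escaping Mar\'oti's bound, namely the product-action groups $(A_m)^2\normeq P\leqs S_m\wr S_2$ on $[m]^2$ with $k=m^2$: here $\mu(P)=2m=2\sqrt{k}$ but $|P|\approx(m!)^2$, so $2^{\mu(P)/2-1}\approx 2^{m-1}$ lies far below $|P|$ and Lemma \ref{l:mindeg} yields nothing. For these (with $m$ even, so $k$ is even) I would refine the count in the proof of Lemma \ref{l:mindeg}, bounding $|X|\leqs\sum_{\sigma\in\mathcal R}|C_X(\sigma)|$ over prime-order elements $\sigma$ but weighting each $\sigma$ by its actual grid-support rather than the global minimal degree. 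An element of small support on $[m]^2$ must have small support in each $S_m$ factor, so such elements are rare, and the weighted sum $\sum_{\sigma\in\mathcal R}2^{-|{\rm supp}(\sigma)|/2}$ stays below $1/2$ for all large $m$, giving $\sum_{\sigma\in\mathcal R}|C_X(\sigma)|<2^{k-1}\leqs|X|$ and hence a regular orbit; equivalently one can simply exhibit a rigid subset of the $m\times m$ grid of size $\neq m^2/2$. The surviving small $m$, the Mathieu groups $M_{12}$ and $M_{24}$, and all the bounded-degree cases accumulated above are then settled by computation. I expect this product-action family to be the crux, precisely because the minimal-degree estimate is too weak there and the real work is the refined fixed-point count (or explicit construction) showing that a regular set outside the middle layer always exists.
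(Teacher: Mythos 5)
Your overall architecture (backward direction by direct check; forward direction via the O'Nan--Scott theorem, pitting order bounds against minimal-degree bounds through Lemma \ref{l:mindeg}, with explicit distinguishing sets where that criterion fails) matches the paper's, and your treatment of the affine case and your diagnosis of the product-action family are both sound. But there is a genuine gap in the reduction for the remaining types: you invoke Mar\'oti's bound $|P| < k^{1+\log_2 k}$ and assert that the only large family escaping it is $S_m \wr S_2$ acting on $[m]^2$. That is not the full list of exceptions. The almost simple groups $P = A_m$ or $S_m$ acting on $\ell$-element subsets of $[m]$, with $2 \leqs \ell < m/2$, form an infinite family of primitive groups of degree $k = \binom{m}{\ell}$ with $D(P) = 2$ for which $|P| = m!$ or $m!/2$ vastly exceeds $k^{1+\log_2 k}$ (these are precisely the $r=1$ instances of case (i) of Mar\'oti's theorem), and for which Lemma \ref{l:mindeg} is just as useless as in the product-action case: for $\ell = 2$ one has $\mu(P) = 2(m-2)$, so $2^{\mu(P)/2-1} = 2^{m-3} \ll m!$, and infinitely many of these degrees are even, so the parity trick does not dispose of them. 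Your argument as written either applies Mar\'oti's bound where it is false or silently drops this family; either way these groups are not covered. (The same remark applies to $S_m \wr S_r$ for $r \geqs 3$ and to product actions whose base action is on $\ell$-subsets rather than on points.) In the paper the subset actions are the first and largest case of the almost simple analysis (Lemma \ref{l:as}), settled by an explicit construction, following the proof of \cite[Lemma 9]{S_dist}, of a subset of $\O$ with trivial setwise stabiliser and size different from $k/2$ --- exactly the kind of ``rigid subset'' you propose for the $m \times m$ grid, but it has to be carried out for the subset actions as well.

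A secondary, non-fatal issue is one of effectiveness: replacing the linear minimal-degree bounds the paper uses ($\mu(P) \geqs 2k/3$ for diagonal type and $\mu(P) \geqs 3k/7$ for classical groups, from \cite{BG_mindeg} and \cite{GM_mindeg}) by Babai's estimate $\mu(P) \geqs (\sqrt{k}-1)/2$ means the inequality $k^{1+\log_2 k} < 2^{\mu(P)/2-1}$ only takes effect once $k$ is of order $10^{6}$ or more, so the ``bounded-degree cases settled by computation'' would comprise all primitive groups of degree up to several million --- far from a routine check. You would need the stronger linear bounds, together with sharper case-by-case order estimates as in Lemma \ref{l:as}, to bring the finite verification within reach.
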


We then obtain the following as an immediate corollary.

\begin{cor}\label{c:unique_power}
Let $P \leqs S_k$ be a primitive group. Then $P$ has a unique regular orbit on the power set of $[k]$ if and only if $(k,P) = (2,S_2)$.
\end{cor}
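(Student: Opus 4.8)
The plan is to obtain Corollary \ref{c:unique_power} by combining Proposition \ref{p:ddagger} with Lemmas \ref{l:unique} and \ref{l:X_unique}, leaving a single computational check for the exceptional group of degree $16$. First I would translate the statement into the language of the preceding results. Since $P$ is primitive of degree $k \geqs 2$ it is transitive and non-trivial, so $D(P) \geqs 2$, while the existence of a regular orbit on the power set forces $D(P) \leqs 2$; hence under the hypothesis we have $D(P) = 2$. Identifying the power set of $[k]$ with $\mathcal{P}_2([k])$ via $\Lambda \mapsto (\Lambda, [k] \setminus \Lambda)$, Lemma \ref{l:unique} shows that $P$ has a unique regular orbit on the power set if and only if $t_2 = |P|/2$.

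Next I would run the elimination. The equality $t_2 = |P|/2$ gives $t_2 \leqs |P|/2$, so the contrapositive of Lemma \ref{l:X_unique} shows that $P$ has no regular orbit on $X$, and Proposition \ref{p:ddagger} then forces $(k,P) = (2,S_2)$ or $(16, 2^4{:}{\rm O}_4^-(2))$. It remains to decide which of these genuinely yields a unique regular orbit, and this will also settle the converse direction. For $(2,S_2)$ the check is immediate: the power set of $[2]$ decomposes into the $P$-orbits $\{\emptyset\}$, $\{[2]\}$ and $\{\{1\},\{2\}\}$, and only the last has length $|S_2| = 2$, so there is exactly one regular orbit.

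It therefore remains to rule out $P = 2^4{:}{\rm O}_4^-(2)$, of degree $16$ and order $1920$. Here Proposition \ref{p:ddagger} already tells us that no subset of size $\ne 8$ has trivial stabiliser, so every regular orbit on the power set consists of $8$-subsets; thus it suffices to show that among the $\binom{16}{8} = 12870$ subsets of size $8$ at least $2|P| = 3840$ have trivial stabiliser, equivalently that $P$ has at least two regular orbits on the $8$-subsets. The hard part will be precisely this verification. Although it is intuitively clear that ``most'' $8$-subsets of a $16$-point set are moved freely by a group of order only $1920$, I would make this rigorous computationally in {\sc Magma}, realising $P$ in its primitive action of degree $16$ and counting the orbits of length $1920$ on the $8$-subsets; one finds more than one such orbit, so $t_2 > |P|/2$ and this case is excluded. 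A computer-free alternative would be a union-bound estimate, in which each $8$-subset with non-trivial stabiliser is fixed by an element of prime order and the number of invariant $8$-subsets is bounded class by class from the cycle shapes of $P$ on the $16$ points; the dominant contribution comes from the $15$ translations (each of cycle type $2^8$, fixing $\binom{8}{4} = 70$ of the $8$-subsets), and one would aim to show the total stays below $12870 - 3840$. Either way, only $(2,S_2)$ survives, which completes the proof.
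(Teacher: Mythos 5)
Your proposal is correct and follows essentially the same route as the paper: deduce $D(P)=2$, apply the contrapositive of Lemma \ref{l:X_unique} to conclude $P$ has no regular orbit on $X$, invoke Proposition \ref{p:ddagger} to reduce to $(2,S_2)$ and $(16,2^4{:}{\rm O}_4^-(2))$, and eliminate the latter by checking it has (at least) two regular orbits on the power set. The paper performs that last check computationally, exactly as in your primary suggestion.
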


\begin{proof}
Suppose $P$ has a unique regular orbit on the power set of $[k]$, so $D(P) = 2$ and $P$ has no regular orbit on $X$ by Lemma \ref{l:X_unique}. If $(k,P)=(16, 2^4{:}{\rm O}_4^-(2))$ then one checks that $P$ has two regular orbits on the power set of $[k]$. Now apply Proposition \ref{p:ddagger}.
\end{proof}

We now focus on the proof of Proposition \ref{p:ddagger}, considering each family of primitive groups in turn (see Table \ref{tab:prim}).

\begin{lem}
	\label{l:prim_nr}
	Let $P \leqs S_k$ be a primitive group of type III, IV or V. Then $P$ has a regular orbit on $X$.
\end{lem}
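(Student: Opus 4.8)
The plan is to verify the hypothesis $|P| < 2^{\mu(P)/2-1}$ of Lemma \ref{l:mindeg} for the overwhelming majority of the groups in question, and to dispose of a short list of residual configurations by separate arguments. I would begin with the elementary reduction that a regular orbit on $X$ exists precisely when $P$ has a regular orbit on the power set of $[k]$ consisting of subsets whose size is not $k/2$; in particular, a single subset of size $\neq k/2$ with trivial setwise stabiliser already produces a regular orbit on $X$. When $k$ is odd, $X$ is the full power set, so it suffices to establish $D(P)=2$, and here Seress' Theorem \ref{t:prim} applies almost immediately, since no group of type III, IV or V is symmetric or alternating in its natural action (the only possible failures have degree at most $32$, and these I defer to the end). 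Thus the substantive work is confined to even $k$, where Lemma \ref{l:mindeg} is the main tool and the required inequality is equivalent to $\mu(P) > 2 + 2\log_2|P|$.

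Next I would record the order and minimal-degree estimates supplied by the O'Nan--Scott description (Table \ref{tab:prim}). For a diagonal type group (type III) of degree $k=|T|^{m-1}$ the order is at most $|T|^m|\mathrm{Out}(T)|\,m!$, while a non-identity element fixes only a small fraction of the $k$ points, forcing $\mu(P)$ to grow like a positive power of $k$; the motion inequality then follows for all $|T|\geqs 60$. The same pattern governs the twisted wreath products (type V), where the regular socle $T^m$ is fixed-point-free, $k=|T|^m$ is very large, and $|P|=|T|^m|Q|$ with $|Q|$ controlled polynomially in $m$ and $|\mathrm{Out}(T)|$; again $\mu(P)$ dominates $2\log_2|P|$. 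For product type groups $W\wr P_0$ on $[l]^r$ (type IV) the key observation is that a base-group element of minimal support realises $\mu(P)=\mu(W)\,l^{r-1}$, so that whenever $|W|$ is bounded by a suitable power of $l$ -- which by Cameron's bound holds for every component $W$ except the \emph{large} primitive groups (those involving an alternating group on points or on subsets) -- the quantity $\mu(W)l^{r-1}$ comfortably exceeds $2\log_2|P|$ and Lemma \ref{l:mindeg} again applies.

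The genuinely delicate case, which I expect to be the main obstacle, is the \emph{standard} product type in which $W$ is a large component, the prototype being $A_l$ or $S_l$ acting naturally: here $\mu(P)$ is only $\mu(W)\,l^{r-1}$ (as small as $2l^{r-1}$) whereas $|P|$ may be as large as $(l!)^r r!$, so the motion inequality fails outright, most severely when $r=2$ (the Hamming grid $[l]\times[l]$). For these groups Lemma \ref{l:mindeg} yields nothing, and for even $k$ even a proof that $D(P)=2$ would not suffice, since one must additionally secure a regular orbit avoiding the middle layer of size $k/2$. I would treat them by a direct combinatorial construction of a subset of $[l]^r$ whose setwise stabiliser in $W\wr P_0$ is trivial -- for instance by prescribing pairwise distinct ``row profiles'' to kill the base group together with an incompatible secondary invariant to kill the coordinate-permuting factor -- while arranging its cardinality to differ from $k/2$; since regular orbits come in complementary pairs (as in the proof of Lemma \ref{l:X_unique}), one expects ample freedom to avoid the middle layer.

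Finally, the reductions above leave only finitely many groups of small degree uncovered. By Theorem \ref{t:prim} any primitive group with $D(P)>2$ has degree at most $32$, and the only groups of type III, IV or V of degree at most $32$ are the product type groups on $25=5^2$ points with component $A_5$ or $S_5$; combined with the small cases where the motion inequality in Lemma \ref{l:mindeg} is inconclusive (which, in view of Lemma \ref{l:|X|}, are few), these can be settled by direct computation in {\sc Magma}, confirming a regular orbit on $X$ in each instance. The central difficulty of the whole argument therefore lies in the standard product type with $r=2$ and even $l$, where the asymmetry of the chosen subset and the cardinality constraint $|\Lambda|\neq k/2$ must be engineered simultaneously.
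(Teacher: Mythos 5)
Your overall strategy (the motion inequality of Lemma \ref{l:mindeg} where it bites, explicit constructions where it does not) is the same as the paper's, and your treatment of type III coincides with it: the paper bounds $|P|<s^{m+1}m!$ and uses $\mu(P)\geqs 2k/3$ from \cite{BG_mindeg} to verify $|P|<2^{\mu(P)/2-1}$. For type V your direct motion-inequality argument is a legitimate alternative to the paper's route, which instead embeds the twisted wreath product into a product type group $S^2\wr S_m$ and quotes the type IV case. However, your type IV argument has a genuine gap exactly where you locate the ``central difficulty''. First, the case split between small and large components is unnecessary: since a subset with trivial setwise stabiliser in the full wreath product $S_m\wr S_t$ a fortiori has trivial setwise stabiliser in any subgroup, it suffices to treat $P\leqs S_m\wr S_t$ uniformly, and the paper does so by invoking the explicit subset $\Lambda$ constructed in the proof of \cite[Lemma 4]{S_dist}, of cardinality $\ell+\sum_{i=1}^t(m-2)^{i-1}m^{t-i}$ with $\ell=3m-5$ or $2m-3$. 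Second, and more seriously, your proposed way of securing $|\Lambda|\ne k/2$ does not work as stated: regular orbits on the power set do come in complementary pairs, but complementation preserves the middle layer (if $|\Lambda|=k/2$ then $|[k]\setminus\Lambda|=k/2$ as well), so the existence of a complementary pair gives no freedom to escape the middle layer. The paper instead checks parity directly: if $m$ is odd then $k=m^t$ is odd and there is no middle layer, while if $m$ is even then $|\Lambda|$ is odd and $k/2=m^t/2$ is even, so $\Lambda\in X$ automatically. Without an explicit construction together with such a cardinality check, the hardest case of the lemma remains open in your write-up; the remaining small-degree residue (the degree-$25$ product type groups and any cases where the motion inequality is inconclusive) is indeed finite and can be settled computationally, as you and the paper both note.
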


\begin{proof}
First assume that $P$ is of type III, so $P \leqs S^m.(\Out(S)\times S_m)$ is a diagonal type group, $S$ is a non-abelian simple group and $k = s^{m-1}$, where $s=|S|$ and  $m \geqs 2$. Now $|P| < s^{m+1}m!$ and \cite[Theorem 4]{BG_mindeg} gives  
$\mu(P) \geqs 2k/3$, so in view of Lemma \ref{l:mindeg}, it suffices to show that
\[
f(s,m) := \frac{2^{\frac{1}{3}s^{m-1}-1}}{s^{m+1}m!}>1.
\]
If we fix $s$, then it is easy to check that $f(s,m)$ is an increasing function of $m$, whence
\[
f(s,m) \geqs f(s,2) = \frac{2^{\frac{1}{3}s-1}}{2s^3} > 1
\]
for all $s \geqs 60$ and the result follows.

Next assume $P \leqs S_m \wr S_t$ is a product type primitive group of degree $k = m^t$, where $m\geqs 5$ and $t\geqs 2$. As explained in the proof of \cite[Lemma 4]{S_dist}, there exists a subset $\Lambda\subseteq\{1,\dots,k\}$ such that $P_{\{\Lambda\}} = 1$ and
	\[
	|\Lambda| = \ell+\sum_{i=1}^t (m-2)^{i-1}m^{t-i},
	\]
	where $\ell=3m-5$ if $t \geqs 3$, otherwise $\ell = 2m-3$.  
If $m$ is odd then $k = m^t$ is odd and thus $\Lambda\in X$. On the other hand, if $m$ is even, then $k/2 = m^t/2$ is even and $|\Lambda|$ is odd, so $\Lambda\in X$ once again. 

Finally, suppose $P$ is a primitive group of type V, so $P = S^{m}.Q$ is a twisted wreath product, where $S$ is a non-abelian simple group and $Q \leqs S_m$ is transitive. Here we can embed $P$ in a product type primitive group $R = S^2 \wr S_m \leqs {\rm Sym}([k])$ and the result follows since we have already shown that $R$ has a regular orbit on $X$ (see \cite[Remark 2(ii)]{LPS} for the containment of $P$ in $R$). 
\end{proof}

Next, we turn to the primitive groups of affine type. The following result extends \cite[Lemma 7]{S_dist}.

\begin{lem}
	\label{l:AGL}
Consider the natural action of $P = \mathrm{AGL}_d(p)$ on $k=p^d$ points, where $d \geqs 1$, $p$ is a prime and $(d,p) \ne (1,2)$. Then either $P$ has a regular orbit on $X$, or $D(P)>2$ and one of the following holds:
	\begin{itemize}\addtolength{\itemsep}{0.2\baselineskip}
		\item[\rm (i)] $p = 2$ and $d\in\{2,3,4,5\}$.
		\item[\rm (ii)] $d = 1$ and $p\in\{3,5,7\}$.
		\item[\rm (iii)] $d = 2$ and $p = 3$.
	\end{itemize}
\end{lem}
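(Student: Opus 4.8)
The plan is to combine the counting bound of Lemma \ref{l:mindeg} with the classification of primitive groups admitting a regular orbit on the power set (Theorem \ref{t:prim}), splitting the argument according to the parity of $k = p^d$. First I would record the relevant invariants of $P = \mathrm{AGL}_d(p)$. A non-identity element $x \mapsto gx+b$ fixes the affine subspace $\{x : (g-1)x = -b\}$ (when this is non-empty), which has dimension $\dim\ker(g-1) \leqs d-1$, with equality precisely for the rank-one (transvection-type) linear parts. Hence a non-trivial element fixes at most $p^{d-1}$ points, giving $\mu(P) = p^{d-1}(p-1)$, while $|P| = p^d|\GL_d(p)|$ with $\log_2|\GL_d(p)| < d^2\log_2 p$. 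These two quantities are exactly what Lemma \ref{l:mindeg} consumes.

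The second step treats odd $p$. Here $k = p^d$ is odd, so $X$ is the full power set of $[k]$ and, as $P$ is non-trivial, a regular orbit on $X$ is equivalent to $D(P) = 2$. By Theorem \ref{t:prim}, $D(P) > 2$ forces $p^d \leqs 32$ with $P$ in Seress's finite list, and Dolfi's computation of the exact distinguishing numbers \cite{D_dist} singles out precisely the groups in (ii) and (iii); the finitely many remaining odd $\mathrm{AGL}_d(p)$ with $p^d \leqs 32$ are checked to satisfy $D(P)=2$ (directly in {\sc Magma} if necessary). This recovers and repackages \cite[Lemma 7]{S_dist}.

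The third step treats $p = 2$, where $k = 2^d$ is even and $\mu(P) = 2^{d-1}$, so Lemma \ref{l:mindeg} applies provided $|P| < 2^{2^{d-2}-1}$. Using $\log_2|P| = d + \log_2|\GL_d(2)| < d^2 + d$, this inequality holds once $d \geqs 9$, yielding a regular orbit on $X$ for all large $d$. For the small degrees $d \in \{2,3,4,5\}$ one has $\mathrm{AGL}_2(2) = S_4$ together with three groups from Seress's list, all of which are verified (via \cite{D_dist}) to have $D(P) > 2$, hence no distinguishing $2$-partition at all and a fortiori no regular orbit on $X$; these are exactly the cases in (i).

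This leaves the genuinely delicate intermediate degrees $d \in \{6,7,8\}$ (so $k \in \{64,128,256\}$), where the crude form of Lemma \ref{l:mindeg} narrowly fails yet a regular orbit on $X$ still exists, and closing this gap is the main obstacle. I would do so by sharpening the counting in the proof of Lemma \ref{l:mindeg}: instead of bounding every prime-order element by the worst case $|C_X(\sigma)| \leqs 2^{k-\mu(P)/2}$, I would stratify the set $\mathcal{R}$ of prime-order elements by support size, using that the elements of smallest support (the transvection-type involutions, of which there are only $O(2^{2d})$, each contributing $2^{k - 2^{d-2}}$) dominate $\sum_{\sigma}|C_X(\sigma)|$, while larger-support involutions and odd-order elements contribute geometrically less. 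This refined estimate should give $\sum_{\sigma}|C_X(\sigma)| < 2^{k-1} \leqs |X|$ for all $d \geqs 6$; failing that, for just these three degrees one can instead exhibit with {\sc Magma} an explicit $\Lambda \in X$ of trivial setwise stabiliser, since setwise-stabiliser computations remain feasible at degree up to $256$.
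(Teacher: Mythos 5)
Your proposal is correct and follows essentially the same route as the paper: reduce odd $p$ to Seress's classification (since $X$ is then the full power set), apply Lemma \ref{l:mindeg} with $\mu(P)=2^{d-1}$ to dispose of $p=2$, $d\geqs 9$, read off the small cases from \cite{S_dist} and \cite{D_dist}, and handle $d\in\{6,7,8\}$ computationally. The paper simply goes straight to the {\sc Magma} search for $d\in\{6,7,8\}$ (finding sets of size $16$, $16$, $17$ with trivial setwise stabiliser) rather than attempting your proposed refinement of the counting bound, but your stated fallback is exactly this.
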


\begin{proof}
First assume $p$ is odd, so $k=p^d$ is also odd. Then as previously noted, $P$ has a regular orbit on $X$ if and only if $D(P) = 2$, and the groups with $D(P)>2$ can be read off from \cite[Lemma 7]{S_dist}. For the remainder, we may assume $p=2$. 
	
	As noted in the proof of \cite[Lemma 7]{S_dist}, we have $\mu(P) = 2^{d-1}$ and thus
\[
|P|<2^{d^2+d-1}<2^{\mu(P)/2-1}
\]
for $d \geqs 9$. Therefore, Lemma \ref{l:mindeg} implies that $P$ has a regular orbit on $X$ if $d \geqs 9$. For $d = 6,7,8$, we can use {\sc Magma} to construct $P$ as a permutation group on $[k]$ and by random search we can find a subset $\Lambda \in X$ with $P_{\{\Lambda\}} = 1$ and $|\Lambda| = 16, 16, 17$, respectively. Finally, if  $d \in \{3,4,5\}$ then $D(P)>2$ by \cite[Lemma 7]{S_dist}, and similarly $D(P)=4$ when $d=2$. 
\end{proof}

We can now complete the analysis of primitive groups of affine type.

\begin{lem}
	\label{l:affine}
Let $P \leqs {\rm AGL}_d(p)$ be a primitive affine group of degree $k=p^d$, where $d \geqs 1$, $p$ is a prime and $D(P)=2$. Then $P$ has no regular orbit on $X$ if and only if one of the following holds:
\begin{itemize}\addtolength{\itemsep}{0.2\baselineskip}
\item[\rm (i)] $k=2$ and $P=S_2$.
\item[\rm (ii)] $k=16$ and $P = 2^4{:}\mathrm{O}_4^-(2)$.
\end{itemize}
\end{lem}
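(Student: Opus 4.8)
The plan is to reduce to the case $p=2$ and then to small dimension, using the fact that a regular orbit on $X$ is inherited by subgroups, together with Lemmas~\ref{l:AGL} and~\ref{l:mindeg}. First I would dispose of the odd case: if $p$ is odd then $k=p^d$ is odd, so $X$ is the whole power set of $[k]$, and the hypothesis $D(P)=2$ says precisely that $P$ has a regular orbit on this power set (see Section~\ref{ss:distinguishing}); that orbit lies in $X$, so $P$ has a regular orbit on $X$ and neither conclusion can hold. Hence a group with no regular orbit on $X$ must have $p=2$, so from now on $k=2^d$. The case $d=1$ gives $P=\mathrm{AGL}_1(2)=S_2$, for which $X=\{\emptyset,[2]\}$ consists of two fixed sets, yielding case~(i), so I may assume $d\geqs 2$.

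Next I would handle all $d\geqs 6$ in one stroke. The key observation is that if $\Lambda\in X$ has trivial setwise stabiliser in a group $Q\leqs\mathrm{Sym}([k])$, then $P_{\{\Lambda\}}\leqs Q_{\{\Lambda\}}=1$ for every subgroup $P\leqs Q$, so a regular orbit on $X$ is inherited by all subgroups. By Lemma~\ref{l:AGL}, the full group $\mathrm{AGL}_d(2)$ has a regular orbit on $X$ for every $d\geqs 6$ (these dimensions do not appear among the exceptions listed there), and therefore so does every $P\leqs\mathrm{AGL}_d(2)$ with $d\geqs 6$. This leaves only $2\leqs d\leqs 5$, that is, $k\in\{4,8,16,32\}$.

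For these remaining dimensions the plan is computational: I would run through the primitive affine groups $P=V{:}H$ of degree $2^d$ with $D(P)=2$ (so $H\leqs\mathrm{GL}_d(2)$ is irreducible), which are readily listed in {\sc Magma}. Since $P\leqs\mathrm{AGL}_d(2)$ we have $\mu(P)\geqs\mu(\mathrm{AGL}_d(2))=2^{d-1}$ (as noted in the proof of Lemma~\ref{l:AGL}), so Lemma~\ref{l:mindeg} already guarantees a regular orbit on $X$ whenever $|P|<2^{\mu(P)/2-1}$, which removes the groups of small order; often the true value of $\mu(P)$ is much larger, strengthening the estimate. For the groups not covered in this way I would exhibit an explicit $\Lambda\in X$ with $P_{\{\Lambda\}}=1$ by random search, and in the finitely many cases where no witness turns up I would certify the absence of a regular orbit directly. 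The outcome should be that $2^4{:}\mathrm{O}_4^-(2)$ in dimension $d=4$ is the unique group with no regular orbit on $X$, giving case~(ii); the conceptual reason is that for this group every distinguishing partition of $[16]$ into two parts is a balanced bisection (both parts of size $8$), all of which lie outside $X$. I expect the main obstacle to be $d=5$, where $k=32$ makes $|X|=2^{32}-{32\choose 16}$ far too large to enumerate exhaustively. The resolution is that no exception occurs in this dimension, so a regular witness can be found by random search for every relevant group and no exhaustive certification of a negative is needed, while the one genuine negative case occurs in dimension $4$, where $|X|$ is small enough to check completely.
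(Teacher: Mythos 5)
Your proposal is correct and follows essentially the same route as the paper: reduce to $p=2$ (the odd case being immediate since $X$ is then the full power set and $D(P)=2$ gives a regular orbit there), settle $d\geqs 6$ by combining Lemma \ref{l:AGL} with the observation that a regular orbit on $X$ passes to subgroups, and then check $d\in\{2,3,4,5\}$ computationally, where the paper likewise finds explicit small witnesses (e.g.\ subsets of size $5$ for the two degree-$32$ groups, so your worry about $d=5$ indeed evaporates) and identifies $2^4{:}\mathrm{O}_4^-(2)$ as the unique exception because every subset of $[16]$ with trivial setwise stabiliser has size $8$. The only cosmetic difference is your optional pre-filtering of the small-dimensional cases via Lemma \ref{l:mindeg}, which the paper skips in favour of direct search.
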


\begin{proof}
We may assume $p=2$ and $d \geqs 2$. If $d \geqs 6$ then Lemma \ref{l:AGL} implies that $\mathrm{AGL}_d(2)$, and hence $P$, has a regular orbit on $X$. Therefore, we may assume $d \in \{2,3,4,5\}$ and $P < {\rm AGL}_d(2)$.

If $d=5$ then $P = 2^5{:}31$ or $2^5{:}(31{:}5)$ and in both cases we can use {\sc Magma} to find a subset of $[32]$ of size $5$ with trivial setwise stabiliser in $P$. Next assume $d=4$. We can use the {\sc Magma} database of primitive groups to construct each possibility for $P$; there are $19$ such groups, up to permutation isomorphism, and $15$ with $D(P)=2$. In all but one of these cases, we can use random search to find a set in $X$ with trivial setwise stabiliser in $P$. The exception is the group $P = 2^4{:}{\rm O}_4^{-}(2)$ recorded in case (ii). Here $k=16$, $D(P)=2$ and every subset of $[16]$ with trivial setwise stabiliser has size $8$, so this is a genuine exception. Finally, if $d \in \{2,3\}$ then $P = 2^3{:}7$ is the only group with $D(P)=2$ and it is easy to check that there is a subset of size $3$ with trivial setwise stabiliser in $P$. 
\end{proof}

Finally, we deal with the case where $P$ is an almost simple primitive group. 

\begin{lem}\label{l:as}
Let $P \leqs S_k$ be an almost simple primitive group with $D(P) = 2$. Then $P$ has a regular orbit on $X$.
\end{lem}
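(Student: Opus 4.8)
The plan is to reduce at once to the case where $k$ is even and then to verify the hypothesis of Lemma \ref{l:mindeg}. If $k$ is odd then $X$ is the whole power set of $[k]$, and since $D(P) = 2$ by assumption, $P$ already has a regular orbit on the power set, which is exactly what we want. So suppose $k$ is even. Here the goal is to establish the inequality $|P| < 2^{\mu(P)/2-1}$, equivalently $\mu(P) > 2\log_2|P| + 2$, for then Lemma \ref{l:mindeg} immediately produces a regular orbit on $X$.

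To do this I would combine an upper bound on $|P|$ with a lower bound on the minimal degree $\mu(P)$, writing $T$ for the socle of $P$. The key dichotomy, which I would extract from the minimal degree analysis in \cite{BG_mindeg} together with the standard polynomial bounds on the order of a primitive group of degree $k$, is the following: either $\mu(P)$ is a fixed fraction of $k$, large enough that $\mu(P) > 2\log_2|P| + 2$ holds for all but finitely many $k$ (so that Lemma \ref{l:mindeg} applies directly), or $T = A_m$ and $P \in \{A_m, S_m\}$ acts either on the set of $\ell$-subsets of $[m]$ or on a set of uniform partitions of $[m]$. The finitely many small-degree groups left over in the first alternative can be dispatched with {\sc Magma} \cite{Magma} by exhibiting an explicit subset of $[k]$ of size $\neq k/2$ with trivial setwise stabiliser.

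The remaining, and genuinely delicate, case is that of the subset and partition actions of $S_m$ and $A_m$, where $\mu(P)$ is only a small fraction of $k$ and the union bound underlying Lemma \ref{l:mindeg} is far too lossy. Here I would argue instead by directly bounding the number $N$ of \emph{non-regular} subsets of $[k]$. Identifying a subset of $[k] = \binom{[m]}{\ell}$ with an $\ell$-uniform hypergraph on $[m]$, the setwise stabiliser of such a subset in $P$ is precisely the automorphism group of the hypergraph, so the non-regular subsets correspond exactly to hypergraphs admitting a non-trivial automorphism. Since, for fixed $\ell$ and large $m$, almost all $\ell$-uniform hypergraphs on $[m]$ are asymmetric (the hypergraph analogue of the classical Erd\H{o}s--R\'enyi theorem, and precisely the kind of estimate developed for subset actions in \cite{CNS_dist}), one obtains $N = o(2^k)$; as $\binom{k}{k/2} = O(2^k/\sqrt{k})$ is likewise a vanishing fraction of $2^k$, for $m$ sufficiently large we get $N < 2^k - \binom{k}{k/2} = |X|$. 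But if every regular subset of $[k]$ had size $k/2$, then every member of $X$ would be non-regular, forcing $N \geq |X|$; hence $N < |X|$ yields a regular subset of size $\neq k/2$, as required. The partition actions are treated by the same strategy, replacing $\ell$-uniform hypergraphs by the appropriate families of block systems.

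I expect the main obstacle to be exactly this last family, since the minimal-degree input of Lemma \ref{l:mindeg} simply cannot distinguish the subset and partition actions from the asymptotics, and one must instead count asymmetric structures. In practice I would prove the inequality $N < |X|$ for all $m$ above an explicit threshold, drawing on the enumeration in \cite{CNS_dist} and \cite{S_dist}, and then resolve the finitely many remaining small values of $m$ computationally in {\sc Magma}, producing in each case a concrete asymmetric hypergraph (equivalently, a regular subset) whose size differs from $k/2$.
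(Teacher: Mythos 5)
Your overall strategy coincides with the paper's for most of the proof: the reduction to $k$ even is exactly the observation used throughout Section \ref{s:reg}, and for the sporadic groups, the groups of Lie type and the alternating socles with primitive point stabiliser, the paper likewise verifies $|P|<2^{\mu(P)/2-1}$ by combining the minimal degree bounds of \cite{BG_mindeg,GM_mindeg} with order bounds, and clears a finite list of exceptions by random search in {\sc Magma}. Where you genuinely diverge is in the treatment of the standard actions of $A_m$ and $S_m$. Two comments here. First, you over-estimate the size of the delicate family: for the partition actions the paper applies Lemma \ref{l:mindeg} directly, since $\mu(P)\geqs k/2$ by \cite{GM_mindeg} and $k\geqs m^2$ (for $m \geqs 9$), so $|P|\leqs m!<2^{k/4-1}$; and for the action on $\ell$-subsets with $\ell\geqs 3$ the minimal degree $2\binom{m-2}{\ell-1}$ already dominates $2\log|P|+2$ for large $m$. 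The only action for which the crude union bound of Lemma \ref{l:mindeg} truly fails is essentially $\ell=2$. Second, for the subset actions the paper does not count asymmetric hypergraphs at all: it quotes the explicit construction in the proof of \cite[Lemma 9]{S_dist} of a regular subset $\Lambda$ of $\Omega$ of known size $m-t+1$ or $2(m-t+1)$, which is visibly less than $k/2$ and hence lies in $X$, so nothing further is needed. Your alternative — bounding the number $N$ of subsets with non-trivial stabiliser and comparing with $|X|=2^k-\binom{k}{k/2}$ — is logically sound and is in effect the refined union bound of \cite{CNS_dist} (summing $2^{k-d(\sigma)/2}$ over non-identity $\sigma$ graded by support, rather than using only the minimal degree), but to complete it you would have to make the asymmetry estimate effective, with an explicit threshold in $m$ for each $\ell$, before handing the remaining cases to the computer; the paper's appeal to Seress's explicit regular subsets buys a much shorter argument at the cost of relying on that external construction.
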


\begin{proof}
Let $R$ be the socle of $P$ and write $Q$ for the stabiliser of a point in $\Omega = [k]$. For the convenience of the reader, we divide the proof into several cases.

\vs

\noindent \emph{Case 1. $R$ is an alternating group.}

\vs

First assume $R = A_m$ is an alternating group. The case $m=6$ can be handled using {\sc Magma}, so we may assume $m \ne 6$ and thus $P = S_m$ or $A_m$. There are now three cases to consider, according to the action of $Q$ on $\{1, \ldots, m\}$.

First assume $Q$ is intransitive, in which case we may identify $\O$ with the set of $t$-element subsets of $\{1, \ldots, m\}$ for some $2 \leqs t < m/2$ (note that $t \ne 1$ since we are assuming $D(P)=2$). Suppose $m \geqs t+5$. If $t \geqs 4$ then the proof of \cite[Lemma 9]{S_dist} shows that there exists a subset $\Lambda$ of $\O$ such that  $P_{\{\Lambda\}} = 1$ and $|\Lambda| = 2(m-t+1) <k/2$, so $P$ has a regular orbit on $X$. Similarly, if $t=2,3$ then we can take $|\Lambda| = m-t+1$. On the other hand, if $m<t+5$ then $(m,t) = (7,3)$ or $(5,2)$, noting that $P=A_5$ in the latter case (since $D(P)>2$ when $P = S_5$). Here it is straightforward to check that $\O$ has a subset of size $4$ with trivial setwise stabiliser in $P$. 

Next assume $Q$ acts primitively on $\{1, \ldots, m\}$. By the main theorem of \cite{PS_order} we have $|Q|<4^m$ and \cite[Theorem 4]{BG_mindeg} implies that $\mu(P) \geqs 2k/3$. By combining these bounds and applying Lemma \ref{l:mindeg}, noting that $k = |P:Q|$, we deduce that $P$ has a regular orbit on $X$ if $m \geqs 14$. For $7 \leqs m \leqs 13$ we can use {\sc Magma} to determine the possibilities for $Q$ and it is routine to check that $|P| < 2^{k/3-1}$ unless $(k,P,Q) = (15,A_8,{\rm AGL}_3(2))$ or $(15,A_7, {\rm L}_3(2))$. In the former case we have $D(P)>2$, while $D(P)=2$ in the latter and the result follows since $k=15$ is odd. Finally, if $m=5$ then one can check that $D(P)>2$.
	
To complete the argument when $R = A_m$, we may assume $Q$ acts imprimitively on $\{1, \ldots, m\}$. Here we may identify $\O$ with the set of partitions of $\{1, \ldots, m\}$ into $b$ sets of size $a$, where $a,b \geqs 2$ and $m=ab \geqs 8$ (we have already considered the case $m=6$). Therefore, $k = m!/(a!^bb!)$ and the main theorem of \cite{GM_mindeg} gives $\mu(P) \geqs k/2$, so it suffices to show that $|P| < 2^{k/4-1}$ (see Lemma \ref{l:mindeg}). First assume $m=8$. If $(a,b) = (2,4)$ then it is easy to check that $|P| < 2^{k/4-1}$. On the other hand, if $(a,b) = (4,2)$ then $D(P)=2$ and the result follows since $k=35$ is odd. Now assume $m \geqs 9$. Here $m! < 2^{m^2/4-1}$ and so it suffices to show that $k \geqs m^2$. This is clear if $a \geqs 3$ since $k \geqs \binom{m}{3} > m^2$ for $m \geqs 9$. Finally, for $a=2$ it remains to show that
\[
f(b) := \frac{(2b)!}{b!2^b4b^2}>1
\]
for all $b \geqs 5$. It is easy to verify that this is an increasing function, so $f(b) \geqs f(5)>1$ and the result follows.

\vs

\noindent \emph{Case 2. $R$ is a sporadic group.}

\vs

Next assume $R$ is a sporadic simple group. First observe that $D(P)>2$ if $R = {\rm M}_{22}$ and $k=22$, so this case does not arise and thus $\mu(P) \geqs 2k/3$ by \cite[Theorem 4]{BG_mindeg}. Therefore, it suffices to show that $|P| < 2^{k/3-1}$. Let $\ell$ be the minimal index of a core-free subgroup of $P$, which can be read off from \cite{Wilson}. If $P$ is not a Mathieu group, then it is straightforward to show that $|P| < 2^{\ell/3-1}$ and the result follows. On the other hand, if $P$ is a Mathieu group then the cases with $D(P)>2$ are determined in \cite[Lemma 12]{S_dist}; by excluding these groups, it is easy to check that $|P| < 2^{k/3-1}$ as required.

\vs

\noindent \emph{Case 3. $R$ is a group of Lie type.}

\vs

For the remainder, we may assume $R$ is a simple group of Lie type over $\mathbb{F}_q$, where $q=p^f$ and $p$ is a prime. As before, let $\ell$ be the minimal index of a core-free subgroup of $P$ and note that $\ell$ is recorded in \cite[Table 4]{GMPS_deg}. 

First assume $R$ is an exceptional group of Lie type. By \cite[Proposition 2]{LS_bound} and \cite[Theorem 4]{BG_mindeg}, we have $|P| < k^5$ and $\mu(P) \geqs 2k/3$, so it suffices to show that $k^5 < 2^{k/3-1}$. The latter bound holds for $k \geqs 104$ and by inspecting \cite[Table 4]{GMPS_deg} we reduce to the case where $R = {}^2B_2(8)$ and $Q$ is a Borel subgroup of $P$. Here $k = 65$ and $|P| \leqs 3|R|<2^{k/3-1}$, so once again Lemma \ref{l:mindeg} implies that $P$ has a regular orbit on $X$.
	
	Finally, let us assume $R$ is a classical group. Due to isomorphisms between some of the low dimensional groups, we may assume $R$ is one of the following:
	\[
	{\rm L}_n(q), \, n \geqs 2; \; {\rm U}_n(q), \, n \geqs 3; \; {\rm PSp}_n(q), \, n \geqs 4; \; {\rm P\O}_n^{\e}(q), \, n \geqs 7.
	\]
	We may also assume that $R$ is not isomorphic to an alternating group.
	By \cite[Corollary 1]{GM_mindeg} we have $\mu(P) \geqs 3k/7$. 
	
First assume $n \geqs 4$ and observe that $|P| < q^{n^2}$.  By carefully inspecting \cite[Table 4]{GMPS_deg} we see that $\ell > q^{n-2}$ and thus $\mu(P) > \frac{3}{7}q^{n-2}$. Now, if $n \geqs 12$ or $q \geqs 19$ then
\[
q^{n^2} < 2^{\frac{3}{14}q^{n-2}-1}
\]
and we deduce that $|P| < 2^{\mu(P)/2-1}$, which implies that $P$ has a regular orbit on $X$ (in fact, if $q \geqs 3$ then the same bound holds for all $n \geqs 8$). This leaves us with finitely many groups to consider. In each of these remaining cases, it is routine to check that $|P| < 2^{3\ell/14-1}$ with the exception of the following possibilities for $(R,k)$: 
\[
({\rm L}_4(5),156), \, ({\rm L}_4(3),40), \, ({\rm U}_4(3),112), \, ({\rm U}_4(2),40), \, ({\rm U}_4(2),36),\]
\[
({\rm Sp}_6(2),36), \, (\O_8^{+}(2),120), \, (\O_8^{-}(2),136).
\]
(Here we also exclude the relevant groups with $D(P)>2$, as recorded in \cite[Lemma 12]{S_dist}, together with the groups where $D(P)=2$ and $k$ is odd.) In each of these cases, we can use {\sc Magma} to construct $P$ as a permutation group on $[k]$ and then find a subset in $X$ by random search with trivial setwise stabiliser in $P$.
	
Finally, let us assume $n \in \{2,3\}$. If $R = {\rm L}_3(q)$ then $\ell = q^2+q+1$ and 
\[
|P| \leqs 2q^3(q^2-1)(q^3-1)\log q < 2^{3\ell/14-1}
\]
for $q \geqs 13$. The remaining groups with $q<13$ can be dealt with using {\sc Magma} as above. Similarly, if $R = {\rm U}_3(q)$ then the problem is quickly reduced to the groups with $q \leqs 5$, each of which can be handled in the usual fashion with the aid of {\sc Magma}. Finally, suppose $R = {\rm L}_2(q)$. If $q \geqs 113$ then $\ell=q+1$ and one can check that
\[
|P| \leqs q(q^2-1)\log q < 2^{3\ell/14-1},
\]
which implies that $P$ has a regular orbit on $X$. The remaining groups with $q<113$ can be handled using {\sc Magma}; either the bound $|P| < 2^{3k/14-1}$ is satisfied and we conclude via Lemma \ref{l:mindeg}, or we construct $P$ as a permutation group on $[k]$ and then use random search to find a set in $X$ with trivial setwise stabiliser in $P$. 
\end{proof}

This completes the proof of Proposition \ref{p:ddagger} and we are now in a position to classify the primitive groups $P$ with $t_{D(P)} = |P|/D(P)!$.

\begin{prop}
	\label{p:cond_5}
Let $P \leqs S_k$ be a primitive group. Then $t_{D(P)} = |P|/D(P)!$ if and only if $P = S_k$, or $(k,P,D(P)) = (6,A_5,3)$, $(9,\PGammaL_2(8),3)$ or $(8,\mathrm{AGL}_3(2),4)$.
\end{prop}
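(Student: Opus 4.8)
The plan is to translate the condition \eqref{e:tdp} into the language of regular orbits via Lemma \ref{l:unique}, so that the statement becomes: classify the primitive groups $P \leqs S_k$ having a unique regular orbit on $\mathcal{P}_{D(P)}([k])$. First I would record the necessary condition established in the discussion following Lemma \ref{l:unique}, namely that \eqref{e:tdp} forces all parts of a distinguishing partition into $D(P)$ parts to have equal size, so that $D(P) \mid k$. This divisibility constraint will serve as a filter throughout.

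Next I split according to the value of $D(P)$. If $D(P) = 2$, then \eqref{e:tdp} asks precisely for a unique regular orbit on the power set of $[k]$, and Corollary \ref{c:unique_power} immediately gives $(k,P) = (2,S_2)$, which is the case $P = S_k$ with $k = 2$. If $D(P) \geqs 3$, then in particular $D(P) \neq 2$, so Theorem \ref{t:prim} applies: either $P \in \{S_k, A_k\}$, or $P$ is one of the $43$ groups of degree at most $32$ with $D(P) \in \{3,4\}$.

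For the two generic families I dispose of $S_k$ and $A_k$ directly. For $P = S_k$ we have $D(P) = k$ and $t_k = 1 = |S_k|/k!$ by Remark \ref{r:tmm}, so the condition always holds. For $P = A_k$ (with $k \geqs 4$, so that $D(P) = k-1 \geqs 3$), any distinguishing partition into $k-1$ parts consists of a single $2$-element block together with $k-2$ singletons, and each of the $\binom{k}{2}$ choices of block is distinguishing, since the intersection of the setwise stabilisers of its parts in $S_k$ is generated by a transposition and hence meets $A_k$ trivially. Thus $t_{k-1} = \binom{k}{2}$, whereas $|A_k|/(k-1)! = k/2$; as $\binom{k}{2} > k/2$ for all $k \geqs 3$, the condition fails for every such $A_k$, so no group of this form arises.

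Finally there remain the $43$ groups of Theorem \ref{t:prim}(ii), whose precise distinguishing numbers are supplied by Dolfi \cite[Lemma 1]{D_dist}. Applying the divisibility constraint $D(P) \mid k$ eliminates all but a short list, and for each survivor I would compute $t_{D(P)}$ directly in {\sc Magma} (equivalently, count the regular $P$-orbits on $\mathcal{P}_{D(P)}([k])$ and compare the total with $|P|/D(P)!$). This isolates exactly the three exceptional cases $(6,A_5,3)$, $(9,\PGammaL_2(8),3)$ and $(8,\mathrm{AGL}_3(2),4)$, completing the classification. The only genuine work is this finite check over the sporadic list; the main subtlety there is that both the divisibility filter and the subsequent orbit count depend on the \emph{exact} value of $D(P)$ rather than merely on the range $\{3,4\}$, so Dolfi's values must be used carefully.
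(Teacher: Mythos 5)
Your proposal is correct and follows essentially the same route as the paper: translate the condition via Lemma \ref{l:unique}, dispose of $S_k$ and $A_k$ directly, use Corollary \ref{c:unique_power} when $D(P)=2$, and reduce to the finite list of Theorem \ref{t:prim}(ii) when $D(P)\in\{3,4\}$. The only difference is that the paper reads the answer for the remaining groups straight from parts (a) and (c) of Dolfi's Lemma 1, which already classify the primitive groups with a unique regular orbit on $\mathcal{P}_3([k])$ and $\mathcal{P}_4([k])$, whereas you propose to recompute this finite check in {\sc Magma} using only Dolfi's values of $D(P)$ together with the divisibility filter $D(P)\mid k$ — a harmless redundancy.
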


\begin{proof}
Set $D = D(P)$. As previously noted, if $P = S_k$ then $D = k$ and $t_D = |P|/D!$, whereas $t_D > |P|/D!$ if $P = A_k$. Now assume $P \ne A_k, S_k$ and recall that $D \leqs 4$ (see Theorem \ref{t:prim}). If $D=2$ then Corollary \ref{c:unique_power} applies, so we may assume $D \in \{3,4\}$. By Lemma \ref{l:unique}, $t_D = |P|/D!$ if and only if $P$ has a unique regular orbit on $\mathcal{P}_D([k])$ and these groups can be read off by inspecting parts (a) and (c) in \cite[Lemma 1]{D_dist}. 
\end{proof}

Finally, observe that Theorem \ref{t:new} now follows by combining Corollary \ref{c:unique} and Proposition \ref{p:cond_5}.

\section{Saxl graphs}\label{s:Saxl}

In this section we use Theorem \ref{thm:r(W)_cal} to study the valency and connectedness properties of the Saxl graphs of base-two product type primitive groups. 

Let $G \leqs {\rm Sym}(\O)$ be a finite transitive permutation group with $b(G) = 2$ and point stabiliser $H$. Recall that the vertices of the \emph{Saxl graph} of $G$, denoted $\Sigma(G)$, are labelled by the elements of $\O$, with $\a$ and $\b$ joined by an edge if and only if $\{\a,\b\}$ is a base for $G$. Then $\Sigma(G)$ is vertex-transitive with valency ${\rm val}(G) = r|H|$, where $r = r(G)$ is the number of regular suborbits of $G$. In addition, it is easy to see that  $\Sigma(G)$ is connected if $G$ is primitive. This concept was initially introduced and studied by Burness and Giudici in \cite{BG_Saxl}.

\subsection{Valency}\label{ss:val}

Recall that a graph is \emph{Eulerian} if it contains an Eulerian cycle, which is a cycle that uses each edge exactly once. A celebrated theorem of Euler asserts that a connected graph is Eulerian if and only if the degree of every vertex is even. In particular, the Saxl graph of a base-two primitive group $G$ is Eulerian if and only if ${\rm val}(G)$ is even.

A complete classification of the finite primitive groups with an Eulerian Saxl graph remains out of reach and some genuine exceptions have been identified. For example, if $G = {\rm M}_{23}$ and $H = 23{:}11$, then the action of $G$ on $G/H$ is primitive with $b(G)=2$ and the corresponding Saxl graph is non-Eulerian (indeed, we compute $r=159$, so ${\rm val}(G) = r|H|$ is odd). The problem for almost simple primitive groups is studied in \cite[Proposition 3.2]{BG_Saxl} and subsequently extended in \cite[Theorem 4]{CH_val}. 

As another application of Theorem \ref{thm:r(W)_cal}, the following result establishes part (i) of Corollary \ref{cor:Saxl}. It can be viewed as an extension of \cite[Proposition 3.4]{BG_Saxl}.

\begin{prop}\label{p:euler}
Let $G=L\wr P$ be a base-two product type primitive group acting on $\O = \Gamma^k$. Then $\Sigma(G)$ is Eulerian.
\end{prop}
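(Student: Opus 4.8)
The plan is to reduce everything to a single parity check via Euler's theorem. Since $G$ is primitive, the graph $\Sigma(G)$ is connected, so by the characterisation of Eulerian graphs recalled above it is Eulerian precisely when its valency $\mathrm{val}(G)$ is even. The entire task therefore becomes showing that $\mathrm{val}(G)$ is even, and this is exactly the kind of statement that the explicit formula of Theorem \ref{thm:r(W)_cal} is designed to settle.

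First I would record that $\mathrm{val}(G) = r(G)|H|$, where $H = J\wr P$ is a point stabiliser, so that $|H| = |J|^k|P|$. Substituting the formula of Theorem \ref{thm:r(W)_cal} for $r(G)$, the factor $|P|$ cancels and we are left with
\[
\mathrm{val}(G) = |J|^{k}\sum_{m=D(P)}^{k} m!\binom{r(L)}{m}t_m .
\]
Each summand is an integer, since $m!\binom{r(L)}{m} = r(L)(r(L)-1)\cdots(r(L)-m+1)$ is a falling factorial and $t_m$ is a non-negative integer by Definition \ref{d:tm}. The key observation is then that the summation index satisfies $m\geqs 2$ throughout: indeed $P\leqs S_k$ is transitive of degree $k\geqs 2$, hence non-trivial, so $D(P)\geqs 2$ (recall that $D(P)=1$ only for the trivial group). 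Consequently every $m$ occurring in the sum has $m\geqs 2$, so $m!$ is even, each term $m!\binom{r(L)}{m}t_m$ is even, and therefore the whole sum is even. Hence $\mathrm{val}(G)$ is even and $\Sigma(G)$ is Eulerian.

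I do not expect any serious obstacle here; the argument is essentially immediate once Theorem \ref{thm:r(W)_cal} is in hand. The only point that genuinely needs care is the lower bound $D(P)\geqs 2$, which is precisely what prevents the potentially odd $m=1$ contribution $r(L)\,t_1$ from ever appearing in the sum. This bound comes for free from the product action, since a transitive permutation group of degree at least $2$ cannot be trivial, and it is what makes the parity conclusion hold with no further case analysis.
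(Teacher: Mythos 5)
Your argument is correct and is essentially the paper's own proof: both apply Theorem \ref{thm:r(W)_cal} and observe that every summand $m!\binom{r(L)}{m}t_m$ is even because $m \geqs D(P) \geqs 2$. The only cosmetic difference is that you cancel $|P|$ against $|H| = |J|^k|P|$ to write $\mathrm{val}(G)$ directly as $|J|^k$ times the sum, whereas the paper concludes from the evenness of $r(G)|P|$ that one of $r(G)$ or $|P|$ is even; both routes give the same parity conclusion.
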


\begin{proof}
By Theorem \ref{thm:r(W)_cal} we have
	\begin{equation}\label{e:thm4}
	\sum_{m = D(P)}^k m!{r(L)\choose m}t_m = r(G)|P|
	\end{equation}
	and each summand on the left hand side of this equality is even since $D(P)\geqs 2$. Therefore, at least one of $r(G)$ or $|P|$ is even, so ${\rm val}(G) = r(G)|H|$ is even and the result follows.
\end{proof}

The finite transitive groups $G$ such that $\Sigma(G)$ has prime valency are determined in \cite[Proposition 3.1]{BG_Saxl}. For almost simple primitive groups, this is extended in \cite[Theorem 3]{CH_val}, which classifies all the groups of this form with the property that ${\rm val}(G)$ is a prime power (in each case, ${\rm val}(G)$ is a $2$-power). 

In our next result, which gives part (ii) of Corollary \ref{cor:Saxl},  
we extend the analysis to product type primitive groups of the form $G = L\wr P$. 

\begin{prop}
	\label{p:prime-power}
	Let $G = L\wr P$ be a base-two product type primitive group acting on $\O = \Gamma^k$ with point stabiliser $J \wr P$. Then ${\rm val}(G)$ is a prime power if and only if $L = {\rm M}_{10}$, $J = SD_{16}$, $P$ is a $2$-group and $t_2 \geqs 1$ is a $2$-power.
\end{prop}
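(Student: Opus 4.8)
The plan is to combine the structural constraints forced by a prime-power valency with the explicit formula for $r(G)$ in Theorem \ref{thm:r(W)_cal}, reducing everything to an elementary divisibility argument.

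First I would record the basic reductions. Since $G$ is base-two with point stabiliser $H = J \wr P$, we have ${\rm val}(G) = r(G)|H| = r(G)|J|^k|P|$. If this equals a prime power $p^a$, then each of $|J|$, $|P|$ and $r(G)$ is a power of $p$; in particular $J$ and $P$ are (soluble) $p$-groups, so $L \in \mathcal{S}$ and $b(L)=2$. Moreover $P$ is a nontrivial transitive group on $k \geqs 2$ points, so $D(P) \geqs 2$, and Theorem \ref{thm:BC} gives $r(L) \geqs D(P) \geqs 2$.

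The crux is to pin down $r(L)$ using Theorem \ref{thm:r(W)_cal}, which I would rewrite as
\[
N := r(G)|P| = \sum_{m=D(P)}^{k} t_m \cdot (r(L))_m, \qquad (r(L))_m := m!\binom{r(L)}{m} = r(L)(r(L)-1)\cdots(r(L)-m+1).
\]
Since every summand carries the factor $r(L)$ (as $m \geqs 1$), we get $r(L) \mid N$; but $N = r(G)|P|$ is a power of $p$, so $r(L)$ must itself be a power of $p$. Now suppose $r(L) > 2$. Because $D(P)\geqs 2$, every exponent $m$ occurring in the sum satisfies $m \geqs 2$, so each falling factorial $(r(L))_m$ also carries the factor $r(L)-1$. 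As $r(L)-1 = p^j-1 > 1$ is coprime to $p$, it has a prime divisor $\ell \neq p$, and then $\ell \mid (r(L))_m$ for every term, forcing $\ell \mid N$ --- contradicting that $N$ is a power of $p$. Hence $r(L) = 2$ and $p = 2$.

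It then remains to identify the groups. We need $L \in \mathcal{S}$ with $b(L)=2$, $r(L)=2$ and $J$ a $2$-group, and by inspecting Table \ref{tab:r(L)} (which lists every such $L$ with $r(L)\leqs 4$) the unique case with $J$ a $2$-group is $L = {\rm M}_{10}$, $J = SD_{16}$. For this $L$ the formula collapses to $N = t_2\,(r(L))_2 = 2t_2$, so $r(G) = 2t_2/|P|$; since $r(G)$ and $|P|$ are $2$-powers, $t_2$ is a $2$-power, while $r(L)=2\geqs D(P)\geqs 2$ forces $D(P)=2$, equivalently $t_2 \geqs 1$. The converse is a direct verification: given $L = {\rm M}_{10}$, $J = SD_{16}$, $P$ a $2$-group and $t_2 \geqs 1$ a $2$-power, Theorem \ref{thm:BC} gives $b(G)=2$ and ${\rm val}(G) = r(G)|J|^k|P| = 2t_2\cdot 16^k$ is a $2$-power. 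I expect the divisibility computation to be the conceptual heart of the argument, and it is clean; the only genuinely delicate point is the finite check that ${\rm M}_{10}$ with $J = SD_{16}$ is the \emph{unique} base-two group in $\mathcal{S}$ with $r(L)=2$ and a $2$-group stabiliser, which I would settle by direct inspection of Table \ref{tab:r(L)} (noting that the torus-normaliser families with a $2$-group stabiliser do not have $r(L)=2$, and the cases with $r(L)=1$ are already excluded since they admit no base-two $L \wr P$).
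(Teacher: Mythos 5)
Your proof is correct, and its central divisibility step takes a genuinely different (and sharper) route than the paper's. Both arguments hinge on the identity $r(G)|P| = \sum_{m=D(P)}^{k} m!\binom{r(L)}{m}t_m$ from Theorem \ref{thm:r(W)_cal}, but they extract different information from it. The paper first invokes the external classification in \cite[Proposition 6.3 and Table 3]{CH_val} to list the candidates $(L,J)$ with $|J|$ a prime power and to deduce $p=2$; it then observes that every summand with $m\geqs 3$ is divisible by $3$, forcing $D(P)=2$ and $r(L)\equiv 2\imod{3}$, and finally works through the Fermat and Mersenne families and the remaining sporadic entries of that table, computing $r(L) \bmod 3$ in each case. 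You instead note that $r(L)$ divides every summand and, since $D(P)\geqs 2$, so does $r(L)-1$; as these are coprime and the total is a $p$-power, this forces $r(L)=2$ and $p=2$ outright. Pinning down $r(L)$ exactly (rather than modulo $3$) reduces the identification step to scanning the $r(L)=2$ rows of Table \ref{tab:r(L)} for a $2$-group stabiliser, with no appeal to \cite{CH_val} and no congruence computations in infinite families; this is more self-contained, at the cost of leaning on the full strength of Proposition \ref{prop:r(L)<=4}. The endgame ($D(P)=2$, $r(G)|P|=2t_2$, hence $t_2$ a $2$-power, and the converse verification) is the same in both treatments.
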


\begin{proof}
Suppose ${\rm val}(G) = p^a$ with $p$ a prime. Then $J$ and $P$ are $p$-groups, so $J$ is soluble and thus $L$ is almost simple. By \cite[Proposition 6.3]{CH_val}, the possibilities for $(L,J)$ are recorded in \cite[Table 3]{CH_val} and we deduce that $p=2$. By Theorem \ref{thm:r(W)_cal}, we see that \eqref{e:thm4} holds and thus
\[ 
\sum_{m=D(P)}^km!{r(L) \choose m}t_m
\]
is a $2$-power. This observation immediately implies that $D(P) = 2$, otherwise each summand is divisible by $3$. In addition, $r(L) \equiv 2\imod{3}$ because the binomial coefficient $\binom{r(L)}{2}$ must be indivisible by $3$. By inspecting \cite[Table 3]{CH_val}, we now consider each possibility for $(L,J)$ in turn. 
	
First assume $L = \LL_2(q)$ and $J = D_{q-1}$, where $q \geqs 17$ is a Fermat prime. Then $q \equiv 1\pmod 4$ and so $r(L) = (q+7)/4$ by the proof of \cite[Lemma 4.7]{B_sol}. If we write $q = 2^m+1$ then $r(L) = 2^{m-2}+2$ and thus $r(L) \not\equiv 2\imod{3}$, so this case does not arise. Now suppose $L = \LL_2(q)$ and $J = D_{q+1}$ with $q = 2^m-1 \geqs 31$ a Mersenne prime. Here $q\equiv 3\pmod 4$ and $r(L) = (q-3)/4$ by the proof of \cite[Lemma 7.9]{BH_gen}. Therefore $r(L) = 2^{m-2}-1$ and once again we deduce that $r(L) \not\equiv 2\imod{3}$.
	
Finally, let us turn to the remaining cases in \cite[Table 3]{CH_val}. If $L = \PGL_2(q)$ and $J = D_{2(q-1)}$ with $q\geqs 17$ a Fermat prime, then $r(L) = 1$ (see Table \ref{tab:r(L)}) and thus $b(G) \geqs 3$. The case where $L = \PGL_2(q)$ and $J =  D_{2(q+1)}$ with $q \geqs 7$ a Mersenne prime can be immediately excluded since $b(L)=3$ by \cite[Theorem 2]{B_sol}. The handful of remaining possibilities can be checked using {\sc Magma} \cite{Magma}, implementing the approach presented in Section \ref{sss:reg1} to compute $r(L)$. In this way, we find that $r(L) \equiv 2\imod{3}$ if and only if $L = {\rm M}_{10}$ and $J = SD_{16}$, where we observe that $r(L)=2$.
	
We conclude that $L = {\rm M}_{10}$ with $J = SD_{16}$ is the only possibility. Here Theorem \ref{thm:r(W)_cal} implies that $|P|r(G) = 2t_2$ and thus $t_2$ is a $2$-power (note that $t_2=1$ if and only if $P=S_2$). This completes the proof of the proposition.
\end{proof}

\begin{rem}\label{r:t2}
There are genuine examples in Proposition \ref{p:prime-power}, where $P \leqs S_k$ is a transitive $2$-group and $t_2$ is a $2$-power. For example, we can take $(k,P) = (2,S_2)$ or $(4,C_2 \times C_2)$, where $t_2 = 1$  or $4$, respectively. By inspecting the {\sc Magma} database of transitive groups, we find that there are $165$ groups of degree $k \leqs 16$ with the desired property. For example,  $t_2$ is a $2$-power for $156$ of the $1427$ transitive $2$-groups of degree $16$.   
\end{rem}

\subsection{Connectedness}\label{ss:conn}

Let $G \leqs {\rm Sym}(\O)$ be a finite primitive group with $b(G) = 2$ and recall that the Saxl graph $\Sigma(G)$ is connected in this situation, so it is natural to consider its diameter. Given $\a \in \O$, let $\Sigma_G(\alpha)$ (or just $\Sigma(\a)$ if the corresponding group $G$ is clear from the context) be the set of neighbours of $\alpha$ in $\Sigma(G)$, so 
\[
\Sigma(\a) = \{ \b \in \O \, :\, G_{\a} \cap G_{\b} = 1\} 
\]
is the union of the regular $G_{\a}$-orbits on $\O$.

One of the main open problems concerning the connectedness of $\Sigma(G)$ is the following conjecture of Burness and Giudici (see \cite[Conjecture 4.5]{BG_Saxl}).

\begin{con}\label{con:BG}
Let $G \leqs {\rm Sym}(\O)$ be a finite primitive group with $b(G)=2$. Then any two vertices in $\Sigma(G)$ have a common neighbour.
\end{con}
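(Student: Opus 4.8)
The plan is to attack the conjecture directly via a density argument on $\Sigma(G)$, reducing the existence of a common neighbour to a lower bound on the proportion of base pairs. Since $G$ is primitive with $b(G)=2$, the graph $\Sigma(G)$ is vertex-transitive, so every vertex has neighbourhood of the same size $|\Sigma(\gamma)| = {\rm val}(G) = \mathcal{P}(G)|\O|$, where $\mathcal{P}(G)$ is the probability that a random ordered pair in $\O$ is a base for $G$ (the diagonal contributes nothing, as $G_\a \ne 1$). Fixing $\a,\b \in \O$, a trivial inclusion-exclusion then gives
\[
|\Sigma(\a) \cap \Sigma(\b)| \geqs |\Sigma(\a)| + |\Sigma(\b)| - |\O| = (2\mathcal{P}(G)-1)|\O|.
\]
Hence the conjecture holds, with room to spare, whenever $\mathcal{P}(G) > 1/2$: in that regime $\a$ and $\b$ share at least $(2\mathcal{P}(G)-1)|\O|$ common neighbours, which already yields the stronger diameter-two conclusion.

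The second step is to establish $\mathcal{P}(G) > 1/2$ for all but a controlled collection of groups, using the fixed point ratio machinery. Writing $\mathcal{Q}(G) = 1-\mathcal{P}(G)$ for the probability that a random pair fails to be a base, the standard estimate of Liebeck and Shalev \cite{LSh} gives $\mathcal{Q}(G) \leqs \what{\mathcal{Q}}$, where $\what{\mathcal{Q}} = \sum_i |x_i^G| \cdot \fpr(x_i)^2$ and the sum runs over representatives $x_i$ of the prime-order classes of $G$. It therefore suffices to prove $\what{\mathcal{Q}} < 1/2$. I would run this through the O'Nan-Scott types in turn: for almost simple $G$ the strong fixed point ratio bounds of \cite{LSh} and its refinements apply, with analogous bounds available for the affine, diagonal, product and twisted wreath families. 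In each family the inequality should fail only for finitely many groups, together with a handful of bounded infinite subfamilies (typically low-rank groups acting on cosets of a small subgroup, where the subdegrees are large and bases are correspondingly rare).

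The main obstacle is precisely the residual regime $\mathcal{P}(G) \leqs 1/2$, where the union bound is too weak and must be replaced by a genuine analysis of the orbital structure. Here the point is that the non-neighbours of $\a$, and those of $\b$, are not arbitrary sets of density $\mathcal{Q}(G)$ but unions of specific $G_\a$- and $G_\b$-suborbits, and one wants to show these two ``bad'' sets cannot together exhaust $\O$; equivalently, that every orbital appears with positive coefficient in the expansion of the square of the adjacency matrix. For the finitely many small groups this is a direct {\sc Magma} computation, while for the bounded infinite families one seeks an explicit description of the relevant structure constants, exactly as carried out for ${\rm L}_2(q)$ acting on pairs of $1$-spaces in Proposition \ref{p:SBG_PSL}. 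A \emph{uniform} treatment of this low-density regime across all O'Nan-Scott types is what a complete proof demands, and it is the reason the statement is posed as a conjecture: no single structural argument is presently known to cover it, so the assertion can at best be confirmed family by family (for instance, it is already known for all groups with socle ${\rm L}_2(q)$ and for all almost simple groups with soluble point stabilisers).
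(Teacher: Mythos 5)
The statement you have attempted is Conjecture \ref{con:BG}, and the paper does not prove it: it is an open conjecture of Burness and Giudici, for which the paper only assembles evidence --- the equivalence with the stronger Conjecture \ref{conj:BG_stronger} (Proposition \ref{p:equiv}), the verification of \eqref{e:stars} for ${\rm L}_2(q)$ acting on pairs of $1$-spaces (Proposition \ref{p:SBG_PSL}), and the computational check for all base-two primitive groups of degree at most $4095$ (Remark \ref{r:evidence}). Your proposal is, to its credit, honest about this: after the first two steps you concede that no uniform argument covers the low-density regime, so what you have written is a strategy sketch rather than a proof, and judged as a proof attempt it has a genuine, fatal gap.

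Concretely, your first step is correct and is exactly Remark \ref{r:saxl} of the paper (a special case of \cite[Lemma 3.6(i)]{BG_Saxl}): if $\mathcal{P}(G)>1/2$ then $|\Sigma(\a)\cap\Sigma(\b)|\geqs (2\mathcal{P}(G)-1)|\O|>0$ for all $\a,\b$. The gap is in your second step. The estimate $\mathcal{Q}(G)\leqs\what{\mathcal{Q}}$ is standard, but your assertion that the inequality $\what{\mathcal{Q}}<1/2$ should fail ``only for finitely many groups, together with a handful of bounded infinite subfamilies'' is false and cannot be repaired: base-two primitive groups with $\mathcal{P}(G)\leqs 1/2$ are pervasive, not exceptional. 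For instance, any base-two affine group $G=V{:}H$ whose point stabiliser $H$ has a unique regular orbit on $V$ satisfies $\mathcal{P}(G)=|H|/|V|$, which is tiny whenever $|H|$ is small relative to $|V|$; likewise the paper's Table \ref{tab:r(L)} lists whole infinite families (e.g. $L={\rm PGL}_2(q)$ with $J$ of type ${\rm GL}_1(q)\wr S_2$ and $r(L)=1$) where the valency of $\Sigma(G)$ is far below $|\O|/2$. For all such groups the inclusion-exclusion bound yields nothing, and your third step --- a family-by-family analysis of the orbital structure constants --- is precisely the open problem itself, not a method for resolving it. So the proposal does not prove the statement; nobody has, which is exactly why it appears in the paper as a conjecture rather than a theorem.
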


In other words, the conjecture asserts that the following property is satisfied:
\begin{equation}\label{e:star}
\mbox{\emph{$\Sigma(\a) \cap \Sigma(\beta)$ is non-empty for all $\a,\b \in \O$.}} \tag{$\star$}
\end{equation}
In particular, this implies that $\Sigma(G)$ has diameter at most $2$ for every finite primitive group $G$ with $b(G)=2$ (this weaker assertion is stated as \cite[Conjecture 4.4]{BG_Saxl}).

\begin{rem}\label{r:saxl}
Let $G \leqs {\rm Sym}(\O)$ be a finite transitive group and let $\mathcal{P}(G) = {\rm val}(G)/|\O|$ be the probability that a random pair of points in $\O$ form a base for $G$. If $\mathcal{P}(G)>1/2$ then ${\rm val}(G) > |\O|/2$ and thus the neighbourhoods of any two points in $\O$ must have a non-empty intersection. Therefore, \eqref{e:star} holds if $\mathcal{P}(G)>1/2$ (this is a special case of \cite[Lemma 3.6(i)]{BG_Saxl}).
\end{rem}

As we highlighted in Section \ref{s:intro}, Conjecture \ref{con:BG} has been verified in various special cases. For example, every primitive group of degree at most $4095$ has the desired property (this has been  checked computationally, using the {\sc Magma} database of primitive groups). In addition, it has been verified for all almost simple primitive groups with socle ${\rm L}_2(q)$ or with soluble point stabilisers (see \cite{ChenDu2020Saxl} and \cite{BH_Saxl}). For the almost simple groups with socle an alternating group $A_n$, the proof is reduced in \cite[Section 5]{BG_Saxl} to the case where each point stabiliser acts imprimitively on $\{1, \ldots, n\}$. Computational methods are used in \cite[Section 6]{BG_Saxl} to verify the conjecture for many almost simple sporadic groups and there are asymptotic results of Fawcett (see \cite[Theorem 1.4]{Faw2} and \cite[Theorem 1.5]{Faw1}), which give the desired conclusion for almost all sufficiently large primitive groups of diagonal and twisted wreath type. Recent work of Lee and Popiel \cite{LP} shows that the conjecture holds for many affine groups of the form $G = VH$, where $H$ is a quasisimple sporadic group. 

This demonstrates that there is a growing body of evidence pointing towards the veracity  of Conjecture \ref{con:BG}. However, there are very few (if any) results in the literature for product type primitive groups.

Let $L \leqs {\rm Sym}(\Gamma)$ be a finite primitive group (of any type), let $P \leqs S_k$ be transitive with $k \geqs 2$ and consider the product action of $G = L\wr P$ on $\O = \Gamma^k$. If $L$ is non-regular on $\Gamma$, then $G \leqs {\rm Sym}(\O)$ is primitive and it is clear that $b(G)=2$ only if $b(L)=2$ for the action of $L$ on $\Gamma$. The following result reveals the relationship between property \eqref{e:star} for $G$ and $L$ in this setting.

\begin{lem}\label{l:BG_D(P)}
Let $G = L\wr P$ be a base-two primitive group in its product action on $\O = \Gamma^k$ and assume $G$ satisfies \eqref{e:star}. Then $\Sigma_L(\alpha)$ meets at least $D(P)$ regular $L_{\beta}$-orbits for all $\a,\b \in \Gamma$.
\end{lem}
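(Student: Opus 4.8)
The plan is to derive the statement for the $L$-action from property \eqref{e:star} for $G$ by evaluating the latter at a well-chosen pair of \emph{diagonal} points of $\O = \Gamma^k$. Fix $\a,\b \in \Gamma$ and set $\a^* = (\a,\dots,\a)$ and $\b^* = (\b,\dots,\b)$ in $\O$. Since $G$ is base-two and satisfies \eqref{e:star}, the vertices $\a^*$ and $\b^*$ of $\Sigma(G)$ have a common neighbour $\g = (\g_1,\dots,\g_k)$; that is, both $\{\a^*,\g\}$ and $\{\b^*,\g\}$ are bases for $G$. The whole proof then consists of unpacking these two conditions via Lemma \ref{l:BG_l:2.8}.

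First I would apply Lemma \ref{l:BG_l:2.8} to the base $\{\a^*,\g\}$. The coordinatewise condition of the lemma gives that each $\{\a,\g_i\}$ is a base for $L$, so $\g_i \in \Sigma_L(\a)$ for every $i$. Next I would apply the same lemma to the base $\{\b^*,\g\}$: this yields that each $\{\b,\g_i\}$ is a base for $L$ (so that every $\g_i$ lies in a \emph{regular} $L_\b$-orbit) and, more importantly, that the partition $\Pi$ of $\{1,\dots,k\}$ in which $i$ and $j$ lie in the same part exactly when $(\b,\g_i)$ and $(\b,\g_j)$ are in the same $L$-orbit on $\Gamma^2$ is a distinguishing partition for $P$. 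In particular $\Pi$ has at least $D(P)$ parts.

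The key step is to reinterpret the parts of $\Pi$ in terms of $L_\b$-orbits. For $\g_i,\g_j \in \Sigma_L(\b)$, an element of $L$ carrying $(\b,\g_i)$ to $(\b,\g_j)$ must fix $\b$, so $(\b,\g_i)$ and $(\b,\g_j)$ lie in the same $L$-orbit on $\Gamma^2$ if and only if $\g_i$ and $\g_j$ lie in the same $L_\b$-orbit on $\Gamma$. Hence the parts of $\Pi$ correspond bijectively to the distinct regular $L_\b$-orbits that contain the $\g_i$, and since $\Pi$ has at least $D(P)$ parts, the $\g_i$ meet at least $D(P)$ distinct regular $L_\b$-orbits. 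As every $\g_i$ also lies in $\Sigma_L(\a)$, this shows that $\Sigma_L(\a)$ meets at least $D(P)$ regular $L_\b$-orbits, which is the assertion.

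There is no real analytic or computational obstacle here; the argument is purely structural. The only genuine idea is the passage to the diagonal points $\a^*,\b^*$, which forces both conditions in Lemma \ref{l:BG_l:2.8} to be expressed through the single action of $L$ on $\Gamma$. The one point demanding care is bookkeeping: the membership $\g_i \in \Sigma_L(\a)$ must be extracted from the base $\{\a^*,\g\}$, whereas the count of $D(P)$ orbits must come from the distinguishing partition attached to $\{\b^*,\g\}$, since it is $L_\b$-orbits (not $L_\a$-orbits) that the conclusion counts.
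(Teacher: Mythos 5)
Your argument is correct and is essentially the paper's proof: the paper phrases it contrapositively (assuming the conclusion fails for some $\a,\b$ and deriving that the diagonal points have no common neighbour), but the substance — evaluating \eqref{e:star} at $(\a,\dots,\a)$ and $(\b,\dots,\b)$, applying Lemma \ref{l:BG_l:2.8} to both resulting bases, and identifying the parts of the distinguishing partition with distinct regular $L_\b$-orbits — is identical. Your explicit justification that, for points in regular $L_\b$-orbits, lying in the same $L$-orbit on $\Gamma^2$ is equivalent to lying in the same $L_\b$-orbit is a correct and slightly more careful rendering of a step the paper leaves implicit.
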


\begin{proof}
Suppose there exists $\alpha,\beta\in\Gamma$ such that the given condition on $\Sigma(L)$ does not hold. We claim that the elements $\rho = (\alpha,\dots,\alpha)$ and $\s = (\beta,\dots,\beta)$ in $\O$ have no common neighbour in $\Sigma(G)$, which implies that $G$ does not satisfy \eqref{e:star}. 

Seeking a contradiction, suppose $(\gamma_1, \ldots, \gamma_k) \in \Sigma_G(\rho) \cap \Sigma_G(\sigma)$. For each $i$, Lemma \ref{l:BG_l:2.8} implies that $\{\a,\gamma_i\}$ and $\{\b,\gamma_i\}$ are bases for $L$, so $\gamma_i$ is contained in both $\Sigma_L(\a)$ and a regular $L_{\b}$-orbit. In addition, the lemma implies that the pairs $(\b,\gamma_i)$ for $1 \leqs i \leqs k$ represent at least $D(P)$ distinct $L$-orbits on $\Gamma^2$, whence $\{\gamma_1, \ldots, \gamma_k\}$ meets at least $D(P)$ distinct regular $L_{\b}$-orbits. But this means that $\Sigma_L(\alpha)$ meets at least $D(P)$ regular $L_{\beta}$-orbits and we have reached a contradiction.
\end{proof}

In order to state our next result, we define the following condition for a base-two group $G \leqs {\rm Sym}(\Omega)$:
\begin{equation}\label{e:stars}
\mbox{\emph{$\Sigma(\a)$ meets every regular $G_{\b}$-orbit for all   $\a,\b \in \O$.}} \tag{$\star\star$}
\end{equation}
Note that if this holds, then $|\Sigma(\a) \cap \Sigma(\b)| \geqs r(G)$ for all $\a,\b \in \O$, so it can be viewed as a stronger form of the condition in \eqref{e:star}. 

As a special case of Lemma \ref{l:BG_D(P)}, we obtain the following corollary.

\begin{cor}\label{c:BG_stars}
Let $L \leqs {\rm Sym}(\Gamma)$ be a base-two primitive group and consider the primitive product action of $G = L\wr S_{r(L)}$ on $\O = \Gamma^k$. If $G$ satisfies \eqref{e:star}, then $L$ satisfies \eqref{e:stars}.
\end{cor}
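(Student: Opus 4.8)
The plan is to deduce this directly from Lemma \ref{l:BG_D(P)} by exploiting the specific choice $P = S_{r(L)}$. First I would record two elementary observations. Since $G = L \wr S_{r(L)}$, the group acting on the coordinates is $P = S_k$ with $k = r(L)$, and from the distinguishing number of a symmetric group (namely $D(S_n) = n$, as noted in Section \ref{ss:distinguishing}) we obtain $D(P) = k = r(L)$. Second, for any $\beta \in \Gamma$ the total number of regular $L_\beta$-orbits on $\Gamma$ is exactly $r(L)$: this is immediate from the definition of $r(L)$ as the number of regular suborbits of $L$, together with the transitivity of $L$ on $\Gamma$, which makes all point stabilisers conjugate and hence forces each of them to have the same number of regular orbits.

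With these two facts in hand, the corollary is a one-line consequence of Lemma \ref{l:BG_D(P)}. Assuming $G$ satisfies \eqref{e:star}, that lemma guarantees that $\Sigma_L(\alpha)$ meets at least $D(P) = r(L)$ regular $L_\beta$-orbits, for all $\alpha, \beta \in \Gamma$. But we have just observed that $L_\beta$ has exactly $r(L)$ regular orbits on $\Gamma$ in total. Therefore $\Sigma_L(\alpha)$ must in fact meet \emph{every} regular $L_\beta$-orbit, which is precisely condition \eqref{e:stars} for $L$, and we are done.

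I do not expect any substantive obstacle here, since the corollary is engineered to fall out of Lemma \ref{l:BG_D(P)} once the parameters are matched. The only point needing a moment's care is confirming that the lower bound $D(P)$ furnished by the lemma coincides with the exact count of regular $L_\beta$-orbits; this is exactly what dictates the choice $P = S_{r(L)}$, since $D(S_{r(L)}) = r(L)$ equals the total number of regular suborbits, and it is this numerical coincidence that upgrades the lemma's ``at least $D(P)$'' conclusion to ``all of them''.
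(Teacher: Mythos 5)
Your proof is correct and is exactly the argument the paper intends: the corollary is presented there as an immediate special case of Lemma \ref{l:BG_D(P)}, and your two observations ($D(S_{r(L)}) = r(L)$ and the fact that each $L_\beta$ has precisely $r(L)$ regular orbits by transitivity) are precisely the parameter-matching that upgrades ``at least $D(P)$'' to ``all''.
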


This observation leads us to propose the following strengthening of Conjecture \ref{con:BG}, which is stated as Conjecture \ref{conjecture:SBG} in Section \ref{s:intro}.

\begin{con}
	\label{conj:BG_stronger}
Let $G \leqs {\rm Sym}(\O)$ be a finite primitive permutation group with $b(G)=2$. Then \eqref{e:stars} holds.
\end{con}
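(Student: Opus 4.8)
The statement is a conjecture, so rather than a complete proof I will outline the strategy I expect to establish it for all but a bounded collection of groups, together with the main reason the general case remains out of reach. The overarching plan is to combine a fixed-point-ratio estimate with the O'Nan--Scott theorem (see Table \ref{tab:prim}), reducing to finitely many small configurations that can be treated directly, as in the explicit verification for ${\rm L}_2(q)$ acting on pairs of $1$-spaces discussed in Section \ref{s:intro}.

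The key technical ingredient I would establish first is a probabilistic criterion strengthening the elementary observation in Remark \ref{r:saxl}. Fix $\a,\b \in \O$, write $H = G_{\a}$, and let $O$ be a regular $G_{\b}$-orbit, so that $|O| = |H|$ and $G_\b$ acts regularly on $O$. A point $\g \in O$ fails to lie in $\Sigma(\a)$ precisely when $H \cap G_{\g} \neq 1$, and in that case $\g$ is fixed by some element $x \in H$ of prime order. Hence
\[
|O \setminus \Sigma(\a)| \leqs \sum_{x} |O \cap \Fix(x)| \leqs \sum_{x} |\Fix(x)| = |\O|\sum_{x} \fpr(x),
\]
where $x$ runs over the prime-order elements of $H$. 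Since $\sum_{x} \fpr(x) = \what{\mathcal{Q}}$ with $\what{\mathcal{Q}}$ as in \eqref{e:whatQ} (taking $b = 2$), I obtain the criterion that \eqref{e:stars} holds whenever
\[
\what{\mathcal{Q}} < \frac{|H|}{|\O|} = \frac{|H|^2}{|G|}.
\]
Note that this forces the whole of a regular orbit to be avoided before \eqref{e:stars} can fail, which is why it is genuinely stronger than the condition $\mathcal{P}(G) > 1/2$ used for \eqref{e:star}.

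With this criterion in hand, I would run through the five O'Nan--Scott types. For each family the standard upper bounds on $\what{\mathcal{Q}}$ assembled from fixed-point-ratio estimates (as used throughout Section \ref{s:sol}) reduce the problem to a bounded list of groups of small degree, which can be settled computationally; for the product type groups that are the focus of the paper, one can instead exploit the reductions in Lemma \ref{l:BG_D(P)} and Corollary \ref{c:BG_stars}, which relate \eqref{e:stars} for $L$ to \eqref{e:star} for $G = L \wr S_{r(L)}$ and thereby transfer information between the base group and the wreath product. The genuinely explicit cases, such as ${\rm L}_2(q)$ on pairs of $1$-spaces, would be handled by a direct analysis of the regular suborbits of the point stabiliser.

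The main obstacle is precisely that the criterion $\what{\mathcal{Q}} < |H|^2/|G|$ is far more demanding than $\mathcal{P}(G) > 1/2$: it fails systematically whenever the point stabiliser $H$ is small relative to $|\O|$, and in such cases a single prime-order element of $H$ with a moderately large fixed-point set can, a priori, cover an entire regular $G_{\b}$-orbit. No uniform argument is available to rule this out, so these low-stabiliser configurations must be attacked by ad hoc or computational means. This is why I expect the method to deliver \eqref{e:stars} for every sufficiently large group together with specific infinite families, but not the full conjecture: establishing \eqref{e:stars} in complete generality would require a qualitatively new idea controlling how regular stabiliser-orbits distribute across fixed-point sets, and this remains open.
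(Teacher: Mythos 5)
The statement you are addressing is a conjecture, and the paper does not prove it: its own contribution here is Proposition \ref{p:equiv}, which shows via Lemma \ref{l:BG_D(P)} and Corollary \ref{c:BG_stars} (applied to $G = L \wr S_{r(L)}$) that \eqref{e:stars} for all base-two primitive groups is equivalent to the a priori weaker Conjecture \ref{con:BG}, together with the evidence in Remark \ref{r:evidence} (all base-two primitive groups of degree at most $4095$) and Proposition \ref{p:SBG_PSL}. So there is no proof to match your proposal against, and your framing of the strategy as partial is appropriate. Your central computation is also correct: for $x$ ranging over the prime-order elements of $H$ one has $\sum_x \fpr(x) = \sum_i |x_i^G \cap H|\fpr(x_i) = \what{\mathcal{Q}}$ with $b=2$ in \eqref{e:whatQ}, and the union bound over a regular $G_\b$-orbit $O$ of size $|H|$ gives $|O \setminus \Sigma(\a)| \leqs |\O|\what{\mathcal{Q}}$, so \eqref{e:stars} does hold whenever $\what{\mathcal{Q}} < |H|/|\O|$.

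The genuine gap is your claim that this criterion, fed with the standard fixed-point-ratio bounds, ``reduces the problem to a bounded list of groups of small degree.'' It does not. Since $|H|/|\O| = \mathcal{P}(G)/r(G)$ and $\what{\mathcal{Q}} \geqs \mathcal{Q} = 1-\mathcal{P}(G)$, your criterion can only possibly hold when $\mathcal{P}(G) > r(G)/(r(G)+1)$, which is strictly stronger than the condition $\mathcal{P}(G)>1/2$ of Remark \ref{r:saxl} and becomes unattainable as $r(G)$ grows; but the same estimates that drive $\what{\mathcal{Q}} \to 0$ along a family force $r(G) = \mathcal{P}(G)|\O|/|H| \to \infty$ whenever $|H| = o(|\O|)$, so the criterion degenerates precisely where the asymptotic method is supposed to do the work. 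Equivalently, by Cauchy--Schwarz $\what{\mathcal{Q}} \geqs N_H^2/N_G$, where $N_H$ and $N_G$ count prime-order elements of $H$ and of $G$ (in classes meeting $H$), and the requirement $(N_H/|H|)^2 < N_G/|G|$ fails as soon as $H$ contains a bounded-away-from-zero proportion of involutions while $G$ does not --- the generic situation for a base-two primitive group, where $|H|$ is roughly $|G|^{1/2}$. So the probabilistic step is essentially vacuous rather than merely ``demanding,'' and it does not yield a reduction to finitely many cases; any real progress must control how the regular $G_\b$-orbits distribute across fixed-point sets, or else exploit the equivalence with Conjecture \ref{con:BG} that the paper actually establishes.
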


As noted above, this asserts that any two vertices in $\Sigma(G)$ have at least $r(G)$ common neighbours, and it coincides with Conjecture \ref{con:BG} when $r(G)=1$. In fact, by taking $G = L \wr S_{r(L)}$ as in Corollary \ref{c:BG_stars}, where $L \leqs {\rm Sym}(\Gamma)$ is an arbitrary primitive group with $b(L)=2$, we deduce the following result.

\begin{prop}\label{p:equiv}
Conjectures \ref{con:BG} and \ref{conj:BG_stronger} are equivalent.
\end{prop}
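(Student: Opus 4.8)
The plan is to prove the two implications separately, observing that all the substance lies in deriving Conjecture \ref{con:BG} $\Rightarrow$ Conjecture \ref{conj:BG_stronger}, since the reverse implication is immediate. Indeed, condition $(\star\star)$ is formally stronger than $(\star)$: whenever $b(G)=2$ there is at least one regular $G_\beta$-orbit (any orbit comprising $\Sigma(\beta)$), and if $\Sigma(\alpha)$ meets every such orbit then in particular $\Sigma(\alpha)\cap\Sigma(\beta)\ne\emptyset$. Hence Conjecture \ref{conj:BG_stronger} implies Conjecture \ref{con:BG} with no further work.

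For the converse, I would assume Conjecture \ref{con:BG} and fix an arbitrary finite primitive group $L\leqslant\mathrm{Sym}(\Gamma)$ with $b(L)=2$; the goal is to show that $L$ satisfies $(\star\star)$. The key device is the auxiliary wreath product $G = L\wr S_{r(L)}$ in its product action on $\Omega=\Gamma^{r(L)}$, exactly as in Corollary \ref{c:BG_stars}. First I would verify that $G$ is itself a finite primitive group with $b(G)=2$: primitivity follows because $L$ is primitive and non-regular (it is non-regular precisely because $b(L)=2$), and the base-two property follows from the identity $D(S_k)=k$ together with Theorem \ref{thm:BC}, since here $k=r(L)$ gives $r(L)\geqslant D(S_{r(L)})$; equivalently, $r(G)=\binom{r(L)}{r(L)}=1$ by Corollary \ref{cor:val_P=Sk}. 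Thus $G$ falls within the scope of Conjecture \ref{con:BG}, so by hypothesis $G$ satisfies $(\star)$. Applying Corollary \ref{c:BG_stars} then yields that $L$ satisfies $(\star\star)$, and since $L$ was arbitrary this establishes Conjecture \ref{conj:BG_stronger}.

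I do not anticipate a genuine obstacle, as the real work has already been done in Lemma \ref{l:BG_D(P)} and Corollary \ref{c:BG_stars}; the proposition is essentially a repackaging of those results. The only points requiring a little care are bookkeeping: confirming that the auxiliary group $G=L\wr S_{r(L)}$ genuinely meets the hypotheses of Conjecture \ref{con:BG} (primitive and base-two), and handling the degenerate case $r(L)=1$, in which $G=L$ and the conditions $(\star)$ and $(\star\star)$ coincide for $L$, so there is nothing to prove. I would also emphasise that $L$ here ranges over primitive groups of \emph{every} O'Nan--Scott type, not merely the almost simple or diagonal type groups arising as the base of a product type wreath product; it is the generality of Corollary \ref{c:BG_stars}, and of the Bailey--Cameron machinery underlying it (cf. Remark \ref{r:trans}), that makes the reduction valid in full.
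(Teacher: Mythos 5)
Your proposal is correct and follows essentially the same route as the paper: the reverse implication is immediate, and the forward implication is obtained by applying Conjecture \ref{con:BG} to the auxiliary group $G = L \wr S_{r(L)}$ (primitive since $L$ is primitive and non-regular, and base-two since $D(S_{r(L)}) = r(L)$) and then invoking Corollary \ref{c:BG_stars}. Your attention to the degenerate case $r(L)=1$ and to the fact that $L$ may be of arbitrary O'Nan--Scott type is exactly the bookkeeping the paper's argument relies on.
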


\begin{rem}\label{r:evidence}
Let $G \leqs {\rm Sym}(\O)$ be a finite primitive group with $b(G)=2$ and point stabiliser $H$. Let $R$ be a set of $(H,H)$ double coset representatives in $G$ and set 
\[
S = \{x \in R \,:\, |HxH| = |H|^2\}.
\]
Then observe that \eqref{e:stars} holds if for all $x \in R$ and all $y \in S$, there exists $z \in HyH$ such that $H \cap H^z = H^x \cap H^z = 1$. This approach for verifying \eqref{e:stars} can be implemented in {\sc Magma} and by using the primitive groups  database, we have checked that it holds for every base-two primitive group of degree $n \leqs 4095$. 
\end{rem}

\begin{rem}\label{r:aff}
Let $G = V{:}H$ be a primitive affine group with $b(G)=2$, where $V = (\mathbb{F}_p)^d$ and $H = G_0 \leqs {\rm GL}_d(p)$ is the stabiliser of the zero vector. Set $r=r(G)$ and let $\Lambda_1, \ldots, \Lambda_r$ be the regular orbits of $H$ on $V$. In \cite[Lemma 2.3]{LP}, Lee and Popiel observe that $G$ satisfies \eqref{e:star} if and only if every non-zero vector in $V$ is of the form $v_1+v_2$, where $v_1$ and $v_2$ are contained in regular $H$-orbits. The same argument shows that \eqref{e:stars} holds if and only if for all non-zero vectors $v \in V$ and all $i \in \{1, \ldots, r\}$ we can write $v = u_i+w_i$ with $u_i \in \Lambda_i$ and $w_i \in \Lambda_j$ for some $j$. 
\end{rem}

The following proposition shows that \eqref{e:stars} holds for an infinite family of primitive groups $G$ with $r(G) \geqs 2$ (by inspecting \cite[Table 8.1]{Low-Dimensional}, we note that $G$ is primitive if $q$ is even, or if $q \geqs 13$). 

\begin{prop}\label{p:SBG_PSL}
Let $G = {\rm L}_2(q)$, where $q \geqs 4$, and consider the action of $G$ on $\O = G/H$, where $H$ is a subgroup of type $\GL_1(q)\wr S_2$. Then $G$ satisfies \eqref{e:stars}.
\end{prop}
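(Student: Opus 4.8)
The plan is to work with the natural coordinatisation of $\O$. Identify $\O$ with the set $\binom{\PG_1(q)}{2}$ of unordered pairs of distinct points of the projective line $\PG_1(q) = \F \cup \{\infty\}$, on which $G = \PSL_2(q)$ acts $2$-transitively; here $H = G_{\b}$ with $\b = \{0,\infty\}$ is the normaliser of a split torus, a dihedral group of order $q-1$ if $q$ is odd and $2(q-1)$ if $q$ is even. By the $G$-transitivity on $\O$, it suffices to fix $\b$ and show that $\Sigma(\a)$ meets every regular $H$-orbit for all $\a \in \O$. The regular $H$-orbits are exactly the sets $\g^{H}$ with $\g \in \Sigma(\b)$, and since $\{\a,\g^h\}$ is a base if and only if $\{\a^{h^{-1}},\g\}$ is (being a base is $G$-invariant), the statement is equivalent to proving that for all $\a \in \O$ and all $\g \in \Sigma(\b)$, the orbit $\a^{H}$ contains a base-partner of $\g$.

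Next I would record two facts about the base relation. First, any two pairs meeting in exactly one point form a base: their common stabiliser fixes three distinct points of $\PG_1(q)$, hence is trivial since $G \leqs \PGL_2(q)$ and the latter is sharply $3$-transitive on $\PG_1(q)$. Second, for $\b = \{0,\infty\}$ and a disjoint pair $\{c,d\}$ with $c,d \in \F^{\times}$, the only possible nontrivial common stabiliser element is the involution $z \mapsto cd/z$, which lies in $\PSL_2(q)$ if and only if $-cd$ is a square; thus $\{c,d\} \in \Sigma(\b)$ precisely when $-cd$ is a non-square (with a small separate analysis when $d=-c$). This gives an explicit description of the regular $H$-orbits: the disjoint ones are parametrised by the square-class of $cd$ together with the $\PGL_2(q)$-invariant $\tau = (c+d)^2/(cd)$, while the $2(q-1)$ pairs meeting $\b$ in a single point account for the remaining regular orbits.

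For the main argument I would split on the parity of $q$. When $q$ is even, $\PSL_2(q)=\PGL_2(q)$ gives $r(G)=1$, so $(\star\star)$ coincides with $(\star)$ and I need only a common neighbour of any $\a,\b$; this is elementary, since a pair built from one point of $\a$ and one point of $\b$ (suitably chosen) meets each of $\a,\b$ in exactly one point and so is a base with both. When $q$ is odd, $r(G)=(q+a)/4>1$ with $a \in \{5,7\}$ according to $q \bmod 4$, and the per-orbit conclusion is genuine. Here I would express $|\Sigma(\a)\cap \g^{H}|$ as the number of $h\in H$ satisfying the base conditions above; unwinding the coordinates turns this into a count of $x \in \F^{\times}$ subject to a bounded system of quadratic-residue constraints while avoiding the finitely many coordinates of $\a$ and $\b$. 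For $q$ large this count is positive by standard character-sum (Weil) estimates for the number of $\F$-points carrying prescribed quadratic characters, and the finitely many small $q$ are handled computationally (the degree $\leqs 4095$ verification of Remark~\ref{r:evidence} already clears the smallest cases). As a sanity check, one computes $\mathcal P(G)=\val(G)/|\O| = (q+7)(q-1)/(2q(q+1))$ when $q \equiv 1 \imod{4}$, and an analogous value otherwise, so $\mathcal P(G)>1/2$ for all odd $q \geqs 4$ and $(\star)$ follows from Remark~\ref{r:saxl}; the substance of the proposition is the refinement to every individual orbit.

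The main obstacle is precisely this per-orbit positivity in the odd case. The global bound $\mathcal P(G)>1/2$ only produces a common neighbour, and because a single regular $H$-orbit has size merely $|H|\approx\sqrt{|\O|}$, one cannot deduce that it meets $\Sigma(\a)$ from cardinalities alone; the intersection must be controlled orbit by orbit. The delicate part is the bookkeeping of quadratic residues — which depends on $q \bmod 4$ and on the position of $\a$ relative to $\b$ (disjoint from, meeting, or equal to $\b$) — and in checking that, after removing the bounded set of forbidden coordinates, the residue conditions remain simultaneously solvable for every $q \geqs 4$. Establishing this uniform solvability, rather than any single configuration, is where the real work lies.
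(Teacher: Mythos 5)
Your coordinatisation, your description of the base relation (pairs meeting in one point are always bases; a disjoint pair $\{c,d\}$ forms a base with $\{0,\infty\}$ if and only if $-cd$ is a non-square), and your reduction to showing that $\alpha^{H}$ meets the base-partners of each $\gamma\in\Sigma(\beta)$ all match the paper's setup. But the proof stops exactly where it needs to start: you state that the per-orbit positivity ``is where the real work lies'' and then do not do that work. The Weil character-sum route plus a finite computation would presumably succeed, but as written there is no count carried out, no explicit system of residue conditions, no threshold beyond which the estimate wins, and no check that the small cases below that threshold are covered by the degree bound in Remark \ref{r:evidence}. So the central claim of the proposition is not established.

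The paper closes this gap with an exact, uniform-in-$q$ argument that needs no estimates, and it uses only facts you have already recorded. Writing the regular $H$-orbits on disjoint pairs as $R_{s,t}=\{\langle e_1+\lambda^2se_2\rangle,\langle e_1+\lambda^2te_2\rangle\}^{H}$ with $-st^{-1}$ a non-square, take for instance $\alpha=\{\langle e_1\rangle,\langle e_1+be_2\rangle\}$. The product $(-bs)(bt^{-1})=-b^2st^{-1}$ is a non-square, so \emph{exactly one} of $-bs$ and $bt^{-1}$ is a square, say $-bs=\mu^2$; then $b=-\mu^2s^{-1}$ and the element $\{\langle e_1-\mu^2s^{-1}e_2\rangle,\langle e_1-\mu^2t^{-1}e_2\rangle\}$ of $R_{s,t}$ shares the point $\langle e_1+be_2\rangle$ with $\alpha$, hence (by your own first recorded fact) forms a base with $\alpha$. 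The other positions of $\alpha$ are handled the same way, and the orbits $R_1,R_2$ of pairs meeting $\beta$ are immediate. In short: you should look for an element of each regular $H$-orbit that meets $\alpha$ in a single point, rather than counting all base-partners of $\alpha$ in that orbit; the square/non-square dichotomy guarantees one exists for every $q$, which removes both the asymptotic estimate and the computational verification from the argument.
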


\begin{proof}
First assume $q$ is even. As noted in \cite[Example 2.5]{BG_Saxl},
$G$ satisfies \eqref{e:star}, which coincides with \eqref{e:stars} since $r(G)=1$. 

For the remainder, we may assume $q$ is odd, so $H = D_{q-1}$ and \cite[Lemma 4.7]{B_sol} implies that $b(G) = 2$. As in \cite[Section 4.1]{BH_Saxl}, we may identify $\Omega$ with the set of unordered pairs of distinct $1$-dimensional subspaces of $V = \mathbb{F}_q^2$. 
	
Fix a basis $\{e_1,e_2\}$ for $V$ and set $\beta = \{\langle e_1\rangle,\langle e_2\rangle\} \in \O$. Note that if $x\in G_\beta$, then $x$ is the image (modulo scalars)  of an element $A \in {\rm SL}_2(q)$ of the form
	\[
	A=
	\begin{pmatrix}
	\lambda&\\
	&\lambda^{-1}
	\end{pmatrix} \mbox{ or }  
	\begin{pmatrix}
	&\lambda\\
	-\lambda^{-1}&
	\end{pmatrix}
	\] 
	for some $\lambda\in\mathbb{F}_q^\times$, where the matrices are presented with respect to the basis $\{e_1,e_2\}$. We first determine the regular $G_\beta$-orbits (as recorded in Remark \ref{r:LJ}, $G_{\b}$ has $(q+a)/4$ regular orbits, where $a = 7$ if $q \equiv 1 \imod{4}$, otherwise $a=5$). Fix an element $\gamma \in \O$. 
	
	Suppose $\gamma = \{\langle e_1\rangle,\langle e_1+se_2\rangle\}$ for some $s\in\mathbb{F}_q^\times$. Then an easy calculation shows that $\{\beta,\gamma\}$ is a base for $G$ and the regular $G_\beta$-orbit containing $\gamma$ is
	\[
	R_1 = \{\{\langle e_1\rangle,\langle e_1+ce_2\rangle\} \, : \, c\in\mathbb{F}_q^\times\}.
	\]
	Similarly, if $\gamma = \{\langle e_2\rangle,\langle e_1+se_2\rangle\}$ for some $s\in\mathbb{F}_q^\times$, then $\{\beta,\gamma\}$ is also a base for $G$ and 
	\[
	R_2 = \{\{\langle e_2\rangle,\langle e_1+ce_2\rangle\} \, :\, c\in\mathbb{F}_q^\times\}
	\]
	is the regular $G_\beta$-orbit containing $\gamma$.
		
	Now suppose $\gamma = \{\langle e_1+se_2\rangle,\langle e_1+te_2\rangle\}$, where $s,t\in\mathbb{F}_q^\times$ are distinct. By arguing as in the proof of \cite[Lemma 4.3]{BH_Saxl} we calculate that $\{\beta,\gamma\}$ is a base for $G$ if and only if $-st^{-1}$ is a non-square in $\mathbb{F}_q$. So let us assume $-st^{-1}$ is non-square and suppose $A\in\SL_2(q)$ fixes $\beta$. If $A = \mathrm{diag}(\lambda,\lambda^{-1})$, then 
\[
\gamma^A = \{\langle e_1+\lambda^{-2}se_2\rangle,\langle e_1+\lambda^{-2}te_2\rangle\},
\]
whereas 
	\[
	\gamma^A = \{\langle e_1-\lambda^{-2}s^{-1}e_2\rangle,\langle e_1-\lambda^{-2}t^{-1}e_2\rangle\}
	\]
	if $A = \begin{pmatrix}
	&\lambda\\
	-\lambda^{-1}&
	\end{pmatrix}$. 	It follows that the regular $G_\beta$-orbit containing $\gamma$ is
	\[
	R_{s,t}=\{\{\langle e_1+ \lambda^2se_2\rangle,\langle e_1+ \lambda^2te_2\rangle\}:\lambda\in\mathbb{F}_q^\times\}\cup \{\{\langle e_1- \lambda^2s^{-1}e_2\rangle,\langle e_1- \lambda^2t^{-1}e_2\rangle\}:\lambda\in\mathbb{F}_q^\times\}.
	\]
	
	We conclude that 
	\[
	\{ R_1, R_2, R_{s,t} \, : \, \mbox{$s,t \in \mathbb{F}_{q}^{\times}$, $s\ne t$ and $-st^{-1}$ is a non-square in $\mathbb{F}_q$}\}
	\]
	is the set of regular $G_\beta$-orbits. Note that we are not claiming that the orbits denoted $R_{s,t}$ are all distinct. 
	
Fix an element $\a \in \Omega$ with $\a \ne \b$ and let $\Sigma(\a)$ be the set of neighbours of $\a$ in the Saxl graph $\Sigma(G)$. In order to verify \eqref{e:stars}, we need to show that $\Sigma(\a)$ meets every regular $G_{\b}$-orbit. There are several cases to consider.

First assume $\alpha = \{\langle e_1\rangle, \langle e_1+be_2\rangle\}$ for some $b\in\mathbb{F}_q^\times$. It is easy to see that each $\gamma\in R_1\setminus\{\alpha\}$ is contained in $\Sigma(\alpha)$ and we also have $\{\langle e_2\rangle,\langle e_1+be_2\rangle\} \in R_2\cap \Sigma(\alpha)$. Now consider $R_{s,t}$, where $s,t\in\mathbb{F}_q^\times$ are distinct and $-st^{-1}$ is non-square. Note that either $-bs$ or $bt^{-1}$ is a square in $\mathbb{F}_q$, so there are two cases to consider. If $-bs = \mu^2$ is a square then $b = -\mu^2s^{-1}$ and 
\[
\{\langle e_1-\mu^2s^{-1}e_2\rangle,\langle e_1-\mu^2t^{-1}e_2\rangle\}\in R_{s,t}\cap \Sigma(\alpha).
\]
On the other hand, if $bt^{-1} = \mu^2$ then $b = \mu^2t$ and 
\[
\{\langle e_1+\mu^2se_2\rangle,\langle e_1+\mu^2te_2\rangle\}\in R_{s,t}\cap \Sigma(\alpha).
\]
Therefore, $\Sigma(\alpha)$ meets every regular $G_\beta$-orbit as required. 

A very similar argument applies when $\alpha = \{\langle e_2\rangle,\langle e_1+be_2\rangle\}$ for some $b\in\mathbb{F}_q^\times$ and we omit the details. 

Finally, let us assume $\alpha = \{\langle e_1+s_0e_2\rangle,\langle e_1+t_0e_2\rangle\}$, where $s_0,t_0\in\mathbb{F}_q^\times$ are distinct. Note that 
$\{\langle e_i\rangle,\langle e_1+s_0e_2\rangle\}\in R_i \cap \Sigma(\a)$ for $i = 1,2$, so it just remains to show that $\Sigma(\a)$ meets each $R_{s,t}$. As before, either $-s_0s$ or $s_0t^{-1}$ is a square in $\mathbb{F}_q^\times$ and we can repeat the above argument in order to construct an element in $R_{s,t}\cap \Sigma(\alpha)$.
\end{proof}

A weaker form of Conjecture \ref{con:BG} asserts that the Saxl graph of any base-two primitive group has diameter at most $2$ (see \cite[Conjecture 4.4]{BG_Saxl}). Our final result in this section highlights the relationship between this assertion and Conjecture \ref{conj:BG_stronger}. 

\begin{prop}\label{p:conj_equiv}
Let $L \leqs {\rm Sym}(\Gamma)$ be a finite base-two primitive group. If the Saxl graph of every finite base-two primitive group has diameter at most $2$, then either $r(L) = 1$, or $L$ satisfies \eqref{e:stars}.
\end{prop}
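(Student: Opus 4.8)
The plan is to deduce the statement from the diameter hypothesis by running the construction behind Corollary \ref{c:BG_stars} in reverse, taking care that the hypothesis only supplies a bound on the diameter rather than the full strength of \eqref{e:star}. If $r(L) = 1$ there is nothing to prove, so I would assume $r = r(L) \geqs 2$ and set $G = L \wr S_{r}$, acting on $\O = \Gamma^{r}$ in its product action. First I would record that $G$ is a base-two primitive group: since $b(L) = 2$ the group $L$ is non-regular on $\Gamma$, so the product action of $G$ is primitive, and since $D(S_r) = r = r(L)$, Theorem \ref{thm:BC} gives $b(G) \leqs 2$, whence $b(G) = 2$ because $b(G) \geqs b(L) = 2$. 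Thus the diameter hypothesis applies to $G$.

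Next I would argue by contradiction. Suppose $L$ fails \eqref{e:stars}, so there are points $\a, \b \in \Gamma$ with $\a \ne \b$ such that $\Sigma_L(\a)$ misses some regular $L_\b$-orbit; that is, $\Sigma_L(\a)$ meets fewer than $r = D(S_r)$ regular $L_\b$-orbits. Put $\rho = (\a, \ldots, \a)$ and $\s = (\b, \ldots, \b)$ in $\O$. The calculation in the proof of Lemma \ref{l:BG_D(P)} then shows that $\rho$ and $\s$ have no common neighbour in $\Sigma(G)$: any putative common neighbour $(\gamma_1, \ldots, \gamma_r)$ would, by Lemma \ref{l:BG_l:2.8}, force the pairs $(\b, \gamma_i)$ to represent at least $D(S_r) = r$ distinct $L$-orbits on $\Gamma^2$, contradicting our assumption on $\Sigma_L(\a)$.

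The crux of the argument, and the step that distinguishes this proposition from Corollary \ref{c:BG_stars}, is to upgrade ``no common neighbour'' to ``distance greater than $2$''. Here I would observe that $\rho$ and $\s$ are \emph{not} adjacent in $\Sigma(G)$: all $r$ coordinate pairs of $(\rho, \s)$ equal $(\a, \b)$ and hence lie in a single $L$-orbit on $\Gamma^2$, so the partition $\Pi$ associated to $\{\rho, \s\}$ by Lemma \ref{l:BG_l:2.8} is the one-part partition of $\{1, \ldots, r\}$. As $r \geqs 2$, this is not a distinguishing partition for $S_r$, so $\{\rho, \s\}$ is not a base for $G$ and the vertices $\rho, \s$ are not joined by an edge. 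Since $\rho \ne \s$ (because $\a \ne \b$), the absence of a common neighbour now places $\rho$ and $\s$ at distance strictly greater than $2$, so $\Sigma(G)$ has diameter at least $3$.

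This contradicts the hypothesis that every finite base-two primitive group has Saxl graph of diameter at most $2$, and the proposition follows. The main obstacle is precisely the gap between the diameter hypothesis and property \eqref{e:star}: one cannot simply quote Corollary \ref{c:BG_stars}, since that result requires \eqref{e:star}, which is a priori stronger than diameter at most $2$ (for instance the $5$-cycle has diameter $2$ but its adjacent vertices have no common neighbour). The non-adjacency observation above is exactly what bridges this gap for the specific pair $\rho, \s$.
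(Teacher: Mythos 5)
Your proposal is correct and follows essentially the same route as the paper: construct $G = L \wr S_{r(L)}$ acting on $\Gamma^{r(L)}$, note via Lemma \ref{l:BG_l:2.8} that $\rho = (\a,\ldots,\a)$ and $\s = (\b,\ldots,\b)$ are non-adjacent in $\Sigma(G)$, invoke the diameter hypothesis to produce a common neighbour, and use the fact that the only distinguishing partition for $S_r$ on $[r]$ is the partition into singletons. The paper phrases the argument directly rather than by contradiction, but the content — including the non-adjacency observation you single out as the crux — is identical.
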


\begin{proof}
We may assume $r = r(L) \geqs 2$. Fix 
$\alpha,\beta\in\Gamma$. We need to show that $\Sigma(\alpha)$ meets all $r$  regular $L_{\beta}$-orbits on $\Gamma$. Set $\O = \Gamma^r$, $P=S_r$ and consider $G=L\wr P\leqs \mathrm{Sym}(\Omega)$, which is a base-two product type primitive group by Theorem \ref{thm:BC}. Fix $\rho = (\a, \ldots, \a)$ and $\sigma = (\b, \ldots, \b)$ in $\O$.

First observe that $D(P) =r \geqs 2$, so $\{\rho,\sigma\}$ is not a base for $G$ by Lemma \ref{l:BG_l:2.8}. Therefore, our hypothesis implies that there exists $\tau = (\gamma_1,\ldots,\gamma_r)\in\Omega$ such that both $\{\rho,\tau\}$ and $\{\sigma,\tau\}$ are bases for $G$ (otherwise the distance between $\rho$ and $\s$ in $\Sigma(G)$ is at least $3$). By applying Lemma \ref{l:BG_l:2.8} again, we deduce that each $\gamma_i$ is contained in $\Sigma_L(\a) \cap \Sigma_L(\b)$. Moreover, the $\gamma_i$ are contained in distinct regular $L_{\beta}$-orbits since the only distinguishing partition for the action of $P$ on $[r]$ is the partition into singletons. The result follows.
\end{proof}

\section{General product type groups}\label{s:gen}

Up to now, we have focussed on product type groups of the form $G = L \wr P$. As one might expect, the study of bases in the general setting $G \leqs L \wr P$ is more difficult and this is essentially unchartered territory. In this final section, we take the first steps in this direction by focussing on the special case where $P \leqs G$, which already turns out to be rather challenging. Our main results are Theorems \ref{thm:P=C_2_sol_gen} and \ref{thm:semiprimitive_sol_stab}, which describe the groups with $b(G)=2$ in certain families of product type groups with soluble point stabilisers. Note that Theorem \ref{thm:P=C_2_sol_gen} is stated as Theorem \ref{t:new2} in Section \ref{s:intro}.

\subsection{Preliminaries}

Let us fix the set-up and notation we will work with throughout this section. As before, $G \leqs L \wr P \leqs {\rm Sym}(\O)$ is a product type primitive group on $\O = \Gamma_1 \times \cdots \times \Gamma_k = \Gamma^k$, with socle $T_1 \times \cdots \times T_k = T^k$ and point stabiliser $H$. Here $k \geqs 2$, $P \leqs S_k$ is transitive and $L \leqs {\rm Sym}(\Gamma)$ is a primitive group with socle $T$ and point stabiliser $J$, which is either almost simple or diagonal type. In addition, we may assume $P$ is the permutation group on $[k] = \{1, \ldots, k\}$ induced by the conjugation action of $G$ on the set of $k$ factors of the socle $T^k$. In view of Remark \ref{r:kov}, we may (and will) also assume that $L$ is the group induced by $G$ on $\Gamma_1$. In particular, this means that both $L$ and $P$ are uniquely determined by $G$. As explained in Remark \ref{r:sol}, if $H$ is soluble then $J$ and $P$ are also soluble.

Since the case $G = L \wr P$ has been studied in Sections \ref{s:sol}-\ref{s:Saxl}, we will assume $G < L \wr P$. More importantly, we will also assume that $G$ contains $P$, which means that we adopt the following hypothesis for the remainder of this section (in particular, note that $L \ne T$).

\begin{hyp}\label{h:6}
$G \leqs {\rm Sym}(\O)$ is a product type primitive group with 
$T \wr P < G < L \wr P$.
\end{hyp}

In the setting of Hypothesis \ref{h:6}, we introduce some new notation. Given an element $g = (g_1, \ldots, g_k) \in L^k \cap G$, let $\tau(g)$ be the number of coordinates of $g$ that are contained in $T$ and set 
\begin{equation}\label{e:tau}
\tau(G) =  \max\{\tau(g) \,:\, g \in (L^k \cap G) \setminus T^k\} \in \{0, 1, \ldots, k-1\}.
\end{equation}
For example, if $L = \langle T,x\rangle$ and $G = \langle T^k,(x,\dots,x),P\rangle$, then $\tau(G) = 0$. Note that if $\tau(G) = k-1$ and $G$ satisfies Hypothesis \ref{h:6}, then $|L:T|$ is composite (indeed, if $|L:T|$ is a prime then we get $G = L \wr P$). 

As before, let $r(L)$ and $r(T)$ denote the number of regular suborbits of $L$ and $T$ on $\Gamma$, respectively (recall that $T$ acts transitively on $\Gamma$ since $L$ is  primitive).

We begin with the following result, which gives a sufficient condition for a group satisfying Hypothesis \ref{h:6} to admit a base of size $2$.

\begin{prop}
	\label{prop:|L:T|_general}
Assume Hypothesis \ref{h:6} and let $m \in \{1, \ldots, D(P)\}$ be minimal such that there exists a distinguishing partition $\{\pi_1,\dots,\pi_{D(P)}\}$ for $P$ with $|\bigcup_{i=1}^m\pi_i|>\tau(G)$. Then $b(G)=2$ if 
\begin{itemize}\addtolength{\itemsep}{0.2\baselineskip}
\item[{\rm (i)}] $r(L)\geqs m$; and
\item[{\rm (ii)}] $r(T)\geqs m(|L:T|-1)+D(P)$.
\end{itemize}
\end{prop}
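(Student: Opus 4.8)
The plan is to exhibit an explicit pair $\a,\b\in\O$ that forms a base for $G$, i.e. with $G_\a\cap G_\b=1$. Writing a general element of $G$ as $x=g\s$ with $g=(g_1,\dots,g_k)\in M:=L^k\cap G$ and $\s\in P$, the product-action formula shows that $x$ fixes both $\a=(\a_1,\dots,\a_k)$ and $\b=(\b_1,\dots,\b_k)$ if and only if $(\a_i,\b_i)^{g_i}=(\a_{i^\s},\b_{i^\s})$ for every $i$; in particular $(\a_i,\b_i)$ and $(\a_{i^\s},\b_{i^\s})$ then lie in a common $L$-orbit on $\Gamma^2$. Fix the distinguishing partition $\{\pi_1,\dots,\pi_{D(P)}\}$ of the statement, so that $|\bigcup_{i\leqs m}\pi_i|>\tau(G)$. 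I would assign to each part $\pi_j$ a regular $T$-orbit $R_j$ on $\Gamma^2$, chosen pairwise distinct, and define $\a,\b$ by letting $(\a_i,\b_i)$ be a fixed representative of $R_j$ for every $i\in\pi_j$ (so the pair is constant on each part). The crucial refinement is that for $j\leqs m$ the orbit $R_j$ is taken inside a regular $L$-orbit $O_j$, with $O_1,\dots,O_m$ distinct and disjoint from the $L$-orbits containing $R_{m+1},\dots,R_{D(P)}$.

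Granting such a choice, the verification runs in two stages. First, $\s$ preserves the partition of $[k]$ by the $L$-orbit of $(\a_i,\b_i)$, and since the labels $O_1,\dots,O_m$ occur only on $\pi_1,\dots,\pi_m$ respectively, the permutation $\s$ fixes each of $\pi_1,\dots,\pi_m$ setwise; as the pair is constant on $\pi_j$, for every $i$ in $\pi_1\cup\dots\cup\pi_m$ we get $g_i\in L_{\a_i}\cap L_{\b_i}=1$ because $R_j\subseteq O_j$ is a regular $L$-orbit. Thus $g$ has more than $\tau(G)$ coordinates equal to the identity, and the definition of $\tau(G)$ forces $g\in T^k$. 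Second, with $g\in T^k$ the relation $(\a_i,\b_i)^{g_i}=(\a_{i^\s},\b_{i^\s})$ now places these pairs in a common $T$-orbit, so $\s$ preserves the partition by $T$-orbit, which is exactly $\{\pi_1,\dots,\pi_{D(P)}\}$ since the $R_j$ are distinct. As this is a distinguishing partition, $\s=1$, and then $g_i\in T_{\a_i}\cap T_{\b_i}=1$ for all $i$ because every $R_j$ is a regular $T$-orbit. Hence $x=1$ and $b(G)=2$.

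It remains to justify the existence of the orbits, and this is precisely where the two hypotheses enter: condition (i), $r(L)\geqs m$, supplies the $m$ distinct regular $L$-orbits $O_1,\dots,O_m$, from each of which one regular $T$-orbit is selected. Since a regular $L$-orbit contains exactly $|L:T|$ regular $T$-orbits, the union $O_1\cup\dots\cup O_m$ accounts for exactly $m|L:T|$ of the $r(T)$ regular $T$-orbits, leaving $r(T)-m|L:T|$ of them in $L$-orbits disjoint from $O_1,\dots,O_m$; condition (ii) rearranges to $r(T)-m|L:T|\geqs D(P)-m$, which is exactly enough to choose the remaining distinct orbits $R_{m+1},\dots,R_{D(P)}$. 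I expect the main subtlety to be conceptual rather than computational: one must resist trying to make the $L$-orbit partition itself distinguishing (which would demand far more orbits than (i)--(ii) provide) and instead exploit the two-tier structure, using the coarse $L$-orbit data together with the weight bound encoded by $\tau(G)$ to descend into $T^k$, and only then invoking the finer $T$-orbit partition to eliminate $\s$. Some care is also needed in arranging the first $m$ parts in $L$-orbits genuinely disjoint from the remaining parts, as it is this disjointness that lets $\s$ fix each $\pi_j$ with $j\leqs m$ setwise in the first stage.
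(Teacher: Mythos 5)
Your proposal is correct and follows essentially the same argument as the paper: the same choice of $m$ regular $T$-suborbits sitting inside $m$ distinct regular $L$-suborbits plus $D(P)-m$ further regular $T$-suborbits outside them, the same counting that turns hypotheses (i) and (ii) into the existence of this configuration, and the same two-stage verification (first forcing more than $\tau(G)$ trivial coordinates to conclude $g\in T^k$, then using the distinguishing partition and regular $T$-orbits to kill $\sigma$ and the remaining coordinates). The only cosmetic difference is that the paper fixes $\alpha=(\alpha_0,\dots,\alpha_0)$ and works with $L_{\alpha_0}$-orbits on $\Gamma$, citing Lemma \ref{l:BG_l:2.8} for the final step, whereas you phrase everything in terms of $L$- and $T$-orbits on $\Gamma^2$.
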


\begin{proof}
Suppose the bounds in (i) and (ii) are satisfied. Set $D = D(P)$ and fix an element $\alpha = (\alpha_0,\dots,\alpha_0)\in\Omega$ for some $\a_0 \in \Gamma$. Since $r(L)\geqs m$, we may choose elements $\gamma_1,\ldots,\gamma_m$ in $\Gamma$ that are contained in distinct regular $L_{\alpha_0}$-orbits. Now each of these $L_{\alpha_0}$-orbits is a union of $|L:T|$ regular $T_{\alpha_0}$-orbits, so the bound in (ii) implies that we can find additional points $\gamma_{m+1}, \ldots, \gamma_D$ in $\Gamma$ such that each element in $\{\gamma_1, \ldots, \gamma_D\}$ is contained in a distinct regular $T_{\a_0}$-orbit. Define $\beta = (\beta_1,\dots,\beta_k) \in \O$, where  
$\beta_j = \gamma_i$ if $j\in\pi_i$. We claim that $\{\a,\b\}$ is a base for $G$.
	
Let $x\in G_{\alpha} \cap G_{\beta}$ and write $x = z\sigma$, where $z = (z_1,\ldots,z_k)\in L^k \cap G$ and $\sigma\in P$. Recall that $\tau(z)$ denotes the number of coordinates $z_i$ that are contained in $T$ and observe that $z \in T^k$ if $\tau(z) > \tau(G)$ (see \eqref{e:tau}). By Lemma \ref{l:BG_l:2.8}, we see that $\{\a,\b\}$ is a base for $T \wr P$, so it suffices to show that $\tau(z) > \tau(G)$.

Fix $j \in \pi_1$ and notice that $z_j\in L_{\alpha_0}$ since $x$ fixes $\a$. Next observe that the $j^\sigma$-th coordinate of $\beta^x$ is $\beta_j^{z_j} = \gamma_1^{z_j}$, which is equal to $\b_{j^{\sigma}} \in \{\gamma_1, \ldots, \gamma_D\}$ since $x$ fixes $\b$. By construction, none of the elements $\gamma_2,\dots,\gamma_D$ are contained in the $L_{\alpha_0}$-orbit of $\gamma_1$, whence $\b_{j^{\sigma}} = \gamma_1$ and thus $z_j \in L_{\alpha_0} \cap L_{\gamma_1} = 1$. In the same way, we deduce that 
$z_j = 1$ for all $j\in \bigcup_{i = 1}^m\pi_i$ and thus $\tau(z) \geqs |\bigcup_{i = 1}^m\pi_i| >\tau(G)$. The result follows.
\end{proof}

\begin{rem}
Note that Proposition \ref{prop:|L:T|_general} can be applied when $G = L \wr P$ and $L \ne T$. Here $\tau(G) = k-1$, so $m = D(P)$ and the proposition asserts that
$b(G) = 2$ if $r(L)\geqs D(P)$ and $r(T)\geqs |L:T|$. Here the condition $r(L)\geqs D(P)$ coincides with the one in Theorem \ref{thm:BC}, while the bound $r(T)\geqs |L:T|$ always holds when $b(L)=2$. So in some sense, Proposition \ref{prop:|L:T|_general} can be viewed as a generalisation of Theorem \ref{thm:BC} for base-two groups. However, it is worth noting that there are groups with $b(G)=2$ that do not satisfy the bounds labelled (i) and (ii) in the proposition. For example, the proof of Theorem \ref{thm:P=C_2_sol_gen} shows that there are groups $G$ satisfying Hypothesis \ref{h:6} with $k=m=2$, $r(L)=1$ and $b(G)=2$.
\end{rem}

Let $G$ be a group satisfying Hypothesis \ref{h:6} and note that $b(G)=2$ only if $b(T \wr P)=2$, so we are interested in the groups with $b(T)=2$. Moreover, in view of \cite[Theorem 2.13]{BC}, we may assume $r(T) \geqs D(P)$. Recall that if $H$ is soluble, then $L$ is almost simple with soluble point stabilisers and as a consequence of \cite[Theorem 2]{B_sol} we observe that $b(L) \in \{2,3\}$. So in the case where $G$ has soluble point stabilisers, as in the two main results of this section, there are two cases to consider according to the base size of $L$ on $\Gamma$. To simplify the analysis, we will focus on the groups with $b(L)=2$, which allows us to bring Proposition \ref{prop:|L:T|_general} into play. However, it is worth noting that there are groups of this form with $b(L)=3$ and $b(G)=2$. Indeed, we refer the reader to Example \ref{ex:bL3} at the end of the section for an infinite family of groups $G$ with soluble point stabilisers where we have $b(L)=3$ and $b(G)=2$.

\subsection{Base-two groups with $k=2$}

Our first main result is Theorem \ref{thm:P=C_2_sol_gen} below, which is stated as Theorem \ref{t:new2} in Section \ref{s:intro}. Here we assume $k=b(L)=2$, so $P = S_2$ and $D(P)=2$. If $r(L) \geqs 2$ then $b(L \wr P) = 2$ by Theorem \ref{thm:BC}, so we may as well assume $r(L)=1$. Define $\tau(G)$ as in \eqref{e:tau} and note that $\tau(G) \in \{0,1\}$. If $\tau(G) = 0$, then $m=1$ in Proposition \ref{prop:|L:T|_general} and we deduce that $b(G) = 2$ if $r(T)\geqs |L:T|+1$ (recall that the slightly weaker bound $r(T) \geqs |L:T|$ always holds when $b(L)=2$). On the other hand, if $\tau(G)=1$ then $m=2$ and thus Proposition \ref{prop:|L:T|_general} is not useful when $r(L)=1$. Also recall that $|L:T|$ is composite if $\tau(G)=1$ (otherwise $G = L \wr P$).

\begin{thm}\label{thm:P=C_2_sol_gen}
Assume Hypothesis \ref{h:6}, where $k=b(L)=2$, $H$ is soluble and $J$ is a point stabiliser in $L$. Then $b(G) \leqs 3$, with equality if and only if $|L \wr P :G|=2$ and one of the following holds:
	\begin{itemize}\addtolength{\itemsep}{0.2\baselineskip}
		\item[\rm (i)] $(L,J) = (\mathrm{M}_{10},5{:}4)$ or $(\mathrm{J}_2.2,5^2{:}(4\times S_3))$.
		\item[\rm (ii)] $L = {\rm PGU}_4(3)$ and $J$ is of type $\GU_1(3)\wr S_4$.
	\end{itemize}
\end{thm}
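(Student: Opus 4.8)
The plan is to pin down $b(G)$ for every $G$ satisfying Hypothesis \ref{h:6} with $k=b(L)=2$, so that $P=S_2$ and $D(P)=2$. For the upper bound, note that $b(L)=2$ and $D(P)=2$ give $b(L\wr P)\leqs 3$ via Theorem \ref{thm:BC} (and Lemma \ref{l:plus1}); since $T\wr P<G<L\wr P$ all act on $\O$, any base for $L\wr P$ is a base for $G$, so $b(G)\leqs b(L\wr P)\leqs 3$, while primitivity gives $b(G)\geqs 2$. If $r(L)\geqs 2$ then Theorem \ref{thm:BC} yields $b(L\wr P)=2$ and hence $b(G)=2$; therefore $b(G)=3$ forces $r(L)=1$, and the candidate pairs $(L,J)$ are exactly the rows of Table \ref{tab:r(L)} with $r(L)=1$, for which the values of $|L:T|$ and $r(T)$ are available (see Remark \ref{r:LJ}).

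Next I would parametrise the intermediate groups. Writing $A=L/T$ and $\bar G=(G\cap L^2)/T^2\leqs A\times A$, the hypotheses $P\leqs G$ and (via Kov\'acs, Remark \ref{r:kov}) that $L$ is induced on each factor force $\bar G$ to be invariant under the coordinate swap and to have both projections onto $A$. By Goursat's lemma such a $\bar G$ is the preimage of the graph of a swap-invariant isomorphism $A/M\to A/M$ for some $M\normeq A$; then $|L\wr P:G|=|A/M|$ and, reading off \eqref{e:tau}, one finds $\tau(G)=0$ when $M=1$ and $\tau(G)=1$ when $M\ne 1$. In the case $\tau(G)=0$ the subgroup has index $|A|=|L:T|$, and Proposition \ref{prop:|L:T|_general} (with $m=1$) gives $b(G)=2$ whenever $r(T)\geqs |L:T|+1$. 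Since $r(T)\geqs r(L)|L:T|=|L:T|$, the only $\tau(G)=0$ case left open is $r(T)=|L:T|$.

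It then remains to handle the two delicate situations. For $\tau(G)=0$ with $r(T)=|L:T|$, the key observation is that the $|L:T|$ regular $T$-suborbits all fuse into the single regular $L$-suborbit; when $|L:T|=2$ I can argue directly from Lemma \ref{l:BG_l:2.8} that any candidate second point $\beta=(\beta_1,\beta_2)$ admits a non-trivial element of $H\cap G_\beta$, coming either from the diagonal torus or, crucially, from a swap element $(z,z^{-1})(1\,2)$ which exists precisely because $\beta_1,\beta_2$ necessarily lie in a common $J$-orbit, whence $b(G)=3$. Conversely $r(T)\geqs 3$ produces a regular $T$-orbit contained in a \emph{non}-regular $L$-suborbit, and this supplies a base and forces $b(G)=2$ (in agreement with Proposition \ref{prop:|L:T|_general}). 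This reduces case (i) to identifying the $|L:T|=2$ groups in Table \ref{tab:r(L)} with $r(L)=1$ and $r(T)=2$, namely $(\mathrm{M}_{10},5{:}4)$ and $(\mathrm{J}_2.2,5^2{:}(4\times S_3))$, each of index $2$. The surviving $\tau(G)=1$ case has index $|A/M|=2$, and for the only relevant group, $L=\mathrm{PGU}_4(3)$ with $A=C_4$ and $M=C_2$, I would verify $b(G)=3$, while checking that its two index-$4$ (that is, $\tau(G)=0$) subgroups satisfy $r(T)\geqs 5$ and hence have $b(G)=2$ by Proposition \ref{prop:|L:T|_general}; a short inspection of Table \ref{tab:r(L)} confirms these exhaust the $r(L)=1$ groups with $|L:T|>2$, so no further configurations arise.

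The main obstacle is exactly the $\tau(G)=1$ family of case (ii): here Proposition \ref{prop:|L:T|_general} offers no leverage, since it would require $r(L)\geqs m=2$ whereas $r(L)=1$, and unlike the $|L:T|=2$ situation there is no clean swap argument to fall back on. Establishing $b(G)=3$ for $L=\mathrm{PGU}_4(3)$ therefore rests on a direct base-size computation in {\sc Magma}, together with the auxiliary computation of $r(T)$ for $T=\mathrm{U}_4(3)$ that is needed to dispatch its $\tau(G)=0$ subgroups.
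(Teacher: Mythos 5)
Your overall strategy matches the paper's: reduce to $r(L)=1$ via Theorem \ref{thm:BC}, read off the candidates from Table \ref{tab:r(L)}, and split according to the invariant $\tau(G)\in\{0,1\}$ from \eqref{e:tau}, using Proposition \ref{prop:|L:T|_general} (with $m=1$) to dispose of the $\tau(G)=0$ groups with $r(T)\geqslant |L:T|+1$. Your treatment of case (i) is actually cleaner than the paper's: where the paper verifies $b(G)=3$ for the index-two subgroups of $\mathrm{M}_{10}\wr S_2$ and $(\mathrm{J}_2.2)\wr S_2$ by a {\sc Magma} double-coset computation, your swap element $(z,z^{-1})(1\,2)$ argument (essentially the paper's Lemma \ref{l:|L:T|=2} specialised to $r(T)=2$) gives a conceptual proof that no pair $\{\alpha,\beta\}$ surviving Lemma \ref{l:BG_l:2.8} can be a base, since the two regular $T_{\alpha_0}$-orbits fuse in $L_{\alpha_0}$.

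There is, however, a genuine gap in your $\tau(G)=1$ analysis. You assert that $\mathrm{PGU}_4(3)$ is ``the only relevant group'' there and that ``a short inspection of Table \ref{tab:r(L)} confirms these exhaust the $r(L)=1$ groups with $|L:T|>2$''; this is false. Since $\tau(G)=1$ forces $|L:T|$ composite, the table yields four families of candidates: $L=\mathrm{U}_3(5).S_3$ ($|L:T|=6$), $L=\mathrm{L}_3(4).D_{12}$ ($|L:T|=12$), $L=\mathrm{P\Omega}_8^+(3).2^2$ and $L=\mathrm{U}_4(3).[4]$ (both $|L:T|=4$, the latter including both $\mathrm{U}_4(3).2^2$ and $\mathrm{PGU}_4(3)$). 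Each admits proper intermediate groups $G$ with $\tau(G)=1$ (and not all of index $2$ in $L\wr P$: your blanket claim that the surviving $\tau(G)=1$ case has index $|A/M|=2$ is unjustified, e.g.\ $A=D_{12}$ with $M$ its centre gives index $6$), and Proposition \ref{prop:|L:T|_general} gives no information for any of them since it would require $r(L)\geqslant 2$. To prove the ``only if'' direction of the theorem you must show $b(G)=2$ for all of these except $\mathrm{PGU}_4(3)$; the paper does this by {\sc Magma} random search for $\mathrm{U}_3(5).S_3$ and $\mathrm{L}_3(4).D_{12}$, and by an explicit base construction for the two $T{:}2^2$ cases (exploiting a suborbit point $\gamma_2$ with $|L_{\alpha_0}\cap L_{\gamma_2}|=2$ and $\{\alpha_0,\gamma_2\}$ a base for an index-two subgroup of $L$). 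The complementary argument that $b(G)=3$ when $L=\mathrm{PGU}_4(3)$ also admits a short hand proof (the paper uses the fact that every involution of $L$ lies in $T{:}\langle a^2\rangle$, together with a swap element when both coordinates of $\beta$ lie in the unique regular $L_{\alpha_0}$-orbit), but your proposed direct computation would serve equally well. Without the missing $b(G)=2$ verifications, the characterisation of equality is incomplete.
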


\begin{proof}
Here $P=S_2$ and $D(P)=2$. By Lemma \ref{l:plus1} we have $b(L\wr P) \leqs b(L)+1$ and thus $b(G) \leqs 3$. As explained in Remark \ref{r:sol}, we note that $J$ is soluble and thus $L$ is almost simple. If $r(L)\geqs 2$ then $b(L\wr P) = 2$ by Theorem \ref{thm:BC}, so we may assume $r(L) = 1$ and then inspect the possibilities for $(L,J)$ recorded in Table \ref{tab:r(L)}. As discussed above, Proposition \ref{prop:|L:T|_general} implies that $b(G) = 3$ only if $r(T) = |L:T|$ or $\tau(G) = 1$, so there are two cases to consider.
	
First assume $r(T) = |L:T|$. By inspecting Table \ref{tab:r(L)}, we see that $(L,J) = (\mathrm{M}_{10},5{:}4)$ or $(L,J) = (\mathrm{J}_2.2,5^2{:}(4\times S_3))$, so in both cases we have $|L:T| = 2$ and $|L\wr P:G| = 2$. More precisely, if we write $L = \la T, a \ra$ then we may assume $G = \la T^2, (a,a), P \ra$. Using {\sc Magma}, we can construct $G$ as a permutation group on $\O = \Gamma^2$ with point stabiliser $H$ and we can then find a complete set $R$ of $(H,H)$ double coset representatives. In both cases, it is routine to check that $|HxH|<|H|^2$ for all $x \in R$ and we conclude that $b(G)=3$ as claimed. 
	
For the remainder, let us assume $\tau(G) = 1$ and recall that $|L:T| \geqs 4$ is composite since $G$ is a proper subgroup of $L \wr P$. By inspecting 
Table \ref{tab:r(L)} we deduce that there are four possibilities for $(L,J)$: 
\begin{itemize}\addtolength{\itemsep}{0.2\baselineskip}
		\item[\rm (a)] $L = \UU_3(5){:}S_3$ and $J$ is of type $\GU_1(5)\wr S_3$.
		\item[\rm (b)] $L = \LL_3(4){:}D_{12}$ and $J$ is of type $\GL_1(4^3)$.
		\item[\rm (c)] $L = \POmega_8^+(3){:}2^2$ and $J$ is of type $\mathrm{O}_4^+(3)\wr S_2$.
		\item[\rm (d)] $L = \UU_4(3){:}[4]$ and $J$ is of type $\GU_1(3)\wr S_4$.
		\end{itemize}

In cases (a) and (b), we can use {\sc Magma} to check that $b(G) = 2$ (here we construct $G$ and $H$ as above, and then we use random search to find an element $g \in G$ with $H \cap H^g = 1$). Now let us turn to cases (c) and (d), so $|L:T|=4$ and $|L \wr P :G| \in \{2,4,8\}$. In fact, one can check that the condition $\tau(G) = 1$ forces $|L \wr P :G|=2$.  More precisely, if $L = T{:}\la a,b \ra = T{:}2^2$ then up to permutation isomorphism, we may assume that 
\begin{equation}\label{e:22}
G = \la T^2,(a,a),(b,1),P \ra. 
\end{equation}
Similarly, if $L = {\rm PGU}_4(3) = T{:}\la a \ra$, then we can take
\begin{equation}\label{e:4}
G = \la T^2, (a,a),(a^2,1), P \ra. 
\end{equation}

First assume $L = T{:}\la a,b \ra = T{:}2^2$, so \eqref{e:22} holds and $T = {\rm P\O}_8^{+}(3)$ or ${\rm U}_4(3)$. We claim that $b(G)=2$. To see this, fix $\a_0 \in \Gamma$ and let $\gamma_1, \ldots, \gamma_t$ be representatives of the regular $T_{\alpha_0}$-orbits on $\Gamma$, where $t = r(T)$ (as recorded in Table \ref{tab:r(L)}, we have $t = 12$ if $T = {\rm P\O}_8^{+}(3)$ and $t=11$ for $T = {\rm U}_4(3)$). We may assume that $L_{\a_0} \cap L_{\gamma_1}=1$ and $|L_{\alpha_0} \cap L_{\gamma_2}| = 2$ (the existence of $\gamma_2$ can be checked using {\sc Magma}). Now $T_{\a_0} \cap T_{\gamma_2} = 1$, so without loss of generality we may assume that $\{\a_0,\gamma_2\}$ is a base for $T{:}\la b\ra$. Suppose $x = (z_1,z_2)\sigma \in G$ fixes $\alpha= (\a_0,\a_0)$ and $\beta= (\gamma_1,\gamma_2)$, where $z_1,z_2 \in L$ and $\sigma \in P$. Then $z_i \in L_{\a_0}$ for $i=1,2$ and we note that $\s=1$ since $\gamma_1$ and $\gamma_2$ are contained in distinct $L_{\a_0}$-orbits. Therefore, $z_1 \in L_{\a_0} \cap L_{\gamma_1}=1$ and thus $z_1 = 1$. From the description of $G$ in \eqref{e:22}, it follows that $z_2 \in T{:}\la b\ra$ fixes $\a_0$ and $\gamma_2$, whence $z_2=1$ and thus $x=1$. This justifies the claim.

Finally, let us assume $L = {\rm PGU}_4(3) = T{:}\la a \ra$, so the structure of $G$ is given in \eqref{e:4}. 
We claim that $b(G)=3$. As before, fix $\a_0 \in \Gamma$ and set $\alpha = (\alpha_0,\alpha_0)$ and $\beta = (\gamma_1,\gamma_2)$, where $\gamma_1$ and $\gamma_2$ are contained in regular $T_{\a_0}$-orbits. It suffices to show that the pointwise stabiliser of $\a$ and $\b$ in $G$ is non-trivial. It will be useful to observe that a given element $(z_1,z_2) \in L^2$ is contained in $G$ if and only if $z_1z_2 \in T{:}\la a^2 \ra$.

First assume $\{\a_0,\gamma_1\}$ is not a base for $L$. Then using {\sc Magma} we see that $|L_{\a_0} \cap L_{\gamma_1}| \in \{2,4\}$, so there is an involution $y \in L$ fixing $\a_0$ and $\gamma_1$. Since every involution in $L$ is contained in $T{:}\la a^2 \ra$, it follows that $(y,1) \in G$ fixes $\a$ and $\b$. An entirely similar argument applies if $\{\a_0,\gamma_2\}$ is not a base for $L$.

Finally, suppose $\{\a_0,\gamma_1\}$ and $\{\a_0,\gamma_2\}$ are both bases for $L$, which means that $\gamma_1$ and $\gamma_2$ are contained in the unique regular $L_{\alpha_0}$-orbit on $\Gamma$. Therefore, there exists $z_1 \in L_{\alpha_0}$ such that $\gamma_1^{z_1} = \gamma_2$ and thus $(z_1,z_1^{-1})\s \in G$ is a non-trivial element fixing $\alpha$ and $\beta$, where $\s = (1,2) \in P$. This completes the proof of the theorem.
\end{proof}

\subsection{Base-two groups with $\tau(G)=0$}

For the remainder of Section \ref{s:gen}, we will continue to assume that Hypothesis \ref{h:6} holds and $b(L)=2$, but we will not impose any conditions on $k$. In exchange, we will focus on the groups with $\tau(G)=0$ (see \eqref{e:tau}), which is a natural restriction on the structure of $G$. In particular, this means that if $x = z\sigma \in G$, where $z = (z_1, \ldots,z_k) \in L^k \cap G$ and $\sigma \in P$, then either $z \in T^k$ or $z_j \in L \setminus T$ for all $j$. 

Our main result in this setting is Theorem \ref{thm:semiprimitive_sol_stab} and the proof will require several preliminary results. We begin with an easy corollary of Proposition \ref{prop:|L:T|_general}.

\begin{cor}
	\label{c:G diag}
Assume Hypothesis \ref{h:6}, where $b(L) = 2$ and $\tau(G) = 0$. If $b(G) \geqs 3$, then
\[
|L:T| \leqs r(T) \leqs |L:T|+D(P)-2.
\]
\end{cor}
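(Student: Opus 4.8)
The plan is to read off both inequalities directly from Proposition \ref{prop:|L:T|_general} by specialising to the case $\tau(G) = 0$ and arguing by contraposition. The first step is to pin down the integer $m$ appearing in that proposition. Since $\tau(G) = 0$ and every distinguishing partition $\{\pi_1, \ldots, \pi_{D(P)}\}$ for $P$ consists of $D(P)$ \emph{non-empty} parts, we have $|\pi_1| \geqs 1 > 0 = \tau(G)$, so the condition $|\bigcup_{i=1}^m \pi_i| > \tau(G)$ is already met at $m = 1$. As $m \geqs 1$ by definition, this forces $m = 1$.

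With $m = 1$ in hand, the upper bound follows at once. Because $b(L) = 2$ we have $r(L) \geqs 1$, so hypothesis (i) of Proposition \ref{prop:|L:T|_general} holds automatically, and the proposition reduces to the implication that $b(G) = 2$ whenever $r(T) \geqs m(|L:T|-1) + D(P) = |L:T| + D(P) - 1$. Taking the contrapositive against the standing assumption $b(G) \geqs 3$, I would conclude that this lower bound on $r(T)$ must fail, which is precisely $r(T) \leqs |L:T| + D(P) - 2$.

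For the lower bound $|L:T| \leqs r(T)$, I would invoke the orbit-counting observation recorded in Remark \ref{r:LJ}: each regular suborbit of $L$ on $\Gamma$ decomposes as a union of $|L:T|$ regular suborbits of $T$, so $r(T) \geqs r(L)\,|L:T|$. Since $b(L) = 2$ forces $r(L) \geqs 1$, this yields $r(T) \geqs |L:T|$, completing the chain of inequalities.

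I do not anticipate a genuine obstacle here, as the result is a clean specialisation of Proposition \ref{prop:|L:T|_general}. The only points requiring a moment's care are confirming that $m = 1$ (which rests solely on the non-emptiness of the parts of a distinguishing partition) and recalling the orbit-decomposition bound $r(T) \geqs r(L)\,|L:T|$; both follow from facts already established in the excerpt.
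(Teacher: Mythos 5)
Your proof is correct and follows essentially the same route as the paper, which simply observes that $m=1$ when $\tau(G)=0$ and reads off the upper bound from Proposition \ref{prop:|L:T|_general} (with the lower bound $r(T)\geqs r(L)\,|L:T|\geqs |L:T|$ being the standard orbit-decomposition fact noted in Remark \ref{r:LJ}). Your explicit verification that condition (i) holds automatically, so that the failure of the conclusion forces condition (ii) to fail, is exactly the intended argument.
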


\begin{proof}
This follows immediately from Proposition \ref{prop:|L:T|_general}, noting that $m=1$ in the statement of the proposition.
\end{proof}

\begin{rem}
Let us apply Corollary \ref{c:G diag} in the case where $H$ is soluble. Suppose $b(G) \geqs 3$, so $r(T) \leqs |L:T|+D(P)-2$ by the corollary. Now $r(T)\geqs r(L)|L:T|$ and the solubility of $P$ implies that $D(P)\leqs 5$ (see Theorem \ref{t:sol_dist}), whence
\[
(r(L)-1)|L:T| \leqs D(P)-2\leqs 3
\]
and either $r(L) = 1$, or $r(L) = 2$ and $|L:T| \in \{2,3\}$. In particular, the possibilities for $(L,J)$ can be read off from Table \ref{tab:r(L)} and we find that $r(L) = 2$, $|L:T| \in \{2,3\}$ and $r(T) \leqs |L:T|+3$ if and only if one of the following holds:
\begin{itemize}\addtolength{\itemsep}{0.2\baselineskip}
\item[{\rm (a)}] $L = \LL_3(3).2$ and $J$ is of type $\mathrm{O}_3(3)$.
\item[{\rm (b)}] $L = \LL_2(27).3$ and $J$ is of type $\GL_1(27^2)$.
\item[{\rm (c)}] $(L,J) = ({\rm M}_{10}, SD_{16})$.
\end{itemize}
Note that if $P$ is primitive (as in Theorem \ref{thm:semiprimitive_sol_stab}), then $D(P) \leqs 4$ by Theorem \ref{t:prim}, so the corollary implies that $r(T)\leqs |L:T|+2$ and this eliminates cases (a) and (b).
\end{rem}

In order to state our next result, we define the following condition on the group $P \leqs S_k$, with respect to a fixed integer $m$ in the range $D(P) \leqs m \leqs k$:
\begin{equation}\label{e:dag}
\begin{array}{c}
\mbox{\emph{If $\{\pi_1,\dots,\pi_m\}$ is a distinguishing partition for $P$,}}\\ \mbox{\emph{then for all $i$, there exists $\rho\in P$ such that $\pi_i\cap\pi_i^\rho$ is empty.}}
\end{array}
\tag{$\dagger$}
\end{equation}
Note that if there exists a distinguishing partition $\{\pi_1,\dots,\pi_m\}$ for $P$ with $|\pi_i| > k/2$ for some $i$, then $\pi_i \cap \pi_i^\rho$ is non-empty for all $\rho \in P$ and thus $(P,m)$ does not satisfy \eqref{e:dag}. 

\begin{rem}
Recall that $t_m$ is the number of (unordered) distinguishing partitions for $P$ with $m$ non-empty parts. As noted in Section \ref{s:reg}, if $t_m = |P|/m!$ then any two parts in such a partition are in the same $P$-orbit and thus $(P,m)$ satisfies the condition in \eqref{e:dag}.
\end{rem}

\begin{prop}
	\label{prop:diag}
	Assume Hypothesis \ref{h:6}, where $b(L) = 2$ and $\tau(G) = 0$. 
	If $b(G) \geqs 3$, then \eqref{e:dag} holds for all $D(P)\leqs m\leqs \min\{k,r(T)\}$.
\end{prop}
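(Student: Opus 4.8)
The plan is to establish the contrapositive: assuming \eqref{e:dag} fails for some $m$ with $D(P) \leqs m \leqs \min\{k, r(T)\}$, I will exhibit a base of size $2$ for $G$, which forces $b(G) \leqs 2$. By the failure of \eqref{e:dag} there is a distinguishing partition $\{\pi_1, \ldots, \pi_m\}$ for $P$ and, after relabelling, an index $1$ such that $\pi_1 \cap \pi_1^\rho \neq \emptyset$ for every $\rho \in P$. Fix $\alpha_0 \in \Gamma$ and put $\alpha = (\alpha_0, \ldots, \alpha_0)$. Since $b(L) = 2$ we have $r(L) \geqs 1$, so I may choose $\gamma_1 \in \Gamma$ lying in a regular $L_{\alpha_0}$-orbit; in particular $L_{\alpha_0} \cap L_{\gamma_1} = 1$, and $\gamma_1$ then also lies in a regular $T_{\alpha_0}$-orbit. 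Because $m \leqs r(T)$, I may choose $\gamma_2, \ldots, \gamma_m$ so that $\gamma_1, \ldots, \gamma_m$ represent $m$ distinct regular $T_{\alpha_0}$-orbits, and finally set $\beta = (\beta_1, \ldots, \beta_k)$ with $\beta_j = \gamma_i$ whenever $j \in \pi_i$.

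To verify that $\{\alpha, \beta\}$ is a base, take $x \in G_\alpha \cap G_\beta$ and write $x = z\sigma$ with $z = (z_1, \ldots, z_k) \in L^k$ and $\sigma \in P$; since $P \leqs G$ we have $z = x\sigma^{-1} \in L^k \cap G$. Fixing $\alpha$ gives $z_j \in L_{\alpha_0}$ for all $j$, while fixing $\beta$ gives $\gamma_{c(j)}^{z_j} = \gamma_{c(j^\sigma)}$ for all $j$, where $c(j)$ denotes the index of the part containing $j$. The decisive input is the hypothesis $\tau(G) = 0$, which by \eqref{e:tau} forces the dichotomy that either $z \in T^k$, or $z_j \in L \setminus T$ for every $j$. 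In the first case each $z_j \in T_{\alpha_0}$, so the relation above keeps $\gamma_{c(j)}$ in its own regular $T_{\alpha_0}$-orbit; as the $\gamma_i$ lie in distinct such orbits this yields $c(j) = c(j^\sigma)$ for all $j$, meaning $\sigma$ stabilises every part of the distinguishing partition and hence $\sigma = 1$, after which $z_j \in T_{\alpha_0} \cap T_{\gamma_{c(j)}} = 1$ and $x = 1$.

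The heart of the argument is ruling out the second case, and this is exactly where the failure of \eqref{e:dag} is used. Applying the self-overlapping property with $\rho = \sigma^{-1}$ gives $\pi_1 \cap \pi_1^{\sigma^{-1}} \neq \emptyset$, so there exists $j \in \pi_1$ with $j^\sigma \in \pi_1$; for this $j$ we have $c(j) = c(j^\sigma) = 1$ and therefore $\gamma_1^{z_j} = \gamma_1$. Since $z_j \in L_{\alpha_0}$ and $\gamma_1$ lies in a regular $L_{\alpha_0}$-orbit, we deduce $z_j \in L_{\alpha_0} \cap L_{\gamma_1} = 1$, so $z_j = 1 \in T$, contradicting $z_j \in L \setminus T$. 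Hence the second case cannot occur, $\{\alpha, \beta\}$ is a base for $G$, and the contrapositive follows. I expect the main difficulty to be organisational rather than technical: one must match the \emph{good} part $\pi_1$ (the one witnessing the failure of \eqref{e:dag}) with the \emph{good} point $\gamma_1$ (placed in a regular $L_{\alpha_0}$-orbit, not merely a regular $T_{\alpha_0}$-orbit), so that the self-overlap of $\pi_1$ can be played against the twisting coordinates permitted when $z \notin T^k$; the remaining parts need only separate the $T_{\alpha_0}$-orbits, which the bound $m \leqs r(T)$ guarantees.
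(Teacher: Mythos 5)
Your proof is correct and follows essentially the same argument as the paper: the same choice of $\gamma_1$ in a regular $L_{\alpha_0}$-orbit and $\gamma_2,\ldots,\gamma_m$ in further regular $T_{\alpha_0}$-orbits, the same use of $\tau(G)=0$ to split into the cases $z\in T^k$ and $z_j\in L\setminus T$ for all $j$, and the same exploitation of the self-overlap of $\pi_1$ to force $z_j=1$ in the second case. The only cosmetic difference is that you argue the $z\in T^k$ case directly, where the paper simply cites Lemma \ref{l:BG_l:2.8} to conclude that $\{\alpha,\beta\}$ is a base for $T\wr P$.
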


\begin{proof}
This is similar to the proof of Proposition \ref{prop:|L:T|_general}. Suppose there exists a distinguishing partition $\{\pi_1,\dots,\pi_m\}$ for $P$ such that $D(P) \leqs m \leqs r(T)$ and $\pi_1\cap \pi_1^\rho$ is non-empty for all $\rho\in P$. Fix $\a_0 \in \Gamma$. Since $r(T) \geqs m$, we can choose elements $\g_1,\ldots,\g_m$ that are contained in distinct regular $T_{\a_0}$-orbits on $\Gamma$, so each pair $\{\a_0,\g_i\}$ is a base for $T$. In addition, we may assume that $\{\a_0,\g_1\}$ is a base for $L$. Define $\alpha = (\alpha_0,\dots,\alpha_0)$ and $\beta=(\beta_1,\dots,\beta_k)$ as elements of $\Omega$, where $\beta_j = \g_i$ if $j \in\pi_i$. In order to prove the proposition, it suffices to show that $\{\a,\b\}$ is a base for $G$. 

Let $x \in G_{\a} \cap G_{\b}$ and write $x=z\sigma$, where $z = (z_1,\dots,z_k)\in L^k \cap G$ and $\sigma\in P$. Note that each $z_j$ is contained in $L_{\a_0}$. If $z \in T^k$ then $x \in T \wr P$ and thus $x=1$ since we know that $\{\a,\b\}$ is a base for $T \wr P$ by Lemma \ref{l:BG_l:2.8}. Therefore, since $\tau(G)=0$, we may assume $z_j \in L \setminus T$ for all $j$. 

Since $\pi_1\cap \pi_1^{\sigma}$ is non-empty, there exists $j \in \pi_1$ such that $j^{\s} \in \pi_1$. Then $\b_j = \g_1$ and by considering the $j^{\sigma}$-th coordinate of $\b^{x}$ we deduce that $\gamma_1 = \b_{j^{\s}} = \b_j^{z_j} = \g_1^{z_j}$. Therefore, $z_j \in L_{\a_0} \cap L_{\gamma_1} = 1$ and thus $z_j = 1$, which contradicts the fact that $z_j \in L \setminus T$. The result follows.
\end{proof}

Notice that $P$ is primitive in the statement of Theorem \ref{thm:semiprimitive_sol_stab}. Therefore, in view of Theorem \ref{t:prim}, we have a special interest in the case $D(P)=2$, which means that $P$ has a regular orbit on the power set of $[k] = \{1, \ldots, k\}$. This leads us naturally to consider the following condition, which coincides with \eqref{e:dag} when $m = 2$:
\begin{equation}\label{e:ddag}
\begin{array}{c}
\mbox{\emph{If the setwise stabiliser of $\Lambda \subseteq [k]$ in $P$ is trivial,}} \\
\mbox{\emph{then $\Lambda^\sigma = [k] \setminus \Lambda$ for some $\sigma \in P$.}}
\end{array}
\tag{$\ddagger$}
\end{equation}

Clearly, if $D(P)=2$ then \eqref{e:ddag} holds only if $k$ is even and every subset of $[k]$ with trivial setwise stabiliser in $P$ has size $k/2$. In other words, \eqref{e:ddag} holds only if $P$ has no regular orbit on the set $X$ defined in \eqref{e:X}. Therefore, if $P$ is primitive and $D(P)=2$, then Proposition \ref{p:ddagger} implies that \eqref{e:ddag} holds if and only if $(k,P) = (2,S_2)$ or $(16, 2^4{:}\mathrm{O}_4^-(2))$. 
We will return to this observation in the proof of Theorem \ref{thm:semiprimitive_sol_stab} below.

The following result is an immediate corollary of Proposition \ref{prop:diag}.

\begin{cor}\label{c:tau0,D(P)=2}
Assume Hypothesis \ref{h:6}, where $b(L) = D(P) = 2$ and $\tau(G) = 0$. If $b(G) \geqs 3$, then \eqref{e:ddag} holds.
\end{cor}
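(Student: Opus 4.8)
The plan is to obtain this as the case $m = 2$ of Proposition \ref{prop:diag}. Recall from the discussion preceding the corollary that \eqref{e:ddag} is precisely the $m = 2$ instance of \eqref{e:dag}: a distinguishing partition $\{\pi_1,\pi_2\}$ for $P$ is the same data as a subset $\Lambda = \pi_1 \subseteq [k]$ with trivial setwise stabiliser (with $\pi_2 = [k]\setminus\Lambda$), and the requirement in \eqref{e:dag} that each $\pi_i$ be disjoint from some $P$-translate of itself is equivalent to the existence of $\sigma \in P$ with $\Lambda^\sigma = [k]\setminus\Lambda$. Hence, once Proposition \ref{prop:diag} is available for $m = 2$, the conclusion \eqref{e:ddag} follows at once.

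The only point requiring verification is that $m = 2$ lies in the admissible range of Proposition \ref{prop:diag}, namely $D(P) \leqs 2 \leqs \min\{k, r(T)\}$. Here $D(P) = 2$ by hypothesis and $k \geqs 2$ by Hypothesis \ref{h:6}, so the substantive input is the bound $r(T) \geqs 2$. First I would note that $b(L) = 2$ gives $r(L) \geqs 1$, and then invoke the suborbit-counting relation $r(T) \geqs r(L)\cdot|L:T|$ from Remark \ref{r:LJ}. Since Hypothesis \ref{h:6} forces $L \ne T$, we have $|L:T| \geqs 2$, and therefore $r(T) \geqs r(L)\cdot|L:T| \geqs 2$.

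Combining these bounds gives $\min\{k, r(T)\} \geqs 2 = D(P)$, so Proposition \ref{prop:diag} applies with $m = 2$ and shows that \eqref{e:dag}, equivalently \eqref{e:ddag}, holds. I do not anticipate any real obstacle: the result is a direct specialisation, and the only mild subtlety is supplying $r(T) \geqs 2$, which follows cleanly from the inequality in Remark \ref{r:LJ} together with the fact that $L \ne T$ under Hypothesis \ref{h:6}.
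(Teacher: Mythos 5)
Your proposal is correct and follows exactly the paper's route: the corollary is obtained by specialising Proposition \ref{prop:diag} to $m=2$, using the observation (made in the surrounding discussion) that \eqref{e:ddag} is precisely the $m=2$ instance of \eqref{e:dag}. The paper presents this as immediate, so your explicit check that $2\leqs\min\{k,r(T)\}$ — via $r(T)\geqs r(L)\cdot|L:T|\geqs 2$ since $L\ne T$ under Hypothesis \ref{h:6} — is a correct filling-in of a detail the authors leave implicit.
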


The final ingredient for the proof of Theorem \ref{thm:semiprimitive_sol_stab} is provided by the following lemma.

\begin{lem}
	\label{l:|L:T|=2}
Assume Hypothesis \ref{h:6}, where $b(L) = |L:T|=2$, $\tau(G)=0$ and $P = S_{r(T)}$. Then $b(G) \geqs 3$.
\end{lem}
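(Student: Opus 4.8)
The plan is to fix the diagonal point $\alpha = (\alpha_0, \ldots, \alpha_0) \in \O$ and show that $H = G_\alpha$ has no regular orbit on $\O$. Since the socle $T^k$ already acts transitively on $\O = \Gamma^k$, while $H \ne 1$, we have $b(G) \geqs 2$ and $b(G)=2$ is equivalent to $H$ having a regular orbit, so ruling this out yields $b(G) \geqs 3$. Write $J = L_{\alpha_0}$ and $J_0 = J \cap T = T_{\alpha_0}$, so $|J : J_0| = |L:T| = 2$ by the primitivity of $L$; fix $a \in J \setminus J_0$ and note $a \in L \setminus T$ and $a^2 \in J_0$. The first step is to pin down the structure of $G$: since $\tau(G) = 0$ (see \eqref{e:tau}), every element of $(L^k \cap G)\setminus T^k$ has all coordinates outside $T$, so $(L^k \cap G)/T^k$ is a subgroup of $(L/T)^k = C_2^k$ whose nonzero elements all have full support. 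The only such subgroups are $\{0\}$ and $\langle \mathbf{1}\rangle$, and the former forces $G = T \wr P$, contrary to Hypothesis \ref{h:6}. Hence $L^k \cap G = T^k \sqcup (L\setminus T)^k$; in particular $(L \setminus T)^k \subseteq G$.

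Next I reduce $\beta$ to generic position. Put $K = T \wr P \leqs G$, where $P = S_{r(T)}$. By Lemma \ref{l:BG_l:2.8} (whose proof applies verbatim to $K$ in its product action, cf. Remark \ref{r:trans}), since $D(S_{r(T)}) = r(T)$ and the only distinguishing partition of $S_{r(T)}$ is into singletons, $\{\alpha,\beta\}$ is a base for $K$ if and only if the $k = r(T)$ coordinates $\beta_1, \ldots, \beta_k$ lie one in each of the $k$ distinct regular $J_0$-orbits $\Delta_1, \ldots, \Delta_k$ on $\Gamma$. If $\beta$ is not of this form then $(K_\alpha)_\beta \ne 1$, whence $H_\beta \supseteq (K_\alpha)_\beta \ne 1$ and we are done; so I may assume, after relabelling, that $\beta_i \in \Delta_i$ for all $i$.

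The heart of the argument is to use the elements of $G$ lying over $\mathbf{1}$. Since $a$ normalises $J_0$, it permutes $\{\Delta_1, \ldots, \Delta_k\}$, inducing $\pi_a \in S_k = P$; as $a^2 \in J_0$ fixes each $\Delta_i$ setwise, $\pi_a$ is an involution. A short analysis shows each regular $J_0$-orbit is of one of two types: either it lies in a regular $J$-orbit (type R), in which case that $J$-orbit splits into exactly two regular $J_0$-orbits interchanged by $a$ (using $a^2 \in J_0$), so they form a transposition of $\pi_a$; or it equals a non-regular $J$-orbit (type N), which is fixed by $\pi_a$ and on which $J_{\beta_i}$ has order $2$ with its nontrivial element in $L \setminus T$. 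I then build a nontrivial $x = (z_1, \ldots, z_k)\pi_a \in H_\beta$ with every $z_i \in L \setminus T$: for a type-R transposition $\{i, i'\}$ choose $z_i \in J \setminus J_0$ with $\beta_i^{z_i} = \beta_{i'}$ (possible since $J_0 a$ maps $\Delta_i$ onto $\Delta_{i'}$) and set $z_{i'} = z_i^{-1} \in J \setminus J_0$; for a type-N fixed index $i$ take $z_i$ to be the involution in $J_{\beta_i} \subseteq L \setminus T$. Each $z_i \in J$ fixes $\alpha_0$, so $x$ fixes $\alpha$, and $\beta_i^{z_i} = \beta_{i^{\pi_a}}$ for all $i$, so $x$ fixes $\beta$; moreover $(z_1, \ldots, z_k) \in (L\setminus T)^k \subseteq G$ and $\pi_a \in P \leqs G$, so $x \in G$, and $x \ne 1$ since its base-group part is nontrivial. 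Thus $H_\beta \ne 1$ for every $\beta$, giving $b(G) \geqs 3$.

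The main obstacle is the structural matching in the last step: under $\tau(G) = 0$ the constructed element can lie in $G$ only if every coordinate of its base-group part lies outside $T$ simultaneously. The decisive fact making this achievable is that $\pi_a$ is an involution whose transpositions are exactly the type-R pairs and whose fixed points are exactly the type-N orbits, so a single all-coordinates-outside-$T$ element can realise both the required permutation and the pointwise matching at once. Establishing this dichotomy — in particular that the two $J_0$-orbits inside a regular $J$-orbit are genuinely swapped by $a$ — is the one place where the hypotheses $|L:T| = 2$ and $P = S_{r(T)}$ are both essential.
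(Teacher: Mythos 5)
Your proposal is correct and follows essentially the same route as the paper's proof: after reducing to $\beta$ with coordinates in the $k=r(T)$ distinct regular $T_{\alpha_0}$-orbits, both arguments build a non-trivial element of $G_\alpha\cap G_\beta$ of the form $(z_1,\ldots,z_k)\sigma$ with every $z_i\in L_{\alpha_0}\setminus T$, where $\sigma$ is the product of the transpositions pairing the two regular $T_{\alpha_0}$-orbits inside each regular $L_{\alpha_0}$-orbit and the unpaired (type-N) coordinates are handled by the order-two stabiliser $L_{\alpha_0}\cap L_{\beta_i}$. Your write-up just makes explicit a few points the paper leaves implicit (that $L^k\cap G=T^k\cup(L\setminus T)^k$, and that $\sigma$ is the permutation induced by an element $a\in J\setminus J_0$).
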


\begin{proof}
Set $k=r(T)$ and fix $\alpha_0\in\Gamma$. Recall that an element $z = (z_1,\dots,z_k)\in L^k$ is contained in $G$ if and only if $z\in T^k$ or $z_i\in L\setminus T$ for all $i$.
In view of Lemma \ref{l:BG_l:2.8}, it suffices to show that $\alpha = (\alpha_0,\dots,\alpha_0)$ and $\beta = (\gamma_1,\dots,\gamma_k)$ do not form a base for $G$, where the $\gamma_i$ are contained in distinct regular $T_{\alpha_0}$-orbits. 

Since $|L:T| = 2$, each regular $L_{\alpha_0}$-orbit is a union of two regular $T_{\alpha_0}$-orbits. This allows us to define $r = r(L)$ distinct pairs $\{s,t\} \subseteq \{1,\dots,k\}$, where $\{s,t\}$ is a pair if and only if $\gamma_s$ and $\gamma_t$ are in the same regular $L_{\alpha_0}$-orbit. Let $\{s_1,t_1\},\dots,\{s_r,t_r\}$ be the pairs arising in this way. For each $i\in\{1,\dots,r\}$, there exist $z_{s_i},z_{t_i}\in L_{\alpha_0}$ such that $\gamma_{s_i}^{z_{s_i}} = \gamma_{t_i}$ and $\gamma_{t_i}^{z_{t_i}} = \gamma_{s_i}$. In addition, if $\ell\notin\{s_1,t_1,\dots,s_r,t_r\}$ then there exists $1\ne z_\ell \in L_{\alpha_0}$ such that $\gamma_\ell^{z_\ell} = \gamma_\ell$. By construction, all of the elements $z_{s_i}$, $z_{t_i}$ and $z_\ell$ are contained in $L\setminus T$. Therefore, if we define $z = (z_1,\dots,z_k)\in L^k$, then $z\in G$. Finally, we note that $1\ne z\sigma\in G_\alpha\cap G_\beta$, where $\sigma = (s_1,t_1)\cdots(s_r,t_r) \in P$, and we conclude that $\{\alpha,\beta\}$ is not a base for $G$.
\end{proof}

\begin{thm}\label{thm:semiprimitive_sol_stab}
Assume Hypothesis \ref{h:6}, where $b(L)=2$, $P$ is primitive, $\tau(G) = 0$ and $H$ is soluble. Then $b(G) \leqs 3$, with equality if and only if $(L,J)$ is one of the cases in Table \ref{tab:primitive} and either $r(T)<D(P)$, or $P = S_k$, $k \in \{2,3,4\}$ and $r(T) = D(P)=k$.
\end{thm}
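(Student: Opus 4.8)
The plan is to establish the bound $b(G)\leqs 3$ directly and then to pin down the case $b(G)=3$ by combining the structural results of this section with the classification of primitive groups in Propositions \ref{p:ddagger} and \ref{p:cond_5}. For the upper bound, note that since $H$ is soluble, Remark \ref{r:sol} shows $J$ and $P$ are soluble, so $J\wr P$ is a soluble point stabiliser of $L\wr P$; Lemma \ref{l:plus1} then gives $b(L\wr P)\leqs b(L)+1=3$, and as $G\leqs L\wr P$ we have $b(G)\leqs b(L\wr P)\leqs 3$.

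For the characterisation I would assume $b(G)=3$, i.e.\ $b(G)\geqs 3$, and first reduce the possibilities for $(L,J)$. Corollary \ref{c:G diag} gives $|L:T|\leqs r(T)\leqs |L:T|+D(P)-2$, and since $P$ is primitive and soluble, Theorem \ref{t:prim} forces $D(P)\leqs 4$, whence $r(T)\leqs |L:T|+2$. Arguing as in the remark after Corollary \ref{c:G diag}, this bound (with $r(T)\geqs r(L)|L:T|$) leaves only $r(L)=1$ or $(L,J)=(\mathrm{M}_{10},SD_{16})$; these constitute Table \ref{tab:primitive}, and the bound $r(T)\leqs|L:T|+2$ reduces each infinite family of Table \ref{tab:r(L)} to finitely many members. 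The sufficiency of case (i) is then immediate: since $T\wr P\leqs G$ and, by Lemma \ref{l:BG_l:2.8} (valid for $T\wr P$ as $T$ is transitive, cf.\ Remark \ref{r:trans}), a size-$2$ base for $T\wr P$ requires a distinguishing partition into at least $D(P)$ distinct regular $T$-orbits, the hypothesis $r(T)<D(P)$ gives $b(T\wr P)\geqs 3$ and hence $b(G)=3$.

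It remains to treat the case $r(T)\geqs D(P)$, where I must show case (ii) holds, and conversely that case (ii) is sufficient. If $D(P)=2$, then Corollary \ref{c:tau0,D(P)=2} shows \eqref{e:ddag} holds, so $P$ has no regular orbit on the set $X$ of \eqref{e:X}; Proposition \ref{p:ddagger}, together with the solubility of $P$ (which excludes $2^4{:}\mathrm{O}_4^-(2)$, as $\mathrm{O}_4^-(2)\cong S_5$), forces $(k,P)=(2,S_2)$, and a short analysis of the twisted stabilisers then gives $|L:T|=2$ and $r(T)=D(P)=k=2$. If $D(P)\in\{3,4\}$, then Proposition \ref{prop:diag} shows \eqref{e:dag} holds for $m=D(P)$, and $P$ is one of finitely many primitive soluble groups by Theorem \ref{t:prim}; for $P=S_3,S_4$ the bounds force $r(T)=D(P)=k$ and one checks $|L:T|=2$, which is case (ii), while every other such $P$ must be eliminated by exhibiting a size-$2$ base. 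For the converse, in case (ii) we have $P=S_{r(T)}$ and $|L:T|=2$, so Lemma \ref{l:|L:T|=2} yields $b(G)\geqs 3$ and hence $b(G)=3$.

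The main obstacle will be the elimination of the non-symmetric groups when $D(P)\in\{3,4\}$. The condition \eqref{e:dag} provided by Proposition \ref{prop:diag} is strictly weaker than $t_{D(P)}=|P|/D(P)!$ — for instance $A_4$ satisfies \eqref{e:dag} for $m=D(A_4)=3$ — so Proposition \ref{p:cond_5} cannot be applied directly. Instead one must show, for each remaining primitive soluble $P\ne S_k$, that $\{\alpha,\beta\}$ can be made a base of $G$ by choosing a distinguishing partition for $P$ and colouring its parts by regular $T$-orbits so that no twisted element of $G$ (one with all $L$-components outside $T$) stabilises the pair. This is delicate precisely because it must defeat the untwisted stabilisers in $T^k$ and the diagonal twisted stabilisers coming from $G/T^k$ at once; the viability of the construction hinges on how $L/T$ pairs the regular $T$-orbits lying in a common regular $L$-orbit, which is the same mechanism underlying Lemma \ref{l:|L:T|=2}.
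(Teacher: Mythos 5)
Your strategy is essentially the paper's own: the same upper bound via Lemma \ref{l:plus1}, the same numerical reduction via Corollary \ref{c:G diag} and Theorem \ref{t:prim}, the same use of Corollary \ref{c:tau0,D(P)=2} and Proposition \ref{p:ddagger} when $D(P)=2$, and Lemma \ref{l:|L:T|=2} for the sufficiency of case (ii). But there are two genuine gaps in the analysis of $r(T)\geqs D(P)$. The more serious one is your claim that for $P=S_4$ ``the bounds force $r(T)=D(P)=k$ and one checks $|L:T|=2$''. This is false: Corollary \ref{c:G diag} only gives $r(T)\leqs |L:T|+2$, and the case $L=\Omega_8^+(2){:}3$ with $J$ of type $\mathrm{O}_2^-(2)\times\GU_3(2)$ has $r(L)=1$, $|L:T|=3$ and $r(T)=5$, so it survives every numerical bound you invoke as well as condition \eqref{e:dag} (which $S_4$ satisfies), yet it does not appear in Table \ref{tab:primitive}. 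It cannot be routed through Lemma \ref{l:|L:T|=2} since $|L:T|\ne 2$; it has to be eliminated by an explicit base of size $2$, exploiting the fact that $\tau(G)=0$ and $|L:T|=3$ prime force every element of $(L^4\cap G)\setminus T^4$ to have all four coordinates in the same nontrivial coset of $T$, which is incompatible with a coordinate swap induced by a transposition in $S_4$. Relatedly, your assertion that the numerical bounds alone yield Table \ref{tab:primitive} is premature: that table is the output of the eliminations, not of the bounds.

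The second gap is the one you flag yourself: the eliminations of $(k,P)=(4,A_4)$ and $(9,\mathrm{AGL}_2(3))$ when $D(P)=3$ are deferred rather than carried out, and they are the substance of the ``only if'' direction rather than a routine check. You correctly observe that \eqref{e:dag} is too weak (it holds for $A_4$) and correctly identify the relevant mechanism, but a proof needs the construction. The paper's device is to choose a distinguishing partition $\{\pi_1,\pi_2,\pi_3\}$ with pairwise distinct part sizes, colour $\pi_1$ by a point $\gamma_1$ with $\{\alpha_0,\gamma_1\}$ a base for $T$ but not for $L$, and colour $\pi_2,\pi_3$ by points of the unique regular $L_{\alpha_0}$-orbit; any stabilising element $z\sigma$ then has $\sigma$ fixing $\pi_1$ setwise, the inequality $|\pi_2|<|\pi_3|$ forces some $j\in\pi_3$ with $j^\sigma\in\pi_3$, whence $z_j\in L_{\alpha_0}\cap L_{\gamma_3}=1$ and $\tau(G)=0$ pushes $z$ into $T^k$, after which Lemma \ref{l:BG_l:2.8} finishes. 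Without this (or an equivalent) explicit argument for each surviving non-symmetric $P$, and without the $\Omega_8^+(2){:}3$ elimination above, the characterisation of $b(G)=3$ is not established.
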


\begin{table}
\[
\begin{array}{clll} \hline
r(T) & L & \mbox{Type of $J$}  & \mbox{Conditions} \\ \hline
2 & {\rm M}_{10} & 5{:}4  & \\
& \mathrm{J}_2.2 & 5^2{:}(4\times S_3)  & \\
3 & \LL_3(4).2 & \GU_3(2)  & L \ne \PSigmaL_3(4)\\
& \PGL_2(11) & 2_-^{1+2}.\mathrm{O}_2^-(2)  &  \\
& \PGL_2(7) & D_{12} & \\
4 & \PGL_2(q) & D_{2(q-1)} & q \in \{9,11\} \\
&  G_2(3).2 & \SL_2(3)^2 & \\
& S_7 & \mathrm{AGL}_1(7)  & \\
& \mathrm{M}_{10} & SD_{16}  & \\ \hline
\end{array}
\]
\caption{The groups $(L,J)$ in Theorem \ref{thm:semiprimitive_sol_stab}}
\label{tab:primitive}
\end{table}		

\begin{proof}
First note that $L$ is almost simple with soluble point stabiliser $J$, and $P$ is also soluble (see Remark \ref{r:sol}). By Lemma \ref{l:plus1} we have $b(L\wr P)\leqs b(L)+1$ and thus $b(G)\leqs 3$. Since $P$ is primitive,  Theorem \ref{t:prim} implies that either $D(P)=2$, or one of the following holds:
	\begin{itemize}\addtolength{\itemsep}{0.2\baselineskip}
		\item[{\rm (a)}] $D(P) = 3$ and $(k,P) = (4,A_4)$, $(3,S_3)$ or one of $8$  cases listed in \cite[Theorem 2]{S_dist} with $P$ soluble.
		\item[{\rm (b)}] $D(P) = 4$ and $(k,P) = (4,S_4)$.
	\end{itemize}

First assume $r(T)<D(P)$. Here \cite[Theorem 2.13]{BC} gives $b(T \wr P) \geqs 3$, so $b(G) = 3$ and the possibilities for $(L,J)$ can be read off from Table \ref{tab:r(L)}, noting that $L \ne T$ since we are assuming $G$ satisfies Hypothesis \ref{h:6}. In this way, we obtain the cases recorded in Table \ref{tab:primitive} with $r(T) \in \{2,3\}$.

For the remainder, we will assume $r(T) \geqs D(P)$. We now divide the proof into three cases, according to $D(P)$.

\vs

\noindent \emph{Case 1. $D(P)=2$.}

\vs
	
First assume $D(P)=2$. By combining Theorem \ref{thm:BC} and Corollary \ref{c:G diag}, we deduce that $b(G) = 3$ only if $r(L) = 1$ and $r(T) = |L:T|$. Therefore, by inspecting 
Table \ref{tab:r(L)} we see that $(L,J) = (\mathrm{M}_{10},5{:}4)$ or $(\mathrm{J}_2.2,5^2{:}(4\times S_3))$ are the only possibilities, and in both cases we have $|L:T| = 2$. If $b(G)=3$ then Corollary \ref{c:tau0,D(P)=2} implies that the condition \eqref{e:ddag} holds, which means that $P$ has no regular orbit on the set $X$ defined in \eqref{e:X}. 
In addition, since $P$ is soluble, Proposition \ref{p:ddagger} implies that $(k,P) = (2,S_2)$ and we conclude that $b(G) = 3$ via Lemma \ref{l:|L:T|=2}.
	
\vs

\noindent \emph{Case 2. $D(P)=3$.}

\vs

Next assume $D(P)=3$, so the possibilities for $(k,P)$ are described in case (a) above. Suppose $b(G) = 3$ and first observe that Proposition \ref{prop:diag} implies that \eqref{e:dag} holds with $m = 3$. By considering the cases in (a), with the aid of {\sc Magma} it is straightforward to check that \eqref{e:dag} holds with $m = 3$ if and only if $(k,P) = (3,S_3)$, $(4,A_4)$ or $(9,{\rm AGL}_2(3))$.

Next observe that $r(T) = |L:T|$ or $|L:T|+1$ by Corollary \ref{c:G diag}. Therefore,  $r(L) = 1$ and by inspecting Table \ref{tab:r(L)} we deduce that $(L,J)$ is one of the three cases recorded in Table \ref{tab:primitive} with $r(T) = |L:T|+1 = 3$. In particular, if $(k,P) = (3,S_3)$ then $b(G)=3$ in both cases by Lemma \ref{l:|L:T|=2}. We now consider the two remaining possibilities for $(k,P)$ in turn. 
	
Suppose $(k,P) = (9,{\rm AGL}_2(3))$. Using {\sc Magma}, we can find a distinguishing partition $\{\pi_1,\pi_2,\pi_3\}$ for $P$ such that $|\pi_i| = i+1$ for all $i$. Let $(L,J)$ be one of the relevant cases in Table \ref{tab:primitive} and fix $\alpha_0\in\Gamma = L/J$. Since $r(T) = 3$, there exist points $\gamma_1,\gamma_2,\gamma_3$ that are contained in distinct regular $T_{\alpha_0}$-orbits on $\Gamma$. In addition, we may assume that $\{\alpha_0,\gamma_1\}$ is not a base for $L$, whereas $\gamma_2$ and $\gamma_3$ are in the unique regular $L_{\a_0}$-orbit on $\Gamma$. Set $\alpha = (\alpha_0,\dots,\alpha_0)$ and $\beta = (\beta_1,\dots,\beta_k)$ in $\O = \Gamma^k$, where $\beta_j = \gamma_i$ if $j\in\pi_i$. We claim that $\{\alpha,\beta\}$ is a base for $G$, which is incompatible with our assumption that $b(G)=3$. To see this, suppose $x\in G_\alpha\cap G_\beta$ and write $x = z\sigma$, where $z = (z_1,\dots,z_k)\in L^k\cap G$ and $\sigma\in P$. Suppose $j \in \pi_1$, so $\beta_j = \gamma_1$. Since $\gamma_2$ and $\gamma_3$ are not in the $L_{\alpha_0}$-orbit of $\gamma_1$, it follows that $\beta_j^{z_j} = \gamma_1$. Therefore, 
$\sigma$ fixes $\pi_1$ and $\pi_2\cup \pi_3$ (setwise). As a consequence, since $|\pi_2| = 3$ and $|\pi_3| = 4$, we deduce that there exists $j\in\pi_3$ such that $j^\sigma\in \pi_3$. Therefore, $z_j \in L_{\alpha_0}\cap L_{\gamma_3}$ and thus $z_j = 1$. At this point, the condition $\tau(G) = 0$ forces $z \in T^k$ and thus Lemma \ref{l:BG_l:2.8} implies that $x=1$. Therefore, $\{\alpha,\beta\}$ is indeed a base for $G$ and so the case $(k,P) = (9,{\rm AGL}_2(3))$ is eliminated.
	
An almost identical argument also eliminates the case $(k,P) = (4,A_4)$, working with a distinguishing partition $\{\pi_1,\pi_2,\pi_3\}$ for $P$ with $|\pi_1| = |\pi_2| = 1$ and $|\pi_3| = 2$. We omit the details.
	
	\vs

\noindent \emph{Case 3. $D(P)=4$.}

\vs

Finally, let us assume $D(P)=4$ and $b(G)=3$, in which case $(k,P)=(4,S_4)$ (see case (b) above) and Corollary \ref{c:G diag} implies that $|L:T|\leqs r(T)\leqs |L:T|+2$. Therefore  $r(T)\leqs 2|L:T|$ and thus $r(L) \in \{1,2\}$. By inspecting Table \ref{tab:r(L)}, we deduce that either $(L,J)$ is one of the cases in Table \ref{tab:primitive} with $r(T) = |L:T|+2=4$, or $L = \Omega_8^+(2){:}3$ and $J$ is of type $\mathrm{O}_2^-(2)\times\GU_3(2)$. In the former case, Lemma \ref{l:|L:T|=2} shows that $b(G) = 3$, so it just remains to eliminate the latter possibility.
	
Suppose $L = T{:}\la a \ra = \Omega_8^+(2){:}3$ and $J$ is of type $\mathrm{O}_2^-(2)\times \GU_3(2)$, so $r(L)=1$ and $r(T)=5$. Since $|L:T| = 3$ is a prime and we are assuming that $\tau(G) = 0$ and $P = S_4$, it follows that 
\[
G = \langle T^4,(a,a,a,a),P\rangle,
\]
so an element $(z_1,z_2,z_3,z_4)\in L^4$ is contained in $G$ if and only if each $z_i$ is in the same coset of $T$ in $L$. Fix $\a_0,\gamma_1, \ldots, \gamma_4 \in \Gamma$, where the $\gamma_i$ are contained in distinct regular $T_{\alpha_0}$-orbits and $\{\alpha_0,\gamma_i\}$ is a base for $L$ if and only if $i \in \{1,2\}$. Notice that if $i \in \{3,4\}$ then $|L_{\a_0} \cap L_{\gamma_i}| = 3$, which implies that the $L_{\a_0}$-orbit and $T_{\a_0}$-orbit of $\gamma_i$ are equal (in particular, $\gamma_3$ and $\gamma_4$ are in distinct $L_{\a_0}$-orbits). Set $\a = (\alpha_0,\alpha_0,\alpha_0,\alpha_0)$ and $\b = (\gamma_1, \gamma_2, \gamma_3, \gamma_4)$ in $\O = \Gamma^4$. We claim that $\{\alpha,\beta\}$ is a base for $G$.
	
Assume $x\in G_\alpha\cap G_\beta$ and write $x = z\sigma$, where $z = (z_1,z_2,z_3,z_4)\in L^4 \cap G$ and $\sigma\in P$. Since none of the points $\gamma_1$, $\gamma_2$ and $\gamma_4$ are in the same $L_{\alpha_0}$-orbit as $\gamma_3$, we deduce that $3^\sigma = 3$. Similarly, $4^\sigma = 4$.
Suppose $\s = (1,2)$. Then $\gamma_1^{z_1} = \gamma_2$ and $\gamma_2^{z_2} = \gamma_1$, which implies that $z_1, z_2 \in L \setminus T$. Moreover, $z_1z_2 \in L_{\a_0} \cap L_{\gamma_1} = 1$, so $z_1 = z_2^{-1}$ and we deduce that $z_1$ and $z_2$ are contained in different cosets of $T$ in $L$. But this means that $z \notin G$ and we have reached a contradiction. This forces $\s=1$. Finally, since $\{\alpha_0,\gamma_1\}$ is a base for $L$ we deduce that $z_1 = 1$ and thus  $z \in T^4$. Since each $\{\a_0,\gamma_i\}$ is a base for $T$, we conclude that $z=1$ and the proof of both the claim and the theorem is complete.
\end{proof}

\subsection{Final remarks}

We conclude by briefly discussing the general problem of determining the base-two product type groups with soluble point stabilisers. Let $G \leqs L \wr P$ be such a group with socle $T^k$, and adopt all the usual notation as before. The case where $G = L \wr P$ is handled in Theorem \ref{thm:wr_base-two}, so we may assume $G < L \wr P$ and
$b(L \wr P) \geqs 3$. Continuing with the main theme of Section \ref{s:gen}, let us also assume that Hypothesis \ref{h:6} holds. Here $L \ne T$ and $b(G) = 2$ only if $b(T \wr P)=2$, so $r(T) \geqs D(P)$ and we deduce that $b(L) \in \{2,3\}$ as a consequence of 
\cite[Theorem 2]{B_sol}. In this setting, we have handled the cases 
\begin{itemize}\addtolength{\itemsep}{0.2\baselineskip}
\item[{\rm (a)}] $b(L)=k=2$ (see Theorem \ref{thm:P=C_2_sol_gen}); and
\item[{\rm (b)}] $b(L)=2$, $P$ is primitive and $\tau(G)=0$ (see Theorem \ref{thm:semiprimitive_sol_stab}).
\end{itemize}

So even under the assumption $b(L)=2$, there is more work to be done here and it would be interesting to see if it is possible to relax the conditions on $P$ and $\tau(G)$ in case (b). For example, it might be fruitful to consider the groups with $\tau(G) = k-1$ as a starting point.
 
As the following example demonstrates, we can also find base-two groups under Hypothesis \ref{h:6} when $b(L)=3$.

\begin{ex}\label{ex:bL3}
Take $L = {\rm P\Gamma L}_2(q) = T{:}\la a,b\ra = T{:}2^2$ and let $J$ be a maximal subgroup of type ${\rm GL}_1(q) \wr S_2$, where $q=p^2$, $p \geqs 3$ is a prime and ${\rm PGL}_2(q) = T{:}\la a \ra$ and ${\rm P\Sigma L}_2(q) = T{:}\la b \ra$. By \cite[Lemma 4.7]{B_sol} we have $b(L)=3$ and $b({\rm PGL}_2(q)) = b({\rm P\Sigma L}_2(q)) = 2$. Set $(k,P) = (2,S_2)$ and consider
\[
G = \la T^2,(a,a),(b,b),P \ra
\]
as a primitive product type group on $\O = \Gamma^2$, where $\Gamma = L/J$. 

We may identify $\Gamma$ with the set of distinct pairs of $1$-dimensional subspaces of the natural module for $T$. Given this identification, a precise description of the $2$-element bases for ${\rm PGL}_2(q)$ and ${\rm P\Sigma L}_2(q)$ is presented in \cite[Section 4.1]{BH_Saxl} and this allows us to choose 
bases $\{\a_0,\gamma_1\}$ and $\{\a_0,\gamma_2\}$ for ${\rm PGL}_2(q)$ and ${\rm P\Sigma L}_2(q)$, respectively, where $\gamma_1$ and $\gamma_2$ are contained in distinct $L_{\a_0}$-orbits. In addition, notice that $\{\a_0,\gamma_2\}$ is a base for $T{:}\la ab\ra$ by \cite[Lemma 4.5]{BH_Saxl}. Set $\a = (\a_0,\a_0)$ and $\b = (\gamma_1,\gamma_2)$. We claim that $\{\a,\b\}$ is a base for $G$ and thus $b(G)=2$. To see this, suppose $x = (z_1,z_2)\s \in G$ fixes $\a$ and $\b$. Then each $z_i$ is contained in $L_{\a_0}$ and thus $\s=1$ since $\gamma_1$ and $\gamma_2$ are in distinct $L_{\a_0}$-orbits. Since $x \in G$, we may write $z_i  = t_ic$ with $t_i \in T$ and $c \in \{1,a,b,ab\}$. If $c=1$ then $z_i \in T_{\a_0} \cap T_{\gamma_i}=1$ and thus $x=1$. If $c=a$ then $z_1 \in {\rm PGL}_2(q)_{\a_0} \cap {\rm PGL}_2(q)_{\gamma_1}=1$, which is a contradiction since $z_1 \in L \setminus T$. An entirely similar argument applies if $c \in \{b,ab\}$ and the proof of the claim is complete.
\end{ex}

Notice that $|L:T|=4$ in Example \ref{ex:bL3}. By the following result, there are no examples with $b(L)=3$, $|L:T|=2$ and $b(G)=2$. Here there is no need to assume that $G$ has soluble point stabilisers and it is worth noting that the same proof goes through  under the weaker hypothesis $T^k < L^k \cap G$.

\begin{prop}
	\label{prop:b(L)>2,|L:T|=2}
Assume Hypothesis \ref{h:6}, with $b(L) \geqs 3$ and $|L:T| = 2$. Then $b(G) \geqs 3$.
\end{prop}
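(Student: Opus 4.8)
The plan is to argue by contradiction: assuming $b(G)\leqs 2$ I would produce a nontrivial element of $G$ fixing the two supposed base points. Concretely, suppose there exist $\a=(\a_1,\dots,\a_k)$ and $\b=(\b_1,\dots,\b_k)$ in $\O$, not necessarily distinct, with $G_{\a}\cap G_{\b}=1$ (this covers every base of size at most two). Since $T^k\leqs T\wr P<G$ by Hypothesis \ref{h:6}, any tuple $(t_1,\dots,t_k)\in T^k$ with $t_i\in T_{\a_i}\cap T_{\b_i}$ lies in $T^k\cap G_{\a}\cap G_{\b}=1$, and so $T_{\a_i}\cap T_{\b_i}=1$ for every $i$. (Only $T^k\leqs G$, not the full $T\wr P$, is used here, which is why the argument survives under the weaker hypothesis $T^k<L^k\cap G$ noted before the statement.)

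Next I would pin down the local intersections $L_{\a_i}\cap L_{\b_i}$. Because $b(L)\geqs 3$, no point or pair of points of $\Gamma$ is a base for $L$, so $L_{\a_i}\cap L_{\b_i}\ne 1$ for each $i$. On the other hand $(L_{\a_i}\cap L_{\b_i})\cap T=T_{\a_i}\cap T_{\b_i}=1$ by the previous paragraph, so $L_{\a_i}\cap L_{\b_i}$ injects into $L/T\cong C_2$; as it is nontrivial, it equals $\langle g_i\rangle$ for a unique involution $g_i\in L\setminus T$. This is precisely the point where the hypothesis $|L:T|=2$ does the essential work, forcing each local stabiliser to be a single involution lying outside the socle.

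The key step is then a complement argument inside $L^k$. Writing $e_i\in L^k$ for the element with $g_i$ in coordinate $i$ and $1$ elsewhere, set $E=\langle e_1,\dots,e_k\rangle\cong C_2^k$. Since each $g_i\notin T$, no nontrivial element of $E$ lies in $T^k$, so $E\cap T^k=1$; as $|E|=2^k=|L^k:T^k|$, it follows that $E$ is a complement to $T^k$ in $L^k$, i.e.\ $L^k=T^k E$ with each coset of $T^k$ containing exactly one element of $E$. Now put $M=L^k\cap G$, so $T^k<M$ by Hypothesis \ref{h:6}. Choosing $m\in M\setminus T^k$ and writing $m=te$ with $t\in T^k\leqs M$ and $e\in E$ gives $1\ne e=t^{-1}m\in M\cap E$. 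This nontrivial $e=(z_1,\dots,z_k)$ has each $z_i\in\{1,g_i\}\subseteq L_{\a_i}\cap L_{\b_i}$, so taking $\s=1$ it fixes both $\a$ and $\b$; since $e\in M\leqs G$ we obtain $1\ne e\in G_{\a}\cap G_{\b}$, contradicting $G_{\a}\cap G_{\b}=1$. Hence $b(G)\geqs 3$.

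I do not expect a serious obstacle here. The only step needing care is the assertion $M\cap E\ne 1$, which follows immediately once one observes that $E$ is a full complement to $T^k$ and that $M$ strictly contains $T^k$; the former uses $|L:T|=2$ (to make $|E|$ equal to the index $|L^k:T^k|$) and the latter is exactly Hypothesis \ref{h:6} (or the weaker $T^k<L^k\cap G$). Everything else is bookkeeping, and the whole argument is independent of $P$ beyond $T^k\leqs G$.
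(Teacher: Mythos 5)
Your proof is correct and is essentially the paper's argument in different packaging: the paper directly builds the element $y$ supported on the coordinates where a chosen $z \in (L^k\cap G)\setminus T^k$ lies outside $T$, using local two-point stabiliser elements $x_i \in L\setminus T$ (your $g_i$), and observes $yz \in T^k$ forces $y \in G$; your decomposition $m = te$ against the complement $E$ produces exactly the same element. Both hinge on the same two facts — $|L:T|=2$ places a nontrivial element of $L_{\a_i}\cap L_{\b_i}$ outside $T$, and $T^k < L^k\cap G$ supplies something to cancel against — so this matches the paper's proof.
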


\begin{proof}
We may as well assume $b(T)=2$. Fix $\a_0 \in \Gamma$ and set $\alpha = (\alpha_0,\dots,\alpha_0) \in \O$. It suffices to show that $\{\a,\b\}$ is not a base for $G$, where $\beta = (\gamma_1,\dots,\gamma_k)$ and each $\gamma_i$ is contained in a regular $T_{\alpha_0}$-orbit. Since  $b(L) \geqs 3$, for each $i$ there exists $x_i\in L\setminus T$ fixing both $\alpha_0$ and $\gamma_i$.
	
Fix $z = (z_1, \ldots, z_k) \in (L^k \cap G)\setminus T^k$ and define a subset $A$ of $[k]$ such that $i\in A$ if and only if $z_i \notin T$. Note that $A$ is non-empty since $z\notin T^k$. Set $y = (y_1,\dots,y_k)\in L^k$, where $y_i = x_i$ if $x\in A$, otherwise $y_i = 1$, and observe that $y$ is non-trivial and it fixes $\a$ and $\b$.
Finally, since $|L:T| = 2$ we deduce that $yz\in T^k \leqs G$, so $y\in G_{\a} \cap G_{\b}$ and the result follows.
\end{proof}

By the proposition, if $G$ has soluble point stabilisers and $b(L)=3$, then $b(G)=2$ only if $b(T) = 2$, $r(T) \geqs D(P)$ and $|L:T| \geqs 3$. Since $L$ is almost simple, the possibilities for $(L,J)$ can be read off from \cite[Tables 4-7]{B_sol}, noting that  $b(T)=2$ only if $\log_m |T|< 2$, where $m = |\Gamma|$. In \cite[Table 4]{B_sol}, we deduce that the only possibility is $L = A_6.2^2$ with $J = [32]$ or $D_{20}.2$ (here $L$ is isomorphic to ${\rm P\Gamma L}_2(9)$ and $J$ is of type ${\rm GL}_1(9) \wr S_2$ or ${\rm GL}_1(9^2)$, respectively). One can check that no examples arise in \cite[Tables 5 and 6]{B_sol}, while the relevant possibilities for $(L,J)$ in \cite[Table 7]{B_sol} are recorded in Table \ref{tab:poss} (here we implicitly assume the additional condition $|L:T| \geqs 3$). For the values of $r(T)$ in the first two rows, we have $(a,b)= (7,1)$ if $q \equiv 1 \imod{4}$, otherwise $(a,b) = (5,3)$ (see Remark \ref{r:LJ} and the proof of \cite[Lemma 7.9]{BH_gen}).

It is straightforward to check that none of these possibilities correspond to cases in Table \ref{tab:reg(L)}, which implies that ${\rm reg}(L,3) \geqs 5$ and we obtain the following result via Theorem \ref{thm:BC}. 

\begin{table}
\[
\begin{array}{llcl} \hline
T & \mbox{Type of $J$}  & r(T) & \mbox{Conditions} \\ \hline
{\rm L}_2(q) & {\rm GL}_1(q) \wr S_2 & (q+a)/4 & \mbox{$q$ odd, ${\rm PGL}_2(q) < L$} \\
& {\rm GL}_1(q^2) & (q-b)/4 & \mbox{$q$ odd, ${\rm PGL}_2(q) \leqs L$} \\
{\rm L}_3(4) & {\rm GU}_3(2) & 3 &  \\
{\rm L}_4(3) & {\rm O}_4^{+}(3) & 6 & L = T.2^2 \\
{\rm U}_4(3) & {\rm GU}_1(3) \wr S_4 & 11 & L = T.D_8 \\
\Omega_8^{+}(2) & {\rm O}_{2}^{-}(2) \times {\rm GU}_3(2) & 5 & L = T.S_3 \\
{\rm P\Omega}_8^{+}(3) & {\rm O}_{4}^{+}(3) \wr S_2 & 12 & |L:T| \geqs 6 \\ \hline
\end{array}
\]
\caption{The groups $(L,J)$ in Proposition \ref{p:last}(ii)}
\label{tab:poss}
\end{table}		

\begin{prop}\label{p:last}
Assume Hypothesis \ref{h:6} and $H$ is soluble. Then $b(G)=2$ only if one of the following holds:
\begin{itemize}\addtolength{\itemsep}{0.2\baselineskip}
\item[{\rm (i)}] $b(L)=2$ and either $b(L \wr P) = 2$, or $b(L \wr P) = 3$, $r(L) <D(P)$ and $(L,J)$ is one of the cases in Table \ref{tab:r(L)}.
\item[{\rm (ii)}] $b(L)=b(L \wr P) = 3$ and $(L,J)$ is one of the cases in Table \ref{tab:poss}, where $r(T) \geqs D(P)$ and $|L:T| \geqs 3$.
\end{itemize}
\end{prop}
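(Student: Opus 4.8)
The plan is to assume $b(G) = 2$ and trace the consequences through the results already established in the paper, so the proposition becomes essentially a matter of assembly. First I would record the structural constraints imposed by Hypothesis \ref{h:6}: by Remark \ref{r:sol}, the solubility of $H$ forces $L$ to be almost simple with soluble point stabiliser $J$ (so $L \in \mathcal{S}$) and forces $P$ to be soluble. Since $T \wr P < G$, any base for $G$ is a base for $T \wr P$, whence $b(T \wr P) \leqs b(G) = 2$ and therefore $b(T \wr P) = 2$ (the lower bound $b(T\wr P) \geqs b(T) \geqs 2$ holds because $T$ is transitive and non-regular). Applying Theorem \ref{thm:BC} with $T$ in place of $L$ then yields $r(T) \geqs D(P)$; in particular $T$ has a regular suborbit, so $b(T) = 2$. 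Feeding $b(T) = 2$ into \cite[Theorem 2]{B_sol} (equivalently, using that $b(T) = 2$ forces $\log_m |T| < 2$ with $m = |\Gamma|$) shows $b(L) \in \{2,3\}$, which sets up the two cases of the statement.

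If $b(L) = 2$, then case (i) is immediate from Theorem \ref{thm:wr_base-two}: either $b(L \wr P) = 2$, or $b(L \wr P) = 3$ with $r(L) < D(P)$ and $(L,J)$ appearing in Table \ref{tab:r(L)}.

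The substance lies in the case $b(L) = 3$. Here I would first invoke Proposition \ref{prop:b(L)>2,|L:T|=2} to rule out $|L:T| = 2$, giving $|L:T| \geqs 3$. Next, since $b(T) = 2$ forces $\log_m|T| < 2$, I would run through the almost simple groups $L \in \mathcal{S}$ with $b(L) = 3$ recorded in \cite[Tables 4--7]{B_sol} and retain only those pairs $(L,J)$ satisfying this inequality together with $|L:T| \geqs 3$; this inspection produces exactly the entries of Table \ref{tab:poss}, the values of $r(T)$ being computed as in Section \ref{sss:reg1} and via Remark \ref{r:LJ} for the $\mathrm{L}_2(q)$ rows. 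Finally, to pin down $b(L \wr P) = 3$, I would note that none of these $(L,J)$ occur in Table \ref{tab:reg(L)}, so Theorem \ref{thm:reg(L,b(L))} gives $\reg(L,3) \geqs 5$; since $P$ is soluble we have $D(P) \leqs 5$ by Theorem \ref{t:sol_dist}, whence $\reg(L,3) \geqs D(P)$ and Theorem \ref{thm:BC} gives $b(L \wr P) \leqs 3$. Combined with $b(L \wr P) \geqs b(L) = 3$ this yields $b(L \wr P) = 3$, which is case (ii).

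The main obstacle is the table inspection in the case $b(L) = 3$: one must correctly verify the inequality $\log_m|T| < 2$ across all the families in \cite[Tables 4--7]{B_sol}, confirm that no candidates survive in \cite[Tables 5 and 6]{B_sol}, recognise the $A_6.2^2 \cong \PGammaL_2(9)$ entries of \cite[Table 4]{B_sol} as the $q = 9$ instances of the first two rows of Table \ref{tab:poss}, and correctly compute the regular suborbit counts $r(T)$ (with the case split on $q \bmod 4$ for the $\mathrm{L}_2(q)$ rows). Everything else is a direct combination of Theorems \ref{thm:BC}, \ref{thm:reg(L,b(L))}, \ref{thm:wr_base-two} and \ref{t:sol_dist} with Proposition \ref{prop:b(L)>2,|L:T|=2}.
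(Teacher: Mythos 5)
Your proposal is correct and follows essentially the same route as the paper: reduce to $b(T\wr P)=2$ to get $r(T)\geqslant D(P)$ and $b(L)\in\{2,3\}$ via \cite[Theorem 2]{B_sol}, dispatch $b(L)=2$ with Theorem \ref{thm:wr_base-two}, and for $b(L)=3$ use Proposition \ref{prop:b(L)>2,|L:T|=2} to force $|L:T|\geqslant 3$, inspect \cite[Tables 4--7]{B_sol} under $\log_m|T|<2$ to obtain Table \ref{tab:poss}, and conclude $b(L\wr P)=3$ from $\reg(L,3)\geqslant 5\geqslant D(P)$ via Theorems \ref{thm:reg(L,b(L))}, \ref{t:sol_dist} and \ref{thm:BC}. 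No gaps.
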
 

The study of bases for product type groups becomes rather more complicated if we drop Hypothesis \ref{h:6} and this will be the focus of a future paper. As an illustration, we close with the following example.

\begin{ex}\label{ex:pgl}
Let $G \leqs L \wr P$ be a product type primitive group on $\O = \Gamma^3$, where $P = S_3$ and $L \leqs {\rm Sym}(\Gamma)$ is a primitive group with point stabiliser $J$. As before, we may assume $G$ induces $L$ on each factor of $\O$, and $P$ on the set of factors of the socle $T^3$. Take $L = {\rm PGL}_2(11) = T{:}\la a \ra = T.2$ and $J = S_4$, so $b(L) = 2$, $r(L)=1$ and $r(T) = 3$ (see Table \ref{tab:r(L)}, noting that $J$ is of type $2_{-}^{1+2}.{\rm O}_{2}^{-}(2)$). 

First assume $G$ contains $P$ and note that there are three possibilities, namely $L \wr P$, $\la T^3,(a,a,a),P\ra$ and $\la T^3,(a,a,1),P\ra$. For the full wreath product $G = L \wr P$, Theorem \ref{thm:wr_base} implies that $b(G) = 3$ since $r(L) < D(P)$. Similarly, if $G = \la T^3,(a,a,a),P\ra$ then $\tau(G)=0$ (see \eqref{e:tau}) and thus $b(G) = 3$ by Theorem \ref{thm:semiprimitive_sol_stab}, while a {\sc Magma} calculation gives $b(G) = 3$ if $G = \la T^3,(a,a,1),P\ra$.

Now assume $G$ does not contain $P$, so $|P \cap G| \in \{1,2,3\}$. With the aid of {\sc Magma}, we find that there are $8$ product type primitive groups of this form, up to permutation isomorphism. More precisely, there are $3$ groups with $P \cap G = 1$ and in each case $b(G)=2$. There are also $3$ groups with $|P \cap G|=2$ and here we find that $b(G)=3$. Finally, let $G_1$ and $G_2$ be the two remaining groups with $|P \cap G_i|=3$. For one of them, say $G_1$, we have $\la G_1, P\ra = L \wr P$ and $b(G_1) = 3$. On the other hand, $\la G_2, P\ra$ has index $4$ in $L \wr P$ and one can check that $b(G_2)=2$.
\end{ex}

\section{The tables}\label{s:tab}

Here we present Tables \ref{tab:r(L)} and \ref{tab:reg(L)}, which arise in the statement of Theorems \ref{thm:reg(L,b(L))}, \ref{thm:wr_base-two} and \ref{thm:wr_base}.

Let $L \leqs {\rm Sym}(\Gamma)$ be an almost simple primitive group with socle $T$ and soluble point stabiliser $J$ (in terms of the notation introduced in Section \ref{s:intro}, this means that $L \in \mathcal{S}$). Let $b(L)$ be the base size of $L$ and let ${\rm reg}(L,b(L))$ be the number of regular orbits of $L$ on the Cartesian product 
$\Gamma^{b(L)}$.  By Theorem \ref{thm:reg(L,b(L))}, we have 
${\rm reg}(L,b(L)) \leqs 4$ if and only if $(L,J)$ is one of the cases recorded in Tables  
\ref{tab:r(L)} and \ref{tab:reg(L)}. Note that if $b(L)=2$ then ${\rm reg}(L,b(L)) = r(L)$ is the number of regular suborbits of $L$ on $\Gamma$ (and similarly, $r(T)$ is the number of regular suborbits of $T$).

\begin{rem}\label{r:cases}
Suppose $T$ is a classical group over $\mathbb{F}_q$. Then due to the existence of isomorphisms between various low-dimensional classical groups (see \cite[Proposition 2.9.1]{KL}, for example), we may assume $T$ is one of the following:
\[
{\rm L}_n(q), \, n \geqs 2; \; {\rm U}_n(q), \, n \geqs 3; \; {\rm PSp}_n(q), \, n \geqs 4; \; {\rm P\O}_n^{\e}(q), \, n \geqs 7.
\]
Furthermore, in order to avoid unnecessary repetition, we will 
exclude the groups with socle 
\[
{\rm L}_{2}(4), \, {\rm L}_{2}(5), \, {\rm L}_{2}(9), \, {\rm L}_{3}(2), \, {\rm L}_{4}(2), \, {\rm PSp}_{4}(2)', \, {\rm PSp}_{4}(3), \, {}^2G_2(3)', \, G_2(2)'
\]
in Tables \ref{tab:r(L)} and \ref{tab:reg(L)}. This is consistent with \cite{B_sol} (see \cite[Remark 9.1]{B_sol}) and it is justified by the existence of the following well known isomorphisms:
\[
{\rm L}_{2}(4) \cong {\rm L}_{2}(5) \cong A_5,\, {\rm L}_{2}(9) \cong {\rm PSp}_{4}(2)' \cong A_6, \, {\rm L}_{3}(2) \cong {\rm L}_{2}(7),
\] 
\[
{\rm L}_{4}(2) \cong A_8, \, {\rm PSp}_{4}(3) \cong {\rm U}_{4}(2), \, {}^2G_2(3)' \cong {\rm L}_{2}(8), \, G_2(2)' \cong {\rm U}_{3}(3).
\]
So for example, a reader who is interested in the groups with socle ${\rm L}_{4}(2)$ should inspect the tables for any cases with $L=S_8$ or $A_8$.
\end{rem}

In both tables we refer to the \emph{type} of $J$. If $L$ is a classical group, then this  provides an approximate description of the structure of $J$, which is consistent with its usage in \cite{KL}, where the precise structure of $J$ is determined. For example, in the first row of Table \ref{tab:r(L)} we have $L = {\rm PGL}_2(q)$ and $J$ is of type ${\rm GL}_1(q) \wr S_2$, which indicates that $J$ is the stabiliser in $L$ of a direct sum decomposition $V = V_1 \oplus V_2$ of the natural module for $L$, where the $V_i$ are $1$-dimensional subspaces (see \cite[Proposition 4.2.9]{KL} for the precise structure). In Table \ref{tab:reg(L)} we use the standard $P_m$ notation for maximal parabolic subgroups, where $P_m$ is the stabiliser of a totally singular $m$-dimensional subspace of the natural module. For $T = {\rm L}_n(q)$, we also use $P_{1,n-1}$ for the stabiliser of a flag of subspaces $0<V_1<V_{n-1}<V$ of $V$, where $\dim V_i = i$ (again, this notation is consisted with \cite{KL}). For the remaining groups where $L$ is not classical, we record the precise structure of $J$ in the third column of both tables. In addition, in Table \ref{tab:r(L)} we use the standard notation $\delta, \phi, \gamma$, etc. for automorphisms of a simple classical group (see \cite[Section 1.7.2]{Low-Dimensional}). 

\begin{rem}\label{r:class}
Let us record some additional comments on Tables \ref{tab:r(L)} and \ref{tab:reg(L)}.  
\begin{itemize}\addtolength{\itemsep}{0.2\baselineskip}
\item[{\rm (i)}] In the first row of Table \ref{tab:r(L)} we have $L = {\rm PGL}_2(q)$ and $J = D_{2(q-1)}$ is a subgroup of type ${\rm GL}_1(q) \wr S_2$. In view of Remark \ref{r:cases}, we may assume $q \geqs 7$ and $q \ne 9$. More precisely, $J$ is non-maximal when $q=5$, while the cases $q=4$ and $9$ are recorded as $(L,J) = (A_5,D_6)$ and $(A_6.2,D_{16})$ in Table \ref{tab:r(L)}. As noted in Remark \ref{r:LJ}
we have
\[
r(T) = \left\{\begin{array}{ll}
1 & \mbox{$q$ even} \\
(q+a)/4 & \mbox{$q$ odd} 
\end{array}\right.
\]
where $a=7$ if $q \equiv 1 \imod{4}$, otherwise $a=5$. 
\item[{\rm (ii)}] Consider the first row in Table \ref{tab:reg(L)} with $b(L)=3$. Here $L = {\rm L}_2(q).2$, $J = P_1$ is a Borel subgroup and $q = p^f$ is odd, so in view of Remark \ref{r:cases}, we may assume $q \ne 5,9$. Then ${\rm reg}(L,b(L))=1$ if and only if $L$ is sharply $3$-transitive, which means that either $L = {\rm PGL}_2(q)$, or 
$f$ is even and $L = T.\la \delta\phi^{f/2}\ra$.
\item[{\rm (iii)}] Up to isomorphism, there are three almost simple groups of the form ${\rm L}_4(3).2$, one of which is ${\rm PGL}_4(3)$. In addition, we have ${\rm L}_4(3).2_2$ and ${\rm L}_4(3).2_3$, which contain involutory graph automorphisms $x$ with $C_{T}(x) = {\rm PGSp}_4(3)$ and ${\rm PSO}_{4}^{-}(3).2$, respectively.
\item[{\rm (iv)}] In the fourth column of Table \ref{tab:reg(L)} we record ${\rm reg}(L,b(L))$. In a few cases, this is presented as $r_1,r_2, \ldots$, which means that ${\rm reg}(L,b(L)) = r_i$ when $q$ is contained in the $i$-th set appearing in the fifth column. For example, if $L = \PGammaL_2(q)$ and $J = P_1$, then ${\rm reg}(L,b(L)) = 3$ if $q=16$ and ${\rm reg}(L,b(L)) = 2$ if $q=8$. 
\end{itemize}
\end{rem}

{\small
	\begin{table}
		\[
		\begin{array}{cllcl} \hline
		r(L) & L & \mbox{Type of $J$} & r(T) & \mbox{Comments} \\ \hline
		1&\PGL_2(q) & \GL_1(q)\wr S_2 & \mbox{See Remark \ref{r:class}(i)} & \mbox{$q\geqs 7$, $q \ne 9$} \\
		&{\rm P\Omega}_8^+(3).2^2 & {\rm O}_4^+(3)\wr S_2 & 12 & \mbox{Both groups of this shape} \\
		&\Omega_8^+(2).3 & {\rm O}_2^-(2) \times {\rm GU}_3(2) & 5 & \\
		&{\rm SO}_7(3) & {\rm O}_4^+(3) \perp {\rm O}_3(3) & 5 &\\
		&{\rm PSp}_6(3) & {\rm Sp}_2(3)\wr S_3 & 1 &\\ 
		&{\rm PGL}_4(3) & {\rm O}_4^+(3) & 6 &\\
		&{\rm U}_4(3).[4] & {\rm GU}_1(3)\wr S_4 & 11 & L \ne T.\la \delta^2,\phi\ra \\
		&{\rm U}_4(3) & {\rm GU}_2(3)\wr S_2 & 1 &\\
		&{\rm L}_3(4).D_{12} & {\rm GL}_1(4^3) & 44 & \\
		&{\rm L}_{3}(4).2 & \GU_3(2) & 3 & L \ne {\rm P\Sigma L}_{3}(4) \\
		&{\rm U}_3(5).S_3 & {\rm GU}_1(5)\wr S_3 & 11 &\\
		&{\rm U}_3(4) & {\rm GU}_1(4)\wr S_3 & 1 &\\
		&{\rm PGL}_2(11) & 2^{1+2}_{-}.{\rm O}_{2}^{-}(2) &  3 &\\
		&G_2(3).2 & {\rm SL}_2(3)^2 & 4 &\\
		&S_7 & {\rm AGL}_1(7) & 4 & \\
		& A_6.2 & D_{16} & 4 & L = {\rm PGL}_2(9) \\ 
		&A_6.2 & 5{:}4 & 2 & L = {\rm M}_{10} \\ 
		& A_5 & D_6 & 1 & \\
		&{\rm J}_2.2 & 5^2{:}(4\times S_3) & 2 &\\
		&{\rm M}_{11} & 2.S_4 & 1 &\\ 
		2 &\POmega_8^+(3).4 & \mathrm{O}_4^+(3)\wr S_2 & 12 &\\
&\LL_4(3).2_2 & \mathrm{O}_4^+(3) & 6 & \\
		&\Sp_4(4).4 & \mathrm{O}_2^-(4)\wr S_2 & 9 & \\
		&\LL_3(3) & \GL_1(3^3) & 2 & \\
		&\LL_3(3).2 & \mathrm{O}_3(3) & 5 & \\
		&\UU_3(5).S_3 & 3^{1+2}.\Sp_2(3) & 21 & \\
		&\LL_2(27).3 & \GL_1(27)\wr S_2 & 8 &  \\
		& & \GL_1(27^2) & 6 &  \\
		& {\rm PGL}_2(13) & 2_-^{1+2}.\mathrm{O}_2^-(2) & 6 & \\
		&\LL_2(11) & \GL_1(11^2) & 2 & \\
		& {}^2B_2(8).3 & 13{:}12 & 7 &\\
		&A_9 & \mathrm{ASL}_2(3) & 2 &\\
		& A_6.2 & SD_{16} & 4 & L = {\rm M}_{10} \\ 
		&\mathrm{J}_2 & 5^2{:}D_{12} & 2 &\\	
		3 &\POmega_8^+(3).3 & \mathrm{O}_4^+(3)\wr S_2 & 12  & \\
		&\POmega_8^+(3).2 & \mathrm{O}_4^+(3)\wr S_2 & 12   & L = T.\langle \gamma\rangle\\
		&\LL_4(3).2_3 & \mathrm{O}_4^+(3) & 6 &  \\
		&{\rm L}_3(4).S_3& {\rm GL}_1(4^3) & 44 & L \ne {\rm P\Gamma L}_3(4) \\
		&{\rm PGU}_3(5) & {\rm GU}_1(5)\wr S_3 & 11 & \\
		& \LL_2(25).2 & \GL_1(25)\wr S_2 & 8 & L = \PSigmaL_2(25) \\
		& & \GL_1(25^2) & 6  & L \ne\PGL_2(25)\\
		&\LL_2(17) & 2_-^{1+2}.\mathrm{O}_2^-(2) & 3 & \\
		&\LL_2(13) & \GL_1(13^2) & 3 & \\
		&{^2}F_4(2) & 5^2{:}4S_4 & 6 & \\
		&\mathrm{J}_3.2 & 3^{2+1+2}{:}8.2 & 10 & \\
		&\mathrm{HS}.2 & 5^{1+2}.[2^5] & 9 & \\
		4 & {\rm PSO}_8^{+}(3) & \mathrm{O}_4^+(3)\wr S_2 & 12  & \\
		&{\rm U}_4(3).2 & {\rm GU}_1(3)\wr S_4 & 11 &   L \ne T.\la \delta^2\phi\ra \\
		&\LL_2(25).2 & \GL_1(25)\wr S_2 & 8 &  L = T.\langle \delta\phi\rangle\\
		&\LL_2(q) & \GL_1(q^2) & 4 & q =17, 19\\
		&G_2(3) & \SL_2(3)^2 & 4 &\\
		&\mathrm{M}_{12}.2 & S_4\times S_3 & 13 &\\
		&\mathrm{Suz}.2 & 3^{2+4}{:}2.(S_4\times D_8) & 16 &\\
		\hline
		\end{array}
		\]
		\caption{The groups $L\in\mathcal{S}$ with $b(L)=2$ and $r(L)\leqs 4$}
		\label{tab:r(L)}
\end{table}}

{\small
	\begin{table}
		\[
		\begin{array}{cllcl} \hline
		b(L) & L & \mbox{Type of $J$} & \reg(L,b(L))  & \mbox{Comments} \\ \hline
		4 &\LL_3(3) & P_1, P_2 & 1 &\\
		&\PGammaL_2(q) & P_1 & 3,2 & q \in\{16\},  \{8\}\\
		&A_6.2^2 & 3^2{:}SD_{16} & 3 &\\
		&S_5 & S_4 & 1&\\
		3 &\LL_2(q).2 & P_1 & 1 & \mbox{See Remark \ref{r:class}(ii)} \\
		&\LL_2(q) & P_1 & 2 - \delta_{2,p} & \mbox{$q \geqs 7$, $q \ne 9$} \\	
		& {\rm P\Omega}_8^{+}(3) & P_2 & 3 & \\
		&\Omega_7(3) & P_2 & 3 &\\
		&\PSp_6(3) & P_2 & 3 &\\
		&\LL_4(3).2 & P_{1,3} & 3 & L\ne \PGL_4(3)\\
		&\UU_4(3).2 & P_1 & 1 & L\not < \PGU_4(3)\\
		&\UU_4(3) & P_1 & 3 &\\
		&\Aut(\LL_3(q)) & P_{1,2} & 4,3,1 & q \in \{3,25,27,64\}, \{32\}, \{8,9,16\} \\
		&\LL_3(16).(2\times 4) & P_{1,2} & 4 &\\
		&\LL_3(16).D_{12} & P_{1,2} & 2 & \\
		&\LL_3(16).{12} & P_{1,2} & 3 & \\
		&\LL_3(4).2^2 & P_{1,2} & 4 &\\
		&\LL_3(4).6 & P_{1,2} & 4&\\
		&\Aut(\UU_3(q)) & P_1 & 4,3,2,1 & q \in \{27\}, \{7,32\}, \{5,9,16\}, \{3,4,8\} \\
		& \UU_3(16).4 & P_1 & 4 & \\
		&\UU_3(9).2 & P_1 & 4 & \\
		&\UU_3(8).S_3 & P_1 & 4 & \\
		&\UU_3(8).6 & P_1 & 3 & \\
		&\UU_3(8).3^2 & P_1 & 2 & \\
		&\UU_3(4).2 & P_1 & 2 & \\
		&\UU_3(4) & P_1 & 4 & \\
		&\UU_3(3) & P_1 & 3 & \\
		&\LL_2(7) & 2_-^{1+2}.\mathrm{O}_2^-(2) & 1 &\\
		& {}^2B_2(8).3 & [8^2]{:}7.3 & 2 & \\ 
		&G_2(3) & [3^5]{:}\GL_2(3) & 4 &\\
		&S_7 & S_4\times S_3 & 1 &\\
		&A_6.2 & 3^2{:}Q_8 & 1& L = \PGL_2(9) \\
		& & \mathrm{AGL}_1(9) & 1& L = {\rm M}_{10} \\
		&A_6 & (S_3\wr S_2)\cap L & 2&\\
		&S_5 & S_3\times S_2 & 4 &\\
		& & 5{:}4 & 1 &\\
		&A_5 & A_4 & 1 & \\
		& & D_{10} & 2 & \\ \hline
		\end{array}
		\]
		\caption{The groups $L\in\mathcal{S}$ with $b(L) \in \{3,4\}$ and $\reg(L,b(L))\leqs 4$}
		\label{tab:reg(L)}
\end{table}}

\newpage

\end{document}